\pgfplotsset{compat=1.15}
\title[On uniqueness for the hyperbolic Half-wave map equation]{O\lowercase{n uniqueness for hyperbolic half-wave maps in dimension} \texorpdfstring{$\lowercase{d} \geq 3$}{d >= 3}}
\renewcommand{\S}{{\mathbb S}}
\newtheorem{theorem}{Theorem}
\newtheorem{lemma}[theorem]{Lemma}
\theoremstyle{definition}
\newtheorem{definition}[theorem]{Definition}
\theoremstyle{remark}
\newtheorem{remark}[theorem]{Remark}
\newcommand{\HH}{\mathbb{H}}
\newcommand{\R}{\mathbb{R}}
\newcommand{\Z}{\mathbb{Z}}
\newcommand{\brac}[1]{\left (#1 \right )}
\newcommand{\scpr}[2]{\left \langle #1 , #2\right \rangle}
\newcommand{\abs}[1]{\left\lvert #1 \right \rvert}
\renewcommand{\vec}[1]{{\bf #1}}
\newcommand{\barint}{
\rule[.036in]{.12in}{.009in}\kern-.16in \displaystyle\int }
\newcommand{\barcal}{\text{$ \rule[.036in]{.11in}{.007in}\kern-.128in\int $}}
\def\mvint_#1{\mathchoice
          {\mathop{\vrule width 6pt height 3 pt depth -2.5pt
                  \kern -8pt \intop}\nolimits_{\kern -3pt #1}}%
          {\mathop{\vrule width 5pt height 3 pt depth -2.6pt
                  \kern -6pt \intop}\nolimits_{#1}}%
          {\mathop{\vrule width 5pt height 3 pt depth -2.6pt
                  \kern -6pt \intop}\nolimits_{#1}}%
          {\mathop{\vrule width 5pt height 3 pt depth -2.6pt
                  \kern -6pt \intop}\nolimits_{#1}}}
\numberwithin{theorem}{section} \numberwithin{equation}{section}
\newcommand{\aleq}{\lesssim}
\newcommand{\defeq}{:=}
\newcommand{\Ds}[1]{|\nabla|^{#1}}
\newcommand{\Dso}{|\nabla|}
\def\avint{\,\ThisStyle{\ensurestackMath{%
			\stackinset{c}{.2\LMpt}{c}{.5\LMpt}{\SavedStyle-}{\SavedStyle\phantom{\int}}}%
		\setbox0=\hbox{$\SavedStyle\int\,$}\kern-\wd0}\int}
\let\latexchi\chi
\renewcommand\chi{\@ifnextchar_\sub@chi\latexchi}
\newcommand{\sub@chi}[2]{
  \@ifnextchar^{\subsup@chi{#2}}{\latexchi^{}_{#2}}%
}
\newcommand{\subsup@chi}[3]{
  \latexchi_{#1}^{#3}%
}
\author{Silvino Reyes Farina}
\email[Silvino Reyes Farina]{sir25@pitt.edu}
\begin{document}
\begin{abstract}
Half-wave maps appear in the physics literature as the continuum limit of Calogero-Moser spin systems. We obtain a uniqueness result for the Half-Wave Maps equation in dimension $d \ge 3$ in the natural energy class with $\HH^2$ target. In the proof, we differentiate in time to arrive at a wave-type equation and isometrically embed $\HH^2$ into some $\R^m$ using the Nash embedding theorem. Relying on geometric properties of $\HH^2$, combined with fractional Leibniz rules and commutator estimates, we then use a Gr\"{o}nwall inequality argument to obtain uniqueness.

\end{abstract}

\subjclass{35L05, 35B40}.
\keywords{wave equation, fractional wave maps, halfwave maps, uniqueness}
\maketitle 
\tableofcontents

\section{Introduction and Main Result}
The Half-Wave Maps equation (HWM) arises in the physics literature as the continuum limit of a Calogero-Moser spin system \cite{lenzmann2020derivation}. Its homogeneous form is:
\begin{align*}
    \partial_t \vec{u} = \vec{u} \wedge (-\Delta)^{\frac{1}{2}}\vec{u},
\end{align*}
where solutions -- referred to as half-wave maps -- are functions $\vec{u}$ that map from Minkowski space-time $\R \times \R^d$ into a smooth manifold $M$ (typically the 2-sphere $\S^2$), the half-Laplacian operator is denoted by $(-\Delta)^{\frac{1}{2}}$, and $\wedge$ is the usual cross-product in $\R^3$. In the last few years, there has been much research done on the HWM. In \cite{lenzmann2020derivation}, Lenzmann and Sok present a rigorous derivation of the HWM and study convergence to global-in-time weak solutions in the energy class as well as short-time strong solutions of higher regularity. Schikorra and Lenzmann also give a complete classification of all traveling solitary waves
with finite energy for the HWM in \cite{lenzmann2018energy}. In \cite{gerard2018lax}, Gérard and Lenzmann prove that the HWM admits a Lax pair for both the $\S^2$ and $\HH^2$ target cases.

Proving well-posedness of the HWM in increasingly lower spatial dimensions is of interest because the most physically relevant case of the HWM is when $d=1$ (i.e., when the PDE models a Calogero-Moser spin system). Past work has used the reformulation of the HWM equation into a Wave Maps equation (WM) to prove existence in $d \ge 5$ for $M= \S^2$ \cite{krieger2017small}, with recent improvements to include $d=4$ with small $\Dot{B}^{2,1}_1$ data \cite{kiesenhofer2021small}. Existence of solutions for the HWM in $d \ge 4$ has also been shown with $M = \HH^2$ and small $\Dot{B}^{2,1}_1$ data \cite{liu2021global}.

Similar well-posedness results are difficult to achieve for $d=3$ as we lose access to the $L^2_tL^\infty_x$ Strichartz estimate. In a more recent paper by Marsen, progress on this case was made by proving that the equation is weakly globally well-posed for initial data that is not only small in $\Dot{B}_{2,1}^{3/2}$, but also possesses some angular regularity and weighted decay of derivatives \cite{marsden2024global}. Another result by Yang established the existence of weak global solutions of the HWM with target $\S^2$ for $d=1$ with large initial data in $\Dot{H}^1 \cap \Dot{H}^{1/2}(\R)$ \cite{yangR}. Uniqueness of solutions was shown for $d \ge 3$ in $\S^2$ using the reformulation of the HWM equation into a Wave Maps equation, orthogonality, and a Gr\"{o}nwall's inequality argument in \cite{eyeson2022uniqueness}. 

Using a similar strategy as in \cite{eyeson2022uniqueness}, the aim of this paper is to show uniqueness of solutions for the hyperbolic Half-Wave Maps equation (HHWM)
\begin{align}\label{eq:half-wave}
    \partial_t \vec{u} = \vec{u} \wedge_L |\nabla| \vec{u},
\end{align}
where $\vec{u}: \mathbb{R}^{d+1} \to \mathbb{H}^2 = \{\vec{u} \in \R^3: -\vec{u}_0^2 + \vec{u}_1^2 + \vec{u}_2^2 =-1, \vec{u}_0 > 0\}$, the Lorentzian cross-product $\wedge_L$ is given by $\vec{a}\wedge_L \vec{b} =\operatorname{diag}\brac{-1,1,1}\brac{\vec{a} \wedge \vec{b}}$, and $\Dso$ is defined by 
\begin{align*}
    \Dso^s f 
    &= \brac{-\Delta}^\frac{s}{2}f\\
    &= \mathcal{F}^{-1}\brac{|\xi|^s\mathcal{F}(f)},
\end{align*}
where $\mathcal{F}$ is the Fourier transform and $s \in (0,2)$ ($s=1$ in the case above).
The Lorentzian inner-product is given by $\langle \vec{a}, \vec{b} \rangle_L = \langle \operatorname{diag}\brac{-1,1,1}\vec{a}, \vec{b} \rangle_{\R^3}$. Therefore, we show the following theorem.

\begin{theorem} \label{thm:1}
   Let $d \ge 3$ and $\alpha \in (1,d+1/2)$, and $\mathbf{u}$,$\mathbf{v}: [0,T] \times \mathbf{R}^d \to \mathbb{H}^2 \hookrightarrow \mathbb{R}^3$ are smooth solutions to the half-wave map equation with the same initial data $\mathbf{u}(\cdot,0) = \mathbf{v}(0,\cdot) \in Q + C^\infty_c(\R^d,\R^3)$ for some $Q \in \mathbb{H}^2$ such that $|\mathbf{u}|$, $|\mathbf{v} | < \infty $. If
    \begin{align}
        \lVert \Dso^\alpha \mathbf{u}\rVert_{L^2_tL_x^{(\frac{2d}{2\alpha - 1},2)}(\mathbb{R}^d \times (0,T))} + \lVert \Dso^\alpha \mathbf{v}\rVert_{L^2_tL_x^{(\frac{2d}{2\alpha - 1},2)}(\mathbb{R}^d \times (0,T))} < \infty \label{first}, 
    \end{align}
    then $\mathbf{u}=\mathbf{v}$.
\end{theorem}

\textbf{Outline:} In \hyperref[sec2]{Section 2}, we introduce commutator estimates and Sobolev inequalities that will be useful in our estimates. In \hyperref[sec2]{Section 3}, we tackle the main issue in using a Gr\"{o}nwall's argument with the reformulation of the HWM. As we want to take advantage of the geometry of hyperbolic space (such as orthogonality and $|\vec{u}|_L^2 = -1$), we define the energy for the WM equation using the Lorentzian inner product. Since the Lorentzian inner product is not positive definite, we cannot assume the energy is non-negative. Thus, we use the Nash embedding to embed $\HH^2$ into some $\R^m$ and then define the conservation of energy using the Euclidean inner product. We conclude in \hyperref[sec2]{Section 4} by performing a Gr\"{o}nwall's argument to show uniqueness.

\subsection*{Acknowledgement} The research that has led to this result was funded by the National Science Foundation (NSF), Career DMS-2044898 (PI: Schikorra).

\section{Commutator Estimates and Nirenberg-Sobolev Inequalities} \label{sec2}
Most of the following lemmas and their proofs can be found in \cite{eyeson2022uniqueness} and are restated here for the reader's convenience.
\begin{lemma} \label{comp}
    For any $p \in (1,\infty)$,
    \begin{align*}
        \lVert \nabla f \rVert_{L^p(\R^d)} \approx \lVert \Dso f \rVert_{L^p(\R^d)}.
    \end{align*}
\end{lemma}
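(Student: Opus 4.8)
The plan is to reduce this equivalence to the $L^p$-boundedness of the Riesz transforms $R_j = \partial_j \Dso^{-1}$, whose Fourier symbols $m_j(\xi) = i\xi_j/|\xi|$ are smooth away from the origin and homogeneous of degree $0$, hence satisfy the Mikhlin--H\"ormander bounds $|\partial^\beta m_j(\xi)| \lesssim |\xi|^{-|\beta|}$. Consequently each $R_j$ is bounded on $L^p(\R^d)$ for every $p \in (1,\infty)$, and this is the only external input needed.

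First I would prove $\lVert \nabla f \rVert_{L^p} \lesssim \lVert \Dso f \rVert_{L^p}$. On the Fourier side $\widehat{\partial_j f}(\xi) = i\xi_j \widehat f(\xi) = m_j(\xi)\,|\xi|\,\widehat f(\xi)$, i.e. $\partial_j f = R_j(\Dso f)$; applying the $L^p$-boundedness of $R_j$ and summing over $j=1,\dots,d$ gives $\lVert \nabla f \rVert_{L^p} \le \sum_j \lVert R_j(\Dso f) \rVert_{L^p} \lesssim \lVert \Dso f \rVert_{L^p}$. For the reverse inequality I would use the identity $\sum_{j=1}^d R_j \partial_j = -\Dso$, which follows from $\sum_j m_j(\xi)\cdot i\xi_j = -\sum_j \xi_j^2/|\xi| = -|\xi|$. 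Thus $\Dso f = -\sum_j R_j(\partial_j f)$, and once more by boundedness of the $R_j$, $\lVert \Dso f \rVert_{L^p} \le \sum_j \lVert R_j(\partial_j f) \rVert_{L^p} \lesssim \sum_j \lVert \partial_j f \rVert_{L^p} \approx \lVert \nabla f \rVert_{L^p}$. Combining the two bounds gives the asserted equivalence.

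The only point requiring care --- rather than a genuine obstacle --- is the functional-analytic setting: the symbol manipulations are cleanest for Schwartz functions (or for $f$ whose Fourier transform vanishes near the origin), and the general statement follows by density, with $\Dso f$ and the homogeneous norms understood modulo polynomials in the usual way for homogeneous Sobolev spaces. I expect nothing beyond routine bookkeeping there.
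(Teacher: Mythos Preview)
Your proposal is correct and is exactly the standard argument via $L^p$-boundedness of the Riesz transforms. The paper does not actually supply a proof of this lemma; it is listed among the preliminary estimates that are ``restated here for the reader's convenience'' with proofs deferred to \cite{eyeson2022uniqueness}, so there is nothing to compare against beyond the fact that your Riesz-transform argument is precisely the expected one.
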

\begin{lemma}[Sobolev inequality] \label{lem:sob_in}
    Let $\alpha \in (0,d)$ and $p \in (1,\frac{1}{\alpha})$ then for any $ f \in C^\infty_c(\R^d)$,
    \begin{align*}
        \lVert f \rVert_{L^\frac{dp}{d-\alpha p}(\R^d)} 
        \lesssim \lVert \Dso^\alpha f \rVert_{L^p(\R^d)}.
    \end{align*}
\end{lemma}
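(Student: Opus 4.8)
The plan is to reduce the statement to the Hardy--Littlewood--Sobolev inequality via the Riesz potential representation. First I would write, for $f \in C^\infty_c(\R^d)$ and $0 < \alpha < d$,
\[
f = \iFT\!\left(|\xi|^{-\alpha}\, \FT(\Dso^\alpha f)\right) = I_\alpha\big(\Dso^\alpha f\big),
\]
where $I_\alpha$ denotes the Riesz potential, which for $0 < \alpha < d$ acts by convolution against the locally integrable kernel $c_{d,\alpha}\,|x|^{-(d-\alpha)}$. Since $f$ is smooth with compact support, $g := \Dso^\alpha f$ is smooth and decays like $|x|^{-d-\alpha}$ at infinity, so $g \in L^p(\R^d)$ for every $p \in (1,\infty)$ and both the multiplier identity and the convolution representation are rigorous; in particular no low-frequency (polynomial) ambiguity arises.

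Next I would invoke the Hardy--Littlewood--Sobolev inequality: for $1 < p < q < \infty$ with $\tfrac1q = \tfrac1p - \tfrac\alpha d$, one has $\lVert I_\alpha g\rVert_{L^q(\R^d)} \lesssim \lVert g\rVert_{L^p(\R^d)}$. Taking $q = \tfrac{dp}{d-\alpha p}$, which exceeds $p$ and is finite precisely when $\alpha p < d$ (in particular under the hypothesis $p < 1/\alpha$), and substituting $g = \Dso^\alpha f$ yields
\[
\lVert f \rVert_{L^{\frac{dp}{d-\alpha p}}(\R^d)} = \lVert I_\alpha(\Dso^\alpha f)\rVert_{L^{\frac{dp}{d-\alpha p}}(\R^d)} \lesssim \lVert \Dso^\alpha f\rVert_{L^p(\R^d)},
\]
which is the asserted inequality.

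The only genuinely nontrivial ingredient is the Hardy--Littlewood--Sobolev inequality itself. I would either cite it directly (e.g.\ Stein's \emph{Singular Integrals}, Chapter~V, or Lieb--Loss) or, for a self-contained treatment, establish the weak-type $(p,q)$ bound for $I_\alpha$ by splitting the kernel $|y|^{-(d-\alpha)}$ into its restrictions to $\{|y|\le R\}$ and $\{|y|>R\}$, estimating the two pieces by an $L^1$-type and an $L^p$-type bound respectively, optimising in $R$, and then upgrading to the strong-type bound by Marcinkiewicz interpolation (legitimate since $p>1$). The remaining points --- the Fourier-multiplier identity $|\xi|^{\alpha}\cdot|\xi|^{-\alpha}=1$ and the explicit form of the Riesz kernel --- are routine. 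Since the lemma is classical and, as the authors note, taken from \cite{eyeson2022uniqueness}, I expect the actual write-up to be brief and citation-based.
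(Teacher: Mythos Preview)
Your proof is correct and entirely standard: writing $f = I_\alpha(\Dso^\alpha f)$ and applying Hardy--Littlewood--Sobolev is exactly the textbook argument. The paper itself does not supply a proof of this lemma at all; it merely states it and refers the reader to \cite{eyeson2022uniqueness} (as announced at the start of Section~2), which you correctly anticipated.
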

\begin{lemma}[Gagliardo-Nirenberg-Sobolev inequality]\label{lem:gag_in}
    Assume
    \begin{enumerate}
        \item $\beta \in (0,\frac{1}{2}]$ and $p \in [\frac{2d}{\beta}, \infty)$, or
        \item $\beta \in (\frac{1}{2},1]$ and $p \in [\frac{2d}{\beta}, \frac{2d}{2\beta - 1}]$.
    \end{enumerate}
    Then for $\theta = 2\brac{\beta - \frac{d}{p}} \in [\beta, 1]$ we have 
    \begin{align*}
        \lVert \Dso^\beta f \rVert_{L^p(\R^d)} \lesssim \lVert f \rVert_{L^\infty(\R^d)}^{1-\theta} \lVert \Dso f \rVert_{L^{2d}(\R^d)}^\theta.
    \end{align*}
\end{lemma}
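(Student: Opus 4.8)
The plan is to reduce the inequality to a pointwise estimate on a Littlewood--Paley square function and then to optimize the frequency cut-off \emph{pointwise}. Write $g \defeq \Dso f$, let $P_j$ denote the Littlewood--Paley projection onto the annulus $\{|\xi|\sim 2^j\}$, let $M$ be the Hardy--Littlewood maximal operator, and set $S_\beta f \defeq \big(\sum_j 2^{2\beta j}|P_j f|^2\big)^{1/2}$. Since $1<p<\infty$ in both stated cases (in particular $p<\infty$), I would first invoke the Littlewood--Paley characterization of the Riesz potential spaces in the form
\begin{align*}
\lVert \Dso^\beta f\rVert_{L^p(\R^d)} \lesssim \lVert S_\beta f\rVert_{L^p(\R^d)},
\end{align*}
so that everything reduces to estimating $S_\beta f$. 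I would also record once and for all that the two stated ranges are exactly those for which $\theta=2(\beta-\tfrac{d}{p})\in[\beta,1]$, and that $\theta<1$ throughout except at the single endpoint with $\beta>\tfrac12$ and $p=\tfrac{2d}{2\beta-1}$.

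Assume first $\theta<1$. The core step is a pointwise bound on $S_\beta f(x)$ obtained by cutting the sum at a frequency $2^{N}$ with $N=N(x)$ to be chosen. For the low frequencies $j<N$, since $P_j$ is convolution against an $L^1$-normalized kernel one has $|P_j f(x)|\lesssim \lVert f\rVert_{L^\infty}$, and summing the geometric series (using $\beta>0$) gives $\big(\sum_{j<N}2^{2\beta j}|P_j f(x)|^2\big)^{1/2}\lesssim 2^{\beta N}\lVert f\rVert_{L^\infty}$. For the high frequencies $j\ge N$ I would write $2^{\beta j}P_j f=2^{(\beta-1)j}\widetilde P_j g$ with $\widetilde P_j\defeq 2^{j}\Dso^{-1}P_j$ a frequency-localized multiplier (so $\widetilde P_j g = 2^j P_jf$), and then interpolate geometrically, with a parameter $\sigma\in[0,1)$, between the two available pointwise bounds $|\widetilde P_j g(x)|\lesssim Mg(x)$ and $|\widetilde P_j g(x)|\le \lVert \widetilde P_j g\rVert_{L^\infty}\lesssim 2^{j/2}\lVert g\rVert_{L^{2d}}$ (the latter by Bernstein). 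For $\sigma<2(1-\beta)$ the resulting series in $j\ge N$ decays geometrically, and after optimizing over $N$ the low- and high-frequency contributions balance to give
\begin{align*}
S_\beta f(x)\lesssim \lVert f\rVert_{L^\infty}^{1-\theta}\,\big(Mg(x)\big)^{2d/p}\,\lVert g\rVert_{L^{2d}}^{2(\theta-\beta)},
\end{align*}
where the choice $\sigma=\tfrac{2(\theta-\beta)}{\theta}$ (legitimate precisely because $\beta\le\theta<1$) forces the three exponents; these are all dictated by the scaling identity $\theta=2(\beta-\tfrac{d}{p})$.

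Taking the $L^p$ norm of the last display and using that $M$ is bounded on $L^{2d}$ then yields
\begin{align*}
\lVert S_\beta f\rVert_{L^p(\R^d)}\lesssim \lVert f\rVert_{L^\infty}^{1-\theta}\,\lVert Mg\rVert_{L^{2d}}^{2d/p}\,\lVert g\rVert_{L^{2d}}^{2(\theta-\beta)}\lesssim \lVert f\rVert_{L^\infty}^{1-\theta}\,\lVert \Dso f\rVert_{L^{2d}}^{\theta},
\end{align*}
since $\tfrac{2d}{p}+2(\theta-\beta)=\theta$, which is the claim. It remains to treat the endpoint $\theta=1$ (only the second case, at $p=\tfrac{2d}{2\beta-1}$), where the high-frequency series above ceases to converge; there the inequality reads $\lVert \Dso^\beta f\rVert_{L^{2d/(2\beta-1)}}\lesssim\lVert \Dso f\rVert_{L^{2d}}$ and follows directly from \cref{lem:sob_in} applied to $h=\Dso^\beta f$ with $\alpha=1-\beta$, since $\Dso^{\alpha}h=\Dso f=g$ and $\tfrac{d\cdot 2d}{d-\alpha\cdot 2d}=\tfrac{2d}{2\beta-1}$ (admissible since $\beta>\tfrac12$); alternatively one may interpolate $L^p$ between this endpoint and the $\sigma=0$ instance above. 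I expect the genuine obstacle to be conceptual and to live in the low-frequency part: because $L^\infty(\R^d)$ does not embed into any $L^p(\R^d)$, one \emph{cannot} estimate the low frequencies by $\lVert f\rVert_{L^\infty}$ after a \emph{fixed} cut-off, and it is essential that $N=N(x)$ be chosen pointwise so that, through the maximal function, the $L^\infty$ contribution becomes genuinely $L^p$-summable. The remaining difficulty is the routine but delicate bookkeeping that calibrates $\sigma$ so the exponents close to $(1-\theta,\theta)$ with the derivative landing in $L^{2d}$.
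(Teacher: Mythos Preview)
The paper does not supply its own proof of this lemma: it is listed among the preliminary estimates with the blanket remark that ``most of the following lemmas and their proofs can be found in \cite{eyeson2022uniqueness},'' and no argument is given. So there is nothing in the paper to compare your proposal against.

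Your argument is the standard Littlewood--Paley proof of Gagliardo--Nirenberg and is correct. The bookkeeping checks out: with $\sigma=2(\theta-\beta)/\theta$ one has $\sigma\ge 0$ iff $\theta\ge\beta$ and $\sigma<2(1-\beta)$ iff $\theta<1$, exactly the non-endpoint range; the optimized pointwise bound has exponents $(1-\theta,\,2d/p,\,2(\theta-\beta))$ on $\lVert f\rVert_{L^\infty}$, $Mg(x)$, $\lVert g\rVert_{L^{2d}}$ respectively, and taking $L^p$ and using $\tfrac{2d}{p}+2(\theta-\beta)=\theta$ closes the estimate. For the endpoint $\theta=1$ (case~2, $p=\tfrac{2d}{2\beta-1}$) your reduction to \cref{lem:sob_in} with $\alpha=1-\beta$ is fine; note that the hypothesis $p\in(1,\tfrac{1}{\alpha})$ written in \cref{lem:sob_in} is a typo for $p\in(1,\tfrac{d}{\alpha})$, and under the intended hypothesis one has $2d<\tfrac{d}{1-\beta}$ precisely because $\beta>\tfrac12$. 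The degenerate sub-endpoint $\beta=1$, $p=2d$ is trivial. Your emphasis that the cut-off $N$ must be chosen pointwise is exactly the right conceptual point.
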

\begin{definition}\label{def_comm}
    The Leibniz rule operator for $\Dso^s$ will be denoted by 
    \begin{align*}
        H_{\Dso^s}\brac{f,g} \defeq \Dso^s(fg) - f\Dso^sg - \brac{\Dso^sf}g.
    \end{align*}
\end{definition}
\begin{lemma} \label{lem:comm_int}
    Let $s \in (0,2)$. Then for some $c = c(s,d)$,
    \begin{align*}
        H_{\Dso^s}\brac{f,g}(x) = c \int_{\R^d} \frac{\brac{f(x) - f(y)}\brac{g(x) - g(y)}}{\abs{x-y}^{d+s}} \ dy.
    \end{align*}
\end{lemma}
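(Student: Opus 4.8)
The plan is to reduce the identity to the classical singular-integral representation of the fractional Laplacian. For $s \in (0,2)$ and $h$ a Schwartz function one has the pointwise formula
\[
    \Dso^s h(x) = c_{d,s}\, \mathrm{P.V.}\!\int_{\R^d} \frac{h(x) - h(y)}{|x-y|^{d+s}}\, dy
    \defeq c_{d,s}\,\lim_{\eps \to 0^+} \int_{|x-y| > \eps} \frac{h(x) - h(y)}{|x-y|^{d+s}}\, dy,
\]
with a positive normalizing constant $c_{d,s}$; this is verified on the Fourier side, where the transform of the right-hand side equals $|\xi|^s\hat h(\xi)$ after a direct computation. I would first record this (it is standard), and observe that by bilinearity and an approximation argument it is enough to prove the lemma for $f,g \in C_c^\infty(\R^d)$, for which $fg \in C_c^\infty(\R^d)$ as well and all three terms in $H_{\Dso^s}(f,g)$ are given by the formula above.

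Next I would apply the representation to $h = fg$, to $g$ (then multiplied by $f(x)$), and to $f$ (then multiplied by $g(x)$). Since the truncation region $\{|x-y| > \eps\}$ is the same in each case, the three principal values can be combined under one integral before taking $\eps \to 0^+$:
\[
    H_{\Dso^s}(f,g)(x) = c_{d,s}\lim_{\eps \to 0^+} \int_{|x-y| > \eps} \frac{N(x,y)}{|x-y|^{d+s}}\, dy,
\]
where, by direct algebra,
\begin{align*}
    N(x,y) &= \big(f(x)g(x) - f(y)g(y)\big) - \big(f(x)g(x) - f(x)g(y)\big) - \big(f(x)g(x) - f(y)g(x)\big) \\
    &= -\big(f(x) - f(y)\big)\big(g(x) - g(y)\big).
\end{align*}

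Finally I would dispose of the principal value. Near the diagonal, $f(x)-f(y) = O(|x-y|)$ and $g(x)-g(y) = O(|x-y|)$, so the combined integrand is $O(|x-y|^{2-d-s})$, which is locally integrable exactly because $s < 2$; for $|x-y| \ge 1$ the boundedness of $f$ and $g$ makes the integrand controlled by $|x-y|^{-d-s}$, which is integrable. Hence the integrand is absolutely integrable and dominated convergence removes the truncation, yielding the stated identity with $c = -c_{d,s} = c(s,d)$ (the sign being immaterial, since only the dependence on $s$ and $d$ is claimed). The one step that genuinely needs care is this passage to the limit: each of the three original integrals is merely conditionally (principal-value) convergent when $s \ge 1$, and it is precisely the cancellation exhibited in $N$ that upgrades this to absolute convergence — this is where $s \in (0,2)$ is used. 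Passing from $C_c^\infty$ to the function class in which the lemma is later applied is then a routine density argument.
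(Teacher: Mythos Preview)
The paper does not actually supply a proof of this lemma; it is one of the results imported from \cite{eyeson2022uniqueness} and stated for convenience. Your argument is the standard one and is correct: the key points --- combining the three principal-value integrals over the common truncation region, the algebraic identity for the numerator, and the observation that the product of differences furnishes exactly the extra $|x-y|^2$ decay needed to make the integral absolutely convergent for $s<2$ --- are all handled properly.
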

\begin{lemma}\label{comm_lem} For $\alpha \in (0,1]$
    \begin{align*}
    H_{\Dso^\alpha}\brac{f,gh} 
    =  H_{\Dso^\alpha}\brac{fg,h}
    +  H_{\Dso^\alpha}\brac{f,g} h
    - f  H_{\Dso^\alpha}\brac{g,h}.
    \end{align*}
\end{lemma}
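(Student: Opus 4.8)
The statement is a purely algebraic (indeed pointwise) identity relating the four Leibniz-rule operators, so the plan is to reduce it to an elementary computation rather than to invoke any analytic estimate. The cleanest route is to work with the integral representation of \Cref{lem:comm_int}, which applies since $\alpha \in (0,1] \subset (0,2)$. Writing $k_\alpha(x,y) \defeq c\,\abs{x-y}^{-d-\alpha}$ for the kernel, each of the four terms becomes an integral against $k_\alpha(x,y)\,dy$, and the key observation is that the multiplicative factors $h(x)$ and $f(x)$ appearing in $H_{\Dso^\alpha}\brac{f,g}h$ and $f\,H_{\Dso^\alpha}\brac{g,h}$ are independent of the integration variable $y$, so they may be absorbed inside their respective integrals.

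First I would expand the left-hand side as
\begin{align*}
H_{\Dso^\alpha}\brac{f,gh}(x) = \int_{\R^d} \brac{f(x)-f(y)}\brac{g(x)h(x)-g(y)h(y)}\, k_\alpha(x,y)\, dy,
\end{align*}
and write the three right-hand terms in the same integral form, pulling $h(x)$ and $f(x)$ under the integral sign. Since all four integrals carry the common kernel $k_\alpha(x,y)\,dy$, the claimed identity follows from the pointwise numerator identity
\begin{align*}
\brac{f(x)-f(y)}\brac{g(x)h(x)-g(y)h(y)}
&= \brac{f(x)g(x)-f(y)g(y)}\brac{h(x)-h(y)}\\
&\quad + h(x)\brac{f(x)-f(y)}\brac{g(x)-g(y)}\\
&\quad - f(x)\brac{g(x)-g(y)}\brac{h(x)-h(y)}.
\end{align*}

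The final step is to verify this last identity, which I would do by writing $f_0 = f(x),\ f_1 = f(y)$ and likewise for $g,h$, expanding both sides into the monomials $f_i g_j h_k$, and checking that the coefficients agree: the left side equals $f_0g_0h_0 - f_0g_1h_1 - f_1g_0h_0 + f_1g_1h_1$, and on the right the three mixed monomials $f_0g_0h_1,\ f_1g_1h_0,\ f_0g_1h_0$ each cancel in pairs, leaving exactly those same four terms. This is bookkeeping with no analytic content, so there is no genuine obstacle; the one point worth recording is why the integral representation is preferable to a direct formal expansion via \Cref{def_comm}: the latter would require interpreting $\Dso^\alpha$ applied to the triple products $fgh$, $gh$, $fg$, whereas the kernel form keeps every quantity manifestly well-defined and makes the absorption of the $y$-independent factors transparent. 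As a consistency check, one can also run the formal expansion, substituting $H_{\Dso^\alpha}\brac{a,b} = \Dso^\alpha(ab) - a\,\Dso^\alpha b - \brac{\Dso^\alpha a}b$ into all four terms and cancelling, which reproduces the same identity.
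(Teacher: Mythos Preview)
Your proof is correct. You take a genuinely different route from the paper: the paper simply expands each $H_{\Dso^\alpha}$ via \Cref{def_comm} and cancels terms directly, without ever invoking the integral representation. Concretely, the paper writes $H_{\Dso^\alpha}(f,gh) = \Dso^\alpha(fgh) - \Dso^\alpha(f)gh - f\Dso^\alpha(gh)$, then rewrites $-\Dso^\alpha(f)gh = H_{\Dso^\alpha}(f,g)h - \Dso^\alpha(fg)h + f\Dso^\alpha(g)h$ and $-f\Dso^\alpha(gh) = -fH_{\Dso^\alpha}(g,h) - f\Dso^\alpha(g)h - fg\Dso^\alpha(h)$, after which everything regroups into the three right-hand terms. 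Your integral-kernel argument is equally valid and perhaps more transparent once the numerator identity is verified, but your remark that the direct expansion is less clean because it ``would require interpreting $\Dso^\alpha$ applied to the triple products'' is not quite right: those expressions pose no difficulty, and indeed the paper's two-line computation is shorter than the kernel route. What your approach buys is a manifestly pointwise identity independent of how $\Dso^\alpha$ acts, at the cost of passing through \Cref{lem:comm_int}; what the paper's approach buys is brevity.
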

\begin{proof}
    Using the definition of the commutator, we have
    \begin{align*}
        H_{\Dso^\alpha}\brac{f,gh}
        &= \Dso^\alpha\brac{fgh}
        - \Dso^\alpha\brac{f} g h
        - f \Dso^\alpha\brac{gh}\\
        &= \Dso^\alpha\brac{fgh}
        -\Dso^{\alpha}\brac{fg}h
        + f \Dso^\alpha g h\\
        & + H_{\Dso^\alpha}\brac{f,g} h
        - f \Dso^\alpha g h
        - f g \Dso^\alpha h
        - fH_{\Dso^\alpha}\brac{g,h}\\
        &=  H_{\Dso^\alpha}\brac{fg,h}
        +  H_{\Dso^\alpha}\brac{f,g} h
        - f  H_{\Dso^\alpha}\brac{g,h}.
    \end{align*}
\end{proof}
\begin{lemma} \label{lem:prod_rule}
    For $\alpha \in (0,1]$ and $\sigma \in (0,\alpha)$, we have that for any $p,p_1,p_2 \in (1,\infty)$ with $\frac{1}{p} = \frac{1}{p_1} + \frac{1}{p_2}$,
    \begin{align*}
        \lVert H_{\Dso^\alpha}\brac{f,g} \rVert_{L^{p}(\R^d)} 
        \lesssim \lVert \Dso^\sigma f \rVert_{L^{p_1}(\R^d)} \lVert \Dso^{\alpha - \sigma} g \rVert_{L^{p_2}(\R^d)}.
    \end{align*}
\end{lemma}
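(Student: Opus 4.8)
The plan is to use the pointwise kernel representation of Lemma~\ref{lem:comm_int} to dominate $H_{\Dso^\alpha}(f,g)$ by a product of two fractional square functions — one of order $\sigma$ built from $f$, the other of order $\alpha-\sigma$ built from $g$ — and then to invoke the $L^q$ bound for these square functions in terms of the corresponding fractional gradients. Since $\alpha\in(0,1]$ and $\sigma\in(0,\alpha)$, both orders $\sigma$ and $\alpha-\sigma$ lie strictly in $(0,1)$, which is exactly the regime in which homogeneous fractional Sobolev norms are captured by first-order differences; this is what makes the scheme work. (It suffices to treat $f,g$ Schwartz, the general case following by density.)

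Concretely, I would split the singular kernel as $\abs{x-y}^{-(d+\alpha)}=\abs{x-y}^{-(\frac d2+\sigma)}\,\abs{x-y}^{-(\frac d2+\alpha-\sigma)}$ — the two exponents add to $d+\alpha$ — and apply the Cauchy--Schwarz inequality in the $y$-variable of Lemma~\ref{lem:comm_int}:
\begin{align*}
    \abs{H_{\Dso^\alpha}\brac{f,g}(x)}
    &= c\,\abs{\int_{\R^d}\frac{\brac{f(x)-f(y)}\brac{g(x)-g(y)}}{\abs{x-y}^{d+\alpha}}\dif y}\\
    &\le c\int_{\R^d}\frac{\abs{f(x)-f(y)}}{\abs{x-y}^{\frac d2+\sigma}}\cdot\frac{\abs{g(x)-g(y)}}{\abs{x-y}^{\frac d2+\alpha-\sigma}}\dif y\\
    &\le c\brac{\int_{\R^d}\frac{\abs{f(x)-f(y)}^2}{\abs{x-y}^{d+2\sigma}}\dif y}^{1/2}\brac{\int_{\R^d}\frac{\abs{g(x)-g(y)}^2}{\abs{x-y}^{d+2(\alpha-\sigma)}}\dif y}^{1/2}\\
    &= c\,S_\sigma f(x)\,S_{\alpha-\sigma}g(x),
\end{align*}
where $S_\beta h(x)\defeq\brac{\int_{\R^d}\abs{h(x)-h(y)}^2\abs{x-y}^{-(d+2\beta)}\dif y}^{1/2}$ denotes the Gagliardo square function of order $\beta$.

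The key input is then the classical characterization of homogeneous fractional Sobolev spaces by square functions of first differences: for $\beta\in(0,1)$ and $q\in(1,\infty)$,
\begin{align*}
    \norm{S_\beta h}_{L^q(\R^d)}\lesssim\norm{\Dso^\beta h}_{L^q(\R^d)},
\end{align*}
which expresses that $S_\beta$ controls the norm of $\dot{F}^{\beta}_{q,2}=\dot{W}^{\beta,q}$ (for $q=2$ this reduces, via Plancherel, to the Gagliardo seminorm identity $[h]_{\dot{H}^{\beta}}^{2}\approx\norm{\Dso^\beta h}_{L^2(\R^d)}^2$; the general $q$ is standard Triebel--Lizorkin theory). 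Applying this to $f$ with $(\beta,q)=(\sigma,p_1)$ and to $g$ with $(\beta,q)=(\alpha-\sigma,p_2)$, the pointwise bound above together with H\"{o}lder's inequality and $\frac1p=\frac1{p_1}+\frac1{p_2}$ gives
\begin{align*}
    \norm{H_{\Dso^\alpha}\brac{f,g}}_{L^p(\R^d)}
    \lesssim\norm{S_\sigma f}_{L^{p_1}(\R^d)}\,\norm{S_{\alpha-\sigma}g}_{L^{p_2}(\R^d)}
    \lesssim\norm{\Dso^\sigma f}_{L^{p_1}(\R^d)}\,\norm{\Dso^{\alpha-\sigma}g}_{L^{p_2}(\R^d)},
\end{align*}
which is the assertion.

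Everything except the square-function estimate is just Cauchy--Schwarz and H\"{o}lder, so I expect the only real obstacle to be making sure that estimate is available in precisely the \emph{homogeneous} form and exponent range needed — in particular that first-order differences suffice, which is exactly what the restriction $\sigma,\alpha-\sigma\in(0,1)$ provides (and the reason one cannot push $\alpha$ past $1$ in this argument without passing to second differences). If one prefers to be self-contained, the alternative is a Littlewood--Paley decomposition of $H_{\Dso^\alpha}(f,g)$ into high--high, high--low and low--high frequency blocks: each block is handled by Bernstein's inequality together with the representation of Lemma~\ref{lem:comm_int} — on the high--high block one simply estimates the three terms defining $H_{\Dso^\alpha}$ separately — and the resulting dyadic sums converge precisely because $\sigma>0$ and $\alpha-\sigma>0$. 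Either way one arrives at the stated inequality.
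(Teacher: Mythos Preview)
Your argument is correct. The paper does not itself prove this lemma --- it is among the estimates in \Cref{sec2} that are quoted from an external reference --- so there is no in-paper proof to compare against. The route you take (kernel representation from \Cref{lem:comm_int}, Cauchy--Schwarz in the $y$-variable to split into the two Gagliardo square functions $S_\sigma f$ and $S_{\alpha-\sigma}g$, H\"older in $x$, and the square-function characterization $\norm{S_\beta h}_{L^q}\approx\norm{\Dso^\beta h}_{L^q}$ for $\beta\in(0,1)$, $q\in(1,\infty)$) is the standard one; the same pointwise bound also yields \Cref{double_comm_int}.
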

\begin{lemma}\label{lem:prod_rule_2}
     Let $\alpha \in (0,1)$ and $\beta \in (0,1]$. Pick any $\gamma \in (0,1)$ such that $\alpha + \beta - \gamma \in (0,1)$, and $p,p_1,p_2 \in (0,\infty)$ such that
     \begin{align*}
         \frac{1}{p} = \frac{1}{p_1} + \frac{1}{p_2}.
     \end{align*}
     Then 
     \begin{align*}
         \lVert\Dso^\alpha H_{\Dso^\beta}\brac{f,g} \rVert_{L^{p}(\R^d)}
         \lesssim\lVert\Dso^\gamma f\rVert_{L^{p_1}(\R^d)} \lVert\Dso^{\alpha+\beta-\gamma} g\rVert_{L^{p_2}(\R^d)}.
     \end{align*}
\end{lemma}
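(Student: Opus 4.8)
The plan is to view $\Dso^\alpha H_{\Dso^\beta}(f,g)$ as a bilinear Fourier multiplier and, after extracting the two target derivatives, to reduce the estimate to the Coifman--Meyer multiplier theorem. Writing $\zeta,\eta$ for the frequencies of $f,g$, Definition \ref{def_comm} gives
\begin{equation*}
\widehat{\Dso^\alpha H_{\Dso^\beta}(f,g)}(\xi)=\int_{\R^d}|\zeta+\eta|^\alpha\big(|\zeta+\eta|^\beta-|\zeta|^\beta-|\eta|^\beta\big)\,\hat f(\zeta)\,\hat g(\eta)\,d\eta,\qquad \zeta=\xi-\eta.
\end{equation*}
Set $M(\zeta,\eta)=|\zeta+\eta|^\alpha\big(|\zeta+\eta|^\beta-|\zeta|^\beta-|\eta|^\beta\big)$ and factor the derivatives out of the symbol by $m(\zeta,\eta)=M(\zeta,\eta)\,|\zeta|^{-\gamma}|\eta|^{-(\alpha+\beta-\gamma)}$, so that $\Dso^\alpha H_{\Dso^\beta}(f,g)=\tilde T\big(\Dso^\gamma f,\Dso^{\alpha+\beta-\gamma}g\big)$ with $\tilde T$ the bilinear multiplier of symbol $m$. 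It then suffices (we may take $p_1,p_2\in(1,\infty)$) to show $\tilde T\colon L^{p_1}\times L^{p_2}\to L^{p}$, which by Coifman--Meyer follows once $m$ obeys the Mikhlin-type bounds $|\partial_\zeta^{a}\partial_\eta^{b}m(\zeta,\eta)|\lesssim(|\zeta|+|\eta|)^{-|a|-|b|}$ up to finitely many derivatives. I deliberately would \emph{not} route this through the pointwise identity
\begin{align*}
\Dso^\alpha H_{\Dso^\beta}(f,g)
&= H_{\Dso^{\alpha+\beta}}(f,g)- H_{\Dso^\alpha}(f,\Dso^\beta g)-H_{\Dso^\alpha}(\Dso^\beta f,g)\\
&\quad - (\Dso^\alpha f)(\Dso^\beta g)-(\Dso^\beta f)(\Dso^\alpha g),
\end{align*}
because the two pure products cannot be redistributed to the required derivative split at fixed Lebesgue exponents; the gain must come from the cancellation inside the commutator, which the multiplier picture keeps visible.

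To verify the symbol bounds I would run a Littlewood--Paley/paraproduct decomposition, splitting into the low--high cone $\{|\zeta|\le\tfrac12|\eta|\}$, the high--low cone $\{|\eta|\le\tfrac12|\zeta|\}$, and the resonant region $\{|\zeta|\sim|\eta|\}$. The essential mechanism is the cancellation in the commutator factor $m_\beta=|\zeta+\eta|^\beta-|\zeta|^\beta-|\eta|^\beta$: on each off-diagonal cone the leading homogeneous-degree-$\beta$ term cancels. On the low--high cone a Taylor expansion (matching the kernel cancellation exhibited pointwise by Lemma \ref{lem:comm_int}) gives $m_\beta=O\big(|\zeta|\,|\eta|^{\beta-1}\big)+O\big(|\zeta|^{\beta}\big)$, and since $|\zeta+\eta|\sim|\eta|$ there one finds that $m$ and its derivatives are controlled by $(|\zeta|/|\eta|)^{\beta-\gamma}+(|\zeta|/|\eta|)^{1-\gamma}$; symmetrically, the high--low cone produces $(|\eta|/|\zeta|)^{\gamma-\alpha}+(|\eta|/|\zeta|)^{1-(\alpha+\beta-\gamma)}$. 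These are exactly the Mikhlin bounds, and the decay exponents $\beta-\gamma$, $\gamma-\alpha$, $1-\gamma$ are what tie the \emph{admissible} derivative split to the cancellation — this is the step that makes genuine use of the commutator structure rather than a crude product bound.

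The main obstacle is the resonant/low-output regime, where $|\zeta|\sim|\eta|$ but the output frequency $|\zeta+\eta|$ may be far smaller than either. There the weight $|\zeta+\eta|^\alpha$ (and the $|\zeta+\eta|^\beta$ inside $m_\beta$) degenerates and is only H\"older, not smooth, across $\zeta+\eta=0$, so the raw symbol is not Coifman--Meyer as it stands. I would resolve this by an additional Littlewood--Paley decomposition of the output frequency, estimating each frequency-localized piece $\{|\zeta|\sim|\eta|\sim 2^{j},\ |\zeta+\eta|\sim 2^{l}\}$ with $l\le j$ by hand: on such a piece $|M|\sim 2^{l\alpha}2^{j\beta}$, so dividing by the weight $2^{j\gamma}2^{j(\alpha+\beta-\gamma)}=2^{j(\alpha+\beta)}$ yields the gain $2^{(l-j)\alpha}$, whence a Bernstein--H\"older bound for the localized bilinear piece followed by geometric summation over $l\le j$ closes the estimate, the positivity $\alpha>0$ ensuring summability. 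Reassembling the three regimes via the square-function and maximal-function bounds then gives the claimed inequality.
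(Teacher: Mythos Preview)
Your bilinear Fourier multiplier / Coifman--Meyer route is viable but genuinely different from the paper's argument. The paper dualises, writing $\int \Dso^\alpha H_{\Dso^\beta}(f,g)\,h=\int H_{\Dso^\beta}(f,g)\,\tilde h$ with $\tilde h=\Dso^\alpha h$, and then passes to the $\beta$-harmonic (Caffarelli--Silvestre) extensions $F,G,H$ of $f,g,\tilde h$ to $\R^{d+1}_+$. An integration by parts in the extension variable collapses the three-term commutator to a single integral of the form $\int_{\R^{d+1}_+} s^{1-\beta}\,\nabla_{x,s}F\cdot\nabla_{x,s}G\,H$, after which the bound follows from off-the-shelf weighted trace inequalities for Poisson-type extensions. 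Thus the extension method uses the commutator cancellation once, algebraically, and thereafter needs only single-function weighted estimates; by contrast, your approach keeps the bilinear structure to the end and must resolve the resonant/low-output region by an additional output-frequency decomposition and geometric summation.

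One caveat on your side: the off-diagonal decay exponents you quote, $\beta-\gamma$ on the low--high cone and $\gamma-\alpha$ on the high--low cone, are only nonnegative when $\alpha\le\gamma\le\beta$, which is strictly stronger than the hypotheses as stated. Indeed, for $\beta<1$ the dominant part of $m_\beta$ on the low--high cone is $-|\zeta|^\beta$, so your reduced symbol behaves like $(|\zeta|/|\eta|)^{\beta-\gamma}$ there and is unbounded if $\gamma>\beta$; the symmetric obstruction occurs on the high--low cone when $\gamma<\alpha$. A frequency-scaling check (localising $f$ at frequency $1$ and $g$ at frequency $N$, or vice versa) shows the inequality itself fails outside $\alpha\le\gamma\le\beta$, so this is a restriction on the lemma rather than a defect of your method --- but you should flag it explicitly rather than leave it implicit in the phrase ``decay exponents''.
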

\begin{proof}
By duality, we only need to show that for any $h \in C^\infty_c(\R^n)$
\begin{align*}
    \left| \int_{\R^n} \Dso^\alpha H_{\Dso^\beta}\brac{f,g} h \right| \lesssim \lVert \Dso^{\gamma} f \rVert_{L^{p_1}(\R^d)} \ \lVert \Dso^{\alpha + \beta - \gamma} g \rVert_{L^{p_2}(\R^d)} \ \lVert h \rVert_{L^{p'}(\R^d)}
\end{align*}
where $\frac{1}{p} + \frac{1}{p'} = 1$.

Let $\Tilde{h} = \Dso^\alpha h \in L^1$. We use the same argument as in the proof of Theorem 3.4.1 in \cite{frac_estimates}.

By two applications of fractional integration by parts, we have
\begin{align*}
   \mathcal{I} = \left| \int_{\R^d} f g \Dso^\beta \Tilde{h} - \Dso^\beta f g \Tilde{h} - f \Dso^\beta g \Tilde{h}\right|.
\end{align*}
We let $F(x,s) = P^\beta_s f(x)$, $G(x,s) = P^\beta_s g(x)$, and $h(x,s) = P^\beta_s \Tilde{h}(x)$ be the $\beta$-harmonic extensions of $f,g$, and $\Tilde{h}$ (\cite{frac_estimates}, Definition 3.1). For example, the harmonic extension of $f$ is 
\begin{align*}
    P^\beta_sf(x) = c_{d,\beta} \int_{\R^d} \frac{s^\beta}{\brac{|x-y|^2 + t^2}^{\frac{d + \beta}{2}}} f(y) \ dy
\end{align*}
and satisfies
\begin{align}
\begin{cases}
    &\operatorname{div}_{\R^{d+1}}(s^{1-\beta} \nabla_{\R^{d+1}} F(x,s)) = 0\\
    &\lim_{s\to 0} F(x,s) = f(x)\\
    &\lim_{|(x,s)|\to \infty} F(x,s) = 0 \label{harmonic}
\end{cases}
\end{align}
and 
\begin{align*}
    \lim_{s \to 0} -s^{1-\beta} \partial_s F(x,s) = c\Dso^\beta f(x).
\end{align*}

By integration by parts in $s$ and \eqref{harmonic}
\begin{align*}
    &\int_{\R^{d+1}_+} \partial_s\Big(s^{1-\beta} F G \partial_s H - s^{1-\beta}\partial_s F G H - s^{1-\beta}F \partial_s G H\Big)\\
    =&\int_{\R^d} \left[s^{1-\beta} \Big(F G \partial_s H - \partial_s F G H - F \partial_s G H\Big)\right]^\infty_0
    -\int_{\R^{d+1}_+} s^{1-\beta} \Big(F G \partial_s H - \partial_s F G H - F \partial_s G H \Big) \partial_s (1) \\
    =&\lim_{s\to \infty}\int_{\R^d} \left[s^{1-\beta} \Big(F G \partial_s H - \partial_s F G H - F \partial_s G H\Big)\right]\\
    -& \lim_{s\to 0}\int_{\R^d} \left[s^{1-\beta} \Big(F G \partial_s H - \partial_s F G H - F \partial_s G H\Big)\right]\\
    =&\lim_{s\to \infty}\int_{\R^d} \left[s^{1-\beta} \Big(F G \partial_s H - \partial_s F G H - F \partial_s G H\Big)\right]
    - \int_{\R^d} f g \Dso^\beta \Tilde{h} - \Dso^\beta f g \Tilde{h} - f \Dso^\beta g \Tilde{h}.
\end{align*}
Now by the decay estimate in \cite{frac_estimates} (Lemma 3.1.3), we have that
\begin{align*}
    &\lim_{s\to \infty}\int_{\R^d} \left[s^{1-\beta} \Big(F G \partial_s H - \partial_s F G H - F \partial_s G H\Big)\right]\\
    &\lesssim \lim_{s\to \infty} s^{-\beta}\lVert f \rVert_{L^\infty(\R^d)} \ \lVert g \rVert_{L^\infty(\R^d)}\lVert h \rVert_{L^\infty(\R^d)}\\
    &=0.
\end{align*}
Therefore,
\begin{align*}
    \mathcal{I}
    &= \left| \int_{\R^d} f g \Dso^\beta \Tilde{h} - \Dso^\beta f g \Tilde{h} - f \Dso^\beta g \Tilde{h} \ \right|\\
    &= \left| \int_{\R^{d+1}_+} \partial_s\Big(s^{1-\beta} F G \partial_s H - s^{1-\beta}\partial_s F G H - s^{1-\beta}F \partial_s G H\Big) \right|\\
    & =\vcentcolon \left|\int_{\R^{d+1}_+} \mathcal{J} \ \right|.
\end{align*}
With the product rule, we have 
\begin{align*}
    \mathcal{J}
    &= \partial_s F G s^{1-\beta} \partial_s H +  F \partial_s G s^{1-\beta} \partial_s H + F G \partial_s\brac{s^{1-\beta} \partial_s H} \\
    &- \partial_s\brac{s^{1-\beta}\partial_s } G H - s^{1-\beta}\partial_s F \partial_sG H - s^{1-\beta}\partial_s F G \partial_s H \\
    &- \partial_s F s^{1-\beta}\partial_s G H - F \partial_s\brac{s^{1-\beta}\partial_s G} H - F s^{1-\beta}\partial_s G \partial_s H\\
    &= F G \partial_s\brac{s^{1-\beta} \partial_s H} - \partial_s\brac{s^{1-\beta}\partial_s F} G H - F \partial_s\brac{s^{1-\beta}\partial_s G} H - 2s^{1-\beta}\partial_s F \partial_sG H.
\end{align*}
Using \eqref{harmonic}, we have, for example, that $\partial_s\brac{s^{1-\beta} \partial_s F} = - s^{1-\beta}\Delta_x F$. Therefore,
\begin{align*}
    \mathcal{J} 
    &= - F G s^{1-\beta}\Delta_x H + s^{1-\beta}\Delta_x F G H + F s^{1-\beta}\Delta_x G H - 2s^{1-\beta}\partial_s F \partial_sG H \\
    &= s^{1-\beta} \brac{\Delta_x\brac{FG}H - FG\Delta_x H} - 2s^{1-\beta}\nabla_x F \cdot \nabla_x G H - 2s^{1-\beta}\partial_s F \partial_sG H.
\end{align*}
By integration by parts, we have
\begin{align*}
    \int_{\R^{d+1}_+} s^{1-\beta} \brac{\Delta_x\brac{FG}H - FG\Delta_x H} 
    = \int_{\R^{d+1}_+} s^{1-\beta} \brac{\nabla_x\brac{FG} \cdot \nabla_x H - \nabla_x\brac{FG} \nabla_x H} = 0.
\end{align*}
Thus, by \cite{frac_estimates}(Lemma 5.7.1, Lemma 5.7.1(a), and Lemma 5.7.1(b)) 
\begin{align*}
    \mathcal{I} 
    &\lesssim \left|\int_{\R^{d+1}} s^{1-\beta}\nabla_x F \cdot \nabla_x G H - s^{1-\beta}\partial_s F \partial_sG H \right|\\
    &= \left|\int_{\R^{d+1}} s^{1-\gamma -\alpha -\beta - \alpha}\nabla_x F \cdot \nabla_x G H - s^{1-\gamma -\alpha -\beta - \alpha}\partial_s F \partial_sG H \right|\\
    &\lesssim \lVert \Dso^{\gamma} f \rVert_{L^{p_1}(\R^d)} \ \lVert \Dso^{\alpha + \beta - \gamma} g \rVert_{L^{p_2}(\R^d)} \ \lVert I^{-\alpha}\Dso^\alpha h \rVert_{L^{p'}(\R^d)}\\
    &= \lVert \Dso^{\gamma} f \rVert_{L^{p_1}(\R^d)} \ \lVert \Dso^{\alpha + \beta - \gamma} g \rVert_{L^{p_2}(\R^d)} \ \lVert h \rVert_{L^{p'}(\R^d)}.
\end{align*}

\end{proof}
\begin{lemma} \label{lem:prod_rule_frac}
    Assume $d \ge 3$, $\sigma \in [0,1]$. Then 
    \begin{align*}
        \lVert \Dso^\sigma\brac{f\Dso g}\rVert_{L^{\frac{2d}{d+2\sigma-1}}(\R^d)}
        \lesssim \lVert \Dso f\rVert_{L^2(\R^d)}\lVert \Dso^{1+\sigma}g\rVert_{L^{\frac{2d}{2(1+\sigma)-1}}(\R^d)}.
    \end{align*}
\end{lemma}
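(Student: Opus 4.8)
The plan is to expand $\Dso^\sigma(f\Dso g)$ via the Leibniz--rule operator of Definition~\ref{def_comm} into two ``leading'' products and a commutator remainder, and then bound each piece by H\"older's inequality together with fractional Sobolev (Hardy--Littlewood--Sobolev) embeddings, handling the remainder with Lemma~\ref{lem:prod_rule}. Throughout write $p=\tfrac{2d}{d+2\sigma-1}$; since $d\ge 3$ and $\sigma\in[0,1]$ one checks $p\in\bigl[\tfrac{2d}{d+1},\tfrac{2d}{d-1}\bigr]\subset(1,\infty)$, and that all Lebesgue exponents appearing below lie in $(1,\infty)$. If $\sigma=0$ the statement is just H\"older's inequality plus the Sobolev embedding $\lVert f\rVert_{L^{2d/(d-2)}}\lesssim\lVert\Dso f\rVert_{L^2}$ --- which is precisely where $d\ge 3$ is used --- so from now on assume $\sigma\in(0,1]$.

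First I would write, using Definition~\ref{def_comm} with $s=\sigma$ and $g\mapsto\Dso g$ (and $\Dso^\sigma\Dso g=\Dso^{1+\sigma}g$),
\begin{align*}
\Dso^\sigma\brac{f\Dso g}= f\,\Dso^{1+\sigma}g+\brac{\Dso^\sigma f}\brac{\Dso g}+H_{\Dso^\sigma}\brac{f,\Dso g}.
\end{align*}
For the first term, H\"older with exponents $\tfrac{2d}{d-2}$ and $\tfrac{2d}{2\sigma+1}$ (note $\tfrac{d-2}{2d}+\tfrac{2\sigma+1}{2d}=\tfrac1p$) together with $\lVert f\rVert_{L^{2d/(d-2)}}\lesssim\lVert\Dso f\rVert_{L^2}$ gives $\lVert f\Dso^{1+\sigma}g\rVert_{L^p}\lesssim\lVert\Dso f\rVert_{L^2}\lVert\Dso^{1+\sigma}g\rVert_{L^{2d/(2\sigma+1)}}$. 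For the second term, H\"older with exponents $r_1=\tfrac{2d}{d-2+2\sigma}$ and $2d$ (again $\tfrac1{r_1}+\tfrac1{2d}=\tfrac1p$), followed by $\lVert\Dso^\sigma f\rVert_{L^{r_1}}\lesssim\lVert\Dso f\rVert_{L^2}$ (a Riesz potential of order $1-\sigma\ge0$ applied to $\Dso f$, with $\tfrac1{r_1}=\tfrac12-\tfrac{1-\sigma}{d}$) and $\lVert\Dso g\rVert_{L^{2d}}\lesssim\lVert\Dso^{1+\sigma}g\rVert_{L^{2d/(2\sigma+1)}}$ (a Riesz potential of order $\sigma>0$).

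For the commutator term I would fix $\tau=\sigma/2\in(0,\sigma)$ and apply Lemma~\ref{lem:prod_rule} with $\alpha=\sigma$ to the pair $(f,\Dso g)$, obtaining
\begin{align*}
\lVert H_{\Dso^\sigma}\brac{f,\Dso g}\rVert_{L^p}\lesssim\lVert\Dso^{\sigma/2}f\rVert_{L^{p_1}}\lVert\Dso^{1+\sigma/2}g\rVert_{L^{p_2}},\qquad p_1=\tfrac{2d}{d-2+\sigma},\quad p_2=\tfrac{2d}{\sigma+1},
\end{align*}
where indeed $\tfrac1{p_1}+\tfrac1{p_2}=\tfrac{d-2+\sigma+\sigma+1}{2d}=\tfrac1p$. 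Then two more Sobolev steps close the estimate: $\lVert\Dso^{\sigma/2}f\rVert_{L^{p_1}}\lesssim\lVert\Dso f\rVert_{L^2}$ (Riesz potential of order $1-\sigma/2>0$, $\tfrac1{p_1}=\tfrac12-\tfrac{1-\sigma/2}{d}$) and $\lVert\Dso^{1+\sigma/2}g\rVert_{L^{p_2}}\lesssim\lVert\Dso^{1+\sigma}g\rVert_{L^{2d/(2\sigma+1)}}$ (Riesz potential of order $\sigma/2>0$, $\tfrac1{p_2}=\tfrac{2\sigma+1}{2d}-\tfrac{\sigma/2}{d}$). Summing the three contributions gives the claimed bound; any $\tau\in(0,\sigma)$ works equally well.

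The only genuinely structural input is $d\ge3$, needed so that $\lVert f\rVert_{L^{2d/(d-2)}}\lesssim\lVert\Dso f\rVert_{L^2}$ is available; everything else is H\"older and Hardy--Littlewood--Sobolev. Accordingly, I expect the main (but routine) obstacle to be the bookkeeping: checking that each intermediate exponent lies in $(1,\infty)$ and that every Riesz-potential order is admissible, uniformly for $\sigma\in(0,1]$ and $d\ge3$, with special attention to the endpoint $\sigma=1$ (where $\Dso^\sigma f=\Dso f$ and the first Sobolev step degenerates to an identity with $r_1=2$) and to small $\sigma$ (where $p\to\tfrac{2d}{d-1}$).
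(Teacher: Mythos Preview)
Your argument is correct. The paper does not supply its own proof of this lemma; it is among the preliminary estimates in Section~2 that the paper quotes from \cite{eyeson2022uniqueness} without reproducing the argument. Your route---expanding $\Dso^\sigma(f\Dso g)$ via the Leibniz-rule operator $H_{\Dso^\sigma}$, handling the two product terms by H\"older plus Sobolev embedding, and controlling the commutator with Lemma~\ref{lem:prod_rule} and two further Sobolev steps---is the natural direct proof and almost certainly coincides with the one in the cited source. The exponent bookkeeping you carried out (including the endpoints $\sigma=0$ and $\sigma=1$) is accurate.
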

\begin{lemma} \label{lem:comm_difference}
    For $\alpha \in (\frac{1}{2},1)$, $d \ge 2$
    \begin{align*}
        \lVert H_{\Dso}\brac{fh,g} - fH_{\Dso}\brac{h,g}\rVert_{L^2(\R^d)} 
        \lesssim \lVert \Dso^\alpha f\rVert_{L^{\frac{2d}{2\alpha - 1}}(\R^d)}\lVert \Dso^\alpha g\rVert_{L^{\frac{2d}{2\alpha - 1}}(\R^d)}\lVert h \rVert_{L^2(\R^d)}.
    \end{align*}
\end{lemma}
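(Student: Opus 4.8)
The plan is to collapse the difference into a single positive-kernel integral and then estimate it via Hardy--Littlewood--Sobolev together with a pointwise fractional bound.

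First, by density we may assume $f,g,h\in C^\infty_c(\R^d)$, and then, since $s=1\in(0,2)$, Lemma~\ref{lem:comm_int} gives
\[
 H_{\Dso}\brac{fh,g}(x)=c\int_{\R^d}\frac{\brac{(fh)(x)-(fh)(y)}\brac{g(x)-g(y)}}{\abs{x-y}^{d+1}}\,dy .
\]
Writing $(fh)(x)-(fh)(y)=f(x)\brac{h(x)-h(y)}+h(y)\brac{f(x)-f(y)}$, the first summand contributes exactly $f(x)H_{\Dso}\brac{h,g}(x)$, so
\[
 H_{\Dso}\brac{fh,g}(x)-f(x)H_{\Dso}\brac{h,g}(x)=c\int_{\R^d}\frac{h(y)\brac{f(x)-f(y)}\brac{g(x)-g(y)}}{\abs{x-y}^{d+1}}\,dy=:K(x)
\]
(the same identity also drops out of Lemma~\ref{comm_lem} with $\alpha=1$). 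It remains to show $\lVert K\rVert_{L^2(\R^d)}\lesssim\lVert\Dso^\alpha f\rVert_{L^{2d/(2\alpha-1)}}\lVert\Dso^\alpha g\rVert_{L^{2d/(2\alpha-1)}}\lVert h\rVert_{L^2}$.

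Second, I would invoke the classical pointwise estimate: for $0<\beta<1$ and $\phi\in C^\infty_c(\R^d)$,
\[
 \abs{\phi(x)-\phi(y)}\lesssim\abs{x-y}^{\beta}\brac{M\brac{\Dso^\beta\phi}(x)+M\brac{\Dso^\beta\phi}(y)},
\]
where $M$ is the Hardy--Littlewood maximal operator. This follows from the representation $\phi=c\,I_\beta\Dso^\beta\phi$, with $I_\beta$ the Riesz potential of order $\beta$, by splitting $\big|\,\abs{x-z}^{\beta-d}-\abs{y-z}^{\beta-d}\big|$ over the regions $\abs{x-z}\le 2\abs{x-y}$ (bound each term by a maximal function) and $\abs{x-z}>2\abs{x-y}$ (mean value theorem plus a dyadic sum, convergent because $\beta<1$). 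Applying this with $\beta=\alpha\in(1/2,1)$ to $f$ and to $g$, and writing $u:=\Dso^\alpha f$, $v:=\Dso^\alpha g$, the factor $\abs{x-y}^{2\alpha}$ cancels $2\alpha$ powers of $\abs{x-y}^{-(d+1)}$ and leaves the Riesz potential $I_{2\alpha-1}$ (note $2\alpha-1\in(0,1)$):
\[
 \abs{K(x)}\lesssim\int_{\R^d}\frac{\brac{Mu(x)+Mu(y)}\brac{Mv(x)+Mv(y)}\,\abs{h(y)}}{\abs{x-y}^{d-(2\alpha-1)}}\,dy.
\]

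Third, expanding the product gives four terms: $Mu(x)Mv(x)\,I_{2\alpha-1}\abs{h}(x)$, $Mu(x)\,I_{2\alpha-1}\brac{Mv\abs{h}}(x)$, its mirror with $u,v$ interchanged, and $I_{2\alpha-1}\brac{MuMv\abs{h}}(x)$. Each is bounded in $L^2$ by Hölder's inequality, the Hardy--Littlewood--Sobolev inequality for $I_{2\alpha-1}$, and boundedness of $M$ on $L^r$ for $r>1$, the splittings chosen so that $u$ and $v$ always land in $L^r$ with $r=\tfrac{2d}{2\alpha-1}$ (allowed since $r>2d>1$) and $h$ lands in $L^2$; for instance $\lVert I_{2\alpha-1}(MuMv\abs{h})\rVert_{L^2}\lesssim\lVert MuMv\abs{h}\rVert_{L^q}\lesssim\lVert Mu\rVert_{L^r}\lVert Mv\rVert_{L^r}\lVert h\rVert_{L^2}$ with $\tfrac1q=\tfrac12+\tfrac{2\alpha-1}{d}$, while for the three terms carrying $I_{2\alpha-1}\abs{h}$ one uses $\lVert I_{2\alpha-1}\abs{h}\rVert_{L^s}\lesssim\lVert h\rVert_{L^2}$ with $\tfrac1s=\tfrac12-\tfrac{2\alpha-1}{d}$. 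All the Hardy--Littlewood--Sobolev exponents appearing here are admissible as soon as $2\alpha-1<\tfrac{d}{2}$, and this holds because $d\ge2$ forces $\tfrac{d+2}{4}\ge1>\alpha$.

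I expect the main obstacle to be the second step — proving, or precisely citing (e.g.\ from \cite{frac_estimates}), the pointwise maximal-function bound and confirming its hypothesis is exactly $\alpha\in(0,1)$ — together with the exponent bookkeeping in the third step, i.e.\ checking that every Hölder and Hardy--Littlewood--Sobolev exponent used is legitimate for all $\alpha\in(1/2,1)$ and $d\ge2$. Once these are in place, the remaining estimates are routine.
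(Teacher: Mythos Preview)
Your argument is correct. The paper does not give its own proof of this lemma; it is one of the results quoted from \cite{eyeson2022uniqueness} at the start of Section~2. Your reduction via Lemma~\ref{lem:comm_int} to the kernel
\[
K(x)=c\int_{\R^d}\frac{h(y)\brac{f(x)-f(y)}\brac{g(x)-g(y)}}{\abs{x-y}^{d+1}}\,dy
\]
is exactly right, and from that point the estimate is precisely the content of Lemma~\ref{triple_comm_int} with $\alpha_1=\alpha_2=\alpha$ and $p_1=p_2=\tfrac{2d}{2\alpha-1}$, which forces the $h$-exponent to equal $2$. Your pointwise maximal-function bound plus Hardy--Littlewood--Sobolev is in fact one standard way to \emph{prove} Lemma~\ref{triple_comm_int}, so the two routes coincide.

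One small inaccuracy in your write-up: of the four terms in the expansion, only the first carries $I_{2\alpha-1}\abs{h}$ alone; terms two and three carry $I_{2\alpha-1}\brac{Mv\,\abs{h}}$ and $I_{2\alpha-1}\brac{Mu\,\abs{h}}$, so your sentence ``the three terms carrying $I_{2\alpha-1}\abs{h}$'' is misphrased. The exponents you would actually use for those mixed terms are slightly different from the $s$ you wrote, but the same H\"older/HLS bookkeeping goes through (as you have already implicitly checked via the admissibility condition $\alpha<\tfrac{d+2}{4}$), so this does not affect the validity of the proof.
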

\begin{lemma} \label{infty_bound}
    Let $d\ge 2$. Then 
    \begin{align*}
        \lVert H_{\Dso}\brac{f,g} \rVert_{L^\infty(\R^d)} \lesssim \lVert \Dso f \rVert_{L^{2d,2}(\R^d)} \ \lVert \Dso g \rVert_{L^{2d,2}(\R^d)}.
    \end{align*}
\end{lemma}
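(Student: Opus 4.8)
The plan is to combine the pointwise kernel representation of the Leibniz‑rule operator with a Littlewood--Paley decomposition, reducing the statement to an endpoint Besov--Lorentz embedding.

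First I would apply Lemma~\ref{lem:comm_int} with $s=1$ to write, for every $x$,
\[
\abs{H_{\Dso}(f,g)(x)}\le c\int_{\R^d}\frac{\abs{f(x)-f(y)}\,\abs{g(x)-g(y)}}{\abs{x-y}^{d+1}}\,dy\le c\,\Phi_f(x)\,\Phi_g(x),
\]
where $\Phi_f(x)^2:=\int_{\R^d}\abs{f(x)-f(y)}^2\abs{x-y}^{-d-1}\,dy$ and the last inequality is Cauchy--Schwarz in $y$. Thus it suffices to prove the single‑function estimate $\lVert\Phi_f\rVert_{L^\infty(\R^d)}\lesssim\lVert f\rVert_{\dot{B}^{1/2}_{\infty,2}(\R^d)}$ together with the embedding $\lVert f\rVert_{\dot{B}^{1/2}_{\infty,2}(\R^d)}\lesssim\lVert \Dso f\rVert_{L^{2d,2}(\R^d)}$. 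I note that $\Dso f\in L^{2d,2}\subset L^{2d}$ with $2d>d$ already forces $f$ to have a continuous representative and the Littlewood--Paley pieces $P_jf$ and $\nabla P_jf$ to be bounded, so all the pointwise quantities below are legitimate.

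For the single‑function estimate, fix $x$, decompose $f(x)-f(y)=\sum_j\bigl(P_jf(x)-P_jf(y)\bigr)$, and use $\abs{P_jf(x)-P_jf(y)}\le\min\{2\lVert P_jf\rVert_\infty,\ \abs{x-y}\lVert\nabla P_jf\rVert_\infty\}$ together with Bernstein's inequality $\lVert\nabla P_jf\rVert_\infty\lesssim 2^j\lVert P_jf\rVert_\infty$. Splitting the $y$‑integral at $\abs{x-y}\sim 2^{-j}$ gives $\int\abs{P_jf(x)-P_jf(y)}^2\abs{x-y}^{-d-1}\,dy\lesssim 2^j\lVert P_jf\rVert_\infty^2$, and the analogous computation for the off‑diagonal integrals $\int\abs{P_jf(x)-P_jf(y)}\,\abs{P_kf(x)-P_kf(y)}\,\abs{x-y}^{-d-1}\,dy$ produces the bound $(1+\abs{j-k})2^{-\abs{j-k}/2}\bigl(2^{j/2}\lVert P_jf\rVert_\infty\bigr)\bigl(2^{k/2}\lVert P_kf\rVert_\infty\bigr)$, i.e.\ the Gram matrix of the pieces of $f(x)-f(\cdot)$ in $L^2(\abs{x-y}^{-d-1}dy)$ has geometrically decaying off‑diagonal entries. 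A Schur test then yields $\Phi_f(x)^2\lesssim\sum_j 2^j\lVert P_jf\rVert_\infty^2=\lVert f\rVert_{\dot{B}^{1/2}_{\infty,2}}^2$, uniformly in $x$; feeding this back into the first display gives $\lVert H_{\Dso}(f,g)\rVert_\infty\lesssim\lVert f\rVert_{\dot{B}^{1/2}_{\infty,2}}\lVert g\rVert_{\dot{B}^{1/2}_{\infty,2}}$. For the embedding, the lifting property of homogeneous Besov spaces reduces $\lVert f\rVert_{\dot{B}^{1/2}_{\infty,2}}\lesssim\lVert\Dso f\rVert_{L^{2d,2}}$ to $L^{2d,2}(\R^d)\hookrightarrow\dot{B}^{-1/2}_{\infty,2}(\R^d)$, and this follows by real interpolation with second exponent $2$ from the elementary inclusions $L^p(\R^d)\hookrightarrow\dot{B}^{-d/p}_{\infty,\infty}(\R^d)$ (immediate from Bernstein), choosing $p_0<2d<p_1$ in $(1,\infty)$.

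The one genuinely delicate point is the off‑diagonal/almost‑orthogonality step. A naive triangle inequality applied to $f=\sum_jP_jf$ would only give the larger norm $\lVert f\rVert_{\dot{B}^{1/2}_{\infty,1}}$ on the right, and $L^{2d,2}$ does \emph{not} embed into $\dot{B}^{-1/2}_{\infty,1}$ (one would need $L^{2d,1}$ there); keeping the $\ell^2$‑summation over frequencies is therefore essential, and it is exactly what the Schur‑test estimate buys. Extracting the decay factor $2^{-\abs{j-k}/2}$ requires tracking, on each dyadic shell $\abs{x-y}\sim 2^{-m}$, which of the two bounds in the $\min$ is active for $P_jf$ and for $P_kf$ and then summing the resulting geometric series in $m$ — this is the only place where a real computation is needed; the rest of the argument is soft.
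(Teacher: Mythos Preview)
Your argument is correct. The pointwise Cauchy--Schwarz applied to the kernel representation of \Cref{lem:comm_int}, the Littlewood--Paley estimate $\Phi_f(x)^2\lesssim\sum_j 2^{j}\lVert P_jf\rVert_\infty^2$ with the off-diagonal decay $(1+|j-k|)2^{-|j-k|/2}$ (your shell-by-shell computation is exactly right), and the interpolated embedding $L^{2d,2}\hookrightarrow\dot B^{-1/2}_{\infty,2}$ all go through as you describe; your closing remark about why one must keep the $\ell^2$ summation rather than the $\ell^1$ one is also on point.

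As for comparison: the paper does not actually prove \Cref{infty_bound}. It is listed among the preliminary lemmas that the author attributes to \cite{eyeson2022uniqueness} and restates without argument, so there is no in-paper proof to compare your approach against. Your write-up therefore supplies a complete, self-contained proof where the paper only gives a citation. If the referenced source follows the same Cauchy--Schwarz-plus-Besov route it would look essentially like yours; if instead it proceeds via square-function or paraproduct machinery, your argument has the advantage of making the role of the Lorentz index $2$ (matching the $\ell^2$ Besov index) completely transparent.
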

\begin{lemma}\label{double_comm_int}
    Assume $s \in (0,1]$ and $\alpha_1,\alpha_2 \in (0,s)$ such that $\alpha_1 + \alpha_2 = s$. Let $p \in (1,\infty)$. Assume for $p_1,p_2 \in [2,\infty)$ such that 
    \begin{align*}
        \frac{1}{p_1} + \frac{1}{p_2} = \frac{1}{p}.
    \end{align*}
    Then we have 
    \begin{align*}
        \brac{\int_{\R^d}\brac{\int_{\R^d} \frac{\abs{f(x) - f(y)} \abs{g(x)-g(y)}}{|x-y|^{d+s}} \ dy}^p }^{\frac{1}{p}}
        \lesssim \lVert \Dso^{\alpha_1} f \rVert_{L^{p_1}(\R^d)} \ \lVert \Dso^{\alpha_2} g \rVert_{L^{p_2}(\R^d)}.
    \end{align*}
\end{lemma}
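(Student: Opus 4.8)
The plan is to reduce the nonnegative bilinear kernel to a product of two Strichartz square functions via a pointwise Cauchy--Schwarz splitting, and then to control each square function by the corresponding fractional gradient. Note that because of the absolute values inside the integrand one cannot simply identify the left-hand side with a multiple of $H_{\Dso^s}(f,g)$ from Lemma~\ref{lem:comm_int} and invoke Lemma~\ref{lem:prod_rule}: the cancellation is lost, so a genuinely positive estimate is needed. Write $\mathcal{K}(x) \defeq \int_{\R^d} \frac{\abs{f(x) - f(y)}\,\abs{g(x)-g(y)}}{\abs{x-y}^{d+s}}\,dy$, so that the quantity to be bounded is $\lVert \mathcal{K} \rVert_{L^p(\R^d)}$. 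Since $\alpha_1 + \alpha_2 = s$, split the weight symmetrically as $\abs{x-y}^{d+s} = \abs{x-y}^{\frac d2 + \alpha_1}\,\abs{x-y}^{\frac d2 + \alpha_2}$; Cauchy--Schwarz in the $y$-integral then gives the pointwise bound
\[
\mathcal{K}(x) \le S_{\alpha_1}(f)(x)\, S_{\alpha_2}(g)(x), \qquad S_\beta(h)(x) \defeq \brac{\int_{\R^d} \frac{\abs{h(x)-h(y)}^2}{\abs{x-y}^{d+2\beta}}\,dy}^{1/2},
\]
where $S_\beta$ is the Strichartz square function of order $\beta$. For $h \in C_c^\infty(\R^d)$ and $\beta \in (0,1)$ this $S_\beta(h)$ is finite everywhere: near the diagonal $\abs{h(x)-h(y)}^2 \lesssim \abs{x-y}^2$ makes the integrand integrable because $\beta < 1$, and the compact support controls the region where $\abs{y}$ is large.

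Next, taking $L^p$ norms and applying H\"older's inequality with exponents $p_1,p_2$ (recall $\tfrac1p = \tfrac1{p_1} + \tfrac1{p_2}$) yields
\[
\lVert \mathcal{K} \rVert_{L^p(\R^d)} \le \lVert S_{\alpha_1}(f)\, S_{\alpha_2}(g) \rVert_{L^p(\R^d)} \le \lVert S_{\alpha_1}(f) \rVert_{L^{p_1}(\R^d)}\,\lVert S_{\alpha_2}(g) \rVert_{L^{p_2}(\R^d)}.
\]
It then remains to invoke the Strichartz characterization of the homogeneous fractional Sobolev spaces: for $\beta \in (0,1)$ and $q \in [2,\infty)$ one has $\lVert S_\beta(h) \rVert_{L^q(\R^d)} \lesssim \lVert \Dso^\beta h \rVert_{L^q(\R^d)}$. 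Applying this to $f$ with $(\beta,q) = (\alpha_1,p_1)$ and to $g$ with $(\beta,q) = (\alpha_2,p_2)$ --- precisely where the hypotheses $p_1,p_2 \ge 2$ and $\alpha_1,\alpha_2 \in (0,s) \subseteq (0,1)$ enter --- gives the asserted inequality for $f,g \in C_c^\infty(\R^d)$; the general case follows by density once the right-hand side is assumed finite.

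The only nontrivial input, and the step I expect to be the main obstacle, is that last square-function estimate $\lVert S_\beta(h) \rVert_{L^q} \lesssim \lVert \Dso^\beta h \rVert_{L^q}$. For $q=2$ it is elementary: by Plancherel and Fubini, $\lVert S_\beta(h) \rVert_{L^2}^2$ equals a constant multiple of the Gagliardo seminorm of $h$, which is itself a constant multiple of $\lVert \Dso^\beta h \rVert_{L^2}^2$. For $q > 2$, however, it genuinely relies on Littlewood--Paley theory (or a vector-valued maximal inequality of Fefferman--Stein type), so I would cite it from \cite{frac_estimates}, or equivalently from the analogous lemma in \cite{eyeson2022uniqueness}, rather than reproving it. The remaining ingredients --- the kernel splitting, Cauchy--Schwarz, and H\"older --- are routine.
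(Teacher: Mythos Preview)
Your proof is correct and follows the standard route: pointwise Cauchy--Schwarz to factor the inner integral into $S_{\alpha_1}(f)(x)\,S_{\alpha_2}(g)(x)$, then H\"older with exponents $p_1,p_2$, then the square-function bound $\lVert S_\beta(h)\rVert_{L^q} \lesssim \lVert \Dso^\beta h\rVert_{L^q}$ for $q \ge 2$. The paper does not supply its own proof of this lemma; it is listed among the preliminary estimates that are ``restated here for the reader's convenience'' with proofs deferred to \cite{eyeson2022uniqueness} (and, for the square-function ingredient, implicitly to \cite{frac_estimates}). Your argument is exactly the one that reference uses, so there is nothing to contrast.
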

\begin{lemma}\label{triple_comm_int_3}
    Assume $\alpha_i \in (0,1)$ such that $\sum_{i=1}^3\alpha_i = 1$ and let $p \in (1,\infty)$. Assume $p \in (p,\infty)$ such that
    \begin{align*}
        \frac{1}{p_1} + \frac{1}{p_2} + \frac{1}{p_3}= \frac{1}{p}
    \end{align*}
    and 
    \begin{align*}
        \frac{1}{p_i} - \frac{\alpha_i}{d} < \frac{1}{2}, \ i = 1,2,3. 
    \end{align*}
    Then we have 
    \begin{align*}
        &\brac{\int_{\R^d}\brac{\int_{\R^d} \frac{\abs{f(x) - f(y)} \abs{g(x)-g(y)} \abs{h(x) - h(y)}}{|x-y|^{d+1}} \ dy}^p }^{\frac{1}{p}}\\
        \lesssim & \lVert \Dso^{\alpha_1} f \rVert_{L^{p_1}(\R^d)} \ \lVert \Dso^{\alpha_2} g \rVert_{L^{p_2}(\R^d)} \ \lVert \Dso^{\alpha_3} h \rVert_{L^{p_3}(\R^d)}.
    \end{align*}
\end{lemma}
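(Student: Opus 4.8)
The plan is to prove the trilinear fractional-integral estimate in Lemma \ref{triple_comm_int_3} by the same philosophy used for the bilinear version in Lemma \ref{double_comm_int}: namely, recognize the inner integral as a composition of three difference quotients against the kernel $|x-y|^{-(d+1)}$, and reduce everything to a product of three fractional-derivative norms via a suitable maximal-function/Littlewood--Paley argument. The exponent bookkeeping is dictated by the homogeneity: since $\sum_i \alpha_i = 1$, the kernel exponent $d+1 = d + \sum_i \alpha_i$ splits naturally into three pieces $d + \alpha_i$ (one per factor), so that each difference $|f(x)-f(y)|$ is paired with its ``own'' weight $|x-y|^{-(d+\alpha_i)/\cdots}$ after a Hölder splitting. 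This is the key structural observation that lets the single kernel support three independent fractional smoothing operations.

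\textbf{Key steps, in order.} First I would fix $x$ and split the kernel multiplicatively, writing
\begin{align*}
    \frac{1}{|x-y|^{d+1}}
    = \frac{1}{|x-y|^{d_1+\alpha_1}}\cdot\frac{1}{|x-y|^{d_2+\alpha_2}}\cdot\frac{1}{|x-y|^{d_3+\alpha_3}},
\end{align*}
where $d_1+d_2+d_3 = d$ and the $d_i$ are chosen to match the Hölder exponents applied below. Second, I would apply Hölder's inequality in the $y$-integral with exponents $(q_1,q_2,q_3)$ satisfying $\sum_i 1/q_i = 1$, distributing one difference quotient and its associated weight to each factor; this turns the inner integral into a product of three integrals, each of the shape $\brac{\int_{\R^d} \frac{|f(x)-f(y)|^{q_i}}{|x-y|^{(d_i+\alpha_i)q_i}}\,dy}^{1/q_i}$. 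Third, I would recognize each such integral as (a multiple of) a fractional Gagliardo-type seminorm functional in $x$, comparable pointwise to a fractional maximal function of $\Dso^{\alpha_i}f$; the constraint $\frac{1}{p_i} - \frac{\alpha_i}{d} < \frac{1}{2}$ is exactly what guarantees the relevant maximal operator is bounded on the target Lebesgue space (the strict inequality keeps us below the critical Sobolev exponent, so no endpoint failure occurs). Finally, I would take the $L^p$ norm in $x$ of the product of the three factors and apply the three-factor Hölder inequality with the exponents $p_1,p_2,p_3$ (which satisfy $\sum_i 1/p_i = 1/p$), pulling out $\lVert \Dso^{\alpha_i}f\rVert_{L^{p_i}}$, and so on, to conclude.

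\textbf{Main obstacle.} The hard part will be the third step: making rigorous the pointwise comparison between the Gagliardo-type difference integrals and fractional maximal functions, and then verifying that the maximal operators land in the correct spaces under the hypothesis $\frac{1}{p_i} - \frac{\alpha_i}{d} < \frac{1}{2}$. One must choose the auxiliary exponents $q_i$ and the kernel split $\{d_i\}$ consistently so that each piece is simultaneously (i) integrable near $y=x$ (controlled by the Hölder regularity encoded in $\Dso^{\alpha_i}f$) and (ii) of the right homogeneity to be a bounded fractional maximal function on $L^{p_i}$. I expect the cleanest route is to invoke Lemma \ref{double_comm_int} inductively — first peeling off one factor $h$ by treating $\frac{1}{|x-y|^{d+1}} = \frac{1}{|x-y|^{d+\alpha_3}}\cdot\frac{1}{|x-y|^{\alpha_1+\alpha_2}}$ and bounding the $h$-difference against $\Dso^{\alpha_3}h$, then reapplying the bilinear estimate of Lemma \ref{double_comm_int} to the remaining two-factor integral with exponent $d + (\alpha_1+\alpha_2)$ — rather than building the three-factor maximal estimate from scratch. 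The subtlety in this inductive approach is that after peeling off $h$ one does not get back exactly the kernel $|x-y|^{-(d+s)}$ with $s = \alpha_1+\alpha_2$ unless the intermediate Hölder step is arranged so that the leftover weight has precisely that homogeneity; ensuring this matching, together with checking that the intermediate exponents still satisfy the admissibility conditions of Lemma \ref{double_comm_int}, is where the real care is required.
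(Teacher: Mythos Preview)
The paper does not actually prove this lemma; it is one of the preliminary estimates imported from \cite{eyeson2022uniqueness} (see the remark opening Section~2), so there is no in-paper proof to compare against. I can only comment on the viability of your sketch.

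Your H\"older-in-$y$ splitting with exponents $(q_1,q_2,q_3)$ satisfying $\sum_i 1/q_i=1$, together with the kernel decomposition $d+1=\sum_i(d/q_i+\alpha_i)$, is correct and produces for each factor the localized functional
\[
x\ \longmapsto\ \left(\int_{\R^d}\frac{|f_i(x)-f_i(y)|^{q_i}}{|x-y|^{d+\alpha_i q_i}}\,dy\right)^{1/q_i}.
\]
After H\"older in $x$ with exponents $p_i$ this is exactly the Triebel--Lizorkin seminorm $[f_i]_{\dot F^{\alpha_i}_{p_i,q_i}}$, \emph{not} $\|\Dso^{\alpha_i} f_i\|_{L^{p_i}}=[f_i]_{\dot F^{\alpha_i}_{p_i,2}}$. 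The embedding $\dot F^{\alpha}_{p,2}\hookrightarrow\dot F^{\alpha}_{p,q}$ holds precisely when $q\ge 2$, so you must choose all $q_i\ge 2$; this is compatible with $\sum_i 1/q_i=1$ (e.g.\ $q_i=3$). That is the rigorous replacement for your step~3; the phrase ``comparable pointwise to a fractional maximal function of $\Dso^{\alpha_i}f$'' is not quite the right mechanism, and you should state the Triebel--Lizorkin embedding explicitly.

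Your reading of the hypothesis $\tfrac{1}{p_i}-\tfrac{\alpha_i}{d}<\tfrac{1}{2}$ as a maximal-operator boundedness condition is off. In the H\"older/Triebel--Lizorkin route above that condition never enters; it is the Sobolev-embedding threshold $\dot W^{\alpha_i,p_i}\hookrightarrow L^2$, and most likely appears in the argument of \cite{eyeson2022uniqueness} through an intermediate step that routes one factor through $L^2$. You should either identify precisely where it is used in your argument or note that your version of the proof does not require it.

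Your alternative inductive route---peel off $|h(x)-h(y)|\lesssim|x-y|^{\alpha_3}\bigl(\mathcal{D}_{\alpha_3}h(x)+\mathcal{D}_{\alpha_3}h(y)\bigr)$ and invoke Lemma~\ref{double_comm_int} on the remainder---is also workable, but the term carrying $\mathcal{D}_{\alpha_3}h(y)$ inside the $y$-integral does not reduce to Lemma~\ref{double_comm_int}; structurally it is Lemma~\ref{triple_comm_int}. So that path needs both bilinear lemmas (or an equivalent estimate) to close, exactly the subtlety you flag at the end.
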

\begin{lemma}\label{triple_comm_int}
    Assume $\alpha_i \in (0,1)$, $i=1,2$ such that $\alpha_1 + \alpha_2 > 1$. Assume $p_i \in (2,\infty)$ such that
    \begin{align*}
        \frac{1}{p_1} + \frac{1}{p_2} + \frac{1}{p_3}= \frac{1}{p}
    \end{align*}
    and 
    \begin{align*}
        \frac{dp_3}{d-(1-\alpha_1+\alpha_2)p_3} \in (1,\infty).
    \end{align*}
    Then we have
    \begin{align*}
        &\brac{\int_{\R^d}\brac{\int_{\R^d} \frac{\abs{f(x) - f(y)} \abs{g(x)-g(y)} \abs{h(y)}}{|x-y|^{d+1}} \ dy}^2 }^{\frac{1}{2}}\\
        \lesssim & \lVert \Dso^{\alpha_1} f \rVert_{L^{p_1}(\R^d)} \ \lVert \Dso^{\alpha_2} g \rVert_{L^{p_2}(\R^d)} \ \lVert h \rVert_{L^{\frac{dp_3}{d-(1-\alpha_1 - \alpha_2)p_3}}(\R^d)}.
    \end{align*}
\end{lemma}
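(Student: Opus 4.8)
The plan is to reduce the trilinear singular integral to a handful of Riesz-potential expressions by inserting pointwise fractional-difference bounds for $f$ and $g$, after noting that the kernel factors \emph{exactly} in a way that leaves a genuine Riesz kernel acting on $h$. Set $\sigma \defeq \alpha_1 + \alpha_2 - 1$, which lies in $(0,1)$ precisely because $\alpha_i \in (0,1)$ and $\alpha_1 + \alpha_2 > 1$. The conceptual heart is the identity
\[
\frac{1}{|x-y|^{d+1}} = \frac{1}{|x-y|^{\alpha_1}}\,\frac{1}{|x-y|^{\alpha_2}}\,\frac{1}{|x-y|^{d-\sigma}},
\]
since $\alpha_1 + \alpha_2 + (d-\sigma) = d+1$. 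The two differences will consume the factors $|x-y|^{\alpha_1}$ and $|x-y|^{\alpha_2}$, and the residual factor $|x-y|^{-(d-\sigma)}$ is exactly the kernel of the Riesz potential $\lapms{\sigma}$ (with $\lapms{\sigma} = \Dso^{-\sigma}$). Thus the ``excess'' smoothness $\sigma$ left over after the differences is what gets transferred onto $h$ through a Hardy--Littlewood--Sobolev step.

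\textbf{Pointwise bounds and the four-term expansion.} Writing $f = \lapms{\alpha_1}\Dso^{\alpha_1} f$ and using the standard H\"older-type estimate for Riesz potentials, $|\lapms{\alpha}w(x) - \lapms{\alpha}w(y)| \lesssim |x-y|^{\alpha}\brac{\mathcal{M}w(x) + \mathcal{M}w(y)}$ for $\alpha \in (0,1)$ (here $\mathcal{M}$ is the Hardy--Littlewood maximal operator), I would record the pointwise bounds
\[
|f(x)-f(y)| \lesssim |x-y|^{\alpha_1}\brac{F(x)+F(y)}, \qquad |g(x)-g(y)| \lesssim |x-y|^{\alpha_2}\brac{G(x)+G(y)},
\]
with $F = \mathcal{M}(\Dso^{\alpha_1} f)$ and $G = \mathcal{M}(\Dso^{\alpha_2} g)$. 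Substituting these into the inner integral and expanding the product $\brac{F(x)+F(y)}\brac{G(x)+G(y)}$, the kernel collapses to $|x-y|^{-(d-\sigma)}$ on each of four pieces, giving the pointwise bound
\[
\int_{\R^d}\frac{|f(x)-f(y)||g(x)-g(y)||h(y)|}{|x-y|^{d+1}}\,dy \lesssim F(x)G(x)\,\lapms{\sigma}|h|(x) + F(x)\,\lapms{\sigma}(G|h|)(x) + G(x)\,\lapms{\sigma}(F|h|)(x) + \lapms{\sigma}(FG|h|)(x).
\]

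\textbf{Estimating each term.} Now take the $L^2_x$ norm. In each term I would combine H\"older in $x$ with the Sobolev/HLS mapping $\|\lapms{\sigma} w\|_{L^{p_3}} \lesssim \|w\|_{L^{q}}$, where $\tfrac1q - \tfrac1{p_3} = \tfrac{\sigma}{d}$, i.e.\ $q = \tfrac{dp_3}{d+\sigma p_3} = \tfrac{dp_3}{d-(1-\alpha_1-\alpha_2)p_3}$, which is exactly the exponent appearing on $h$ in the statement. The hypothesis $q \in (1,\infty)$ is precisely the admissibility condition for this HLS step. For the first term one applies H\"older directly with $\tfrac1{p_1}+\tfrac1{p_2}+\tfrac1{p_3}=\tfrac12$ (the relation forced by the outer $L^2$ norm) and then HLS on $\lapms{\sigma}|h|$; for the mixed terms one applies HLS to the inner potential first and then H\"older, and the exponents close to the same product $\|F\|_{L^{p_1}}\|G\|_{L^{p_2}}\|h\|_{L^q}$. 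Finally, boundedness of $\mathcal{M}$ on $L^{p_1}$ and $L^{p_2}$ ($p_1,p_2 > 1$) turns $\|F\|_{L^{p_1}} \lesssim \|\Dso^{\alpha_1}f\|_{L^{p_1}}$ and $\|G\|_{L^{p_2}} \lesssim \|\Dso^{\alpha_2}g\|_{L^{p_2}}$, yielding the claimed inequality.

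\textbf{Main obstacle.} The delicate points are not the structure of the estimate but the bookkeeping: first, justifying the pointwise fractional-difference bound rigorously (this is where $\alpha_i \in (0,1)$ is essential, via the Riesz-potential representation and the far/near splitting of the kernel), and second, verifying that \emph{every} exponent produced by the repeated H\"older and HLS steps lands in the admissible range $(1,\infty)$. The latter is exactly what the hypotheses are tailored to guarantee: $p_i \in (2,\infty)$ keeps the H\"older exponents feasible, and the standing assumption $q \in (1,\infty)$ is what makes $\lapms{\sigma}$ bounded on the relevant spaces. I would stress that the positivity of the residual smoothing $\sigma = \alpha_1 + \alpha_2 - 1 > 0$ is indispensable: it is what renders the leftover kernel a genuine (HLS-admissible) Riesz potential rather than a kernel too singular to integrate, and it is precisely this feature that separates the present estimate from the balanced triple-difference inequality of \Cref{triple_comm_int_3}, where the three differences exhaust the full order of the kernel.
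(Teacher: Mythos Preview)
Your argument is correct and is the standard route to this kind of trilinear estimate: the factorisation of the kernel with $\sigma=\alpha_1+\alpha_2-1>0$, the pointwise difference bound $|f(x)-f(y)|\lesssim |x-y|^{\alpha_1}\bigl(\mathcal{M}(\Dso^{\alpha_1}f)(x)+\mathcal{M}(\Dso^{\alpha_1}f)(y)\bigr)$, the four-term expansion, and the HLS step $\lapms{\sigma}:L^{q}\to L^{p_3}$ with $q=\frac{dp_3}{d-(1-\alpha_1-\alpha_2)p_3}$ all fit together exactly as you describe, and the maximal-function bounds close the estimate. The exponent bookkeeping in the mixed terms (your Terms 2--4) does close under the stated hypotheses once one uses $\tfrac{1}{p_1}+\tfrac{1}{p_2}+\tfrac{1}{p_3}=\tfrac12$.

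There is nothing to compare against in the paper itself: this lemma is stated without proof in Section~2 and referred to \cite{eyeson2022uniqueness} along with the other commutator and Sobolev-type inequalities. Your proof is precisely the argument one would expect to find there (and your observation that the strict positivity of $\sigma$ is what distinguishes this estimate from \Cref{triple_comm_int_3} is exactly the right structural point). One small remark: the hypothesis in the displayed condition of the lemma has a sign typo ($+\alpha_2$ in place of $-\alpha_2$); your computation correctly identifies the intended exponent as the one appearing in the conclusion.
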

\section{Decay Estimates}
As originally shown in \cite{krieger2017small} for HWM and then in \cite{liu2021global} for the HHWM (\cref{reform}), by differentiating the HHWM in time and then subtracting the Laplacian of $\vec{u}$, we get
\begin{equation}
\begin{aligned}\label{eq:wave map}
    \vec{u}_{tt} - \Delta \vec{u} = &(|\partial_t \vec{u}|_L^2 - |\nabla \vec{u}|_L^2) \vec{u} \\
    &+ \vec{u} \wedge_L|\nabla|(\vec{u} \wedge_L|\nabla|\vec{u}) - \vec{u} \wedge_L(\vec{u} \wedge_L(-\Delta \vec{u})) \\
    &- P_\vec{u}\brac{\langle\vec{u} , |\nabla| \vec{u}\rangle_L|\nabla|\vec{u}},
\end{aligned}
\end{equation}
where $P_{\vec{u}}: \mathbb{R}^3 \to T_{\vec{u}} \mathbb{H}^2 \subset \mathbb{R}^3$ is the projection onto the orthogonal complement of $\mathbf{u}$. So
$P_{\vec{u}} \vec{x} =  \vec{x} + \langle \vec{u},\vec{x} \rangle_L\vec{u}$ for all $\mathbf{x} \in \R^3$.
Recall that in \cite{eyeson2022uniqueness}, uniqueness of solutions was shown in $d\ge 3$ for $\S^2$. This method relied on the integral representing the energy conservation of the Wave Maps equation
\begin{align*}
    E(\mathbf{u} -\mathbf{v},t) = \frac{1}{2}\int_{\R^d} |\partial_t\brac{\mathbf{u} -\mathbf{v}}|^2 + |\nabla \brac{\mathbf{u} -\mathbf{v}}|^2 \ge 0
\end{align*}
being non-negative, where $\vec{u}$ and \vec{v} are two solutions to the HWM. 

The main issue when attempting to use the strategy used in \cite{eyeson2022uniqueness} for the HHWM is that the Lorentzian inner-product is not necessarily positive-definite on differences of tangent spaces, so 
\begin{align*}
    E(\mathbf{u} -\mathbf{v},t) = \frac{1}{2}\int_{\R^d} |\partial_t\brac{\mathbf{u} -\mathbf{v}}|^2_L + |\nabla \brac{\mathbf{u} -\mathbf{v}}|^2_L \overset{?}{\ge} 0.
\end{align*}
To circumvent this, we use the Nash embedding theorem \cite[Theorem 3]{nash} on $\mathbb{H}^2$ and isometrically embed it into some $\mathbb{R}^m$ in order to rewrite \eqref{eq:wave map} with Euclidean range. Therefore, there is a smooth embedding $\varphi: \mathbb{H}^2 \xhookrightarrow{} \mathbb{R}^m$ such that $\langle d\varphi_p \mathbf{x}, d \varphi_p \mathbf{y} \rangle_{\R^m}= \langle \mathbf{x}, \mathbf{y} \rangle_L$ for $\mathbf{x},\mathbf{y} \in T_p \mathbb{H}^2$, where $\langle \cdot, \cdot \rangle_{\R^m}$ is the Euclidean inner-product on $\mathbb{R}^m$.  Take an orthonormal frame $\mathbf{E}_p^1,\mathbf{E}_p^2$ for $T_{\Tilde{p}}\Tilde{\mathbb{H}}^2 = d\varphi_p(T_p\mathbb{H}^2) \subset T_{\varphi(p)} \mathbb{R}^m$ such that $d\varphi_p \mathbf{X}_p^r = \mathbf{E}^r_{\Tilde{p}}$ for $r \in\{1,2\}$, where $\mathbf{X}^1_p,\mathbf{X}^2_p$ is an orthonormal frame for $T_p\mathbb{H}^2$ (\cref{comp_ortho}), and extend this to an orthonormal frame $\mathbf{E}^1_{\Tilde{p}},\mathbf{E}^2_{\Tilde{p}}, \dots, \mathbf{E}^m_{\Tilde{p}}$ for $T_{\varphi(p)} \mathbb{R}^m$.

Now, letting $\Tilde{\vec{u}} = \varphi \circ \vec{u}$, we have
\begin{align*}
    \partial^\alpha \partial_\alpha \Tilde{\vec{u}} = \sum_{r=1}^2\lambda_r\mathbf{E}^r_{\Tilde{\vec{u}}} + \sum_{l=3}^m\lambda_l\mathbf{E}^l_{\Tilde{\vec{u}}},
\end{align*}
where $\partial^\alpha = \partial_\alpha$ if $\alpha = t$ and $\partial^\alpha = -\partial_\alpha$ otherwise.
The covariant derivative in the $\alpha$ direction will be denoted by $D^\alpha$ in $\HH^2 \subset \R^3$ and $\Tilde{D}^\alpha$ in $\Tilde{\HH}^2\subset \R^m$. If we fix all other directions except for the $\alpha$ direction, we have that $\vec{u}$ is a path from $\R$ to $\mathbb{H}^2$ and $\partial_\alpha \vec{u}$ is a vector field along $\vec{u}$, so
\begin{align*}
\tilde{D}^\alpha \partial_\alpha \Tilde{\vec{u}} = \langle \partial^\beta \partial_\alpha \tilde{\vec{u}}, \vec{E}_{\tilde{\vec{u}}}^1\rangle_{} \vec{E}_{\tilde{\vec{u}}}^1 +  \langle \partial^\beta \partial_\alpha {\tilde{\vec{u}}}, \vec{E}_{\Tilde{\vec{u}}}^2\rangle_{} \vec{E}_{\tilde{\vec{u}}}^2
\end{align*}
and
\begin{align*}
D^\alpha \partial_\alpha \vec{u} = \langle \partial^\beta \partial_\alpha \vec{u}, \vec{X}_{\vec{u}}^1\rangle_{L} \vec{X}_{\vec{u}}^1 +  \langle \partial^\beta \partial_\alpha \vec{u}, \vec{X}_{\vec{u}}^2\rangle_{L} \vec{X}_{\vec{u}}^2
\end{align*}
by \cref{cov_der_tang}. Also, 
\begin{align*}
    \Tilde{D}^\alpha \partial_\alpha \Tilde{\vec{u}}
    &= \Tilde{D}^\alpha \partial_\alpha\brac{\varphi \circ \vec{u}}\\
    &= \Tilde{D}^\alpha d\varphi_{\vec{u}}\partial_\alpha \vec{u}\\
    &= d\varphi_{\vec{u}}D^\alpha \partial_\alpha \Tilde{\vec{u}}
\end{align*}
by \cref{no_sec_der}. 
Then 
\begin{align*}
    \lambda_r &= \langle \partial^\alpha \partial_\alpha \Tilde{\vec{u}}, \mathbf{E}^r_{\Tilde{\vec{u}}}\rangle_{\R^m}\\
    &=\langle \Tilde{D}^\alpha \partial_\alpha \Tilde{\vec{u}},\mathbf{E}^r_{\Tilde{\vec{u}}}\rangle_{\R^m}\\
    &=\langle d\varphi_{\vec{u}}D^\alpha \partial_\alpha \Tilde{\vec{u}},d\varphi_{\vec{u}}\mathbf{X}^r_\vec{u}\rangle_{\R^m}\\
    &= \langle D^\alpha \partial_\alpha \vec{u},\mathbf{X}^r_\vec{u}\rangle_L\\
    &= \langle \partial^\alpha \partial_\alpha \vec{u},\mathbf{X}^r_\vec{u}\rangle_L,
\end{align*} 
and using the product rule and the fact that $\mathbf{E}^l_{\Tilde{\vec{u}}} \in \brac{T_{\Tilde{\vec{u}}} \Tilde{\mathbb{H}}^2}^\perp$,
\begin{align*}
    \lambda_l & = \langle \partial^\alpha\partial_\alpha \Tilde{\vec{u}}, \mathbf{E}^l_{\Tilde{\vec{u}}}\rangle_{\R^m}\\
    &=\partial^\alpha \langle \partial_\alpha \Tilde{\vec{u}}, \mathbf{E}^l_{\Tilde{\vec{u}}} \rangle_{\R^m}- \langle \partial_\alpha \Tilde{\vec{u}}, d\mathbf{E}^l_{\Tilde{\vec{u}}} \partial^\alpha \Tilde{\vec{u}}\rangle_{\R^m}\\
    &=- \langle \partial_\alpha \Tilde{\vec{u}}, d\mathbf{E}^l_{\Tilde{\vec{u}}} \partial^\alpha \Tilde{\vec{u}}\rangle_{\R^m}\\
    &= A^l(\Tilde{\vec{u}})(\partial_\alpha \Tilde{\vec{u}}, \partial^\alpha\Tilde{\vec{u}}),
\end{align*}
where $A(\Tilde{p})(\cdot,\cdot): T_{\Tilde{p}}\Tilde{\mathbb{H}}^2 \times T_{\Tilde{p}}\Tilde{\mathbb{H}}^2 \to \brac{T_{\Tilde{p}}\Tilde{\mathbb{H}}^2}^\perp$ is the second fundamental form and 
\begin{align*}
    A(\Tilde{p})(V,W) 
    &=  \sum_{l=3}^m A^l(\Tilde{\vec{u}})(V,W) \vec{E}^l_{\Tilde{p}}\\
    &= \sum_{l=3}^m - \langle V, d\mathbf{E}^l_{\Tilde{p}} W\rangle_{\R^m} \vec{E}^l_{\Tilde{p}}.
\end{align*}
Now using $\langle\vec{u} , \mathbf{X}^r_\vec{u}\rangle_L = 0$ given by \cref{tang_def}, we see that
\begin{equation}
    \begin{aligned}
        \sum_{\alpha}\langle \partial^\alpha\partial_\alpha \Tilde{\vec{u}},\mathbf{E}^r_{\Tilde{\vec{u}}}\rangle_L \mathbf{E}_{\Tilde{\vec{u}}}^r
        &=\sum_{\alpha}\langle \partial^\alpha\partial_\alpha \vec{u},\mathbf{X}^r_\vec{u}\rangle_L \mathbf{E}_{\Tilde{\vec{u}}}^r\\
        &=\langle \vec{u}_{tt} - \Delta \vec{u},\mathbf{X}^r_\vec{u}\rangle_L \mathbf{E}_{\Tilde{\vec{u}}}^r\\
        &= \Big\langle (|\partial_t \vec{u}|^2 - |\nabla \vec{u}|^2) \vec{u} , \mathbf{X}^r_\vec{u}\Big\rangle_L \mathbf{E}_{\Tilde{\vec{u}}}^r\\
        &+\Big\langle\vec{u} \wedge_L|\nabla|(\vec{u} \wedge_L|\nabla|\vec{u}) - \vec{u} \wedge_L(\vec{u} \wedge_L(-\Delta \vec{u})) - \langle \vec{u} , |\nabla| \vec{u}\rangle_L P_\vec{u}(|\nabla|\vec{u}), \mathbf{X}_\vec{u}^r\Big\rangle_L \mathbf{E}^r_{\Tilde{\vec{u}}}\\
        &=\Big\langle\vec{u} \wedge_L|\nabla|(\vec{u} \wedge_L|\nabla|\vec{u}) - \vec{u} \wedge_L(\vec{u} \wedge_L(-\Delta \vec{u})) - \langle \vec{u} , |\nabla| \vec{u}\rangle_L P_\vec{u}(|\nabla|\vec{u}), \mathbf{X}_\vec{u}^r\Big\rangle_L \mathbf{E}^r_{\Tilde{\vec{u}}}\\
        &=\Big\langle\vec{u} \wedge_LH_{|\nabla|}(\vec{u} \wedge_L,|\nabla|\vec{u}) - \langle \vec{u} , |\nabla| \vec{u}\rangle_L P_\vec{u}(|\nabla|\vec{u}), \mathbf{X}_\vec{u}^r\Big\rangle_L \mathbf{E}^r_{\Tilde{\vec{u}}}
    \end{aligned}
\end{equation}
for $r \in \{1,2\}$. Thus,
\begin{align*}
    \Tilde{\vec{u}}_{tt} - \Delta \Tilde{\vec{u}}
    &= \sum_{l=1}^m A^l(\Tilde{\vec{u}})\brac{\partial_{\alpha} \Tilde{\vec{u}}, \partial^{\alpha}\Tilde{\vec{u}} }\mathbf{E}_{\Tilde{\vec{u}}}^l \\
    &+\sum_{r=1}^2 \Big\langle\vec{u} \wedge_LH_{|\nabla|}(\vec{u} \wedge_L,|\nabla|\vec{u}) - \langle \vec{u} , |\nabla| \vec{u}\rangle_L P_\vec{u}(|\nabla|\vec{u}), \mathbf{X}_\vec{u}^r\Big\rangle_L \mathbf{E}^r_{\Tilde{\vec{u}}}.
\end{align*}
Suppose $\vec{u},\vec{v}$ are solutions to \eqref{eq:half-wave} such that $\vec{u}(\cdot,0) = \vec{v}(\cdot,0)$ and let $\Tilde{\vec{w}} = \Tilde{\vec{u}} - \Tilde{\vec{v}}$. Since the Euclidean inner product is positive definite, we conclude that the energy is non-negative. Therefore, 
\begin{equation}
    \begin{aligned}
        E(t) = \frac{1}{2}\brac{\lVert\nabla \Tilde{\vec{w}}(t)\rVert^2_{L^2(\mathbb{R}^d)} + \lVert \partial_t\Tilde{\vec{w}}(t) \rVert^2_{L^2(\mathbb{R}^d)}} \ge 0.
    \end{aligned}
\end{equation}
Differentiating in time yields,
\begin{align}
        \frac{d}{dt}E(t) =& \int_{\mathbb{R}^d} \langle \Tilde{\vec{w}}_{tt} - \Delta \Tilde{\vec{w}}, \Tilde{\vec{w}}_{t} \rangle_{\R^m} \nonumber
        \\
        =&\int_{\mathbb{R}^d} \langle A^l(\Tilde{\vec{u}})(\partial_{\alpha} \Tilde{\vec{u}}, \partial^{\alpha}\Tilde{\vec{u}})\mathbf{E}_{\Tilde{\vec{u}}}^l, \Tilde{\vec{w}}_{t} \rangle_{\R^m}- \langle A^l(\Tilde{\vec{v}})(\partial^{\alpha} \Tilde{\vec{v}}, \partial_{\alpha}\Tilde{\vec{v}})\mathbf{E}_{\Tilde{\vec{v}}}^l, \Tilde{\vec{w}}_{t} \rangle_{\R^m} \label{eq:diffenergy_1}\\
        -&\int_{\mathbb{R}^d} \langle\vec{u} , |\nabla| \vec{u}\rangle_L \langle P_\vec{u}( |\nabla|\vec{u}), \mathbf{X}_\vec{u}^r \rangle_L \langle \mathbf{E}_{\Tilde{\vec{u}}}^r, \Tilde{\vec{w}}_{t} \rangle_{\R^m}-  \langle\vec{v} , |\nabla| \vec{v}\rangle_L\langle P_\vec{v}(|\nabla|\vec{v}), \mathbf{X}_\vec{v}^r\rangle_L \langle \mathbf{E}_{\Tilde{\vec{v}}}^r, \Tilde{\vec{w}}_{t} \rangle_{\R^m} \label{eq:diffenergy_2}\\
        +&\int_{\mathbb{R}^d} \langle \vec{u} \wedge_LH_{|\nabla|}(\vec{u} \wedge_L,|\nabla|\vec{u}), \mathbf{X}_\vec{u}^r\rangle_L \langle \mathbf{E}_{\Tilde{\vec{u}}}^r, \Tilde{\vec{w}}_{t} \rangle_{\R^m}- \langle \vec{v} \wedge_LH_{|\nabla|}(\vec{v} \wedge_L,|\nabla|\vec{v}), \mathbf{X}_\vec{v}^r\rangle_L \langle \mathbf{E}_{\Tilde{\vec{v}}}^r, \Tilde{\vec{w}}_{t} \rangle_{\R^m} \label{eq:diffenergy_3}.
\end{align}
In the remainder of this section we prove the main estimate for \cref{thm:1}.
\begin{theorem} \label{big_thm}
    Let $\vec{u},\vec{v}$ be as in \cref{thm:1} such that $\lVert \vec{u} \rVert_\infty, \lVert \vec{v} \rVert_\infty < \Lambda$ for some $0< \Lambda < \infty$. Set 
    $$
    E(t) = \frac{1}{2}\brac{\lVert\nabla \Tilde{\vec{w}}(t)\rVert^2_{L^2(\mathbb{R}^d)} + \lVert \partial_t\Tilde{\vec{w}}(t) \rVert^2_{L^2(\mathbb{R}^d)}}.
    $$
    Then for any $\alpha > 1$,
    $$
    \frac{d}{dt}E(t) \lesssim_\Lambda \Sigma(t)E(t),
    $$
    where
    \begin{align*}
        \Sigma(t) &\lesssim_\Lambda \lVert |\nabla|^\alpha \vec{u}(t)\rVert_{L^{\frac{2d}{2\alpha - 1}}(\mathbb{R}^d)}^2 + \lVert |\nabla|^\alpha \vec{v}(t)\rVert_{L^{\frac{2d}{2\alpha - 1}}(\mathbb{R}^d)}^2\\
        &+ \lVert|\nabla| \vec{u}(t)\rVert_{L^{(2d,2)}(\mathbb{R}^d)}^2 + \lVert |\nabla|\vec{v}(t)\rVert_{L^{(2d,2)}(\mathbb{R}^d)}^2.
    \end{align*}
\end{theorem}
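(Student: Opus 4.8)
The identity $\frac{d}{dt}E(t)=\eqref{eq:diffenergy_1}+\eqref{eq:diffenergy_2}+\eqref{eq:diffenergy_3}$ is already established before the statement, so the plan is to bound each of the three integrals by $\lesssim_\Lambda\Sigma(t)E(t)$. Throughout I would use the following reductions. First, since $\vec u,\vec v$ take values in the compact set $K_\Lambda=\{\,|\vec y|\le\Lambda\,\}\cap\mathbb H^2$ on which $\varphi$ is a smooth embedding, $|\Tilde{\vec w}|\approx_\Lambda|\vec u-\vec v|$ and $|d\varphi_{\vec u}|,|d\varphi_{\vec v}|\lesssim_\Lambda 1$; hence $|\nabla\Tilde{\vec u}|\lesssim_\Lambda|\nabla\vec u|$ and $|\partial_t\Tilde{\vec u}|=|d\varphi_{\vec u}(\vec u\wedge_L\Dso\vec u)|\lesssim_\Lambda|\Dso\vec u|$, so by \cref{comp} every spacetime‑gradient norm $\lVert\partial\Tilde{\vec u}\rVert_{L^p}$ ($1<p<\infty$) is $\lesssim_\Lambda\lVert\Dso\vec u\rVert_{L^p}$, and similarly for $\vec v$. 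Second, $\nabla(\vec u-\vec v)=d\psi_{\Tilde{\vec u}}\nabla\Tilde{\vec w}+(d\psi_{\Tilde{\vec u}}-d\psi_{\Tilde{\vec v}})\nabla\Tilde{\vec v}$ with $\psi=\varphi^{-1}$, so $|\nabla(\vec u-\vec v)|\lesssim_\Lambda|\nabla\Tilde{\vec w}|+|\Tilde{\vec w}|\,|\nabla\vec v|$, whence (using $d\ge3$ so that $\dot H^1(\mathbb R^d)\hookrightarrow L^{\frac{2d}{d-2}}$ and that $\lVert\nabla\vec v\rVert_{L^d}\lesssim\lVert\vec v\rVert_{\dot H^{d/2}}<\infty$ on $[0,T]$) $\lVert\nabla(\vec u-\vec v)\rVert_{L^2}\lesssim_\Lambda\sqrt E$ and $\lVert\Tilde{\vec w}\rVert_{L^{2d/(d-2)}}\lesssim\lVert\nabla\Tilde{\vec w}\rVert_{L^2}\lesssim\sqrt E$; also $\lVert\partial_t\Tilde{\vec w}\rVert_{L^2},\lVert\nabla\Tilde{\vec w}\rVert_{L^2}\le\sqrt{2E}$. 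Third, the two $\Sigma$‑type quantities that will appear, namely $\lVert\Dso\vec u\rVert_{L^{(2d,2)}}$ and $\lVert\Dso^{\beta}\vec u\rVert_{L^{2d/(2\beta-1)}}$ for $\beta\in(1,\alpha]$, are both $\lesssim\Sigma(t)^{1/2}$: the first because $L^{(2d,2)}\hookrightarrow L^{2d}$ and it is literally a summand of $\Sigma$, the second by interpolating the two scale‑$\tfrac12$ endpoints $\dot W^{1,2d}$ and $\dot W^{\alpha,2d/(2\alpha-1)}$; and for $\beta\in(\tfrac12,1)$ one has $\lVert\Dso^{\beta}\vec u\rVert_{L^{2d/(2\beta-1)}}=\lVert\Dso^{-(1-\beta)}\Dso\vec u\rVert_{L^{2d/(2\beta-1)}}\lesssim\lVert\Dso\vec u\rVert_{L^{2d}}\lesssim\Sigma^{1/2}$ by Hardy--Littlewood--Sobolev.

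\textbf{Term \eqref{eq:diffenergy_1}: the second fundamental form.} Here one exploits the differential geometry of $\Tilde{\mathbb H}^2\subset\mathbb R^m$. Since $A^l(\Tilde p)(V,W)\mathbf E^l_{\Tilde p}$ lies in $(T_{\Tilde p}\Tilde{\mathbb H}^2)^\perp$ for all $V,W$, while $\partial_t\Tilde{\vec u}=d\varphi_{\vec u}\partial_t\vec u\in T_{\Tilde{\vec u}}\Tilde{\mathbb H}^2$, we have $\langle A^l(\Tilde{\vec u})(\partial_\alpha\Tilde{\vec u},\partial^\alpha\Tilde{\vec u})\mathbf E^l_{\Tilde{\vec u}},\partial_t\Tilde{\vec u}\rangle=0$, and likewise for $\vec v$. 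Writing the $\Tilde{\vec u}$‑part of the integrand of \eqref{eq:diffenergy_1} paired against $\partial_t\Tilde{\vec w}=\partial_t\Tilde{\vec u}-\partial_t\Tilde{\vec v}$, the difference of second fundamental form terms splits as a coefficient‑difference piece $[A^l(\Tilde{\vec u})-A^l(\Tilde{\vec v})](\partial_\alpha\Tilde{\vec u},\partial^\alpha\Tilde{\vec u})\mathbf E^l$, which is $\lesssim_\Lambda|\Tilde{\vec w}|\,|\partial\Tilde{\vec u}|^2$ by smoothness of $A,\mathbf E$ on $K_\Lambda$, plus pieces bilinear in $\partial\Tilde{\vec w}$ with coefficient frozen at $\Tilde{\vec u}$ or $\Tilde{\vec v}$; the latter are normal to the corresponding tangent space, so pairing with $\partial_t\Tilde{\vec w}$ only the $O_\Lambda(|\Tilde{\vec w}|)$ defect between $T_{\Tilde{\vec u}}\Tilde{\mathbb H}^2$ and $T_{\Tilde{\vec v}}\Tilde{\mathbb H}^2$ (and between these and $\partial_t\Tilde{\vec u},\partial_t\Tilde{\vec v}$) survives. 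Thus the integrand of \eqref{eq:diffenergy_1} is pointwise $\lesssim_\Lambda\big(|\Tilde{\vec w}|\,|\partial\Tilde{\vec u}|^2+|\Tilde{\vec w}|\,|\partial\Tilde{\vec w}|\,|\partial\Tilde{\vec u}|+(\vec u\!\leftrightarrow\!\vec v)\big)\,|\partial_t\Tilde{\vec w}|$, with each summand carrying, besides $|\partial_t\Tilde{\vec w}|$, a further factor $|\Tilde{\vec w}|$ or $|\partial\Tilde{\vec w}|$. Hölder with $\lVert\Tilde{\vec w}\rVert_{L^{2d/(d-2)}},\lVert\partial\Tilde{\vec w}\rVert_{L^2},\lVert\partial_t\Tilde{\vec w}\rVert_{L^2}\lesssim\sqrt E$ and $\lVert\partial\Tilde{\vec u}\rVert_{L^{2d}}^2\lesssim_\Lambda\lVert\Dso\vec u\rVert_{L^{(2d,2)}}^2$ (exponents $\tfrac{d-2}{2d}+\tfrac12+2\cdot\tfrac1{2d}=1$) gives $\eqref{eq:diffenergy_1}\lesssim_\Lambda E\big(\lVert\Dso\vec u\rVert_{L^{(2d,2)}}^2+\lVert\Dso\vec v\rVert_{L^{(2d,2)}}^2\big)$.

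\textbf{Terms \eqref{eq:diffenergy_2}, \eqref{eq:diffenergy_3}: the commutator terms.} The key algebraic observation for \eqref{eq:diffenergy_2} is that $\langle\vec u,\vec u\rangle_L\equiv-1$, so applying $\Dso$ and \cref{def_comm} componentwise, $\langle\vec u,\Dso\vec u\rangle_L=-\tfrac12\sum_{i,j}\eta_{ij}H_{\Dso}(\vec u_i,\vec u_j)$ with $\eta=\operatorname{diag}(-1,1,1)$ — this converts the scalar factor $\langle\vec u,\Dso\vec u\rangle_L$ into a genuine Leibniz defect. For \eqref{eq:diffenergy_3}, one already has $H_{\Dso}(\vec u\wedge_L,\Dso\vec u)$, and because $(\Dso\vec u)\wedge_L\Dso\vec u=0$ the third term of the Leibniz defect drops, consistently with \eqref{eq:wave map}. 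In each of \eqref{eq:diffenergy_2}, \eqref{eq:diffenergy_3} one then expands the difference $(\cdot)_{\vec u}-(\cdot)_{\vec v}$ so that every summand carries either a factor $\vec u-\vec v$ undifferentiated (using the linearity of $\wedge_L$, $\langle\cdot,\cdot\rangle_L$ and $P_{\vec u}\vec x=\vec x+\langle\vec u,\vec x\rangle_L\vec u$ in $\vec u$; placed in $L^{2d/(d-2)}\lesssim\sqrt E$) or a factor $\Dso(\vec u-\vec v)$ with exactly one derivative (placed in $L^2$, controlled by $\lVert\nabla(\vec u-\vec v)\rVert_{L^2}\lesssim_\Lambda\sqrt E$ via \cref{comp}), while the remaining $\Dso^{\ge1}$‑derivatives of $\vec u$ or $\vec v$ are estimated by the fractional Leibniz/commutator machinery of Section~\ref{sec2} — \cref{comm_lem}, \cref{lem:comm_int}, \cref{lem:prod_rule}, \cref{lem:prod_rule_2}, \cref{infty_bound}, \cref{double_comm_int}, \cref{triple_comm_int_3}, \cref{triple_comm_int} — distributing the derivatives so these norms become the $\Sigma$‑quantities $\lVert\Dso^{\beta}\vec u\rVert_{L^{2d/(2\beta-1)}}$ (with \cref{lem:gag_in} used to absorb any sub‑$\Dso^{1}$ derivative against $\lVert\vec u\rVert_{L^\infty}\lesssim_\Lambda1$). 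Collecting the $\sqrt E$ from the pairing against $\partial_t\Tilde{\vec w}$, the $\sqrt E$ from the $(\vec u-\vec v)$–factor, and the two $\Sigma^{1/2}$'s from the derivative factors, each summand is $\lesssim_\Lambda E\,\Sigma(t)$, so $\eqref{eq:diffenergy_2},\eqref{eq:diffenergy_3}\lesssim_\Lambda\Sigma(t)E(t)$.

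\textbf{Conclusion and the main obstacle.} Summing the three bounds yields $\frac{d}{dt}E(t)\lesssim_\Lambda\Sigma(t)E(t)$ with $\Sigma$ as in the statement (constants depending also on the finite quantities $\sup_{[0,T]}\lVert\vec u\rVert_{\dot H^{d/2}}$, $\sup_{[0,T]}\lVert\vec v\rVert_{\dot H^{d/2}}$, which I would absorb into $\lesssim_\Lambda$). The routine part is \eqref{eq:diffenergy_1}; the genuine difficulty is \eqref{eq:diffenergy_2}--\eqref{eq:diffenergy_3}. The constraint is rigid: the energy controls only $\lVert\nabla\Tilde{\vec w}\rVert_{L^2}$ and $\lVert\partial_t\Tilde{\vec w}\rVert_{L^2}$ — no fractional Sobolev norm of $\vec u-\vec v$ below $\dot H^{d/2}$, and nothing above $\dot H^{d/2}$ for $\vec u,\vec v$ except the $\Sigma$‑norms — so the commutator differences must be split (using integration by parts and the Leibniz identities \cref{comm_lem}, \cref{lem:comm_int} to move derivatives) in such a way that no fractional power ever lands on $\vec u-\vec v$ and no norm of $\vec u$ or $\vec v$ stronger than $\dot H^{d/2}$ or the $\Sigma$‑norms is needed, and that every term closes with exactly one power of $E$ (so that Grönwall is Lipschitz and forces $E\equiv0$). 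Matching the Hölder exponents against the precise Lorentz‑refined estimates of Section~\ref{sec2} is where $d\ge3$ (for $\dot H^1\hookrightarrow L^{2d/(d-2)}$ and for the available room in the triple‑commutator estimates) and the extra integrability \eqref{first} — equivalently the control of $\lVert\Dso^{\beta}\vec u\rVert_{L^{2d/(2\beta-1)}}$ for $\beta\in(1,\alpha]$ — are used; one may assume $\alpha\in(1,2)$ without loss by interpolating the $\Sigma$‑norms. This bookkeeping is the heart of the argument.
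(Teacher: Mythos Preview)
Your high–level scheme and your handling of \eqref{eq:diffenergy_1} and \eqref{eq:diffenergy_2} match the paper: the orthogonality of the second fundamental form to $\partial_t\Tilde{\vec u},\partial_t\Tilde{\vec v}$, the identity $\langle\vec u,\Dso\vec u\rangle_L=-\tfrac12 H_{\Dso}(T_L(\vec u),\vec u)$, and the multilinear splitting of the $\vec u$-- versus $\vec v$--expressions are exactly what is done there, and the H\"older/Sobolev bookkeeping closes as you say.

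The genuine gap is \eqref{eq:diffenergy_3}, more precisely the sub\-term \eqref{eq:diffenergy_3_3}
\[
\int_{\R^d}\big\langle \vec v\wedge_L H_{\Dso}(\vec v\wedge_L,\Dso\vec w),\,\mathbf X^r_{\vec u}\big\rangle_L\,\big\langle\mathbf E^r_{\Tilde{\vec u}},\,\partial_t\Tilde{\vec w}\big\rangle.
\]
Your plan (``every summand carries either $\vec u-\vec v$ undifferentiated or $\Dso(\vec u-\vec v)$ in $L^2$, and the commutator machinery distributes the remaining derivatives onto $\vec u,\vec v$'') does not close here: all the Leibniz estimates in Section~\ref{sec2} (\cref{lem:prod_rule}, \cref{lem:prod_rule_2}, \cref{double_comm_int}) require a strictly positive fractional order on \emph{each} argument, so estimating $H_{\Dso}(\vec v\wedge_L,\Dso\vec w)$ forces $\Dso^{2-\sigma}\vec w$ with $2-\sigma>1$, which $E(t)$ does not control. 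The paper explicitly flags this obstruction and resolves it with three devices that your sketch does not contain: (i) the Riesz--transform expansion $\Dso\vec w=c\sum_e\mathcal R^e\partial_e\!\int_0^1 d\psi_{\Tilde\gamma(s)}\Tilde{\vec w}\,ds$, which separates $\Dso\vec w$ into pieces carrying $\partial_e\Tilde{\vec w}$ (one \emph{local} derivative, so that subsequent commutators hit the bounded coefficient $d\psi$ rather than piling fractional orders onto it) and pieces carrying $\Tilde{\vec w}$ times $\partial_e\vec u,\partial_e\vec v$; (ii) for the surviving piece with $\partial_e\Tilde{\vec w}$, the test factor $\langle\mathbf E^r_{\Tilde{\vec u}},\partial_t\Tilde{\vec w}\rangle$ is itself expanded using the equation, via $\partial_t\vec w=\vec w\wedge_L\Dso\vec u+\vec v\wedge_L\Dso\vec w$, followed by fractional integration by parts to move the $H_{\Dso}$ off of $\Dso\vec w$; (iii) for the resulting ``diagonal'' pairing $\sum_j'\int\vec v^j H_{\Dso}(\vec v^j,\Gamma)\cdot(\vec v\wedge_L\Gamma)$ with $\Gamma=\int_0^1\mathcal R^e(d\psi_{\Tilde\gamma(s)}\partial_e\Tilde{\vec w})\,ds$, a determinant/antisymmetry cancellation (two collinear columns) together with \cref{lem:comm_difference} is needed to avoid a second derivative on $\Gamma$. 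None of (i)--(iii) is implied by ``integration by parts and the Leibniz identities''; without them \eqref{eq:diffenergy_3_3} does not reduce to $\Sigma(t)E(t)$, and this is exactly the part you label ``the heart of the argument'' but leave unsupplied.
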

With the assumptions of \cref{thm:1}, we are left to prove \cref{big_thm}.
\subsection{Proof of \cref{big_thm}}
We will use the following throughout without further mention. Observe that for $\frac{1}{2} < \alpha_1 < \alpha_2 < d + \frac{1}{2}$, using \cref{lem:sob_in}, we have
\begin{align} \label{chain}
    \lVert \Dso^{\alpha_1} \vec{u} \rVert_{L^{\frac{2d}{2\alpha_1 - 1}}(\R^d)}
    \lesssim  \lVert \Dso^{\alpha_2} \vec{u} \rVert_{L^{\frac{2d}{2\alpha_2 - 1}}(\R^d)}.
\end{align}
We begin by estimating \eqref{eq:diffenergy_1} as in \cite{sha_stru}.
\begin{lemma}
For $d \ge 3$, we have that 
\begin{align*}
    &\int_{\mathbb{R}^d} \langle A(\Tilde{\vec{u}})(\partial_{\alpha} \Tilde{\vec{u}}, \partial^{\alpha}\Tilde{\vec{u}}), \Tilde{\vec{w}}_{t} \rangle_{\R^m}- \langle A(\Tilde{\vec{v}})(\partial^{\alpha} \Tilde{\vec{v}}, \partial_{\alpha}\Tilde{\vec{v}}), \Tilde{\vec{w}}_{t} \rangle_{\R^m}\ \\
    & \lesssim 
    \brac{\lVert d \vec{u} \rVert_{L^{2n}(\R^d)}^2 + \lVert d \vec{v} \rVert_{L^{2n}(\R^d)}^2} \lVert d \Tilde{\vec{w}} \rVert_{L^2(\R^d)}^2.
\end{align*}
\end{lemma}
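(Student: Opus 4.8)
The plan is to bound the integrand in the statement, pointwise on $\R^d$, by a constant depending only on $\Lambda$ times $\lvert\Tilde{\vec{w}}\rvert\,\lvert d\Tilde{\vec{w}}\rvert\,(\lvert d\vec{u}\rvert^2+\lvert d\vec{v}\rvert^2)$, and then to integrate this (after passing to absolute values) via H\"older's inequality together with the Sobolev embedding $\dot H^1(\R^d)\hookrightarrow L^{2d/(d-2)}(\R^d)$, which is available because $d\ge 3$. Three geometric facts, all consequences of the Nash embedding, are used throughout: (i) the images of $\Tilde{\vec{u}}=\varphi\circ\vec{u}$ and $\Tilde{\vec{v}}=\varphi\circ\vec{v}$ lie in the fixed \emph{compact} set $K\defeq\varphi(\mathbb{H}^2\cap\{\lvert\cdot\rvert\le\Lambda\})\subset\Tilde{\mathbb{H}}^2$, on which $\varphi$, the second fundamental form $A$, and the orthogonal projections $\Pi_{\Tilde p}^{\perp}$ of $\R^m$ onto $(T_{\Tilde p}\Tilde{\mathbb{H}}^2)^{\perp}$ are bounded and Lipschitz with constants depending only on $\Lambda$; (ii) $A(\Tilde p)(\cdot,\cdot)$ is valued in the normal space $(T_{\Tilde p}\Tilde{\mathbb{H}}^2)^{\perp}$; and (iii) $\partial_\alpha\Tilde{\vec{u}}=d\varphi_{\vec{u}}\partial_\alpha\vec{u}$ is tangent to $\Tilde{\mathbb{H}}^2$ at $\Tilde{\vec{u}}$, and likewise $\partial_\alpha\Tilde{\vec{v}}$ at $\Tilde{\vec{v}}$, so that $\lvert\partial_\alpha\Tilde{\vec{u}}\rvert\lesssim_\Lambda\lvert d\vec{u}\rvert$ by Lipschitz continuity of $\varphi$ on $K$. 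I also extend each $A(\Tilde p)$ to a smooth, bilinear, $(T_{\Tilde p}\Tilde{\mathbb{H}}^2)^{\perp}$-valued form on $\R^m\times\R^m$ (say by precomposing with the tangential projection) that agrees with the second fundamental form on tangent pairs; this is the only extension used.

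First I would decompose the difference. Adding and subtracting $A(\Tilde{\vec{v}})(\partial_\alpha\Tilde{\vec{u}},\partial^\alpha\Tilde{\vec{u}})$ and then telescoping the remaining difference of bilinear forms one slot at a time — so that only the first-order differences $\partial_\alpha\Tilde{\vec{u}}-\partial_\alpha\Tilde{\vec{v}}=\partial_\alpha\Tilde{\vec{w}}$ appear — gives
\begin{align*}
 A(\Tilde{\vec{u}})(\partial_\alpha\Tilde{\vec{u}},\partial^\alpha\Tilde{\vec{u}}) - A(\Tilde{\vec{v}})(\partial_\alpha\Tilde{\vec{v}},\partial^\alpha\Tilde{\vec{v}}) = {} & \big[A(\Tilde{\vec{u}})-A(\Tilde{\vec{v}})\big](\partial_\alpha\Tilde{\vec{u}},\partial^\alpha\Tilde{\vec{u}}) \\
 & + A(\Tilde{\vec{v}})(\partial_\alpha\Tilde{\vec{w}},\partial^\alpha\Tilde{\vec{u}}) + A(\Tilde{\vec{v}})(\partial_\alpha\Tilde{\vec{v}},\partial^\alpha\Tilde{\vec{w}}).
\end{align*}
The first term is handled directly: Lipschitz continuity of $A$ on $K$ gives $\lvert A(\Tilde{\vec{u}})-A(\Tilde{\vec{v}})\rvert\lesssim_\Lambda\lvert\Tilde{\vec{w}}\rvert$, so pairing with $\Tilde{\vec{w}}_t$ and using $\lvert\partial_\alpha\Tilde{\vec{u}}\rvert\lesssim_\Lambda\lvert d\vec{u}\rvert$ and $\lvert\Tilde{\vec{w}}_t\rvert\le\lvert d\Tilde{\vec{w}}\rvert$ bounds its contribution by $\lesssim_\Lambda\lvert\Tilde{\vec{w}}\rvert\,\lvert d\vec{u}\rvert^2\,\lvert d\Tilde{\vec{w}}\rvert$.

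For the remaining two terms I would use fact (ii). Since $A(\Tilde{\vec{v}})(\cdot,\cdot)$ is normal to $\Tilde{\mathbb{H}}^2$ at $\Tilde{\vec{v}}$, pairing it with $\Tilde{\vec{w}}_t$ equals pairing it with $\Pi_{\Tilde{\vec{v}}}^{\perp}\Tilde{\vec{w}}_t$. Now $\Tilde{\vec{w}}_t=\partial_t\Tilde{\vec{u}}-\partial_t\Tilde{\vec{v}}$ with $\partial_t\Tilde{\vec{v}}$ tangent at $\Tilde{\vec{v}}$, hence $\Pi_{\Tilde{\vec{v}}}^{\perp}\Tilde{\vec{w}}_t=\Pi_{\Tilde{\vec{v}}}^{\perp}\partial_t\Tilde{\vec{u}}$; and since $\partial_t\Tilde{\vec{u}}$ is tangent at $\Tilde{\vec{u}}$, i.e.\ $\Pi_{\Tilde{\vec{u}}}^{\perp}\partial_t\Tilde{\vec{u}}=0$, this equals $(\Pi_{\Tilde{\vec{v}}}^{\perp}-\Pi_{\Tilde{\vec{u}}}^{\perp})\partial_t\Tilde{\vec{u}}$, so Lipschitz continuity of $\Tilde p\mapsto\Pi_{\Tilde p}^{\perp}$ on $K$ gives $\lvert\Pi_{\Tilde{\vec{v}}}^{\perp}\Tilde{\vec{w}}_t\rvert\lesssim_\Lambda\lvert\Tilde{\vec{w}}\rvert\,\lvert d\vec{u}\rvert$. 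Combining with the elementary bounds $\lvert A(\Tilde{\vec{v}})(\partial_\alpha\Tilde{\vec{w}},\partial^\alpha\Tilde{\vec{u}})\rvert\lesssim_\Lambda\lvert d\Tilde{\vec{w}}\rvert\,\lvert d\vec{u}\rvert$ and $\lvert A(\Tilde{\vec{v}})(\partial_\alpha\Tilde{\vec{v}},\partial^\alpha\Tilde{\vec{w}})\rvert\lesssim_\Lambda\lvert d\vec{v}\rvert\,\lvert d\Tilde{\vec{w}}\rvert$, the second and third contributions are $\lesssim_\Lambda\lvert\Tilde{\vec{w}}\rvert\,\lvert d\Tilde{\vec{w}}\rvert\,\lvert d\vec{u}\rvert^2$ and $\lesssim_\Lambda\lvert\Tilde{\vec{w}}\rvert\,\lvert d\Tilde{\vec{w}}\rvert\,\lvert d\vec{u}\rvert\,\lvert d\vec{v}\rvert$. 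Adding all three contributions and using $\lvert d\vec{u}\rvert\,\lvert d\vec{v}\rvert\le\tfrac12(\lvert d\vec{u}\rvert^2+\lvert d\vec{v}\rvert^2)$ yields the pointwise bound announced at the start.

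It remains to integrate over $\R^d$. By H\"older's inequality with the exponents $\tfrac{2d}{d-2}$, $2$, $d$ (which are conjugate: $\tfrac{d-2}{2d}+\tfrac12+\tfrac1d=1$), applied to $\lvert\Tilde{\vec{w}}\rvert$, $\lvert d\Tilde{\vec{w}}\rvert$, and $\lvert d\vec{u}\rvert^2$ (resp.\ $\lvert d\vec{v}\rvert^2$), followed by the Sobolev inequality $\lVert\Tilde{\vec{w}}\rVert_{L^{2d/(d-2)}(\R^d)}\lesssim\lVert d\Tilde{\vec{w}}\rVert_{L^2(\R^d)}$ — legitimate since $\Tilde{\vec{w}}$ is bounded on $\R^d$ and $d\Tilde{\vec{w}}\in L^2$ — the integral in the statement is $\lesssim_\Lambda(\lVert d\vec{u}\rVert_{L^{2d}(\R^d)}^2+\lVert d\vec{v}\rVert_{L^{2d}(\R^d)}^2)\lVert d\Tilde{\vec{w}}\rVert_{L^2(\R^d)}^2$, as claimed. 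The one step needing care — the main, if modest, obstacle — is the decomposition: the difference of bilinear forms must be telescoped one slot at a time, so that only first-order differences $\partial_\alpha\Tilde{\vec{w}}$ appear, rather than fully expanded, which would produce a term $A(\Tilde{\vec{v}})(\partial_\alpha\Tilde{\vec{w}},\partial^\alpha\Tilde{\vec{w}})$ that is cubic in $d\Tilde{\vec{w}}$ once paired with $\Tilde{\vec{w}}_t$ and cannot be absorbed into $\lVert d\Tilde{\vec{w}}\rVert_{L^2}^2$. With the telescoped decomposition every term carries exactly one factor $\lvert\Tilde{\vec{w}}\rvert$ — the Lipschitz piece directly, the other two through the orthogonality of $A(\Tilde{\vec{v}})$ to $T_{\Tilde{\vec{v}}}\Tilde{\mathbb{H}}^2$ — which is then traded, via the Sobolev embedding, for the second power of $\lVert d\Tilde{\vec{w}}\rVert_{L^2}$.
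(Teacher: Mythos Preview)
Your proof is correct and follows essentially the same route as the paper: the same telescoping decomposition into $[A(\Tilde{\vec u})-A(\Tilde{\vec v})](\partial_\alpha\Tilde{\vec u},\partial^\alpha\Tilde{\vec u})$ plus two terms linear in $\partial\Tilde{\vec w}$, the same orthogonality trick on the latter (the paper subtracts $\langle A(\Tilde{\vec u})(\cdot,\cdot),\partial_t\Tilde{\vec u}\rangle=0$ to produce $A(\Tilde{\vec v})-A(\Tilde{\vec u})$, whereas you insert $\Pi^\perp_{\Tilde{\vec v}}$ and subtract $\Pi^\perp_{\Tilde{\vec u}}\partial_t\Tilde{\vec u}=0$; both extract a factor $|\Tilde{\vec w}|$ via a Lipschitz estimate on the compact image), and the same H\"older/Sobolev conclusion.
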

\begin{proof}
Since the second fundamental form is bi-linear and symmetric, we have
\begin{align*}
    &A(\Tilde{\vec{u}})(\partial_{\alpha} \Tilde{\vec{u}}, \partial^{\alpha}\Tilde{\vec{u}}) - A(\Tilde{\vec{v}})(\partial_{\alpha} \Tilde{\vec{v}}, \partial^{\alpha}\Tilde{\vec{v}})\\
    &=  \sum_{l=3}^m\langle \partial_\alpha \Tilde{\vec{u}}, d\mathbf{E}^l_{\Tilde{\vec{u}}} \partial^\alpha \Tilde{\vec{u}}\rangle_{\R^m}\vec{E}^l_{\Tilde{\vec{u}}} -  \langle \partial_\alpha \Tilde{\vec{v}}, d\mathbf{E}^l_{\Tilde{\vec{v}}} \partial^\alpha \Tilde{\vec{v}}\rangle_{\R^m}\vec{E}^l_{\Tilde{\vec{v}}}\\
    &=  \sum_{l=3}^m\langle \partial_\alpha \Tilde{\vec{u}}, d\mathbf{E}^l_{\Tilde{\vec{u}}} \partial^\alpha \Tilde{\vec{u}}\rangle_{\R^m}\brac{\vec{E}^l_{\Tilde{\vec{u}}} - \vec{E}^l_{\Tilde{\vec{v}}}} + \langle \partial_\alpha \Tilde{\vec{u}}, d\mathbf{E}^l_{\Tilde{\vec{u}}} \partial^\alpha \Tilde{\vec{u}}\rangle_{\R^m}\vec{E}^l_{\Tilde{\vec{v}}}-  \langle \partial_\alpha \Tilde{\vec{v}}, d\mathbf{E}^l_{\Tilde{\vec{v}}} \partial^\alpha \Tilde{\vec{v}}\rangle_{\R^m}\vec{E}^l_{\Tilde{\vec{v}}}\\
    &= \brac{A(\Tilde{\vec{u}}) - A(\Tilde{\vec{v}})}(\partial_{\alpha} \Tilde{\vec{u}}, \partial^{\alpha}\Tilde{\vec{u}}) + A(\Tilde{\vec{v}})(\partial_{\alpha} \Tilde{\vec{u}}, \partial^{\alpha}\Tilde{\vec{u}}) - A(\Tilde{\vec{v}})(\partial_{\alpha} \Tilde{\vec{v}}, \partial^{\alpha}\Tilde{\vec{v}})\\
    &= \brac{A(\Tilde{\vec{u}}) - A(\Tilde{\vec{v}})}(\partial_{\alpha} \Tilde{\vec{u}}, \partial^{\alpha}\Tilde{\vec{u}}) + A(\Tilde{\vec{v}})(\partial_{\alpha} \Tilde{\vec{u}}, \partial^{\alpha}\Tilde{\vec{u}} - \partial^{\alpha}\Tilde{\vec{v}}) + A(\Tilde{\vec{v}})(\partial_{\alpha} \Tilde{\vec{u}} - \partial_{\alpha} \Tilde{\vec{v}}, \partial^{\alpha}\Tilde{\vec{v}})\\
    &= \brac{A(\Tilde{\vec{u}}) - A(\Tilde{\vec{v}})}(\partial_{\alpha} \Tilde{\vec{u}}, \partial^{\alpha}\Tilde{\vec{u}}) + A(\Tilde{\vec{v}})(\partial_{\alpha} \Tilde{\vec{u}}, \partial^{\alpha}\Tilde{\vec{u}} - \partial^{\alpha}\Tilde{\vec{v}}) + A(\Tilde{\vec{v}})(\partial_{\alpha} \Tilde{\vec{v}}, \partial^{\alpha}\Tilde{\vec{u}} - \partial^{\alpha}\Tilde{\vec{v}})\\
    &= \brac{A(\Tilde{\vec{u}}) - A(\Tilde{\vec{v}})}(\partial_{\alpha} \Tilde{\vec{u}}, \partial^{\alpha}\Tilde{\vec{u}}) + A(\Tilde{\vec{v}})(\partial_{\alpha} \Tilde{\vec{u}} + \partial_{\alpha} \Tilde{\vec{v}}, \partial^{\alpha}\Tilde{\vec{w}}).
\end{align*}
Hence,
    \begin{align*}
&\int_{\mathbb{R}^d} \langle A(\Tilde{\vec{u}})(\partial_{\alpha} \Tilde{\vec{u}}, \partial^{\alpha}\Tilde{\vec{u}}), \Tilde{\vec{w}}_{t} \rangle_{\R^m}- \langle A(\Tilde{\vec{v}})(\partial_{\alpha} \Tilde{\vec{v}}, \partial^{\alpha}\Tilde{\vec{v}}), \Tilde{\vec{w}}_{t} \rangle_{\R^m}\\
&=\int_{\mathbb{R}^d} \langle [A(\Tilde{\vec{u}}) -A(\Tilde{\vec{v}})](\partial_{\alpha} \Tilde{\vec{u}}, \partial^{\alpha}\Tilde{\vec{u}}), \Tilde{\vec{w}}_{t} \rangle_{\R^m}+ \langle A(\Tilde{\vec{v}})(\partial_{\alpha} \Tilde{\vec{u}} + \partial_{\alpha} \Tilde{\vec{v}}, \partial^{\alpha}\Tilde{\vec{w}}), \Tilde{\vec{w}}_{t} \rangle_{\R^m}\\
&= I(t) + II(t).
\end{align*}
Then 
\begin{align*}
    I(t) &= \int_{\mathbb{R}^d} \langle [A(\Tilde{\vec{u}}) -A(\Tilde{\vec{v}})](\partial_{\alpha} \Tilde{\vec{u}}, \partial^{\alpha}\Tilde{\vec{u}}), \Tilde{\vec{w}}_{t} \rangle_{\R^m}\\
    &= \int_{\mathbb{R}^d} -\langle \partial_\alpha \Tilde{\vec{u}}, d\mathbf{E}^l_{\Tilde{\vec{u}}} \partial^\alpha \Tilde{\vec{u}}\rangle_{\R^m} \vec{E}^l_{\Tilde{\vec{u}}} + \langle \partial_\alpha \Tilde{\vec{u}}, d\mathbf{E}^l_{\Tilde{\vec{v}}} \partial^\alpha \Tilde{\vec{u}}\rangle_{\R^m} \vec{E}^l_{\Tilde{\vec{v}}}\\
    &= \int_{\mathbb{R}^d} -\langle \partial_\alpha \Tilde{\vec{u}}, \brac{d\mathbf{E}^l_{\Tilde{\vec{u}}} - d\mathbf{E}^l_{\Tilde{\vec{v}}}} \partial^\alpha \Tilde{\vec{u}}\rangle_{\R^m} \vec{E}^l_{\Tilde{\vec{u}}} - \langle \partial_\alpha \Tilde{\vec{u}}, d\mathbf{E}^l_{\Tilde{\vec{v}}} \partial^\alpha \Tilde{\vec{u}}\rangle_{\R^m} \brac{\vec{E}^l_{\Tilde{\vec{u}}} - \vec{E}^l_{\Tilde{\vec{v}}}}\\
    &\lesssim_{\Lambda} \int_{\mathbb{R}^d} |d \Tilde{\vec{u}}|^2 |\Tilde{\vec{w}}| |d \Tilde{\vec{w}}| \\
    &\lesssim_{\Lambda} \lVert d\Tilde{\vec{u}}\rVert_{L^{2n}(\R^d)}^2\lVert \Tilde{\vec{w}}\rVert_{L^{\frac{2n}{n-2}}(\R^d)} \lVert d \Tilde{\vec{w}} \rVert_{L^2(\R^d)}\\
    &\lesssim_{\Lambda} \lVert d\vec{u}\rVert_{L^{2n}(\R^d)}^2 \lVert d \Tilde{\vec{w}} \rVert_{L^2(\R^d)}^2.
\end{align*}
For the next integral, since $\partial \Tilde{\vec{u}} \in T_{\Tilde{\vec{u}}}\Tilde{\mathbb{H}}^2, \partial \Tilde{\vec{v}} \in T_{\Tilde{\vec{v}}}\Tilde{\mathbb{H}}^2$ and $A(\Tilde{\vec{u}})(\cdot,\cdot) \in \brac{T_{\Tilde{\vec{u}}}\Tilde{\mathbb{H}}^2}^\perp, A(\Tilde{\vec{v}})(\cdot,\cdot) \in \brac{T_{\Tilde{\vec{v}}}\Tilde{\mathbb{H}}^2}^\perp$, we have $\langle A(\Tilde{\vec{u}})(\cdot,\cdot), \partial_t \Tilde{\vec{u}}\rangle_{\R^m}= 0 = \langle A(\Tilde{\vec{v}})(\cdot,\cdot),\partial_t \Tilde{\vec{v}}\rangle$, which yields

\begin{align*}
    \langle A(\Tilde{\vec{v}})(\partial_\alpha \Tilde{\vec{u}},\partial^\alpha \Tilde{\vec{w}}),\partial_\alpha \Tilde{\vec{w}}_t\rangle
    &= \langle A(\Tilde{\vec{v}})(\partial_\alpha \Tilde{\vec{u}},\partial^\alpha \Tilde{\vec{w}}),\partial_t \Tilde{\vec{u}}\rangle_{\R^m}\\
    &= \langle [A(\Tilde{\vec{v}})-A(\Tilde{\vec{u}})](\partial_\alpha \Tilde{\vec{u}},\partial^\alpha \Tilde{\vec{w}}),\partial_t \Tilde{\vec{u}}\rangle_{\R^m}\\
    &\lesssim_{\Lambda} |d\Tilde{\vec{u}}|^2|\Tilde{\vec{w}}||d\Tilde{\vec{w}}|,
\end{align*}
and similarly,

\begin{align*}
    \langle A(\Tilde{\vec{v}})(\partial_\alpha \Tilde{\vec{v}},\partial^\alpha \Tilde{\vec{w}}),\partial_\alpha \Tilde{\vec{w}}_t\rangle
    &= \langle A(\Tilde{\vec{v}})(\partial_\alpha \Tilde{\vec{v}},\partial^\alpha \Tilde{\vec{w}}),\partial_t \Tilde{\vec{u}}\rangle_{\R^m}\\
    &= \langle [A(\Tilde{\vec{v}})-A(\Tilde{\vec{u}})](\partial_\alpha \Tilde{\vec{v}},\partial^\alpha \Tilde{\vec{w}}),\partial_t \Tilde{\vec{u}}\rangle_{\R^m}\\
    &\lesssim_{\Lambda} |d\Tilde{\vec{u}}|^2|\Tilde{\vec{w}}||d\Tilde{\vec{w}}| + |d\Tilde{\vec{v}}|^2|\Tilde{\vec{w}}||d\Tilde{\vec{w}}|.
\end{align*}

Hence,
\begin{align*}
    II(t) \lesssim_{\Lambda} \brac{\lVert d \vec{u} \rVert_{L^{2n}(\R^d)}^2 + \lVert d \vec{v} \rVert_{L^{2n}(\R^d)}^2} \lVert d \Tilde{\vec{w}} \rVert_{L^2(\R^d)}^2.
\end{align*}
\end{proof}
Since $\lVert \vec{u}\lVert_\infty, \lVert \vec{v}\lVert_\infty < \infty$, we have $\operatorname{Im}\vec{u},\operatorname{Im}\vec{v} \subset N = \overline{B}_r \cap \mathbb{H}^2$, where $\overline{B}_r$ is the compact ball in $\R^3$ with radius $r > \sup\{\lVert \vec{u}\lVert_\infty, \lVert \vec{v}\lVert_\infty\}$. Then $\varphi: N \to \varphi(N)$ is a diffeomorphism, so there is a diffeomorphism $\psi: \varphi(N) \to N$ that is the inverse of $\varphi$. 

We extend $\psi$ to a smooth map $\psi \in C_c^\infty(\R^m,\R^3)$ to get that

\begin{align} \label{w_eq}
    \vec{u}(x) - \vec{v}(x) 
    &= \psi(\Tilde{\vec{u}}(x))-\psi(\Tilde{\vec{v}}(x)) 
    = \int_0^1 d\psi_{\Tilde{\gamma}(s)(x)} \Tilde{\vec{w}}(x) \ ds,
\end{align}
where $\Tilde{\gamma}(s) = s\Tilde{\vec{u}}+(1-s)\Tilde{\vec{v}}$.

With this we have the following lemma.
\begin{lemma} \label{isom_est}
For $p \in (1,\infty]$,
\begin{align*}
    \lVert \Dso \Tilde{\vec{u}}\rVert_{L^{p}(\R^d)} &\lesssim_{\Lambda} \lVert \Dso\vec{u} \rVert_{L^{p}(\R^d)}, \\
     \lVert \Dso \Tilde{\vec{v}}\rVert_{L^{p}(\R^d)} &\lesssim_{\Lambda} \lVert \Dso \vec{v} \rVert_{L^{p}(\R^d)},
\end{align*}
and for $\sigma \in (\frac{1}{2},1)$ and $k \in \mathbb{Z}_+$
\begin{align*}
    \lVert \Dso^\sigma \int_0^1 d^k\psi_{\Tilde{\gamma}(s)}\ ds \rVert_{L^{\frac{2d}{2\gamma - 1}}(\R^d)} 
    \lesssim_{\Lambda}  \lVert \Dso \vec{u}\rVert_{L^{2d}(\R^d)} +  \lVert \Dso \vec{v}\rVert_{L^{2d}(\R^d)}.
\end{align*}
\end{lemma}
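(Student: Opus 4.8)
The plan is to reduce every estimate to the \emph{ordinary} (local) chain rule by trading the operator $\Dso=\Dso^1$ for $\nabla$ via \cref{comp}, and to push the genuinely fractional regularity in the last estimate entirely into one degenerate endpoint case of \cref{lem:gag_in}. Throughout I would use that, since $\lVert\vec{u}\rVert_\infty,\lVert\vec{v}\rVert_\infty<\Lambda$, the images of $\vec{u}$ and $\vec{v}$ lie in the compact set $N=\overline{B}_r\cap\HH^2$, so that $\varphi$ — and the smooth compactly supported extension $\psi$ of $\varphi^{-1}$ — together with all of their derivatives are bounded by a constant $C(\Lambda)$ on the relevant (bounded) sets.

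For the first two estimates I would argue as follows. Writing $\Tilde{\vec{u}}=\varphi\circ\vec{u}$, the ordinary chain rule gives the pointwise identity $\nabla\Tilde{\vec{u}}=d\varphi_{\vec{u}}\,\nabla\vec{u}$, hence the pointwise bound $|\nabla\Tilde{\vec{u}}|\le C(\Lambda)\,|\nabla\vec{u}|$. For $p\in(1,\infty)$ this yields, by \cref{comp} applied on both sides, $\lVert\Dso\Tilde{\vec{u}}\rVert_{L^p}\approx\lVert\nabla\Tilde{\vec{u}}\rVert_{L^p}\lesssim_\Lambda\lVert\nabla\vec{u}\rVert_{L^p}\approx\lVert\Dso\vec{u}\rVert_{L^p}$, and symmetrically for $\vec{v}$. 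Since the pointwise domination $|\nabla\Tilde{\vec{u}}|\le C(\Lambda)|\nabla\vec{u}|$ transfers to any rearrangement-invariant norm, the same conclusion holds for the Lorentz exponents $L^{(p,q)}$ needed later once \cref{comp} is replaced by its real-interpolation version; the strict endpoint $p=\infty$, where \cref{comp} is unavailable, would instead be obtained directly from the singular-integral representation $\Dso g(x)=c\int (g(x)-g(y))|x-y|^{-d-1}\,dy$, splitting the kernel at unit scale, using the mean value theorem on the near part and $\lVert\vec{u}\rVert_\infty<\infty$ to absorb the far part.

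For the third estimate (reading $\gamma=\sigma$ in the exponent, which is plainly the intended meaning) I set $\Phi\defeq d^k\psi\in C^\infty_c(\R^m)$ and $q=\tfrac{2d}{2\sigma-1}$. First I would interchange $\Dso^\sigma$ with the (elementary) $s$-integration and apply Minkowski's integral inequality, so that it suffices to bound $\lVert\Dso^\sigma(\Phi\circ\Tilde{\gamma}(s))\rVert_{L^q}$ uniformly for $s\in[0,1]$, where $\Tilde{\gamma}(s)=s\Tilde{\vec{u}}+(1-s)\Tilde{\vec{v}}$ is valued in a bounded subset of $\R^m$ (so that $\Phi\circ\Tilde{\gamma}(s)$ makes sense thanks to the extension of $\psi$) and $\lVert\Phi\circ\Tilde{\gamma}(s)\rVert_\infty\le\lVert\Phi\rVert_\infty\lesssim_\Lambda 1$. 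Since $\sigma\in(\tfrac12,1)$, the exponent $q$ is exactly the right endpoint in case (2) of \cref{lem:gag_in}, for which $\theta=2(\sigma-\tfrac{d}{q})=1$; modulo a density argument \cref{lem:gag_in} then degenerates to
\[
\lVert\Dso^\sigma(\Phi\circ\Tilde{\gamma}(s))\rVert_{L^q}\;\lesssim\;\lVert\Phi\circ\Tilde{\gamma}(s)\rVert_\infty^{0}\,\lVert\Dso(\Phi\circ\Tilde{\gamma}(s))\rVert_{L^{2d}}\;=\;\lVert\Dso(\Phi\circ\Tilde{\gamma}(s))\rVert_{L^{2d}}.
\]
Now I would again trade $\Dso$ for $\nabla$ via \cref{comp}, use the ordinary chain rule $\nabla(\Phi\circ\Tilde{\gamma}(s))=(d\Phi\circ\Tilde{\gamma}(s))\,\nabla\Tilde{\gamma}(s)$ with $|d\Phi|\lesssim_\Lambda 1$ and $|\nabla\Tilde{\gamma}(s)|\le|\nabla\Tilde{\vec{u}}|+|\nabla\Tilde{\vec{v}}|$ for $s\in[0,1]$, and \cref{comp} once more, to get $\lVert\Dso(\Phi\circ\Tilde{\gamma}(s))\rVert_{L^{2d}}\lesssim_\Lambda\lVert\Dso\Tilde{\vec{u}}\rVert_{L^{2d}}+\lVert\Dso\Tilde{\vec{v}}\rVert_{L^{2d}}$, and finish by invoking the first part of the lemma with $p=2d$ (only the elementary $p\in(1,\infty)$ case is needed here) to replace $\Tilde{\vec{u}},\Tilde{\vec{v}}$ by $\vec{u},\vec{v}$. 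All bounds are uniform in $s$, so integrating over $s\in[0,1]$ concludes.

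I do not expect a serious obstacle: the argument is a chain of reductions. The one technical point needing care is the endpoint $p=\infty$ of the first estimate, where \cref{comp} does not apply and one must argue directly with the $\Dso^1$ kernel; the single real idea is the observation that choosing $q=\tfrac{2d}{2\sigma-1}$ forces $\theta=1$ in \cref{lem:gag_in}, which is precisely the interpolation identity converting $\sigma$ fractional derivatives in $L^q$ into one full derivative in $L^{2d}$ and thereby making the (local, elementary) chain rule applicable.
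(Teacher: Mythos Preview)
Your proposal is correct and follows essentially the same route as the paper: trade $\Dso$ for $\nabla$ via \cref{comp}, use the pointwise chain rule with the $\Lambda$-dependent bounds on $d\varphi$ and $d^{k+1}\psi$, and for the third estimate reduce $\Dso^\sigma$ in $L^{\frac{2d}{2\sigma-1}}$ to $\Dso$ in $L^{2d}$ (the paper cites \cref{lem:sob_in} directly rather than the $\theta=1$ endpoint of \cref{lem:gag_in}, but these are the same inequality). Your extra care about the endpoint $p=\infty$ and Lorentz exponents goes beyond what the paper's proof actually writes out (it only invokes \cref{comp}, which is stated for $p\in(1,\infty)$), so if anything you are being more thorough there.
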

\begin{proof}
Note that by \cref{comp}
\begin{align*}
    \lVert \Dso \Tilde{\vec{u}}\rVert_{L^{p}(\R^d)}
    \lesssim \lVert \nabla \Tilde{\vec{u}}\rVert_{L^{p}(\R^d)}
\end{align*}
and
\begin{align*}
    \abs{\nabla\Tilde{\vec{u}}}
    = &  \abs{d\varphi_{\vec{u}} \nabla\vec{u}} \\
    \lesssim_{\Lambda} & \abs{\nabla\vec{u}}.
\end{align*}
By again using \cref{comp},
\begin{align*}
    \lVert \Dso \Tilde{\vec{u}}\rVert_{L^{p}(\R^d)} &\lesssim_{\Lambda} \lVert \Dso\vec{u} \rVert_{L^{p}(\R^d)}.
\end{align*}
Similarly,
\begin{align*}
   \lVert \Dso \Tilde{\vec{v}}\rVert_{L^{p}(\R^d)} &\lesssim_{\Lambda} \lVert \Dso \vec{v} \rVert_{L^{p}(\R^d)}.
\end{align*}
Next, by the Sobolev inequality, \cref{comp} and for $k \in \Z_+$,
\begin{align*}
    \lVert \Dso^\sigma\int_0^1 d^k\psi_{\Tilde{\gamma}(s)}\ ds \rVert_{L^{\frac{2d}{2\gamma - 1}}(\R^d)} 
    &\lesssim \lVert \Dso \int_0^1 d^k\psi_{\Tilde{\gamma}(s)}\ ds \rVert_{L^{2d}(\R^d)}\\
    & \lesssim \lVert \nabla \int_0^1 d^k\psi_{\Tilde{\gamma}(s)}\ ds \rVert_{L^{2d}(\R^d)}.
\end{align*}
Since we are working in a compact subset of $\mathbb{H}^2$, $d\varphi, d^2\psi$ are uniformly bounded, so we have
\begin{align*}
    \abs{\nabla \brac{\int_0^1 d^k\psi_{\Tilde{\gamma}(s)}\ ds}}
    &= \abs{\int_0^1 d^{k+1}\psi_{\Tilde{\gamma}(s)} \nabla\Tilde{\gamma}(s) \ ds}\\
    &= \abs{\int_0^1 d^{k+1}\psi_{\Tilde{\gamma}(s)} \nabla\brac{s\Tilde{\vec{u}}+(1-s)\Tilde{\vec{v}}} \ ds}\\ 
    \lesssim_{\Lambda} & \abs{\nabla\Tilde{\vec{u}}} + \abs{\nabla\Tilde{\vec{v
}}}.
\end{align*}
Thus, using \cref{comp}
\begin{align*}
    \lVert \Dso^\sigma \int_0^1 d^k\psi_{\Tilde{\gamma}(s)}\ ds \rVert_{L^{\frac{2d}{2\gamma - 1}}(\R^d)}
    & \lesssim \lVert \nabla \int_0^1 d^k\psi_{\Tilde{\gamma}(s)}\ ds \rVert_{L^{2d}(\R^d)}\\
    &\lesssim_{\Lambda}  \lVert \Dso \vec{u}\rVert_{L^{2d}(\R^d)} +  \lVert \Dso \vec{v}\rVert_{L^{2d}(\R^d)}.
\end{align*}
\end{proof}
Furthermore, we can extend $\vec{X}^r$ as well so that
\begin{align}
    \vec{X}^r_{\vec{u}} - \vec{X}^r_{\vec{v}} 
    &= \int_0^1 d\vec{X}^r_{\gamma(s)}\vec{w} \ ds \label{vec_ident}
\end{align}
where $\gamma(s) = s\vec{u}+(1-s)\vec{v}$, and
\begin{align}
    \vec{E}^r_{\Tilde{\vec{u}}} - \vec{E}^r_{\Tilde{\vec{v}}} 
    =& \int_0^1 d\vec{E}^r_{\Tilde{\gamma}(s)}\Tilde{\vec{w}} \ ds \label{vec_indet_2},
\end{align}
where $d\vec{X}^r_{\gamma(s)}$ and $d\vec{E}^r_{\Tilde{\gamma}(s)}$ are bounded. 
We now tackle integral \eqref{eq:diffenergy_2}. We can rewrite it as such

\begin{align}
    &\int_{\mathbb{R}^n} \Big\langle \langle\vec{u} , |\nabla| \vec{u}\rangle_L P_\vec{u}( |\nabla|\vec{u}), \mathbf{X}_\vec{u}^r \Big\rangle_L \langle\mathbf{E}_{\Tilde{\vec{u}}}^r, \Tilde{\vec{w}}_{t} \rangle_{\R^m} - \Big\langle \langle\vec{v} , |\nabla| \vec{v}\rangle_L P_\vec{v}(|\nabla|\vec{v}), \mathbf{X}_\vec{v}^r\Big\rangle_L\langle\mathbf{E}_{\Tilde{\vec{v}}}^r, \Tilde{\vec{w}}_{t} \rangle_{\R^m} \nonumber\\
    =&\int_{\mathbb{R}^n} \Big\langle \langle\vec{u} , |\nabla| \vec{u}\rangle_L P_\vec{u}(|\nabla|\vec{u}) - \langle\vec{v} , |\nabla| \vec{v}\rangle_L P_\vec{v}(|\nabla|\vec{v}), \mathbf{X}_\vec{u}^r\Big\rangle_L\langle\mathbf{E}_{\Tilde{\vec{u}}}^r, \Tilde{\vec{w}}_{t} \rangle_{\R^m} \label{eq:diffenergy_2_1}\\
    +&\int_{\mathbb{R}^n} \Big\langle \langle\vec{v} , |\nabla| \vec{v}\rangle_L P_\vec{v}(|\nabla|\vec{v}), (\mathbf{X}_\vec{u}^r-\mathbf{X}_\vec{v}^r)\Big\rangle_L\langle\mathbf{E}_{\Tilde{\mathbf{ u}}}^r, \Tilde{\vec{w}}_{t} \rangle_{\R^m} \label{eq:diffenergy_2_2}\\
    +&\int_{\mathbb{R}^n} \Big\langle \langle\vec{v} , |\nabla| \vec{v}\rangle_L P_\vec{v}(|\nabla|\vec{v}), \mathbf{X}_\vec{v}^r\Big\rangle_L\langle(\mathbf{E}_{\Tilde{\vec{u}}}^r-\mathbf{E}_{\Tilde{\vec{v}}}^r), \Tilde{\vec{w}}_{t} \rangle_{\R^m}. \label{eq:diffenergy_2_3}
\end{align}

From here on out, $\sideset{}{'}{\sum}\limits_{j=1}^3$ will denote
\begin{align*}
    \sideset{}{'}{\sum}_{j=1}^3\vec{p}^j\vec{q}^j
    &= -\vec{p}^1\vec{q}^1 + \vec{p}^2\vec{q}^2 + \vec{p}^3\vec{q}^3\\
    & =\langle \vec{p}, \vec{q} \rangle_L.
\end{align*}
Here we define $T_L$ by $T_L(\vec{f})(\vec{g}) = \langle \vec{f}, \vec{g} \rangle_L$, so 
\begin{align*}
    H_{\Dso^s}(T_L(\vec{f}),\vec{g}) = \Dso^s (T_L(\vec{f})(\vec{g})) + T_L(\Dso^s\vec{f})(\vec{g}) + T_L(\vec{f}) \Dso^s\vec{g}.
\end{align*}
We estimate the above integral term by term, beginning by estimating \eqref{eq:diffenergy_2_1}.
\begin{lemma}
    For $d \ge 3$, we have that 
    \begin{align*}
        &\int_{\mathbb{R}^n}  \left\langle \langle\vec{u} , |\nabla| \vec{u}\rangle_LP_\vec{u}(\Dso\vec{u}) - \langle\vec{v} , \Dso \vec{v}\rangle_LP_\vec{v}(\Dso\vec{v}), \mathbf{X}_\vec{u}^r \right\rangle_L\langle\mathbf{E}_{\Tilde{\vec{u}}}^r, \Tilde{\vec{w}}_{t} \rangle_{\R^m}\\
        &\lesssim_{\Lambda} (\lVert |\nabla|\vec{u}\rVert_{L^{2d}(\R^d)}^2+ \lVert |\nabla|\vec{v}\rVert_{L^{2d}(\R^d)}^2)(\lVert  \nabla \Tilde{\vec{w}} \rVert_{L^2(\R^d)}^2 +\lVert \Tilde{\vec{w}}_t\rVert_{L^2(\R^d)}^2).
    \end{align*} 
\end{lemma}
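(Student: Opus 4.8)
The plan is to exploit the pointwise constraint $|\vec u|_L^2=|\vec v|_L^2=-1$, which makes the scalar coefficient $\langle\vec u,\Dso\vec u\rangle_L$ \emph{one fractional order better} than it looks: since $\Dso$ kills constants, $\Dso\langle\vec u,\vec u\rangle_L=0$, and expanding with the Leibniz-rule operator $H_{\Dso}(T_L(\cdot),\cdot)$ exhibits $\langle\vec u,\Dso\vec u\rangle_L$ as a fixed scalar multiple of $H_{\Dso}(T_L(\vec u),\vec u)$, and likewise for $\vec v$. This is the crucial gain: on its own $\langle\vec u,\Dso\vec u\rangle_L$ is controlled only in $L^{2d}$, which is too weak to pair against a second factor $\Dso\vec u\in L^{2d}$ inside the integral, whereas as a commutator it carries, via \cref{lem:comm_int,lem:prod_rule,infty_bound}, essentially one more derivative's worth of integrability — even an $L^\infty$ bound, at the cost of the Lorentz norm $\lVert\Dso\vec v\rVert_{L^{(2d,2)}}$ that already occurs in $\Sigma(t)$. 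I will use the same commutator recasting, plus bilinearity, throughout precisely so that the difference $\vec w:=\vec u-\vec v$ never appears at full first order.

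Concretely, with $a_{\vec u}:=\langle\vec u,\Dso\vec u\rangle_L$ (now a commutator) and $P_{\vec u}(\Dso\vec u)=\Dso\vec u+a_{\vec u}\vec u$, I would write the vector factor in \eqref{eq:diffenergy_2_1} as $a_{\vec u}\Dso\vec u+a_{\vec u}^2\vec u$, subtract the $\vec v$-version, and telescope: $\langle\vec u,\Dso\vec u\rangle_L P_{\vec u}(\Dso\vec u)-\langle\vec v,\Dso\vec v\rangle_L P_{\vec v}(\Dso\vec v)=(a_{\vec u}\Dso\vec u-a_{\vec v}\Dso\vec v)+(a_{\vec u}^2\vec u-a_{\vec v}^2\vec v)$, then each parenthesis is split using the bilinear identity $a_{\vec u}-a_{\vec v}=$ (fixed multiple of) $\big(H_{\Dso}(T_L(\vec w),\vec u)+H_{\Dso}(T_L(\vec v),\vec w)\big)$. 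This yields (i) ``good'' commutator factors carrying $\vec w$ at sub-first order, times $\Dso\vec u$ (or $\vec u$), and (ii) remainder terms $a_{\vec v}\,\Dso\vec w$ and $a_{\vec v}^2\,\vec w$ in which a good coefficient multiplies a $\vec w$-difference. The outer pairing $\langle\,\cdot\,,\mathbf{X}_{\vec u}^r\rangle_L\langle\mathbf{E}_{\Tilde{\vec u}}^r,\Tilde{\vec w}_t\rangle_{\R^m}$ I would bound by $\lesssim_\Lambda|\text{vector factor}|\,|\Tilde{\vec w}_t|$, using that $\mathbf{X}_{\vec u}^r$, $\mathbf{E}_{\Tilde{\vec u}}^r$ and the extended $P_{\vec u}$ are uniformly bounded on the compact set $N=\overline{B}_r\cap\HH^2$.

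The bulk of the work is to estimate each resulting integral by Hölder on $\R^d$, always arranging: one factor $\lVert\Dso\vec u\rVert_{L^{2d}}$ or $\lVert\Dso\vec v\rVert_{L^{2d}}$ out of each ``solution'' slot; every occurrence of $\vec w,\Dso\vec w$ converted into $\Tilde{\vec w},\Dso\Tilde{\vec w}$ via \eqref{w_eq}, \cref{isom_est} and the bounded-derivative extensions \eqref{vec_ident}, \eqref{vec_indet_2}, with a fractional Leibniz split $\Dso^\sigma(d\psi_{\Tilde{\gamma}(s)}\Tilde{\vec w})=d\psi_{\Tilde{\gamma}(s)}\Dso^\sigma\Tilde{\vec w}+(\Dso^\sigma d\psi_{\Tilde{\gamma}(s)})\Tilde{\vec w}+H_{\Dso^\sigma}(d\psi_{\Tilde{\gamma}(s)},\Tilde{\vec w})$ so that $\Tilde{\vec w}$ lands in $L^2$ when it keeps a full derivative (via \cref{comp}) or in the appropriate $L^q\hookleftarrow\dot H^1$ when it keeps $\le\sigma<1$ derivatives (via \cref{lem:sob_in}); the commutator factors $H_{\Dso}(T_L(\cdot),\cdot)$, $H_{\Dso}(T_L(\vec w),\vec u)$, $\Dso^\sigma H_{\Dso}(d\psi,\Tilde{\vec w})$ handled by \cref{lem:comm_int,double_comm_int,triple_comm_int_3,triple_comm_int,lem:prod_rule,lem:prod_rule_2,lem:comm_difference} (the terms with an undifferentiated $\Tilde{\vec w}(y)$ going through \cref{triple_comm_int}); and any leftover fractional norm $\lVert\Dso^\beta\vec u\rVert_{L^q}$ with $\beta<1$ traded for $\lVert\Dso\vec u\rVert_{L^{2d}}$ up to powers of $\lVert\vec u\rVert_{L^\infty}\le\Lambda$ through \cref{lem:gag_in} and the chain \eqref{chain}. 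Summing over $r\in\{1,2\}$ and using $ab\le\tfrac12(a^2+b^2)$ converts $\lVert\nabla\Tilde{\vec w}\rVert_{L^2}\lVert\Tilde{\vec w}_t\rVert_{L^2}$ into $\lVert\nabla\Tilde{\vec w}\rVert_{L^2}^2+\lVert\Tilde{\vec w}_t\rVert_{L^2}^2$.

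The main obstacle is the remainder $a_{\vec v}\,\Dso\vec w$ (and its twin inside $P_{\vec u}(\Dso\vec u)-P_{\vec v}(\Dso\vec v)$), where a \emph{full} derivative sits on $\vec w$ and the commutator structure gives no help with $\Dso$ itself. One cannot simply put $\Dso\vec w\in L^2$: differentiating $\vec w=\int_0^1 d\psi_{\Tilde{\gamma}(s)}\Tilde{\vec w}\,ds$ produces a term $\sim(\nabla\vec u+\nabla\vec v)\Tilde{\vec w}$ whose $L^2$ norm is not controlled by the energy $E(t)$ (and $d\varphi$ is an isometry only on tangent spaces, so $\nabla\vec w=\nabla\vec u-\nabla\vec v$, tangent to neither $T_{\vec u}\HH^2$ nor $T_{\vec v}\HH^2$, cannot be compared directly with $\nabla\Tilde{\vec w}$). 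The way out is that $\vec w$ still never truly appears at full order: inserting $\vec w=\int_0^1 d\psi_{\Tilde{\gamma}(s)}\Tilde{\vec w}\,ds$ into $\Dso\vec w$ gives the clean piece $\int_0^1 d\psi_{\Tilde{\gamma}(s)}\Dso\Tilde{\vec w}\,ds$ — a bounded factor times $\Dso\Tilde{\vec w}\in L^2$, whose estimate needs exactly the $L^\infty$ bound on $a_{\vec v}$ from \cref{infty_bound} (hence the order-one norm $\lVert\Dso\vec v\rVert_{L^{(2d,2)}}$) — plus error pieces $\int_0^1(\Dso d\psi_{\Tilde{\gamma}(s)})\Tilde{\vec w}\,ds$ and $H_{\Dso}(\int_0^1 d\psi_{\Tilde{\gamma}(s)}\,ds,\Tilde{\vec w})$ in which $\Tilde{\vec w}$ carries $<1$ derivatives and the surplus derivative sits on the smooth coefficient $\int_0^1 d^k\psi_{\Tilde{\gamma}(s)}\,ds$, which \cref{isom_est} controls in the relevant $L^q$ by $\lVert\Dso\vec u\rVert_{L^{2d}}+\lVert\Dso\vec v\rVert_{L^{2d}}$; pairing against $a_{\vec v}$ (for these pieces only in a finite $L^p$, hence controlled by $\lVert\Dso\vec v\rVert_{L^{2d}}$ via \cref{lem:prod_rule} and \cref{lem:gag_in}) and against $\Tilde{\vec w}_t\in L^2$ closes the estimate. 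Verifying that all the fractional exponents $\sigma,\beta$ and the Hölder triples stay inside the admissibility ranges of \cref{lem:prod_rule,lem:prod_rule_2,lem:gag_in,double_comm_int} simultaneously, for every telescoped piece, is the real bookkeeping burden.
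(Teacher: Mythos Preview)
Your proposal is correct and follows essentially the same route as the paper: both hinge on rewriting $\langle\vec u,\Dso\vec u\rangle_L=-\tfrac12 H_{\Dso}(T_L(\vec u),\vec u)$ via the constraint $|\vec u|_L^2=-1$, then telescoping bilinearly and closing each piece with H\"older plus the fractional Leibniz/commutator lemmas (\cref{lem:prod_rule}, \cref{lem:gag_in}, \cref{infty_bound}, \cref{comm_lem}, \cref{isom_est}). The only cosmetic differences are that the paper keeps $P_{\vec u}$ intact in the telescoping (yielding the four terms \eqref{eq:energydiff_2_1_1}--\eqref{eq:energydiff_2_1_4}) whereas you first expand $P_{\vec u}(\Dso\vec u)=\Dso\vec u+a_{\vec u}\vec u$, and that you are more explicit than the paper about converting $\Dso\vec w$ to $\Dso\Tilde{\vec w}$ through the three-term Leibniz split, reserving the $L^\infty$ bound from \cref{infty_bound} for the piece carrying $\Dso\Tilde{\vec w}$ --- a point the paper's treatment of \eqref{eq:energydiff_2_1_1} glosses over by writing $\lVert\nabla\vec w\rVert_{L^2}$ directly.
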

\begin{proof}
Observe that
\begin{align*}
    H_{\Dso}\brac{T_L(\vec{u}),\vec{u}} 
    &= \Dso\brac{\langle \vec{u}, \vec{u}\rangle} - \langle \Dso \vec{u}, \vec{u} \rangle_L - \langle \vec{u}, \Dso\vec{u} \rangle_L \\
    &= -\Dso 1 - 2 \langle \vec{u}, \Dso \vec{u} \rangle_L\\
    &= -2 \langle \vec{u}, \Dso \vec{u} \rangle_L, 
\end{align*}
since $\langle \vec{u}, \vec{u} \rangle_L = -1$ and $\Dso$ applied to a constant is $0$. So 
\begin{align*}
    \langle \vec{u}, \Dso\vec{u} \rangle_L = -\frac{1}{2} H_{\Dso}\brac{T_L(\vec{u}),\vec{u}}.
\end{align*}
Now using multi-linearity, we have
\begin{align*}
    &\langle\vec{u} , |\nabla| \vec{u}\rangle_LP_\vec{u}(\Dso\vec{u}) - \langle\vec{v} , \Dso \vec{v}\rangle_LP_\vec{v}(\Dso\vec{v})\\
    =&-\frac{1}{2} H_{|\nabla|}(T_L(\vec{u}), \vec{u})P_\vec{u}(|\nabla|\vec{w})\\
    &-\frac{1}{2}H_{|\nabla|}(T_L(\vec{u}), \vec{u})(P_\vec{u}-P_\vec{v}) |\nabla| \vec{v}\\
    &-\frac{1}{2}H_{|\nabla|}(T_L(\vec{w}), \vec{u})P_\vec{v}(|\nabla|\vec{v})\\
    &-\frac{1}{2} H_{|\nabla|}(T_L(\vec{v}), \vec{w})P_\vec{v}(|\nabla|\vec{v}).
\end{align*}
Then by integrating,
\begin{align}
     &\int_{\mathbb{R}^n}  \left\langle \langle\vec{u} , |\nabla| \vec{u}\rangle_LP_\vec{u}(\Dso\vec{u}) - \langle\vec{v} , \Dso \vec{v}\rangle_LP_\vec{v}(\Dso\vec{v}), \mathbf{X}_\vec{u}^r \right\rangle_L\langle\mathbf{E}_{\Tilde{\vec{u}}}^r, \Tilde{\vec{w}}_{t} \rangle_{\R^m} \nonumber\\
    =&-\frac{1}{2}\int_{\mathbb{R}^n}  \big\langle H_{|\nabla|}(T_L(\vec{u}), \vec{u})P_\vec{u}(|\nabla|\vec{w}), \mathbf{X}_\vec{u}^r\big\rangle_L\langle\mathbf{E}_{\Tilde{\vec{u}}}^r, \Tilde{\vec{w}}_t \rangle_{\R^m}\label{eq:energydiff_2_1_1}\\
    &-\frac{1}{2}\int_{\mathbb{R}^n} \big\langle H_{|\nabla|}(T_L(\vec{u}), \vec{u})(P_\vec{u}-P_\vec{v}) |\nabla| \vec{v}, \mathbf{X}_\vec{u}^r\big\rangle_L\langle\mathbf{E}_{\Tilde{\vec{u}}}^r, \Tilde{\vec{w}}_t \rangle_{\R^m} \label{eq:energydiff_2_1_2}\\
    &-\frac{1}{2}\int_{\mathbb{R}^n} \big\langle H_{|\nabla|}(T_L(\vec{w}), \vec{u})P_\vec{v}(|\nabla|\vec{v}), \mathbf{X}_\vec{u}^r\big\rangle_L \langle \mathbf{E}_{\Tilde{\vec{u}}}^r, \Tilde{\vec{w}}_t \rangle_{\R^m} \label{eq:energydiff_2_1_3}\\
    &-\frac{1}{2}\int_{\mathbb{R}^n} \big\langle H_{|\nabla|}(T_L(\vec{v}), \vec{w})P_\vec{v}(|\nabla|\vec{v}), \mathbf{X}_\vec{u}^r\big\rangle_L \langle\mathbf{E}_{\Tilde{\vec{u}}}^r, \Tilde{\vec{w}}_t \rangle_{\R^m} \label{eq:energydiff_2_1_4}.
\end{align}

We estimate the above term by term. We start by estimating \eqref{eq:energydiff_2_1_1} using \cref{infty_bound}, we have
\begin{align*}
    &\int_{\mathbb{R}^n} \big\langle H_{|\nabla|}(T_L(\vec{u}), \vec{u})P_\vec{u}(\Dso\vec{w}), \mathbf{X}_\vec{u}^r\big\rangle_L\langle\mathbf{E}_{\Tilde{\vec{u}}}^r, \Tilde{\vec{w}}_t \rangle_{\R^m}\\
     \lesssim_{\Lambda} & \lVert H_{|\nabla|}(T_L(\vec{u}), \vec{u})\rVert_{L^\infty(\R^d)} \ \lVert \nabla|\vec{w} \rVert_{L^{2}(\R^d)} \ \lVert \Tilde{\vec{w}}_t \rVert_{L^{2}(\R^d)}\\
    &\overset{\cref{infty_bound}}{\lesssim_{\Lambda}} \lVert |\nabla|\vec{u} \rVert_{L^{(2d,2)}(\R^d)}^2 \brac{\lVert \nabla \vec{w}\rVert_{L^2(\R^d)}^2 + \lVert \Tilde{\vec{w}}_t\rVert_{L^2(\R^d)}^2}.
\end{align*}
It is evident by the definition of $P_{\vec{u}}$ that 
\begin{align*}
|(P_{\vec{u}}-P_\vec{v})(\vec{x})|
&=|\vec{x} + \langle \vec{u}, \vec{x} \rangle_L\vec{u} - \vec{x} - \langle \vec{v}, \vec{x} \rangle_L\vec{v}|\\
&=|\langle \vec{u} - \vec{v}, \vec{x} \rangle_L\vec{u} - \langle \vec{v}, \vec{x} \rangle_L(\vec{u} - \vec{v})|\\
&\aleq_{\Lambda} |\vec{u}-\vec{v}| |\vec{x}|,
\end{align*}
since $\vec{u}$ and $\vec{v}$ are bounded. So for \eqref{eq:energydiff_2_1_2} we use $\cref{lem:prod_rule}$ for $\sigma \in (1,\frac{1}{2})$ and \cref{lem:gag_in} to get
\begin{align*}
    &\int_{\mathbb{R}^n} \big\langle H_{|\nabla|}(T_L(\vec{u}), \vec{u})\textcolor{red}{(P_\vec{u}-P_\vec{v})}|\nabla| \vec{v}, \mathbf{X}_\vec{u}^r\big\rangle_L \langle\mathbf{E}_{\Tilde{\vec{u}}}^r, \Tilde{\vec{w}}_t \rangle_{\R^m}\\
    \lesssim_{\Lambda} & \int_{\mathbb{R}^n}|\langle H_{|\nabla|}(T_L(\vec{u}), \vec{u})| \ \left|\vec{u}-\vec{v}\right| \ \left| |\nabla|\vec{v} \right| \ |\Tilde{\vec{w}}_t| \\
    \lesssim_{\Lambda} & \int_{\mathbb{R}^n}|\langle H_{|\nabla|}(T_L(\vec{u}), \vec{u})| \ \left| |\nabla|\vec{v} \ \int_0^1 d\psi_{\Tilde{\gamma}(s)} \Tilde{\vec{w}} \ ds\right| \ |\Tilde{\vec{w}}_t| \\
    \lesssim_{\Lambda} & \lVert H_{|\nabla|}(T_L(\vec{u}), \vec{u}) \rVert_{L^{2d}(\R^d)} \ \lVert |\nabla|\vec{v}\rVert_{L^{2d}(\R^d)} \ \lVert \Tilde{\vec{w}} \rVert_{L^{\frac{2d}{d-2}}(\R^d)} \lVert \Tilde{\vec{w}}_t\rVert_{L^{2}(\R^d)}\\
    \overset{\cref{lem:prod_rule}}{\lesssim_{\Lambda}} & \lVert \Dso^{1-\sigma}\vec{u} \rVert_{L^{\frac{2d}{1-\sigma}}(\R^d)} \ \lVert \Dso^{\sigma} \vec{u} \rVert_{L^{\frac{2d}{\sigma}}(\R^d)} \ \lVert |\nabla|\vec{v}\rVert_{L^{2d}(\R^d)} \ \lVert \Tilde{\vec{w}} \rVert_{L^{\frac{2d}{d-2}}(\R^d)} \lVert \Tilde{\vec{w}}_t\rVert_{L^{2}(\R^d)}\\
    \overset{\cref{lem:gag_in}}{\lesssim_{\Lambda}} & \lVert \Dso\vec{u} \rVert_{L^{2d}(\R^d)}^{1-\sigma} \ \lVert \Dso \vec{u} \rVert_{L^{2d}(\R^d)}^\sigma \ \lVert \Dso \vec{v} \rVert_{L^{2d}(\R^d)} \ \lVert \Dso\Tilde{\vec{w}} \rVert_{L^{2}(\R^d)} \ \lVert \Tilde{\vec{w}}_t \rVert_{L^{2}(\R^d)}\\
    \lesssim_{\Lambda} & \brac{\lVert \Dso \vec{u} \rVert_{L^{2d}(\R^d)}^2 + \lVert \Dso \vec{v} \rVert_{L^{2d}(\R^d)}^2} \ \brac{\lVert \Dso\Tilde{\vec{w}} \rVert_{L^{2}(\R^d)}^2 + \lVert \Tilde{\vec{w}}_t \rVert_{L^{2}(\R^d)}^2}.
\end{align*}
Recall that $\vec{w} = \int_0^1 d\psi_{\Tilde{\gamma}(s)} \Tilde{\vec{w}} \ ds$, where $\Tilde{\gamma}(t) = s\Tilde{\vec{u}} +(1-s)\Tilde{\vec{v}}$.
So for \eqref{eq:energydiff_2_1_3}, using \cref{comm_lem}, we have
\begin{align}
    &\int_{\mathbb{R}^n} \big\langle H_{|\nabla|}(T_L(\vec{w}), \vec{u})P_\vec{v}(|\nabla|\vec{v}), \mathbf{X}_\vec{u}^r\big\rangle_L\langle \mathbf{E}_{\Tilde{\vec{u}}}^r, \Tilde{\vec{w}}_t \rangle\nonumber\\
    &=\int_{\mathbb{R}^n} \left\langle H_{|\nabla|}\brac{T_L\brac{\int_0^1 d\psi_{\Tilde{\gamma}(s)} \Tilde{\vec{w}} \ ds}, \vec{u}} P_\vec{v}(|\nabla|\vec{v}), \mathbf{X}_\vec{u}^r\right\rangle_L\langle \mathbf{E}_{\Tilde{\vec{u}}}^r, \Tilde{\vec{w}}_t \rangle\nonumber\\
    &=\sum_{\ell=1}^m \sideset{}{'}{\sum}_{j=1}^3 \int_{\mathbb{R}^n} \left\langle H_{|\nabla|}\brac{\int_0^1 \frac{\partial \psi^j_{\Tilde{\gamma}(s)}}{\partial p^\ell} \Tilde{\vec{w}}^\ell \ ds, \vec{u}^j}P_\vec{v}(|\nabla|\vec{v}), \mathbf{X}_\vec{u}^r\right\rangle_L\langle \mathbf{E}_{\Tilde{\vec{u}}}^r, \Tilde{\vec{w}}_t \rangle\nonumber\\
    \overset{\cref{comm_lem}}{=} &\sum_{\ell=1}^m \sideset{}{'}{\sum}_{j=1}^3 \int_{\mathbb{R}^n} \big\langle H_{|\nabla|}\brac{\int_0^1 \frac{\partial \psi^j_{\Tilde{\gamma}(s)}}{\partial p^\ell} \ ds \ \vec{u}^j, \Tilde{\vec{w}}^\ell} P_\vec{v}(|\nabla|\vec{v}), \mathbf{X}_\vec{u}^r\big\rangle_L\langle \mathbf{E}_{\Tilde{\vec{u}}}^r, \Tilde{\vec{w}}_t \rangle\label{beg_2_1}\\
    &+\sum_{\ell=1}^m\sideset{}{'}{\sum}_{j=1}^3\int_{\mathbb{R}^n} \big\langle H_{|\nabla|}\brac{\int_0^1 \frac{\partial \psi^j_{\Tilde{\gamma}(s)}}{\partial p^\ell} \ ds,\vec{u}^j} \Tilde{\vec{w}}^\ell P_\vec{v}(|\nabla|\vec{v}), \mathbf{X}_\vec{u}^r\big\rangle_L\langle \mathbf{E}_{\Tilde{\vec{u}}}^r, \Tilde{\vec{w}}_t \rangle\label{beg_2_2}\\
    &+\sideset{}{'}{\sum}_{j=1}^3\sum_{\ell=1}^m\int_{\mathbb{R}^n} \big\langle \int_0^1 \frac{\partial \psi^j_{\Tilde{\gamma}(s)}}{\partial p^\ell} \ ds H_{|\nabla|}\brac{\vec{u}^j, \Tilde{\vec{w}}^\ell} P_\vec{v}(|\nabla|\vec{v}), \mathbf{X}_\vec{u}^r\big\rangle_L\langle \mathbf{E}_{\Tilde{\vec{u}}}^r, \Tilde{\vec{w}}_t \rangle\label{beg_2_3}.
\end{align}
For \eqref{beg_2_1}, using \cref{lem:prod_rule} for $\sigma \in (0,\frac{1}{2})$, we have
\begin{align*}
    &\int_{\mathbb{R}^n} \big\langle H_{|\nabla|}\brac{\int_0^1 \frac{\partial \psi^j_{\Tilde{\gamma}(s)}}{\partial p^\ell} \ ds \ \vec{u}^j, \Tilde{\vec{w}}^\ell} P_\vec{v}(|\nabla|\vec{v}), \mathbf{X}_\vec{u}^r\big\rangle_L\langle \mathbf{E}_{\Tilde{\vec{u}}}^r, \Tilde{\vec{w}}_t \rangle\\
    \lesssim_{\Lambda} & \lVert \Dso^{\sigma}\brac{\int_0^1 \frac{\partial \psi^j_{\Tilde{\gamma}(s)}}{\partial p^\ell} \ ds \ \vec{u}^j} \rVert_{L^{\frac{2d}{2\sigma-1}}(\R^d)} \ \lVert \Dso^{1-\sigma} \Tilde{\vec{w}} \rVert_{L^{\frac{2d}{d-2\sigma}}(\R^d)} \ \lVert \Dso \vec{v} \rVert_{L^{2d}(\R^d)} \ \lVert \Tilde{\vec{w}}_t \rVert_{L^{2}(\R^d)}\\
    \overset{\cref{lem:prod_rule_frac}}{\lesssim_{\Lambda}} & \brac{\lVert \Dso \vec{u} \rVert_{L^{2d}(\R^d)}^2 + \lVert \Dso \vec{v} \rVert_{L^{2d}(\R^d)}^2} \ \lVert \Dso \Tilde{\vec{w}} \rVert_{L^{2d}(\R^d)} \ \lVert \Tilde{\vec{w}}_t \rVert_{L^{2}(\R^d)}\\
    \lesssim_{\Lambda} & \brac{\lVert \Dso \vec{u} \rVert_{L^{2d}(\R^d)}^2 + \lVert \Dso \vec{v} \rVert_{L^{2d}(\R^d)}^2} \ \brac{\lVert \Dso \Tilde{\vec{w}} \rVert_{L^{2}(\R^d)}^2 + \lVert \Tilde{\vec{w}}_t \rVert_{L^{2}(\R^d)}^2}.
\end{align*}
 For \eqref{beg_2_2}, using \cref{lem:prod_rule} for $\sigma \in (0,\frac{1}{2})$, we have
\begin{align*}
    &\int_{\mathbb{R}^n} \big\langle H_{|\nabla|}\brac{\int_0^1 \frac{\partial \psi^j_{\Tilde{\gamma}(s)}}{\partial p^\ell} \ ds,\vec{u}^j} \Tilde{\vec{w}}^\ell P_\vec{v}(|\nabla|\vec{v}), \mathbf{X}_\vec{u}^r\big\rangle_L\langle \mathbf{E}_{\Tilde{\vec{u}}}^r, \Tilde{\vec{w}}_t \rangle\\
    \lesssim_{\Lambda} & \lVert \Dso^{\sigma} \int_0^1 \frac{\partial \psi^j_{\Tilde{\gamma}(s)}}{\partial p^\ell} \ ds \rVert_{L^{\frac{2d}{\sigma}}(\R^d)} \ \lvert \Dso^{1-\sigma} \vec{u}^j \rVert_{L^{\frac{2d}{1-\sigma}}(\R^d)} \ \lVert \Tilde{\vec{w}} \rVert_{L^{\frac{2d}{d-2}}(\R^d)} \ \lVert \Dso \vec{v} \rVert_{L^{2d}(\R^d)} \ \lVert \Tilde{\vec{w}}_t \rVert_{L^{2}(\R^d)}\\
    \overset{\cref{isom_est}}{\lesssim_{\Lambda}} & \brac{\lVert \Dso \vec{u} \rVert_{L^{2d}(\R^d)}^2 + \lVert \Dso \vec{v} \rVert_{L^{2d}(\R^d)}^2} \ \lVert \Dso \Tilde{\vec{w}} \rVert_{L^{2d}(\R^d)} \ \lVert \Tilde{\vec{w}}_t \rVert_{L^{2}(\R^d)}\\
    \lesssim_{\Lambda} & \brac{\lVert \Dso \vec{u} \rVert_{L^{2d}(\R^d)}^2 + \lVert \Dso \vec{v} \rVert_{L^{2d}(\R^d)}^2} \ \brac{\lVert \Dso \Tilde{\vec{w}} \rVert_{L^{2}(\R^d)}^2 + \lVert \Tilde{\vec{w}}_t \rVert_{L^{2}(\R^d)}^2}.
\end{align*}
For \eqref{beg_2_3}, using \cref{lem:prod_rule} with $\sigma \in (0,\frac{1}{2})$
\begin{align*}
    &\int_{\mathbb{R}^n} \big\langle \int_0^1 \frac{\partial \psi^j_{\Tilde{\gamma}(s)}}{\partial p^\ell} \ ds \ H_{|\nabla|}\brac{\vec{u}^j, \Tilde{\vec{w}}^\ell} P_\vec{v}(|\nabla|\vec{v}), \mathbf{X}_\vec{u}^r\big\rangle_L\langle \mathbf{E}_{\Tilde{\vec{u}}}^r, \Tilde{\vec{w}}_t \rangle\\
    \lesssim_{\Lambda} & \lVert \Dso^{\sigma} \vec{u}^j \rVert_{L^{\frac{2d}{2\sigma-1}}(\R^d)} \ \lvert \Dso^{1-\sigma} \Tilde{\vec{w}}^\ell\rVert_{L^{\frac{2d}{d-2\sigma}}(\R^d)} \ \lVert \Dso \vec{v} \rVert_{L^{2d}(\R^d)} \ \lVert \Tilde{\vec{w}}_t \rVert_{L^{2}(\R^d)}\\
    \lesssim_{\Lambda} & \brac{\lVert \Dso \vec{u} \rVert_{L^{2d}(\R^d)}^2 + \lVert \Dso \vec{v} \rVert_{L^{2d}(\R^d)}^2} \ \lVert \Dso \Tilde{\vec{w}} \rVert_{L^{2d}(\R^d)} \ \lVert \Tilde{\vec{w}}_t \rVert_{L^{2}(\R^d)}\\
    \lesssim_{\Lambda} & \brac{\lVert \Dso \vec{u} \rVert_{L^{2d}(\R^d)}^2 + \lVert \Dso \vec{v} \rVert_{L^{2d}(\R^d)}^2} \ \brac{\lVert \Dso \Tilde{\vec{w}} \rVert_{L^{2}(\R^d)}^2 + \lVert \Tilde{\vec{w}}_t \rVert_{L^{2}(\R^d)}^2}.
\end{align*}
Similarly for \eqref{eq:energydiff_2_1_4}, 
\begin{align*}
    &
    \int_{\mathbb{R}^n} \big\langle H_{|\nabla|}(T_L(\vec{v}), \vec{w})P_\vec{v}(|\nabla|\vec{v}), \mathbf{X}_\vec{u}^r\big\rangle_L\langle\mathbf{E}_{\Tilde{\vec{u}}}^r, \Tilde{\vec{w}}_t \rangle_{\R^m}\\
    \lesssim_{\Lambda} & \brac{\lVert \Dso \vec{u} \rVert_{L^{2d}(\R^d)}^2 + \lVert \Dso \vec{v} \rVert_{L^{2d}(\R^d)}^2} \ \brac{\lVert \Dso \Tilde{\vec{w}} \rVert_{L^{2}(\R^d)}^2 + \lVert \Tilde{\vec{w}}_t \rVert_{L^{2}(\R^d)}^2}.
\end{align*}
\end{proof}
Now we continue estimating \eqref{eq:diffenergy_2}. For \eqref{eq:diffenergy_2_2}, we use \eqref{vec_ident},
\begin{align*}
    &\int_{\mathbb{R}^n} \big\langle P_\vec{v}(\langle\vec{v} , |\nabla| \vec{v}\rangle_L|\nabla|\vec{v}), (\mathbf{X}_\vec{u}^r-\mathbf{X}_\vec{v}^r)\big\rangle_L\langle\mathbf{E}_{\Tilde{\vec{v}}}^r, \Tilde{\vec{w}}_t \rangle_{\R^m}\\
    &\lesssim_{\Lambda}
    \lVert |\nabla| \vec{v} \rVert_{L^{2d}(\R^d)}^2\lVert |\nabla| \Tilde{\vec{w}}_t \rVert_{L^2(\R^d)}^2,
\end{align*}
and similarly for \eqref{eq:diffenergy_2_3}, we use \eqref{vec_indet_2}
\begin{align*}
    &\int_{\mathbb{R}^n} \big\langle P_\vec{v}(\langle\vec{v} , |\nabla| \vec{v}\rangle_L|\nabla|\vec{v}), \mathbf{X}_\vec{v}^r\big\rangle_L\langle\brac{\mathbf{E}_{\Tilde{\vec{u}}}^r-\mathbf{E}_{\Tilde{\vec{v}}}^r}, \Tilde{\vec{w}}_t \rangle_{\R^m}\\
    & \lesssim_{\Lambda} \lVert |\nabla| \vec{v} \rVert_{L^{2d}(\R^2)}^2\lVert |\nabla| \Tilde{\vec{w}}_t \rVert_{L^2(\R^d)}^2.
\end{align*}
We next rewrite \eqref{eq:diffenergy_3} using multilinearity
\begin{align}
    &\int_{\mathbb{R}^n} \langle \vec{u} \wedge_LH_{|\nabla|}(\vec{u} \wedge_L,|\nabla|\vec{u}), \mathbf{X}_\vec{u}^r\rangle_L\langle\mathbf{E}_{\Tilde{\vec{u}}}^r, \Tilde{\vec{w}}_t \rangle_{\R^m}- \langle \vec{v} \wedge_LH_{|\nabla|}(\vec{v} \wedge_L,|\nabla|\vec{v}), \mathbf{X}_\vec{v}^r\rangle_L\langle\mathbf{E}_{\Tilde{\vec{v}}}^r, \Tilde{\vec{w}}_t \rangle_{\R^m}\nonumber\\
    &=\int_{\mathbb{R}^n} \langle \vec{w} \wedge_LH_{|\nabla|}(\vec{u} \wedge_L,|\nabla|\vec{u}), \mathbf{X}_\vec{u}^r\rangle_L\langle\mathbf{E}_{\Tilde{\vec{u}}}^r, \Tilde{\vec{w}}_t \rangle_{\R^m}dx \label{eq:diffenergy_3_1}\\
    &+\int_{\mathbb{R}^n} \langle \vec{v} \wedge_LH_{|\nabla|}(\vec{w} \wedge_L,|\nabla|\vec{u}), \mathbf{X}_\vec{u}^r\rangle_L\langle \mathbf{E}_{\Tilde{\vec{u}}}^r, \Tilde{\vec{w}}_t \rangle_{\R^m}dx\label{eq:diffenergy_3_2}\\
    &+\int_{\mathbb{R}^n} \langle \vec{v} \wedge_LH_{|\nabla|}(\vec{v} \wedge_L,|\nabla|\vec{w}), \mathbf{X}_\vec{u}^r\rangle_L\langle\mathbf{E}_{\Tilde{\vec{u}}}^r, \Tilde{\vec{w}}_t \rangle_{\R^m}dx\label{eq:diffenergy_3_3}\\
    &+\int_{\mathbb{R}^n} \langle \vec{v} \wedge_LH_{|\nabla|}(\vec{v} \wedge_L,|\nabla|\vec{v}), \mathbf{X}_\vec{u}^r-\mathbf{X}_\vec{v}^r\rangle_L\langle\mathbf{E}_{\Tilde{\vec{u}}}^r, \Tilde{\vec{w}}_t \rangle_{\R^m}dx\label{eq:diffenergy_3_4}\\
    &+\int_{\mathbb{R}^n} \langle \vec{v} \wedge_LH_{|\nabla|}(\vec{v} \wedge_L,|\nabla|\vec{v}), \mathbf{X}_\vec{u}^r\rangle_L\langle\brac{\mathbf{E}_{\Tilde{\vec{u}}}^r-\mathbf{E}_{\Tilde{\vec{v}}}^r}, \Tilde{\vec{w}}_t \rangle_{\R^m}dx.\label{eq:diffenergy_3_5}
\end{align}
The last two integrals \eqref{eq:diffenergy_3_4} and \eqref{eq:diffenergy_3_5} can be estimated using \cref{lem:prod_rule} for $\sigma \in (0,\frac{1}{2})$
\begin{align*}
    &\int_{\mathbb{R}^n} \langle \vec{v} \wedge_LH_{|\nabla|}(\vec{v} \wedge_L,|\nabla|\vec{v}), \mathbf{X}_\vec{u}^r-\mathbf{X}_\vec{v}^r\rangle_L\langle \mathbf{E}_{\Tilde{\vec{u}}}^r, \Tilde{\vec{w}}_t \rangle_{\R^m}dx\\
    &\lesssim_{\Lambda} \lVert H_{|\nabla|}(\vec{v} \wedge_L,|\nabla|\vec{v}) \rVert_{L^d(\R^d)} \lVert \vec{w} \rVert_{L^{\frac{2d}{d-2}}(\R^d)}\lVert \Tilde{\vec{w}}_t \rVert_{L^2(\R^d)}\\
    &\lesssim_{\Lambda} \lVert |\nabla|^{1-\sigma}\vec{v} \rVert_{L^{\frac{2d}{2(1-\sigma) -1}}(\R^d)} \lVert |\nabla|^{1+\sigma}\vec{v} \rVert_{L^{\frac{2d}{2(1+\sigma) -1}}(\R^d)} \ \brac{\lVert \Dso \Tilde{\vec{w}}\rVert_{L^2(\R^d)}^2 + \lVert \Tilde{\vec{w}}_t \rVert_{L^2(\R^d)}^2}\\
    &\lesssim_{\Lambda} \lVert |\nabla|^{1+\sigma}\vec{v} \rVert_{L^{\frac{2d}{2(1+\sigma) -1}}(\R^d)}^2 \ \brac{\lVert \Dso \Tilde{\vec{w}} \rVert_{L^2(\R^d)}^2 + \lVert \Tilde{\vec{w}}_t \rVert_{L^2(\R^d)}^2}
\end{align*}
and similarly
\begin{align*}
    &\int_{\mathbb{R}^n} \langle \vec{v} \wedge_LH_{|\nabla|}(\vec{v} \wedge_L,|\nabla|\vec{v}), \mathbf{X}_\vec{u}^r\rangle_L\langle\brac{\mathbf{E}_{\Tilde{\vec{u}}}^r-\mathbf{E}_{\Tilde{\vec{v}}}^r}, \Tilde{\vec{w}}_t \rangle_{\R^m}dx\\
    &\lesssim_{\Lambda}\lVert |\nabla|^{1+\sigma}\vec{v} \rVert_{L^{\frac{2d}{2(1+\sigma) -1}}(\R^d)}^2 \ \brac{\lVert \Dso \Tilde{\vec{w}} \rVert_{L^2(\R^d)}^2 + \lVert \Tilde{\vec{w}}_t \rVert_{L^2(\R^d)}^2}.
\end{align*}
We continue estimating \eqref{eq:diffenergy_3} by next estimating \eqref{eq:diffenergy_3_1}.
\begin{lemma}
For $d\geq 3$, and any $\alpha \in (1,d+\frac{1}{2})$ we have
\begin{align*}
    &\abs{\int_{\mathbb{R}^n} \langle \vec{w} \wedge_LH_{|\nabla|}(\vec{u} \wedge_L,|\nabla|\vec{u}), \mathbf{X}_\vec{u}^r\rangle_L\langle\mathbf{E}_{\Tilde{\vec{u}}}^r, \Tilde{\vec{w}}_t \rangle_{\R^m}} \\
    \lesssim_{\Lambda} & \brac{\|\nabla\Tilde{\vec{w}}\|_{L^2(\R^d)}^2+\|\partial_t\Tilde{\vec{w}}\|_{L^2(\R^d)}^2}\, \|\Ds{\alpha} \vec{u}\|_{L^{\frac{2d}{2\alpha -1}}(\R^d)}^2.
\end{align*}
\end{lemma}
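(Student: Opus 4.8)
The plan is to estimate \eqref{eq:diffenergy_3_1} by a Hölder splitting into three factors — the difference $\vec{w}$, the Leibniz term $H_{|\nabla|}(\vec{u}\wedge_L,|\nabla|\vec{u})$, and $\partial_t\Tilde{\vec{w}}$ — and then to absorb the first factor into one power of the energy and the second into $\lVert|\nabla|^{\alpha}\vec{u}\rVert_{L^{\frac{2d}{2\alpha-1}}(\R^d)}^2$. First I would discard the frame vectors and the Lorentzian structure pointwise: since $\vec{u}$ stays in the fixed compact set $N=\overline{B}_r\cap\mathbb{H}^2$, the orthonormal frames $\mathbf{X}_\vec{u}^r$ and $\mathbf{E}_{\Tilde{\vec{u}}}^r$ are bounded by a constant depending only on $\Lambda$, and both $\wedge_L$ and $\langle\cdot,\cdot\rangle_L$ are bounded bilinear maps, so the integrand is pointwise $\lesssim_{\Lambda}|\vec{w}|\,\bigl|H_{|\nabla|}(\vec{u}\wedge_L,|\nabla|\vec{u})\bigr|\,|\partial_t\Tilde{\vec{w}}|$. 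Hölder's inequality with exponents $\tfrac{2d}{d-2},d,2$ (admissible since $d\ge 3$) then bounds \eqref{eq:diffenergy_3_1} by $\lesssim_{\Lambda}\lVert\vec{w}\rVert_{L^{\frac{2d}{d-2}}(\R^d)}\,\lVert H_{|\nabla|}(\vec{u}\wedge_L,|\nabla|\vec{u})\rVert_{L^{d}(\R^d)}\,\lVert\partial_t\Tilde{\vec{w}}\rVert_{L^2(\R^d)}$.

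For the first factor I would use $\vec{w}=\int_0^1 d\psi_{\Tilde{\gamma}(s)}\Tilde{\vec{w}}\,ds$ from \eqref{w_eq} together with the boundedness of $d\psi$ on the relevant compact set to get $|\vec{w}|\lesssim_{\Lambda}|\Tilde{\vec{w}}|$ pointwise, and then the Sobolev inequality \cref{lem:sob_in} (with exponent $1$, together with \cref{comp}) to conclude $\lVert\vec{w}\rVert_{L^{\frac{2d}{d-2}}(\R^d)}\lesssim_{\Lambda}\lVert\Tilde{\vec{w}}\rVert_{L^{\frac{2d}{d-2}}(\R^d)}\lesssim\lVert\nabla\Tilde{\vec{w}}\rVert_{L^2(\R^d)}$. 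The third factor is already one of the two pieces of the energy, so nothing further is needed there.

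The real work is the middle factor $\lVert H_{|\nabla|}(\vec{u}\wedge_L,|\nabla|\vec{u})\rVert_{L^{d}(\R^d)}$. Since $\wedge_L$ is bilinear, this is, componentwise, a finite combination of scalar terms $H_{|\nabla|}(\vec{u}^i,(|\nabla|\vec{u})^j)$, each of which I would estimate by \cref{lem:prod_rule} with $p=d$, splitting the single derivative as $\sigma+(1-\sigma)=1$ and taking $p_1=\tfrac{2d}{2\sigma-1}$, $p_2=\tfrac{2d}{3-2\sigma}$ (so that $\tfrac1{p_1}+\tfrac1{p_2}=\tfrac1d$ and $p_1,p_2\in(1,\infty)$ for $\sigma\in(\tfrac12,1)$ and $d\ge 2$). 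This gives $\lVert H_{|\nabla|}(\vec{u}\wedge_L,|\nabla|\vec{u})\rVert_{L^{d}(\R^d)}\lesssim\lVert|\nabla|^{\sigma}\vec{u}\rVert_{L^{\frac{2d}{2\sigma-1}}(\R^d)}\,\lVert|\nabla|^{2-\sigma}\vec{u}\rVert_{L^{\frac{2d}{3-2\sigma}}(\R^d)}$, and two applications of the chain estimate \eqref{chain} then turn the right-hand side into $\lVert|\nabla|^{\alpha}\vec{u}\rVert_{L^{\frac{2d}{2\alpha-1}}(\R^d)}^2$, provided $\tfrac12<\sigma<\alpha$ and $\tfrac12<2-\sigma<\alpha<d+\tfrac12$. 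This is exactly where the hypothesis enters: those constraints force $\sigma\in(\max\{\tfrac12,\,2-\alpha\},\,1)$, an interval which is non-empty precisely because $\alpha>1$ (and \eqref{chain} needs the upper bound $\alpha<d+\tfrac12$). I expect this to be the only delicate point — not the individual estimates, but checking that the admissibility ranges of \cref{lem:prod_rule} and of \eqref{chain} overlap for a single choice of $\sigma$.

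Finally I would multiply the three bounds together and apply $ab\le\tfrac12(a^2+b^2)$ to $\lVert\nabla\Tilde{\vec{w}}\rVert_{L^2(\R^d)}\,\lVert\partial_t\Tilde{\vec{w}}\rVert_{L^2(\R^d)}$, which produces precisely $\brac{\lVert\nabla\Tilde{\vec{w}}\rVert_{L^2(\R^d)}^2+\lVert\partial_t\Tilde{\vec{w}}\rVert_{L^2(\R^d)}^2}\,\lVert|\nabla|^{\alpha}\vec{u}\rVert_{L^{\frac{2d}{2\alpha-1}}(\R^d)}^2$, as claimed.
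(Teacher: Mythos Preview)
Your proposal is correct and follows essentially the same route as the paper: a H\"older split with exponents $\tfrac{2d}{d-2},d,2$, Sobolev embedding for $\vec{w}$ via \eqref{w_eq}, and \cref{lem:prod_rule} with $\sigma\in(\tfrac12,1)$ on the Leibniz term followed by \eqref{chain}. The only cosmetic difference is that the paper first reduces to $\alpha\in(1,\tfrac32)$ and then sets $\alpha=2-\sigma$, whereas you keep $\alpha$ general and solve the constraint $\sigma\in(\max\{\tfrac12,2-\alpha\},1)$ directly; these are equivalent.
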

\begin{proof}
It suffices to prove the estimate for $\alpha\in (1, \frac32)$. As before, by Cauchy-Schwarz and H\"older's inequality

\begin{align*}
&\abs{\int_{\mathbb{R}^n}\langle \vec{w} \wedge_LH_{|\nabla|}(\vec{u} \wedge_L,|\nabla|\vec{u}), \mathbf{X}_\vec{u}^r\rangle_L\langle\mathbf{E}_{\Tilde{\vec{u}}}^r, \Tilde{\vec{w}}_t \rangle_{\R^m}} \\
\lesssim_{\Lambda} &\|\vec{w}\|_{L^{\frac{2d}{d-2}}(\R^d)}\, \|\partial_t\Tilde{\vec{w}}\|_{L^2(\R^d)}\,
\left \|H_{\abs{\nabla}}(\vec{u} \wedge_L,\abs{\nabla} \vec{u}) \right \|_{L^{d}(\R^d)}\\
\lesssim_{\Lambda} &\brac{\|\nabla\Tilde{\vec{w}}\|_{L^2(\R^d)}^2+\|\partial_t\Tilde{\vec{w}}\|_{L^2(\R^d)}^2}\, \left \|H_{\abs{\nabla}}(\vec{u} \wedge_L,\abs{\nabla} \vec{u}) \right \|_{L^{d}(\R^d)}.
\end{align*}
In the last line we used the Sobolev embedding, \Cref{lem:sob_in}. What is left to estimate is the Leibniz rule operator term.\\
We apply \eqref{lem:prod_rule} and find that for any $\sigma \in (\frac12,1)$
\begin{align*}
\|H_{\Dso{}}(\vec{u} \wedge_L,\Dso{} \vec{u})\|_{L^{d}(\R^d)} \aleq& \|\Ds{\sigma} \vec{u}\|_{L^{\frac{2d}{2\sigma-1}}(\R^d)}\, \|\Ds{2-\sigma} \vec{u}\|_{L^{\frac{2d}{2(2-\sigma)-1}}(\R^d)}\\
\aleq& \|\Ds{2-\sigma} \vec{u}\|_{L^{\frac{2d}{2(2-\sigma)-1}}(\R^d)}^2.
\end{align*}
The last line is the Sobolev embedding. Any $\alpha\in (1,\frac32)$ is of the form $2-\sigma$ for some $\sigma \in (\frac12,1)$, so the result follows.
\end{proof}
Now we estimate \eqref{eq:diffenergy_3_2}.
\begin{lemma}\label{la:weirdHest1}
 For any $\alpha \in (1,d+\frac{1}{2})$, and any $d \geq 3$,
\begin{align*}
\|\vec{v} \wedge_L \brac{H_{\abs{\nabla}}\brac{\vec{w} \wedge_L,\abs{\nabla} \vec{u}}} \|_{L^{2}(\R^d)} \aleq \|\nabla \Tilde{\vec{w}}\|_{L^{2}(\R^d)}\, \brac{\|\Ds{\alpha} \vec{u}\|_{L^{\frac{2d}{2\alpha-1}}}^2+\|\Ds{\alpha} \vec{v}\|_{L^{\frac{2d}{2\alpha-1}}}^2}
\end{align*}
in particular
\begin{align*}
&\int_{\mathbb{R}^n} \langle \vec{v} \wedge_LH_{|\nabla|}(\vec{w} \wedge_L,|\nabla|\vec{u}), \mathbf{X}_\vec{u}^r\rangle_L\mathbf{E}_{\Tilde{\vec{u}}}^r, \Tilde{\vec{w}}_t \rangle dx \\
\lesssim_{\Lambda} & \brac{\|\Ds{\alpha} \vec{u}\|_{L^{\frac{2d}{2\alpha-1}}}^2+\|\Ds{\alpha} \vec{v}\|_{L^{\frac{2d}{2\alpha-1}}}^2}\, \brac{\|\abs{\nabla} \Tilde{\vec{w}}\|_{L^{2}(\R^d)}^2 + \|\Tilde{\vec{w}}_t\|_{L^2(\R^d)}^2}
\end{align*}
\end{lemma}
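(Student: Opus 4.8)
Here is the strategy I would follow.

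The plan is to discard the geometric factors, reduce to a bilinear $L^{2}$ estimate for $H_{|\nabla|}(\vec w,|\nabla|\vec u)$, and then convert the rough factor $\vec w$ into the energy $\|\nabla\Tilde{\vec w}\|_{L^{2}}$ by means of the mean value identity $\vec w=\int_{0}^{1}d\psi_{\Tilde\gamma(s)}\Tilde{\vec w}\,ds$ (cf.\ \cref{w_eq}). Since $\operatorname{Im}\vec u,\operatorname{Im}\vec v$ lie in a fixed compact set, the maps $\vec v,\vec X^{r}_{\vec u},\vec E^{r}_{\Tilde{\vec u}}$ are bounded and the Lorentzian cross products merely permute components with signs, so componentwise $|\vec v\wedge_{L}H_{|\nabla|}(\vec w\wedge_{L},|\nabla|\vec u)|\lesssim_{\Lambda}\sum_{j,k}|H_{|\nabla|}(w^{j},(|\nabla|\vec u)^{k})|$ pointwise; hence it suffices to prove $\|H_{|\nabla|}(\vec w,|\nabla|\vec u)\|_{L^{2}}\lesssim_{\Lambda}\|\nabla\Tilde{\vec w}\|_{L^{2}}(\|\Ds{\alpha}\vec u\|_{L^{2d/(2\alpha-1)}}^{2}+\|\Ds{\alpha}\vec v\|_{L^{2d/(2\alpha-1)}}^{2})$. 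The displayed ``in particular'' bound then follows by pairing against $\Tilde{\vec w}_{t}\in L^{2}$ with Cauchy--Schwarz and using $ab\le\tfrac12(a^{2}+b^{2})$. By \eqref{chain} one may assume $\alpha\in(1,\tfrac32)$.

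Write $\vec w=F\Tilde{\vec w}$ with $F\defeq\int_{0}^{1}d\psi_{\Tilde\gamma(s)}\,ds\in L^{\infty}$. Combining the kernel formula \cref{lem:comm_int} with
\[
\vec w(x)-\vec w(y)=F(x)\bigl(\Tilde{\vec w}(x)-\Tilde{\vec w}(y)\bigr)+\int_{0}^{1}\bigl(d\psi_{\Tilde\gamma(s)(x)}-d\psi_{\Tilde\gamma(s)(y)}\bigr)\Tilde{\vec w}(y)\,ds
\]
and the bound $|d\psi_{\Tilde\gamma(s)(x)}-d\psi_{\Tilde\gamma(s)(y)}|\lesssim_{\Lambda}|\Tilde{\vec u}(x)-\Tilde{\vec u}(y)|+|\Tilde{\vec v}(x)-\Tilde{\vec v}(y)|$ (since $d^{2}\psi$ is bounded and $\Tilde\gamma(s)=\Tilde{\vec v}+s\Tilde{\vec w}$), I split $H_{|\nabla|}(\vec w,|\nabla|\vec u)=F\,H_{|\nabla|}(\Tilde{\vec w},|\nabla|\vec u)+R$, where $|R|$ is dominated by ``three--difference'' kernels with factors $|\Tilde{\vec z}(x)-\Tilde{\vec z}(y)|$ $(\vec z\in\{\vec u,\vec v\})$, $|\Tilde{\vec w}(y)|$ and $|(|\nabla|\vec u)(x)-(|\nabla|\vec u)(y)|$. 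I estimate $R$ with \cref{triple_comm_int}, taking $f=\Tilde{\vec z}$, $g=|\nabla|\vec u$, $h=\Tilde{\vec w}$, $\alpha_{1}\in(2-\alpha,1)$, $\alpha_{2}\in(1-\alpha_{1},\alpha-1)$ (admissible as $\alpha\in(1,\tfrac32)$, so $\alpha_{1}+\alpha_{2}>1$), and $p_{1}=\tfrac{2d}{2\alpha_{1}-1}$, $p_{2}=\tfrac{2d}{1+2\alpha_{2}}$, $p_{3}=\tfrac{2d}{d-2(\alpha_{1}+\alpha_{2})}$: a short computation shows the Lebesgue exponent the lemma forces on $h$ is exactly $\tfrac{2d}{d-2}$, so $\|\Tilde{\vec w}\|_{L^{2d/(d-2)}}\lesssim\|\nabla\Tilde{\vec w}\|_{L^{2}}$ by \cref{lem:sob_in}; moreover $\|\Dso^{\alpha_{1}}\Tilde{\vec z}\|_{L^{p_{1}}}\lesssim\|\Dso\Tilde{\vec z}\|_{L^{2d}}\lesssim_{\Lambda}\|\Dso\vec z\|_{L^{2d}}\lesssim\|\Ds{\alpha}\vec z\|_{L^{2d/(2\alpha-1)}}$ by \cref{lem:sob_in}, \cref{isom_est}, \cref{comp} and \eqref{chain}, while $\|\Dso^{\alpha_{2}}|\nabla|\vec u\|_{L^{p_{2}}}=\|\Dso^{1+\alpha_{2}}\vec u\|_{L^{p_{2}}}\lesssim\|\Ds{\alpha}\vec u\|_{L^{2d/(2\alpha-1)}}$ by \eqref{chain}. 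One more use of $ab\le\tfrac12(a^{2}+b^{2})$ gives the bound for $\|R\|_{L^{2}}$.

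The technical heart is the diagonal term $F\,H_{|\nabla|}(\Tilde{\vec w},|\nabla|\vec u)$: it contains no explicit $\vec v$, so it must be controlled by $\|\nabla\Tilde{\vec w}\|_{L^{2}}\|\Ds{\alpha}\vec u\|^{2}_{L^{2d/(2\alpha-1)}}$ alone, yet a bare product rule $\Dso^{\sigma}\Tilde{\vec w}\cdot\Dso^{2-\sigma}\vec u$ places $\vec u$ at a Lebesgue exponent (and scaling weight) that the norm $\|\Ds{\alpha}\vec u\|_{L^{2d/(2\alpha-1)}}$ cannot reach — the missing second copy of $\vec u$ must be extracted by hand. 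The natural route is to use that $\vec u$ solves \cref{eq:half-wave}: split $|\nabla|\vec u$ into its component along $\vec u$, namely $\tfrac12 H_{|\nabla|}(T_{L}(\vec u),\vec u)\,\vec u$ (the computation preceding \cref{eq:energydiff_2_1_1}), and its $\langle\cdot,\cdot\rangle_{L}$--tangential component $P_{\vec u}(|\nabla|\vec u)=\pm\vec u\wedge_{L}\partial_{t}\vec u$. In the first piece $H_{|\nabla|}(T_{L}(\vec u),\vec u)$ is itself a commutator of $\vec u$ with $\vec u$, so by \cref{lem:prod_rule_2} (or \cref{double_comm_int}) it behaves like $\Dso^{\gamma}\vec u\cdot\Dso^{1-\gamma}\vec u$; thus $H_{|\nabla|}\!\bigl(\Tilde{\vec w},H_{|\nabla|}(T_{L}(\vec u),\vec u)\vec u\bigr)$ factors as $\Dso^{\sigma}\Tilde{\vec w}$ — placed in the right space by \cref{lem:sob_in}, producing $\|\nabla\Tilde{\vec w}\|_{L^{2}}$ — times two fractional derivatives of $\vec u$, each $\lesssim\|\Ds{\alpha}\vec u\|_{L^{2d/(2\alpha-1)}}$ by \eqref{chain}, with the leftover bounded factor of $\vec u$ absorbed into $L^{\infty}$; AM--GM closes it. The tangential piece $H_{|\nabla|}(\Tilde{\vec w},\vec u\wedge_{L}\partial_{t}\vec u)$ is reduced to terms of the same two shapes using $|\partial_{t}\vec u|\lesssim_{\Lambda}|{|\nabla|\vec u}|$ and, where needed, a second application of the decomposition above; interchanging $\vec u$ and $\vec v$ handles the $\vec v$--derivative contributions symmetrically. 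Making this last step rigorous is where I expect the real work to lie.
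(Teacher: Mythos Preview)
Your very first reduction is the problem. The pointwise bound
\[
\bigl|\vec v\wedge_L H_{|\nabla|}(\vec w\wedge_L,|\nabla|\vec u)\bigr|
\;\lesssim_\Lambda\;\sum_{j,k}\bigl|H_{|\nabla|}(w^j,(|\nabla|\vec u)^k)\bigr|
\]
is of course true, but it throws away exactly the structure that makes the estimate go through. After this step you are asking for
\(\|H_{|\nabla|}(\Tilde{\vec w},|\nabla|\vec u)\|_{L^2}\lesssim \|\nabla\Tilde{\vec w}\|_{L^2}\,\|\Ds{\alpha}\vec u\|_{L^{2d/(2\alpha-1)}}^{2}\),
and your own handling of the ``tangential piece'' is circular: writing
\(P_{\vec u}(|\nabla|\vec u)=-\vec u\wedge_L\partial_t\vec u\) and then substituting
\(\partial_t\vec u=\vec u\wedge_L|\nabla|\vec u\) just returns
\(\vec u\wedge_L(\vec u\wedge_L|\nabla|\vec u)=-P_{\vec u}(|\nabla|\vec u)\).
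No iteration of the normal/tangential split or of \cref{comm_lem} changes this; after peeling off the bounded $\vec u$--factors the residual term is always (a bounded multiple of) $H_{|\nabla|}(\Tilde{\vec w},|\nabla|\vec u)$ again. The equation gives no new information here.

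The paper does \emph{not} discard the outer $\vec v\wedge_L$. Using
\(\vec a\wedge_L(\vec b\wedge_L\vec c)=\langle\vec b,\vec a\rangle_L\,\vec c-\langle\vec c,\vec a\rangle_L\,\vec b\)
componentwise (the bounded factor $\vec v^j$ may be moved across $H_{|\nabla|}$), the expression becomes a sum of terms
\(\sideset{}{'}\sum_j \vec v^{\,j}H_{|\nabla|}\bigl(w^{j},|\nabla|\vec u\bigr)\) and
\(\sideset{}{'}\sum_j \vec v^{\,j}H_{|\nabla|}\bigl(\vec w,|\nabla|\vec u^{\,j}\bigr)\).
The missing second copy of $\vec u$ (or $\vec v$) is now extracted from the \emph{contractions} produced by the double cross product, together with the constraints: for the first kind one writes $\vec v=\tfrac12(\vec u+\vec v)-\tfrac12\vec w$ and exploits $\langle\vec u+\vec v,\vec w\rangle_L=0$ inside the kernel of \cref{lem:comm_int}; for the second kind one rewrites $\langle\vec u(x),|\nabla|\vec u(x)-|\nabla|\vec u(y)\rangle_L$ inside the kernel so that the contraction $\langle\vec u,|\nabla|\vec u\rangle_L=-\tfrac12 H_{|\nabla|}(T_L(\vec u),\vec u)$ appears. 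In both cases the geometric contraction coming from $\vec v\wedge_L(\cdot\wedge_L\cdot)$ is what couples the constraint to the commutator and yields the extra factor. Your pointwise step severs that coupling, and the ``diagonal'' term is then genuinely uncontrollable.
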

\begin{proof} We recall the formula
\begin{align*}
    \vec{a} \wedge_L(\vec{b} \wedge_L\vec{c})= \langle\vec{b}, \vec{a}\rangle_L\vec{c}  - \langle\vec{c}, \vec{a}\rangle_L\vec{b} 
\end{align*}
so 
\begin{align}
 &\vec{v} \wedge_L \brac{H_{\abs{\nabla}}\brac{\vec{w} \wedge_L,\abs{\nabla} \vec{u}}} \nonumber\\
 =& \sideset{}{'}{\sum}_{j=1}^3 \brac{\vec{v}^j H_{\abs{\nabla}}\brac{(u-v)^j ,\abs{\nabla} \vec{u}}}-\brac{\vec{v}^j H_{\abs{\nabla}}\brac{\vec{u-v} ,\abs{\nabla} \vec{u}^j}} \nonumber\\
 = & -\sideset{}{'}{\sum}_{j=1}^3 \,(v-u)^{{j}} H_{\abs{\nabla}}\brac{\vec{u-v} ,\abs{\nabla} \vec{u}^j}\label{secondterm_1}\\
 &+\frac{1}{2}{\sum_{j = 1}^3}' \,{(u-v)^j H_{\abs{\nabla}}\brac{(u-v)^j ,\abs{\nabla} \vec{u}}}\label{secondterm_2}\\
  &-\frac{1}{2}\sideset{}{'}{\sum}_{j=1}^3 \, (u+v)^j H_{\abs{\nabla}}\brac{(u-v)^j ,\abs{\nabla} \vec{u}}\label{secondterm_3}\\
  &-\sideset{}{'}{\sum}_{j=1}^3 \, u^{{j}} H_{\abs{\nabla}}\brac{\vec{u-v},\abs{\nabla} u^{j}} \label{secondterm_4}.
 \end{align}
Without loss of generality, we may assume $\alpha \in (1,\frac{3}{2})$ and set $\sigma := 2-\alpha$. Then $\sigma \in (\frac12,1)$ and by \cref{lem:prod_rule} we can estimate \eqref{secondterm_1} and \eqref{secondterm_2},

\begin{align*}
 &\|(\vec{v}-\vec{u})^j H_{\abs{\nabla}} \brac{\vec{u-v} ,\abs{\nabla} \vec{u}^j}\|_{L^{2}(\R^d)}\\
 =&\|\int_0^1 \sum_{\ell=1}^m \frac{\partial \psi^j_{\Tilde{\gamma}(s)}}{\partial \Tilde{p}^\ell}\Tilde{\vec{w}}^\ell \ dsH_{\abs{\nabla}}\brac{\vec{u-v},\abs{\nabla} \vec{u}^j}\|_{L^{2}(\R^d)}\\
 \lesssim_{\Lambda}& \|\int_0^1 \sum_{\ell=1}^m \frac{\partial \psi_{\Tilde{\gamma}(s)}}{\partial \Tilde{p}^\ell}\Tilde{\vec{w}}^\ell\|_{L^{\frac{2d}{d-2}}(\R^d)}\, \|\Ds{\sigma} \vec{w} \|_{L^{\frac{2d}{2\sigma-1}}(\R^d)} \|\Ds{2-\sigma} \vec{u}^j\|_{L^{\frac{2d}{2(2-\sigma)-1}}(\R^d)}\\
 \lesssim_{\Lambda}& \|\Tilde{\vec{w}}\|_{L^{\frac{2d}{d-2}}(\R^d)}\, \|\Ds{\sigma} \vec{w} \|_{L^{\frac{2d}{2\sigma-1}}(\R^d)} \|\Ds{2-\sigma} \vec{u}^j\|_{L^{\frac{2d}{2(2-\sigma)-1}}(\R^d)}\\
 \lesssim_{\Lambda}& \|\abs{\nabla} \Tilde{\vec{w}}\|_{L^{2}(\R^d)}\, \brac{\|\Ds{\alpha} \vec{u}\|_{L^{\frac{2d}{2\alpha-1}}}^2+\|\Ds{\alpha} \vec{v}\|_{L^{\frac{2d}{2\alpha-1}}}^2}.
 \end{align*}
For \eqref{secondterm_3}, observe that for fixed $i$, using \cref{lem:comm_int},
\begin{align*}
 &\sideset{}{'}{\sum}_{j=1}^3  \frac{1}{2}\brac{(u+v)^j H_{\abs{\nabla}}\brac{(u-v)^j ,\abs{\nabla} \vec{u}}}(x)\\
 =& c \int_{\R^d} \frac{\brac{\abs{\nabla} \vec{u}(x)-\abs{\nabla} \vec{u}(y)}\, \langle (\vec{u+v})(x),\vec{w}(x)-\vec{w}(y)\rangle_L } {|x-y|^{d+1}}dy.
\end{align*}
Setting $\vec{a} := \vec{u+v}$ and $\vec{b} := \vec{u} - \vec{v}$ we recall that $\vec{a}(x) \cdot_L \vec{b}(x) = |\vec{u}(x)|^2-|\vec{v}(x)|^2 = 0$,
\begin{align*}
 \vec{a}(x) \cdot_L (\vec{b}(x)-\vec{b}(y)) 
=&(\vec{a}(x)-\vec{a}(y))\cdot_L (\vec{b}(x)-\vec{b}(y)) -\brac{\vec{a}(x)-\vec{a}(y)}\cdot_L  \vec{b}(x).
\end{align*}
This implies that 
\begin{align*}
    &\sideset{}{'}{\sum}_{j=1}^3  \frac{1}{2}(u+v)^j H_{\Dso}\brac{(u-v)^j ,\Dso \vec{u}}(x) \\
    =&-\sideset{}{'}{\sum}_{j=1}^3  \frac{1}{2}\brac{(u-v)^j H_{\abs{\nabla}}\brac{(u+v)^j ,\Dso \vec{u}}}(x) \\
    &+ c \int_{\R^d} \frac{\brac{\Dso \vec{u}(x)-\Dso \vec{u}(y)}\, \scpr{(\vec{u+v})(x)-(\vec{u+v})(y)}{{\vec{w}(y)-\vec{w}(y)}}_L} {|x-y|^{d+1}}dy\\
    =&-\sideset{}{'}{\sum}_{j=1}^3  \frac{1}{2}\brac{(u-v)^j H_{\abs{\nabla}}\brac{(u+v)^j ,\Dso \vec{u}}}(x) \\
    &+ c \int_{\R^d} \frac{\brac{\Dso \vec{u}(x)-\Dso \vec{u}(y)}\, \scpr{(\vec{u+v})(x)-(\vec{u+v})(y)}{{\vec{w}(x)}}_L} {|x-y|^{d+1}}dy\\
    &- c \int_{\R^d} \frac{\brac{\Dso \vec{u}(x)-\Dso \vec{u}(y)}\, \scpr{(\vec{u+v})(x)-(\vec{u+v})(y)}{{\vec{w}(y)}}_L} {|x-y|^{d+1}}dy.
\end{align*}
We estimate the first term above exactly as we estimated \eqref{secondterm_1},
\begin{align*}
    &\left \| \frac{1}{2}\int_0^1 \sum_{\ell=1}^m \frac{\partial \psi^j_{\Tilde{\gamma}(s)}}{\partial \Tilde{p}^\ell} \ ds \ \Tilde{\vec{w}}^\ell H_{\abs{\nabla}}\brac{(u{+}v)^j ,\abs{\nabla} \vec{u}} \right \|_{L^{2}(\R^d)} \\
    &\lesssim_{\Lambda} \|\Dso \Tilde{\vec{w}}\|_{L^{2}(\R^d)}\, \brac{\|\Ds{\alpha} \vec{u}\|_{L^{\frac{2d}{2\alpha-1}}}^2+\|\Ds{\alpha} \vec{v}\|_{L^{\frac{2d}{2\alpha-1}}}^2}.
\end{align*}
 
For the second term we use \cref{double_comm_int}. Take a small $\sigma \in (0,1)$. Then,
\begin{align}
    &\left \|\int_{\R^d} \frac{\brac{\abs{\nabla} \vec{u}(x)-\abs{\nabla} \vec{u}(y)}\, \scpr{(\vec{u}+\vec{v})(x)-(\vec{u}+\vec{v})(y)}{\vec{w}(x)}_L} {|x-y|^{d+1}}dy\right \|_{L^2(\R^d)}\label{eq:triple_1}\\
    \aleq &\|\Ds{1+\sigma} \vec{u}\|_{L^{\frac{2d}{2(1+\sigma)-1}}}\, \|\Ds{1-\sigma} (\vec{u}+\vec{v})\|_{\frac{2d}{2(1-\sigma)-1}}\, \|\int_0^1 \sum_{\ell=1}^m \frac{\partial \psi^j_{\Tilde{\gamma}(s)}}{\partial \Tilde{p}^\ell} \ ds \ \Tilde{\vec{w}}^\ell(x)\|_{L^{\frac{2d}{d-2}}(\R^d)}\nonumber\\
    \aleq &\|\Ds{1+\sigma} \vec{u}\|_{L^{\frac{2d}{2(1+\sigma)-1}}}\, \|\Ds{1-\sigma} (\vec{u}+\vec{v})\|_{\frac{2d}{2(1-\sigma) - 1}}\, \|\Tilde{\vec{w}}^\ell\|_{L^{\frac{2d}{d-2}}(\R^d)}\nonumber\\
    \aleq&\|\Ds{1 + \sigma} \vec{u}\|_{L^{\frac{2d}{2\alpha-1}}}\, \brac{\|\abs{\nabla} \vec{u}\|_{L^{2d}(\R^d)}+\|\abs{\nabla} \vec{v}\|_{L^{2d}(\R^d)}}\, \|\Dso \Tilde{\vec{w}}\|_{L^{2}(\R^d)}\nonumber.
\end{align}

For the third term we use \cref{triple_comm_int}. Take a small $\sigma \in (0,1)$. Then,
\begin{align}
    &\left \|\int_{\R^d} \frac{\brac{\abs{\nabla} \vec{u}(x)-\abs{\nabla} \vec{u}(y)}\, \scpr{(\vec{u}+\vec{v})(x)-(\vec{u}+\vec{v})(y)}{\vec{w}(y)}_L} {|x-y|^{d+1}}dy\right \|_{L^2(\R^d)}\label{eq:triple_2}\\
    \aleq &\|\Ds{1+\sigma} \vec{u}\|_{L^{\frac{2d}{2(1+\sigma)-1}}}\, \|\Ds{1+\sigma} (\vec{u}+\vec{v})\|_{\frac{2d}{2(1+\sigma)-1}}\, \|\int_0^1 \sum_{\ell=1}^m \frac{\partial \psi_{\Tilde{\gamma}(s)}}{\partial \Tilde{p}^\ell} \ ds \ \Tilde{\vec{w}}^\ell(x)\|_{L^{\frac{2d}{d-2}}(\R^d)}\nonumber\\
    \aleq &\|\Ds{1+\sigma} \vec{u}\|_{L^{\frac{2d}{2(1+\sigma)-1}}}\, \|\Ds{1+\sigma} (\vec{u}+\vec{v})\|_{\frac{2d}{2(1+\sigma) - 1}}\, \|\Tilde{\vec{w}}^\ell\|_{L^{\frac{2d}{d-2}}(\R^d)}\nonumber\\
    \aleq & \brac{\Ds{1 + \sigma} \vec{u}\|_{L^{\frac{2d}{2(1+\sigma)-1}}}^2+\|\Ds{1 + \sigma} \vec{v}\|_{L^{\frac{2d}{2(1+\sigma)-1}}}^2}\, \|\Dso \Tilde{\vec{w}}\|_{L^{2}(\R^d)}\nonumber.
 \end{align}

To estimate \eqref{secondterm_4}, we again use \cref{lem:comm_int}
\begin{align*}
&\sideset{}{'}{\sum}_{j=1}^3 \brac{\vec{u}^{j} H_{\abs{\nabla}}\brac{\vec{u-v} ,\abs{\nabla} \vec{u}^j}}(x)\\
&=c\int_{\R^n} \frac{\brac{(\vec{u-v})(x)-(\vec{u-v})(y)}\, \left \langle \abs{\nabla} \vec{u}(x)-\abs{\nabla} \vec{u}(y), \vec{u}(x) \right \rangle_L }{|x-y|^{d+1}}\, dy.
\end{align*}
Now we write 
\begin{align*}
&\scpr{\abs{\nabla} \vec{u}(x)-\abs{\nabla} \vec{u}(y)}{\vec{u}(x)}_L\\
=&\scpr{\vec{u}(x) }{\abs{\nabla} \vec{u}(x)}_L-\scpr{\vec{u}(y) }{ \abs{\nabla} \vec{u}(y)}_L +\scpr{\vec{u}(y) -\vec{u}(x)}{\abs{\nabla} \vec{u}(y)}_L\\
=&\scpr{\vec{u}(x) }{ \abs{\nabla} \vec{u}(x)}_L-\scpr{\vec{u}(y)}{\abs{\nabla} \vec{u}(y)}_L+\scpr{\vec{u}(y) -\vec{u}(x)}{\abs{\nabla}\vec{u}(y)-\abs{\nabla}\vec{u}(x)}_L+\scpr{ \vec{u}(y)-\vec{u}(x)}{\abs{\nabla}\vec{u}(x)}_L\\
=&\scpr{\vec{u}(x) }{\abs{\nabla} \vec{u}(x)}_L-\scpr{\vec{u}(y) }{\abs{\nabla} \vec{u}(y)}_L+\scpr{\vec{u}(x) -\vec{u}(y)}{\abs{\nabla}\vec{u}(x)-\abs{\nabla}\vec{u}(y)}_L-\scpr{\vec{u}(x)-\vec{u}(y)}{\abs{\nabla}\vec{u}(x)}_L\\
\end{align*}
Thus we have for $i=1,2,3$,
\begin{equation}\label{eq:L2est:325363}
\begin{split}
&\sideset{}{'}{\sum}_{j=1}^3 {\vec{u}^{j} H_{\abs{\nabla}}\brac{\vec{w} ,\abs{\nabla} \vec{u}^j}}(x)\\
=&\,H_{\abs{\nabla}}\brac{\vec{w} , \scpr{\vec{u} }{\abs{\nabla} \vec{u}}}(x)\\
&-\sideset{}{'}{\sum}_{j=1}^3  \abs{\nabla} \vec{u}^j(x)\, H_{\abs{\nabla}}\brac{\vec{w} , \vec{u}^j}(x)\\
&+c\int_{\R^n} \frac{\brac{\vec{w}(x)-\vec{w}(y)}\, \scpr{\abs{\nabla} \vec{u}(x)-\abs{\nabla} \vec{u}(y)}{\vec{u}(x)-\vec{u}(y)}_L} {|x-y|^{d+1}}\, dy\\
=&\,H_{\abs{\nabla}}\brac{\vec{w} , \scpr{\vec{u} }{\abs{\nabla} \vec{u}}}(x)\\
&-\sideset{}{'}{\sum}_{j=1}^3  \abs{\nabla} \vec{u}^j(x)\, H_{\abs{\nabla}}\brac{\vec{w} , \vec{u}^j}(x)\\
&+c\int_{\R^n} \frac{\vec{w}(x)\, \scpr{\abs{\nabla} \vec{u}(x)-\abs{\nabla} \vec{u}(y)}{\vec{u}(x)-\vec{u}(y)}_L} {|x-y|^{d+1}}\, dy\\
&-c\int_{\R^n} \frac{\vec{w}(y)\, \scpr{\Dso \vec{u}(x)-\abs{\nabla} \vec{u}(y)}{\vec{u}(x)-\vec{u}(y)}_L}{|x-y|^{d+1}}\, dy.
\end{split}
\end{equation}
We estimate the first term in \eqref{eq:L2est:325363} by expanding using the definition of the commutator and then recombine to get
\begin{align}
    &H_{\abs{\nabla}}\brac{\vec{w} , \scpr{\vec{u} }{\abs{\nabla} \vec{u}}_L}\nonumber\\
    =& \Dso\brac{\int_0^1 \sum_{\ell=1}^m \frac{\partial \psi_{\Tilde{\gamma}(s)}}{\partial \Tilde{p}^\ell} \ ds \ \Tilde{\vec{w}}^\ell, \scpr{\vec{u} }{\abs{\nabla} \vec{u}}_L} \nonumber\\
    -& \Dso\brac{\int_0^1 \sum_{\ell=1}^m \frac{\partial \psi_{\Tilde{\gamma}(s)}}{\partial \Tilde{p}^\ell} \ ds \ \Tilde{\vec{w}}^\ell}\langle \vec{u},\Dso\vec{u}\rangle_L\nonumber\\
    -& \int_0^1 \sum_{\ell=1}^m \frac{\partial \psi_{\Tilde{\gamma}(s)}}{\partial \Tilde{p}^\ell} \ ds \ \Tilde{\vec{w}}^\ell \ \Dso{\langle \vec{u},\Dso\vec{u}\rangle_L}\nonumber\\
    =& \Dso\brac{\int_0^1 \sum_{\ell=1}^m \frac{\partial \psi_{\Tilde{\gamma}(s)}}{\partial \Tilde{p}^\ell} \ ds \ \Tilde{\vec{w}}^\ell, \scpr{\vec{u} }{\abs{\nabla} \vec{u}}_L}\nonumber\\
    -& \Dso\brac{\int_0^1 \sum_{\ell=1}^m \frac{\partial \psi_{\Tilde{\gamma}(s)}}{\partial \Tilde{p}^\ell} \ ds} \ \Tilde{\vec{w}}^\ell\langle \vec{u},\Dso\vec{u}\rangle_L\nonumber\\
    -& \int_0^1 \sum_{\ell=1}^m \frac{\partial \psi_{\Tilde{\gamma}(s)}}{\partial \Tilde{p}^\ell} \ ds \ \Dso\Tilde{\vec{w}}^\ell \ \langle \vec{u},\Dso\vec{u}\rangle_L\nonumber\\
    -& H_{\Dso}\brac{\int_0^1 \sum_{\ell=1}^m \frac{\partial \psi_{\Tilde{\gamma}(s)}}{\partial \Tilde{p}^\ell} \ ds, \ \Dso\Tilde{\vec{w}}^\ell} \ \langle \vec{u},\Dso\vec{u}\rangle_L\nonumber\\
    -& \Dso\brac{\int_0^1 \sum_{\ell=1}^m \frac{\partial \psi_{\Tilde{\gamma}(s)}}{\partial \Tilde{p}^\ell} \ ds \ \langle \vec{u},\Dso\vec{u}\rangle_L} \ \Tilde{\vec{w}}^\ell\nonumber\\
    +& \Dso\brac{\int_0^1 \sum_{\ell=1}^m \frac{\partial \psi_{\Tilde{\gamma}(s)}}{\partial \Tilde{p}^\ell} \ ds} \ \Tilde{\vec{w}}^\ell\langle \vec{u},\Dso\vec{u}\rangle_L\nonumber\\
    +& H_{\Dso}\brac{\int_0^1 \sum_{\ell=1}^m \frac{\partial \psi_{\Tilde{\gamma}(s)}}{\partial \Tilde{p}^\ell} \ ds, \ \langle \vec{u},\Dso\vec{u}\rangle_L} \ \Tilde{\vec{w}}^\ell\nonumber\\
    =& H_{\Dso}\brac{\Tilde{\vec{w}}^\ell, \int_0^1 \sum_{\ell=1}^m \frac{\partial \psi_{\Tilde{\gamma}(s)}}{\partial \Tilde{p}^\ell} \ ds \ \scpr{\vec{u} }{\abs{\nabla \vec{u}}}_L}\label{eq:comm_w_1}\\
    -& H_{\Dso}\brac{\int_0^1 \sum_{\ell=1}^m \frac{\partial \psi_{\Tilde{\gamma}(s)}}{\partial \Tilde{p}^\ell} \ ds, \ \Dso\Tilde{\vec{w}}^\ell} \ \langle \vec{u},\Dso\vec{u}\rangle_L\label{eq:comm_w_2}\\
    +& H_{\Dso}\brac{\int_0^1 \sum_{\ell=1}^m \frac{\partial \psi_{\Tilde{\gamma}(s)}}{\partial \Tilde{p}^\ell} \ ds, \ \langle \vec{u},\Dso\vec{u}\rangle_L} \ \Tilde{\vec{w}}^\ell\label{eq:comm_w_3}.
\end{align}
To estimate $\eqref{eq:comm_w_1}$ we apply \cref{lem:prod_rule}, the Sobolev embedding \cref{lem:sob_in}, and \cref{lem:prod_rule_2}.
\begin{align*}
    &\|H_{\abs{\nabla}}\brac{\Tilde{\vec{w}}^\ell , \int_0^1 \sum_{\ell=1}^m \frac{\partial \psi_{\Tilde{\gamma}(s)}}{\partial \Tilde{p}^\ell} \ ds \scpr{\vec{u}}{\abs{\nabla} \vec{u}}_L}\|_{L^2(\R^d)} \\
    \aleq& \|\Ds{1-\sigma} \Tilde{\vec{w}}\|_{L^{\frac{2d}{d-2\sigma}}(\R^d)}\, \|\Ds{\sigma} \brac{\int_0^1 \sum_{\ell=1}^m \frac{\partial \psi_{\Tilde{\gamma}(s)}}{\partial \Tilde{p}^\ell} \ ds \ \langle\vec{u},\Dso\vec{u}\rangle_L}\|_{L^{\frac{d}{\sigma}}(\R^d)}\\
    \aleq& \|\Dso \Tilde{\vec{w}}\|_{L^2(\R^d)}\, \|\Ds{\sigma} \brac{\int_0^1 \sum_{\ell=1}^m \frac{\partial \psi_{\Tilde{\gamma}(s)}}{\partial \Tilde{p}^\ell} \ ds \ \langle\vec{u},\Dso\vec{u}\rangle_L}\|_{L^{\frac{d}{\sigma}}(\R^d)}\\
    \aleq& \|\Dso \Tilde{\vec{w}}\|_{L^2(\R^d)}\, \|\Ds{\sigma} \brac{\int_0^1 \sum_{\ell=1}^m \frac{\partial \psi_{\Tilde{\gamma}(s)}}{\partial \Tilde{p}^\ell} \ ds} \ \langle\vec{u},\Dso\vec{u}\rangle_L\|_{L^{\frac{d}{\sigma}}(\R^d)}\\
    +& \|\Dso \Tilde{\vec{w}}\|_{L^2(\R^d)}\, \|\int_0^1 \sum_{\ell=1}^m \frac{\partial \psi_{\Tilde{\gamma}(s)}}{\partial \Tilde{p}^\ell} \ ds \ \Ds{\sigma}\underbrace{(\scpr{\vec{u}}{\Dso \vec{u}}_L)}_{=H_{\abs{\nabla}}(T_L(\vec{u}),\vec{u})}\|_{L^{\frac{d}{\sigma}}(\R^d)}\\
    +& \|\Dso \Tilde{\vec{w}}\|_{L^2(\R^d)}\, \|H_{\Dso^\sigma}\brac{\int_0^1 \sum_{\ell=1}^m \frac{\partial \psi_{\Tilde{\gamma}(s)}}{\partial \Tilde{p}^\ell} \ ds, \ \underbrace{(\scpr{\vec{u}}{\Dso \vec{u}}_L)}_{=H_{\abs{\nabla}}(T_L(\vec{u}),\vec{u})}}\|_{L^{\frac{d}{\sigma}}(\R^d)}\\
    \aleq&\|\Dso \Tilde{\vec{w}}\|_{L^2(\R^d)}\, \|\int_0^1\sum_{\ell=1}^m \Dso^\sigma\frac{\partial \psi_{\Tilde{\gamma}(s)}}{\partial \Tilde{p}^\ell} \ ds\|_{L^{\frac{2d}{2\sigma - 1}}(\R^d)} \ \|\Dso \vec{u}\|_{L^{2d}(\R^d)}\\
    +&\|\Dso \Tilde{\vec{w}}\|_{L^2(\R^d)}\, \|\Dso^\gamma\vec{u}\|_{L^\frac{2d}{2\gamma-1}(\R^d)} \|\Dso^{1 + \sigma - \gamma}\vec{u}\|_{L^\frac{2d}{2(1+\sigma-\gamma)-1}(\R^d)}\\
    +&\|\Dso \Tilde{\vec{w}}\|_{L^2(\R^d)}\, \|\Dso^{\sigma -\gamma_1}\vec{u}\|_{L^\frac{2d}{2(\sigma-\gamma_1)-\theta}(\R^d)} \|\Dso^{\gamma_1}\vec{u}\|_{L^\frac{2d}{2\gamma_1-1+\theta}(\R^d)} \ \|\Dso^{1 + \gamma_1 - \gamma_2}\vec{u}\|_{L^\frac{2d}{2(1+\gamma_1-\gamma_2)-1}(\R^d)}\\
    \lesssim_{\Lambda} & \|\Dso \Tilde{\vec{w}}\|_{L^2(\R^d)}\,\brac{ \|\Dso\vec{u}\|_{L^{2d}(\R^d)}^2 + \|\Dso\vec{v}\|_{L^{2d}(\R^d)}^2}.
\end{align*}
For the second term in \eqref{eq:L2est:325363}, using a  similar decomposition as above and by  \cref{lem:prod_rule}, we have
\begin{align*}
 &\|\Dso \vec{u}^j H_{\Dso}\brac{\vec{w} , \vec{u}^j}\|_{L^2(\R^d)}\\
 \aleq& \|\Dso \vec{u}\|_{L^{2d}(\R^d)} \|H_{\Dso}\brac{\int_0^1 \sum_{\ell=1}^m \frac{\partial \psi_{\Tilde{\gamma}(s)}}{\partial \Tilde{p}^\ell} \ ds \ \Tilde{\vec{w}}^\ell , \vec{u}^j}  \|_{L^{\frac{2d}{d-1}}(\R^d)}\\
 \aleq& \|\Dso \vec{u}\|_{L^{2d}(\R^d)} \|H_{\Dso}\brac{\Tilde{\vec{w}}^\ell , \int_0^1 \sum_{\ell=1}^m \frac{\partial \psi_{\Tilde{\gamma}(s)}}{\partial \Tilde{p}^\ell} \ ds \ \vec{u}^j}  \|_{L^{\frac{2d}{d-1}}(\R^d)}\\
 +& \|\Dso \vec{u}\|_{L^{2d}(\R^d)} \|H_{\Dso}\brac{\int_0^1 \sum_{\ell=1}^m \frac{\partial \psi_{\Tilde{\gamma}(s)}}{\partial \Tilde{p}^\ell} \ ds, \ \Tilde{\vec{w}}^\ell} \vec{u}^j  \|_{L^{\frac{2d}{d-1}}(\R^d)}\\
 +& \|\Dso \vec{u}\|_{L^{2d}(\R^d)} \|H_{\Dso}\brac{\int_0^1 \sum_{\ell=1}^m \frac{\partial \psi_{\Tilde{\gamma}(s)}}{\partial \Tilde{p}^\ell} \ ds,\ \vec{u}^j}\Tilde{\vec{w}}^\ell  \|_{L^{\frac{2d}{d-1}}(\R^d)}\\
 \lesssim_{\Lambda}& \|\Dso \vec{u}\|_{L^{2d}(\R^d)} \|\Ds{1-\sigma}\Tilde{\vec{w}}\|_{L^{\frac{2d}{d-2\sigma}}(\R^d)} \|\Ds{\sigma} \vec{u}\|_{L^{\frac{2d}{2\sigma - 1}(\R^d)}}\\
 +& \|\Dso \vec{u}\|_{L^{2d}(\R^d)} \|\Tilde{\vec{w}}\|_{L^{\frac{2d}{d-2}}(\R^d)} \|\Ds{\sigma} \vec{u}\|_{L^{\frac{2d}{\sigma }(\R^d)}} \ \|\Ds{1-\sigma} \vec{u}\|_{L^{\frac{2d}{1-\sigma }(\R^d)}}\\
 \lesssim_{\Lambda} & |\Dso \vec{u}\|_{L^{2d}(\R^d)}^2 \|\Dso\Tilde{\vec{w}}\|_{L^2(\R^d)}.
\end{align*}
For the third and forth term in \eqref{eq:L2est:325363}, we estimate as we did for $\eqref{eq:triple_1}$ and $\eqref{eq:triple_2}$,
\begin{align*}
  &\left \|\int_{\R^n} \frac{\vec{w}(x)\, \brac{\abs{\nabla} \vec{u}(x)-\abs{\nabla} \vec{u}(y)} \cdot_L \brac{\vec{u}(x)-\vec{u}(y)}}{|x-y|^{d+1}}\, dy\right \|_{L^2(\R^d,dx)}\\
  \aleq& \|\Ds{1 + \sigma} \vec{u}\|_{L^{\frac{2d}{2\alpha-1}}}^2\, \|\Dso \Tilde{\vec{w}}\|_{L^{2}(\R^d)},
\end{align*}
and 
\begin{align*}
  &\left \|\int_{\R^n} \frac{\vec{w}(y)\, \brac{\abs{\nabla} \vec{u}(x)-\abs{\nabla} \vec{u}(y)} \cdot_L \brac{\vec{u}(x)-\vec{u}(y)}}{|x-y|^{d+1}}\, dy\right \|_{L^2(\R^d,dx)}\\
  \aleq& \|\Ds{1 + \sigma} \vec{u}\|_{L^{\frac{2d}{2\alpha-1}}}^2\, \|\Dso \Tilde{\vec{w}}\|_{L^{2}(\R^d)},
\end{align*}
This concludes the estimate of \eqref{secondterm_4}.
\end{proof}
To estimate \eqref{eq:diffenergy_3_3}, we need the next two following results.
\begin{lemma}\label{prod_frac_vec}
    Let $d \ge 3$, $\vec{u},\vec{v} \in \mathbb{H}^2$. Then for $\sigma \in (0,1/2]$, we have
\begin{align*}
    &\lVert H_{\Dso}\brac{\vec{v}, \langle \vec{v},\Dso\vec{w} \rangle_L} \rVert_{L^{2}(\R^d)}\\
    \lesssim_{\Lambda} & \brac{\Dso^{1+\sigma}\vec{u} \rVert_{L^{\frac{2d}{2(1+\sigma) - 1}}(\R^d)}^2 + \lVert \Dso^{1+\sigma}\vec{v} \rVert_{L^{\frac{2d}{2(1+\sigma) - 1}}(\R^d)}^2}  \ \lVert \Dso \Tilde{\vec{w}} \rVert_{L^{2}(\R^d)}.
\end{align*}
\end{lemma}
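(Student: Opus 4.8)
Throughout set $\alpha := 1+\sigma$; since the left-hand side does not depend on $\sigma$, the Sobolev embedding \eqref{chain} lets me assume $\sigma \in (0,\tfrac12)$, i.e. $\alpha\in(1,\tfrac32)$, so that the target norms are $\|\Dso^\alpha\vec{u}\|_{L^{2d/(2\alpha-1)}(\R^d)}$ and $\|\Dso^\alpha\vec{v}\|_{L^{2d/(2\alpha-1)}(\R^d)}$, exactly as in \cref{la:weirdHest1}. The plan is first to use the hyperbolic constraint $\langle\vec{u},\vec{u}\rangle_L=\langle\vec{v},\vec{v}\rangle_L=-1$ to rewrite $\langle\vec{v},\Dso\vec{w}\rangle_L$ so that $\vec{w}$ never carries a full derivative, and then to run the commutator machinery of Section~\ref{sec2} exactly as in the proof of \cref{la:weirdHest1}. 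Applying $\Dso$ to $\langle\vec{u},\vec{u}\rangle_L$ and to $\langle\vec{v},\vec{v}\rangle_L$, together with the definition of $H_{\Dso}(T_L(\cdot),\cdot)$, gives $\langle\vec{u},\Dso\vec{u}\rangle_L=-\tfrac12 H_{\Dso}(T_L(\vec{u}),\vec{u})$ and the analogue for $\vec{v}$; inserting $\vec{w}=\vec{u}-\vec{v}$ and $\vec{v}=\vec{u}-\vec{w}$ and using bilinearity of $H_{\Dso}(T_L(\cdot),\cdot)$ yields
\begin{align*}
\langle\vec{v},\Dso\vec{w}\rangle_L = -\tfrac12 H_{\Dso}\brac{T_L(\vec{u}),\vec{w}} - \tfrac12 H_{\Dso}\brac{T_L(\vec{w}),\vec{v}} - \langle\vec{w},\Dso\vec{u}\rangle_L ,
\end{align*}
in which $\vec{w}$ appears only as a plain factor. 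I then substitute $\vec{w}=\int_0^1 d\psi_{\Tilde{\gamma}(s)}\Tilde{\vec{w}}\,ds$ from \eqref{w_eq} and expand componentwise, so that the only low-regularity object is a plain $\Tilde{\vec{w}}^\ell$ multiplied by the bounded coefficients $\partial\psi^j_{\Tilde{\gamma}(s)}/\partial\Tilde{p}^\ell$.

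It then remains to bound $H_{\Dso}(\vec{v},\cdot)$ applied to each of the three pieces in $L^2(\R^d)$. For the piece coming from $\langle\vec{w},\Dso\vec{u}\rangle_L$ I apply \cref{lem:prod_rule} to the outer commutator, moving a fractional derivative onto $\vec{v}$; since $\Dso\vec{u}$ already carries a derivative, what is left of it is at worst $\Dso^{1+\sigma}\vec{u}$, $\Tilde{\vec{w}}$ is kept in a commutator slot with fractional derivative $<1$ by means of \cref{comm_lem}, and the $\psi$-coefficients are controlled through \cref{isom_est}. For the two genuine double commutators $H_{\Dso}(\vec{v},H_{\Dso}(T_L(\vec{u}),\vec{w}))$ and $H_{\Dso}(\vec{v},H_{\Dso}(T_L(\vec{w}),\vec{v}))$ I again use \cref{lem:prod_rule} on the outer commutator and then either \cref{lem:prod_rule_2} on $\Dso^{1-\sigma}H_{\Dso}(\cdot,\cdot)$ or, after rewriting the inner commutator with \cref{lem:comm_int}, the iterated-kernel estimates \cref{double_comm_int}, \cref{triple_comm_int} and \cref{triple_comm_int_3} --- these are the same steps used for \eqref{secondterm_4} and \eqref{eq:L2est:325363} in the proof of \cref{la:weirdHest1}. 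In every resulting term $\Tilde{\vec{w}}$ occurs either plainly, bounded by $\|\Tilde{\vec{w}}\|_{L^{2d/(d-2)}(\R^d)}$, or with a fractional derivative $1-\sigma<1$, bounded by $\|\Dso^{1-\sigma}\Tilde{\vec{w}}\|_{L^{2d/(d-2\sigma)}(\R^d)}$, and in both cases \cref{lem:sob_in} gives the bound by $\|\Dso\Tilde{\vec{w}}\|_{L^2(\R^d)}$.

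Finally, all factors built from $\vec{u}$ and $\vec{v}$ are consolidated into the target norms: fractional-Leibniz outputs of $\vec{u}$ and $\vec{v}$ are raised to the exponent $\tfrac{2d}{2(1+\sigma)-1}$ via \eqref{chain}, low-order derivatives are traded for $\|\Dso\vec{u}\|_{L^{2d}(\R^d)}$ and $\|\Dso\vec{v}\|_{L^{2d}(\R^d)}$ using \cref{lem:gag_in} and the $L^\infty$ bounds on $\vec{u},\vec{v}$, and the norms of the $\psi$-coefficients are absorbed into $\|\Dso\vec{u}\|_{L^{2d}(\R^d)}+\|\Dso\vec{v}\|_{L^{2d}(\R^d)} \lesssim \|\Dso^{1+\sigma}\vec{u}\|_{L^{2d/(2(1+\sigma)-1)}(\R^d)}+\|\Dso^{1+\sigma}\vec{v}\|_{L^{2d/(2(1+\sigma)-1)}(\R^d)}$ through \cref{isom_est} and \eqref{chain}. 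A count of derivatives shows that precisely two such factors appear in each term, which produces the claimed square. The only real difficulty is the bookkeeping for the double commutators: one must choose the several splitting exponents so that $\Tilde{\vec{w}}$ never absorbs a full derivative while the two remaining slots keep a strictly positive fractional derivative and every H\"older exponent stays within the admissible ranges of \cref{lem:prod_rule}, \cref{lem:prod_rule_2}, \cref{double_comm_int} and \cref{triple_comm_int}; once those choices are fixed, each individual estimate repeats one already performed in the proof of \cref{la:weirdHest1}.
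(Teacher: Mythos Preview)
Your proposal is correct and follows essentially the same approach as the paper. Both arguments use the hyperbolic constraint $\langle\vec{u},\vec{u}\rangle_L=\langle\vec{v},\vec{v}\rangle_L=-1$ to rewrite $\langle\vec{v},\Dso\vec{w}\rangle_L$ so that $\vec{w}$ never carries a full derivative, then substitute $\vec{w}=\int_0^1 d\psi_{\Tilde{\gamma}(s)}\Tilde{\vec{w}}\,ds$, split via \cref{comm_lem}, and estimate with the same combination of \cref{lem:prod_rule}, \cref{lem:prod_rule_2}, \cref{lem:prod_rule_frac}, \cref{lem:gag_in}, and \cref{isom_est}. The only cosmetic difference is the precise algebraic identity: the paper writes $\vec{v}=\tfrac12(\vec{u}+\vec{v})-\tfrac12(\vec{u}-\vec{v})$ and uses $\langle\vec{u}+\vec{v},\vec{u}-\vec{v}\rangle_L=0$ to obtain product terms plus one double commutator $H_{\Dso}(\vec{v},H_{\Dso}(T_L(\vec{u}+\vec{v}),\vec{w}))$, whereas your identity produces $-\tfrac12 H_{\Dso}(T_L(\vec{u}),\vec{w})-\tfrac12 H_{\Dso}(T_L(\vec{w}),\vec{v})-\langle\vec{w},\Dso\vec{u}\rangle_L$, giving two double commutators; the resulting terms are of identical type and are handled the same way.
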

\begin{proof}
Since  $|\vec{u}|^2_L = |\vec{v}|^2_L = -1$,
\begin{align*}
 \langle\vec{u}+\vec{v},\vec{u}-\vec{v}\rangle_L= |\vec{u}|_L^2-|\vec{v}|_L^2 = 0.
\end{align*}
Consequently,
\begin{align*}
 (\vec{u}+\vec{v}) \cdot_L \abs{\nabla} (\vec{u}-\vec{v}) = -(\abs{\nabla} (\vec{u}+\vec{v}) )\cdot_L (\vec{u}-\vec{v}) -H_{\abs{\nabla}} \brac{(\vec{u}+\vec{v})\cdot_L,\vec{u}-\vec{v}},
\end{align*}
That is
\begin{align*}
 \vec{v} \cdot_L \abs{\nabla} (\vec{u}-\vec{v}) =& -\frac{1}{2} (\vec{u}-\vec{v}) \cdot_L \abs{\nabla}(\vec{u}-\vec{v}) + \frac{1}{2} (\vec{u}+\vec{v}) \cdot_L \abs{\nabla}(\vec{u}-\vec{v}) \\
 =&-\frac{1}{2} (\vec{u}-\vec{v}) \cdot_L \abs{\nabla}(\vec{u}-\vec{v}) - \frac{1}{2} \brac{\abs{\nabla} (\vec{u}+\vec{v}) \cdot_L (\vec{u}-\vec{v}) +H_{\abs{\nabla}} ((\vec{u}+\vec{v})\cdot_L,\vec{u}-\vec{v})} \\
\end{align*}
Using \cref{comm_lem}, we get
\begin{align}
    &\lVert H_{\Dso}\brac{\vec{v}, \langle \vec{w},\Dso\vec{u} \rangle_L} \rVert_{L^{2}(\R^d)}\nonumber\\
    &= \sideset{}{'}{\sum}_{j=1}^3 \lVert \sum_{\ell=1}^m H_{\Dso}\brac{\vec{v}^i,\int_0^1 \frac{\partial \psi^j_{\Tilde{\gamma}(s)}}{\partial p^\ell} \ ds \tilde{\vec{w}}^\ell\Dso\vec{u}^j} \rVert_{L^{2}(\R^d)}\nonumber\\
    \lesssim & \sideset{}{'}{\sum}_{j=1}^3 \sum_{\ell=1}^m \lVert  H_{\Dso}\brac{\vec{v}^i\int_0^1 \frac{\partial \psi^j_{\Tilde{\gamma}(s)}}{\partial p^\ell} \ ds, \Tilde{\vec{w}}^\ell\Dso\vec{u}^j} \rVert_{L^{2}(\R^d)}\label{prod_frac_vec_1}\\
    &+ \sideset{}{'}{\sum}_{j=1}^3 \sum_{\ell=1}^m \lVert  H_{\Dso}\brac{\vec{v}^i,\int_0^1 \frac{\partial \psi^j_{\Tilde{\gamma}(s)}}{\partial p^\ell} \ ds} \tilde{\vec{w}}^\ell\Dso\vec{u}^j \rVert_{L^{2}(\R^d)}\label{prod_frac_vec_2}\\
    &+ \sideset{}{'}{\sum}_{j=1}^3 \sum_{\ell=1}^m \lVert \vec{v}^i H_{\Dso}\brac{\int_0^1 \frac{\partial \psi^j_{\Tilde{\gamma}(s)}}{\partial p^\ell} \ ds, \Tilde{\vec{w}}^\ell\Dso\vec{u}^j} \rVert_{L^{2}(\R^d)}\label{prod_frac_vec_3}.
\end{align}
For \eqref{prod_frac_vec_1}, we use \cref{lem:prod_rule} with small $\sigma \in (0,\frac{1}{2}]$ and \cref{lem:prod_rule_frac} to get
\begin{align*}
    &\lVert  H_{\Dso}\brac{\vec{v}^i\int_0^1 \frac{\partial \psi^j_{\Tilde{\gamma}(s)}}{\partial p^\ell} \ ds, \tilde{\vec{w}}^\ell\Dso\vec{u}^j} \rVert_{L^{2}(\R^d)}\\
    \lesssim & \lVert \Dso^{1-\sigma}\brac{\vec{v}^i\int_0^1 \frac{\partial \psi^j_{\Tilde{\gamma}(s)}}{\partial p^\ell} \ ds} \rVert_{L^{\frac{2d}{2(1-\sigma) - 1}}(\R^d)} \ \lVert \Dso^{\sigma}\brac{\tilde{\vec{w}}^\ell\Dso\vec{u}^j} \rVert_{L^{\frac{2d}{d + 2\sigma - 1}}(\R^d)}\\
    \lesssim_{\Lambda} & \brac{\Dso^{1+\sigma}\vec{u} \rVert_{L^{\frac{2d}{2(1+\sigma) - 1}}(\R^d)}^2 + \lVert \Dso^{1+\sigma}\vec{v} \rVert_{L^{\frac{2d}{2(1+\sigma) - 1}}(\R^d)}^2}  \ \lVert \Dso\Tilde{\vec{w}} \rVert_{L^{2}(\R^d)}.
\end{align*}

For \eqref{prod_frac_vec_2} we use \cref{lem:prod_rule} and \cref{lem:gag_in}
\begin{align*}
    &\lVert  H_{\Dso}\brac{\vec{v}^i,\int_0^1 \frac{\partial \psi^j_{\Tilde{\gamma}(s)}}{\partial p^\ell} \ ds} \tilde{\vec{w}}^\ell\Dso\vec{u}^j \rVert_{L^{2}(\R^d)}\\
    \lesssim &\lVert  H_{\Dso}\brac{\vec{v}^i,\int_0^1 \frac{\partial \psi^j_{\Tilde{\gamma}(s)}}{\partial p^\ell} \ ds} \rVert_{L^{2d}(\R^d)} \ \lVert \Tilde{\vec{w}} \rVert_{L^{\frac{2d}{d-2}}(\R^d)} \ \lVert \Dso\vec{u} \rVert_{L^{2d}(\R^d)}\\
    \lesssim &\lVert \Dso^\sigma\vec{v} \rVert_{L^{\frac{2d}{\sigma}}(\R^d)} \lVert \int_0^1\Dso^{1-\sigma} \frac{\partial \psi^j_{\Tilde{\gamma}(s)}}{\partial p^\ell} \ ds \rVert_{L^{\frac{2d}{1-\sigma}}(\R^d)} \ \lVert \Dso\Tilde{\vec{w}} \rVert_{L^{2}(\R^d)} \ \lVert \Dso\vec{u} \rVert_{L^{2d}(\R^d)}\\
    \lesssim &\lVert \Dso\vec{v} \rVert_{L^{2d}(\R^d)}^\sigma \lVert  \int_0^1 \Dso d\psi_{\Tilde{\gamma}(s)} \ ds \rVert_{L^{2d}(\R^d)}^{1-\sigma} \ \lVert \Dso\Tilde{\vec{w}} \rVert_{L^{2}(\R^d)} \ \lVert \Dso\vec{u} \rVert_{L^{2d}(\R^d)}\\
    \lesssim_{\Lambda} & \brac{\lVert \Dso\vec{u} \rVert_{L^{2d}(\R^d)}^2 + \lVert  \Dso \vec{v} \rVert_{L^{2d}(\R^d)}^2} \ \lVert \Dso\Tilde{\vec{w}} \rVert_{L^{2}(\R^d)}.
\end{align*}
Finally, for \eqref{prod_frac_vec_3}, taking small $\sigma\in (0,\frac{1}{2}]$, we use \cref{lem:prod_rule} and \cref{lem:prod_rule_frac} to get
\begin{align*}
    &\lVert \vec{v}^i H_{\Dso}\brac{\int_0^1 \frac{\partial \psi^j_{\Tilde{\gamma}(s)}}{\partial p^\ell} \ ds, \tilde{\vec{w}}^\ell\Dso\vec{u}^j} \rVert_{L^{2}(\R^d)}\\
    \lesssim & \lVert \Dso^{1-\sigma} \int_0^1 \frac{\partial \psi^j_{\Tilde{\gamma}(s)}}{\partial p^\ell} \ ds \rVert_{L^{\frac{2d}{2(1-\sigma) -1}}(\R^d)} \ \lVert \Dso^\sigma \brac{\Tilde{\vec{w}}^\ell \Dso\vec{u}^j }\rVert_{L^{\frac{2d}{d + 2\sigma - 1}}(\R^d)}\\
    \lesssim_{\Lambda} & \brac{\Dso^{1+\sigma}\vec{u} \rVert_{L^{\frac{2d}{2(1+\sigma) - 1}}(\R^d)}^2 + \lVert \Dso^{1+\sigma}\vec{v} \rVert_{L^{\frac{2d}{2(1+\sigma) - 1}}(\R^d)}^2}  \ \lVert \Dso\Tilde{\vec{w}} \rVert_{L^{2}(\R^d)}.
\end{align*}
Hence,
\begin{align*}
    &\lVert H_{\Dso}\brac{\vec{v}, \langle \vec{w},\Dso\vec{u} \rangle_L} \rVert_{L^{2}(\R^d)}\\
    \lesssim_{\Lambda} & \brac{\Dso^{1+\sigma}\vec{u} \rVert_{L^{\frac{2d}{2(1+\sigma) - 1}}(\R^d)}^2 + \lVert \Dso^{1+\sigma}\vec{v} \rVert_{L^{\frac{2d}{2(1+\sigma) - 1}}(\R^d)}^2}  \ \lVert \Dso\Tilde{\vec{w}} \rVert_{L^{2}(\R^d)}.
\end{align*}
Similarly,
\begin{align*}
    &\lVert H_{\Dso}\brac{\vec{v}, \langle \vec{w},\Dso\vec{v} \rangle_L} \rVert_{L^{2}(\R^d)}\\
    \lesssim_{\Lambda} & \brac{\Dso^{1+\sigma}\vec{u} \rVert_{L^{\frac{2d}{2(1+\sigma) - 1}}(\R^d)}^2 + \lVert \Dso^{1+\sigma}\vec{v} \rVert_{L^{\frac{2d}{2(1+\sigma) - 1}}(\R^d)}^2}  \ \lVert \Dso\Tilde{\vec{w}} \rVert_{L^{2}(\R^d)}.
\end{align*}
and
\begin{align*}
    &\lVert H_{\Dso}\brac{\vec{v}, \langle \Dso\brac{\vec{u} + \vec{v}},\vec{w} \rangle_L} \rVert_{L^{2}(\R^d)}\\
    \lesssim_{\Lambda} & \brac{\Dso^{1+\sigma}\vec{u} \rVert_{L^{\frac{2d}{2(1+\sigma) - 1}}(\R^d)}^2 + \lVert \Dso^{1+\sigma}\vec{v} \rVert_{L^{\frac{2d}{2(1+\sigma) - 1}}(\R^d)}^2}  \ \lVert \Dso\Tilde{\vec{w}} \rVert_{L^{2}(\R^d)}.
\end{align*}
Let $\sigma \in (0, \frac{1}{2}]$ be small, $\gamma \in (\sigma,1)$, and $\alpha \in (\gamma - \sigma, \gamma)$. Then for the last term, we use \cref{def_comm} to get
\begin{align}
    &\lVert H_{\Dso}\brac{\vec{v}, H_{\abs{\nabla}} ((\vec{u}+\vec{v})\cdot_L,\vec{u}-\vec{v})} \rVert_{L^{2}(\R^d)}\nonumber\\
    =& \lVert H_{\Dso}\brac{\vec{v}, H_{\abs{\nabla}} \left((\vec{u}+\vec{v})\cdot_L,\Dso^{-\gamma}\Dso^\gamma\brac{\int_0^1 d\psi_{\gamma(s)}\Tilde{\vec{w}} \ ds}\right)} \rVert_{L^{2}(\R^d)}\nonumber\\
    \lesssim & \lVert H_{\Dso}\brac{\vec{v}, H_{\abs{\nabla}} \left((\vec{u}+\vec{v})\cdot_L,\Dso^{-\gamma}\brac{\int_0^1 \Dso^\gamma d\psi_{\gamma(s)}\Tilde{\vec{w}} \ ds}\right)} \rVert_{L^{2}(\R^d)} \label{main_1}\\
    + & \lVert H_{\Dso}\brac{\vec{v}, H_{\abs{\nabla}} \left((\vec{u}+\vec{v})\cdot_L,\Dso^{-\gamma}\brac{\int_0^1 d\psi_{\gamma(s)} \ \Dso^\gamma \Tilde{\vec{w}} \ ds }\right)} \rVert_{L^{2}(\R^d)} \label{main_2}\\
    + & \lVert H_{\Dso}\brac{\vec{v}, H_{\abs{\nabla}} \left((\vec{u}+\vec{v})\cdot_L,\Dso^{-\gamma}\brac{H_{\Dso^\gamma}\brac{\int_0^1 d\psi_{\gamma(s)} \ ds, \Tilde{\vec{w}}}}\right)} \rVert_{L^{2}(\R^d)} \label{main_3}.
\end{align}
For \eqref{main_1}
\begin{align*}
    & \lVert H_{\Dso}\brac{\vec{v}, H_{\abs{\nabla}} \left((\vec{u}+\vec{v})\cdot_L,\Dso^{-\gamma}\brac{\int_0^1 \Dso^\gamma d\psi_{\gamma(s)}\Tilde{\vec{w}} \ ds}\right)} \rVert_{L^{2}(\R^d)}\\
    &\overset{\cref{lem:prod_rule}}{\lesssim} \lVert \Dso^{1-\sigma}\vec{v} \rVert_{L^{\frac{2d}{1-\sigma}}(\R^d)} \ \lVert \Dso^\sigma H_{\abs{\nabla}} \left((\vec{u}+\vec{v})\cdot_L,\Dso^{-\gamma}\brac{\int_0^1 \Dso^\gamma d\psi_{\gamma(s)}\Tilde{\vec{w}} \ ds}\right) \rVert_{L^{\frac{2d}{d + 2\sigma - 1}}(\R^d)}\\
    &\overset{\cref{lem:prod_rule_2}}{\lesssim} \lVert \Dso^{1-\sigma}\vec{v} \rVert_{L^{\frac{2d}{1-\sigma}}(\R^d)} \ \lVert \Dso^{1+\sigma-\gamma}\brac{\vec{u}+\vec{v}} \rVert_{L^{\frac{2d}{1+\sigma -\gamma}}(\R^d)} \ \lVert \int_0^1 \Dso^\gamma d\psi_{\gamma(s)}\Tilde{\vec{w}} \ ds \rVert_{L^{\frac{2d}{d + \gamma - 2}}(\R^d)}\\
    &\overset{\cref{lem:gag_in}}{\lesssim} \lVert \Dso^{1-\sigma}\vec{v} \rVert_{L^{\frac{2d}{1-\sigma}}(\R^d)} \ \lVert \Dso^{1+\sigma-\gamma}\brac{\vec{u}+\vec{v}} \rVert_{L^{\frac{2d}{1+\sigma -\gamma}}(\R^d)} \ \lVert \int_0^1 \Dso^\gamma d\psi_{\gamma(s)} \ ds \rVert_{L^{\frac{2d}{\gamma}}(\R^d)} \ \lVert \Tilde{\vec{w}} \rVert_{L^{\frac{2d}{d - 2}}(\R^d)}\\
    \lesssim & \lVert \Dso\vec{v} \rVert_{L^{2d}(\R^d)}^{1-\sigma} \ \lVert \Dso\brac{\vec{u}+\vec{v}} \rVert_{L^{2d}(\R^d)}^{1+\sigma-\gamma} \ \lVert \int_0^1 \Dso d\psi_{\gamma(s)} \ ds \rVert_{L^{2d}(\R^d)}^\gamma \ \lVert \Tilde{\vec{w}} \rVert_{L^{\frac{2d}{d - 2}}(\R^d)}\\
    &\overset{\cref{comm_lem}}{\lesssim_{\Lambda}} \lVert \Dso\vec{v} \rVert_{L^{2d}(\R^d)}^{1-\sigma} \ \lVert \Dso\brac{\vec{u}+\vec{v}} \rVert_{L^{2d}(\R^d)}^{1+\sigma-\gamma} \ \lVert \int_0^1 \Dso d\psi_{\gamma(s)} \ ds \rVert_{L^{2d}(\R^d)}^\gamma \ \lVert \Dso\Tilde{\vec{w}} \rVert_{L^{2d}(\R^d)}\\
    \lesssim_{\Lambda} & \lVert \Dso\vec{v} \rVert_{L^{2d}(\R^d)}^{1-\sigma} \ \brac{\lVert \Dso\vec{u} \rVert_{L^{2d}(\R^d)} + \lVert \Dso\vec{v} \rVert_{L^{2d}(\R^d)}}^{1+\sigma-\gamma} \ \brac{\lVert \Dso\vec{u} \rVert_{L^{2d}(\R^d)} + \lVert \Dso\vec{v} \rVert_{L^{2d}(\R^d)}}^\gamma \ \lVert \Dso\Tilde{\vec{w}} \rVert_{L^{2d}(\R^d)}\\
    \lesssim_{\Lambda} & \brac{\lVert \Dso\vec{u} \rVert_{L^{2d}(\R^d)} + \lVert \Dso\vec{v} \rVert_{L^{2d}(\R^d)}}^2 \ \lVert \Dso\Tilde{\vec{w}} \rVert_{L^{2d}(\R^d)}\\
    \lesssim_{\Lambda} & \brac{\lVert \Dso\vec{u} \rVert_{L^{2d}(\R^d)}^2 + \lVert \Dso\vec{v} \rVert_{L^{2d}(\R^d)}^2} \ \lVert \Dso\Tilde{\vec{w}} \rVert_{L^2(\R^d)}.
\end{align*}

For \eqref{main_2}
\begin{align*}
    & \lVert H_{\Dso}\brac{\vec{v}, H_{\abs{\nabla}} \left((\vec{u}+\vec{v})\cdot_L,\Dso^{-\gamma}\brac{\int_0^1 d\psi_{\gamma(s)} \Dso^\gamma\Tilde{\vec{w}} \ ds}\right)} \rVert_{L^{2}(\R^d)}\\
    \overset{\cref{lem:prod_rule}}{\lesssim} &\lVert \Dso^{1-\sigma}\vec{v} \rVert_{L^{\frac{2d}{2(1-\sigma) -1}}(\R^d)} \ \lVert \Dso^\sigma H_{\abs{\nabla}} \left((\vec{u}+\vec{v})\cdot_L,\Dso^{-\gamma}\brac{\int_0^1 d\psi_{\gamma(s)} \Dso^\gamma\Tilde{\vec{w}} \ ds}\right) \rVert_{L^{\frac{2d}{d + 2\sigma - 1}}(\R^d)}\\
    \overset{\cref{lem:prod_rule_2}}{\lesssim} & \lVert \Dso^{1-\sigma}\vec{v} \rVert_{L^{\frac{2d}{2(1-\sigma) -1}}(\R^d)} \ \lVert \Dso^{1+\sigma-\gamma}\brac{\vec{u}+\vec{v}} \rVert_{L^{\frac{2d}{2(1+\sigma -\gamma) - 1}}(\R^d)} \ \lVert \int_0^1 d\psi_{\gamma(s)} \Dso^\gamma\Tilde{\vec{w}} \ ds \rVert_{L^{\frac{2d}{d + \gamma - 2}}(\R^d)}\\
    \lesssim_{\Lambda} & \lVert \Dso^{1-\sigma}\vec{v} \rVert_{L^{\frac{2d}{2(1-\sigma) -1}}(\R^d)} \ \lVert \Dso^{1+\sigma-\gamma}\brac{\vec{u}+\vec{v}} \rVert_{L^{\frac{2d}{2(1+\sigma -\gamma) - 1}}(\R^d)} \ \lVert \Dso^\gamma\Tilde{\vec{w}} \rVert_{L^{\frac{2d}{d - 2(1-\gamma)}}(\R^d)}\\
    \lesssim_{\Lambda} & \brac{\lVert \Dso\vec{u} \rVert_{L^{2d}(\R^d)} + \lVert \Dso\vec{v} \rVert_{L^{2d}(\R^d)}}^2 \ \lVert \Dso\Tilde{\vec{w}} \rVert_{L^{2d}(\R^d)}\\
    \lesssim_{\Lambda} & \brac{\lVert \Dso\vec{u} \rVert_{L^{2d}(\R^d)}^2 + \lVert \Dso\vec{v} \rVert_{L^{2d}(\R^d)}^2} \ \lVert \Dso\Tilde{\vec{w}} \rVert_{L^2(\R^d)}.
\end{align*}
For \eqref{main_2}
\begin{align*}
    & \lVert H_{\Dso}\brac{\vec{v}, H_{\abs{\nabla}} \left((\vec{u}+\vec{v})\cdot_L,\Dso^{-\gamma}H_{\Dso^\gamma}\brac{\int_0^1 d\psi_{\gamma(s)} \ ds, \Tilde{\vec{w}}}\right)} \rVert_{L^{2}(\R^d)}\\
    &\overset{\cref{lem:prod_rule}}{\lesssim} \lVert \Dso^{1-\sigma}\vec{v} \rVert_{L^{\frac{2d}{1-\sigma}}(\R^d)} \ \lVert \Dso^\sigma H_{\abs{\nabla}} \left((\vec{u}+\vec{v})\cdot_L,\Dso^{-\gamma} H_{\Dso^\gamma}\brac{\int_0^1 d\psi_{\gamma(s)} \ ds, \Tilde{\vec{w}}}\right) \rVert_{L^{\frac{2d}{d + 2\sigma - 1}}(\R^d)}\\
    & \overset{\cref{lem:prod_rule_2}}{\lesssim} \lVert \Dso^{1-\sigma}\vec{v} \rVert_{L^{\frac{2d}{1-\sigma}}(\R^d)} \ \lVert \Dso^{1+\sigma-\gamma}\brac{\vec{u}+\vec{v}} \rVert_{L^{\frac{2d}{2(1+\sigma -\gamma) - 1}}(\R^d)} \ \lVert H_{\Dso^\gamma}\brac{\int_0^1 d\psi_{\gamma(s)} \ ds, \Tilde{\vec{w}}} \rVert_{L^{\frac{2d}{d + \gamma - 2}}(\R^d)}\\
    \lesssim & \lVert \Dso^{1-\sigma}\vec{v} \rVert_{L^{\frac{2d}{1-\sigma}}(\R^d)} \ \lVert \Dso^{1+\sigma-\gamma}\brac{\vec{u}+\vec{v}} \rVert_{L^{\frac{2d}{2(1+\sigma -\gamma) - 1}}(\R^d)} \ \lVert \int_0^1 \Dso^{\gamma - \alpha} d\psi_{\gamma(s)} \ ds \rVert_{L^{\frac{2d}{2(\gamma - \alpha) - \sigma}}(\R^d)} \ \lVert \Dso^\alpha\Tilde{\vec{w}} \rVert_{L^{\frac{2d}{d - 2(1-\alpha)}}(\R^d)}\\
    \lesssim & \lVert \Dso^{1-\sigma}\vec{v} \rVert_{L^{\frac{2d}{1-\sigma}}(\R^d)} \ \lVert \Dso^{1+\sigma-\gamma}\brac{\vec{u}+\vec{v}} \rVert_{L^{\frac{2d}{2(1+\sigma -\gamma) - 1}}(\R^d)} \ \lVert \int_0^1 \Dso^{\gamma - \alpha} d\psi_{\gamma(s)} \ ds \rVert_{L^{\frac{2d}{2(\gamma - \alpha) - \sigma}}(\R^d)} \ \lVert \Dso\Tilde{\vec{w}} \rVert_{L^2(\R^d)}\\
    & \overset{\cref{lem:gag_in}}{\lesssim} \lVert \Dso\vec{v} \rVert_{L^{2d}(\R^d)}^{1-\sigma} \ \lVert \Dso^{1+\sigma-\gamma}\brac{\vec{u}+\vec{v}} \rVert_{L^{\frac{2d}{2(1+\sigma -\gamma) - 1}}(\R^d)} \ \lVert \int_0^1 \Dso d\psi_{\gamma(s)} \ ds \rVert_{L^{2d}(\R^d)}^{\sigma} \ \lVert \Dso\Tilde{\vec{w}} \rVert_{L^2(\R^d)}\\
    & \overset{\cref{comm_lem}}{\lesssim_{\Lambda}} \lVert \Dso\vec{v} \rVert_{L^{2d}(\R^d)}^{1-\sigma} \ \lVert \Dso^{1+\sigma-\gamma}\brac{\vec{u}+\vec{v}} \rVert_{L^{\frac{2d}{2(1+\sigma -\gamma) - 1}}(\R^d)} \ \brac{\lVert \Dso\vec{u} \rVert_{L^{2d}(\R^d)} + \lVert \Dso\vec{v} \rVert_{L^{2d}(\R^d)}}^{\sigma} \ \lVert \Dso\Tilde{\vec{w}} \rVert_{L^2(\R^d)}\\
    \lesssim_{\Lambda} & \lVert \Dso^{1+\sigma-\gamma}\brac{\vec{u}+\vec{v}} \rVert_{L^{\frac{2d}{2(1+\sigma -\gamma) - 1}}(\R^d)} \ \brac{\lVert \Dso\vec{u} \rVert_{L^{2d}(\R^d)} + \lVert \Dso\vec{v} \rVert_{L^{2d}(\R^d)}} \ \lVert \Dso\Tilde{\vec{w}} \rVert_{L^2(\R^d)}\\
    \lesssim_{\Lambda} & \brac{\lVert \Dso\vec{u} \rVert_{L^{2d}(\R^d)}^2 + \lVert \Dso\vec{v} \rVert_{L^{2d}(\R^d)}^2} \ \lVert \Dso\Tilde{\vec{w}} \rVert_{L^2(\R^d)}.
\end{align*}
Therefore,
\begin{align*}
    &\lVert H_{\Dso}\brac{\vec{v}, \langle \vec{v},\Dso\vec{w} \rangle_L} \rVert_{L^{2}(\R^d)}\\
    \lesssim_{\Lambda} & \brac{\Dso^{1+\sigma}\vec{u} \rVert_{L^{\frac{2d}{2(1+\sigma) - 1}}(\R^d)}^2 + \lVert \Dso^{1+\sigma}\vec{v} \rVert_{L^{\frac{2d}{2(1+\sigma) - 1}}(\R^d)}^2}  \ \lVert \Dso\Tilde{\vec{w}} \rVert_{L^{2}(\R^d)}.
\end{align*}
\end{proof}

\begin{lemma}\label{comm_lemm}
Let $d\geq 3$. Then for $\sigma \in (0,1/2]$,
\begin{align*}
&\|H_{\Dso}\brac{\vec{v} \wedge_L, \langle \Dso \vec{u}, \vec{v} \rangle_L \vec{w}} \|_{L^{2}(\R^d)} \\
&\lesssim_{\Lambda} \|\nabla \Tilde{\vec{w}}\|_{L^2(\R^d)}\, \brac{\|\Ds{1+\sigma} \vec{u}\|_{L^{\frac{2d}{2(1+\sigma)-1}}(\R^d)}^2 + \|\Ds{1+\sigma} \vec{v}\|_{L^{\frac{2d}{2(1+\sigma)-1}}(\R^d)}^2},
\end{align*}
and 
\begin{align*}
&\|H_{\Dso}\brac{\vec{v} \wedge_L, \langle \vec{v},\vec{w} \rangle_L \Dso \vec{u}}  \|_{L^{2}(\R^d)} \\
&\lesssim_{\Lambda} \|\nabla \Tilde{\vec{w}}\|_{L^2(\R^d)}\, \brac{\|\Ds{1+\sigma} \vec{u}\|_{L^{\frac{2d}{2(1+\sigma)-1}}(\R^d)}^2 + \|\Ds{1+\sigma} \vec{v}\|_{L^{\frac{2d}{2(1+\sigma)-1}}(\R^d)}^2}.
\end{align*}
\end{lemma}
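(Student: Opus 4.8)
We sketch the argument, which runs along the same lines as the estimates of \eqref{secondterm_1}--\eqref{secondterm_4} and of \cref{prod_frac_vec}. The plan is to reduce each quantity to a product of one copy of $\|\nabla\Tilde{\vec{w}}\|_{L^2(\R^d)}$ with two copies of $\|\Ds{1+\sigma}\vec{u}\|_{L^{2d/(2(1+\sigma)-1)}(\R^d)}$ or $\|\Ds{1+\sigma}\vec{v}\|_{L^{2d/(2(1+\sigma)-1)}(\R^d)}$. To do so we first exploit the constraint $|\vec{u}|_L^2=|\vec{v}|_L^2=-1$ to rewrite the scalar factors $\langle\Dso\vec{u},\vec{v}\rangle_L$ and $\langle\vec{v},\vec{w}\rangle_L$ in terms of a Leibniz--rule operator plus a factor that is pointwise controlled by $\vec{w}$ (using that $|\vec{w}|\le|\vec{u}|+|\vec{v}|\lesssim_\Lambda 1$ on the compact image $N$), then substitute $\vec{w}=\int_0^1 d\psi_{\Tilde{\gamma}(s)}\Tilde{\vec{w}}\,ds$ as in \eqref{w_eq}, and finally distribute the derivatives with \cref{comm_lem}, \cref{lem:prod_rule}, \cref{lem:prod_rule_2}, \cref{lem:prod_rule_frac} and \cref{lem:comm_int}, cleaning up with \cref{lem:gag_in}, \cref{lem:sob_in}, \cref{isom_est} and \eqref{chain}. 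Throughout we may take the auxiliary splitting parameters (also denoted $\sigma,\gamma$) as small as needed.

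For the first estimate, since $\vec{w}=\vec{u}-\vec{v}$ and $\langle\vec{u},\Dso\vec{u}\rangle_L=-\tfrac12 H_{\Dso}(T_L(\vec{u}),\vec{u})$, we have $\langle\Dso\vec{u},\vec{v}\rangle_L=\langle\Dso\vec{u},\vec{u}\rangle_L-\langle\Dso\vec{u},\vec{w}\rangle_L=-\tfrac12 H_{\Dso}(T_L(\vec{u}),\vec{u})-\langle\Dso\vec{u},\vec{w}\rangle_L$, hence $\langle\Dso\vec{u},\vec{v}\rangle_L\vec{w}=-\tfrac12 H_{\Dso}(T_L(\vec{u}),\vec{u})\vec{w}-\langle\Dso\vec{u},\vec{w}\rangle_L\vec{w}$. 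For the contribution of the first term we plug into $H_{\Dso}(\vec{v}\wedge_L,\cdot)$, peel off $\Tilde{\vec{w}}$ with \cref{comm_lem}, and apply \cref{lem:prod_rule} to split off $\|\Ds{1-\sigma}\Tilde{\vec{w}}\|_{L^{2d/(d-2\sigma)}}\lesssim\|\Dso\Tilde{\vec{w}}\|_{L^2}$ (by \cref{lem:sob_in}); the remaining factor is a product of $\vec{v}$ (or of $d\psi$, handled by \cref{isom_est}) with $H_{\Dso}(T_L(\vec{u}),\vec{u})$ carrying at most $\sigma$ derivatives, which is estimated by \cref{lem:prod_rule_2}, \cref{lem:prod_rule_frac} and \cref{lem:gag_in} to produce $\|\Dso\vec{u}\|_{L^{2d}}^2+\|\Dso\vec{v}\|_{L^{2d}}^2$, upgraded by \eqref{chain} to the desired norms. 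For the second term we use $|\langle\Dso\vec{u},\vec{w}\rangle_L|\lesssim_\Lambda|\Dso\vec{u}|$ and, writing the remaining $\vec{w}$ via \eqref{w_eq}, expand $H_{\Dso}(\vec{v}\wedge_L,\cdot)$ with \cref{lem:comm_int} and split the differences exactly as in the treatment of \eqref{eq:triple_1}--\eqref{eq:triple_2}; the resulting kernel integrals are controlled by \cref{double_comm_int} and \cref{triple_comm_int}, again finishing with \cref{lem:sob_in}, \cref{isom_est} and \eqref{chain}.

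For the second estimate we observe that $\langle\vec{u}+\vec{v},\vec{w}\rangle_L=|\vec{u}|_L^2-|\vec{v}|_L^2=0$, whence $\langle\vec{v},\vec{w}\rangle_L=-\tfrac12\langle\vec{w},\vec{w}\rangle_L=-\tfrac12|\vec{w}|_L^2$, so $\langle\vec{v},\vec{w}\rangle_L\Dso\vec{u}=-\tfrac12|\vec{w}|_L^2\,\Dso\vec{u}$ is again effectively linear in $\vec{w}$ because $|\vec{w}|_L^2\lesssim_\Lambda|\vec{w}|$ pointwise. We expand $H_{\Dso}(\vec{v}\wedge_L,\cdot)$ with \cref{lem:comm_int} and decompose
\[
|\vec{w}(x)|_L^2\Dso\vec{u}(x)-|\vec{w}(y)|_L^2\Dso\vec{u}(y)=\big(|\vec{w}(x)|_L^2-|\vec{w}(y)|_L^2\big)\Dso\vec{u}(x)+|\vec{w}(y)|_L^2\big(\Dso\vec{u}(x)-\Dso\vec{u}(y)\big),
\]
using $|\vec{w}(x)|_L^2-|\vec{w}(y)|_L^2=\langle\vec{w}(x)+\vec{w}(y),\vec{w}(x)-\vec{w}(y)\rangle_L$ with $|\vec{w}(x)+\vec{w}(y)|\lesssim_\Lambda 1$ in the first piece and $|\vec{w}(y)|\lesssim_\Lambda 1$ in the second; after substituting \eqref{w_eq} and freezing $\Dso\vec{u}(x)$ out of the inner integral where appropriate, the kernel integrals fall under \cref{double_comm_int}, \cref{triple_comm_int} and \cref{triple_comm_int_3}, \cref{lem:sob_in} gives $\|\Tilde{\vec{w}}\|_{L^{2d/(d-2)}}\lesssim\|\nabla\Tilde{\vec{w}}\|_{L^2}$, \cref{isom_est} removes the $d\psi$ and $\Tilde{}$ factors, and \eqref{chain} reconciles the exponents. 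The bound with $\vec{u}$ and $\vec{v}$ swapped in the inner-product slots follows identically.

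The main difficulty is purely the bookkeeping: one must choose the fractional exponents so that every application of \cref{lem:prod_rule}, \cref{lem:prod_rule_2}, \cref{double_comm_int}, \cref{triple_comm_int} and \cref{triple_comm_int_3} satisfies its hypotheses (the H\"older relations among $p_1,p_2,p_3$, and constraints such as $\alpha_1+\alpha_2>1$ or $\tfrac1{p_i}-\tfrac{\alpha_i}{d}<\tfrac12$) while the estimate still closes with exactly one power of $\|\nabla\Tilde{\vec{w}}\|_{L^2}$; in particular the terms that are genuinely quadratic in $\vec{w}$ must be tamed using only the pointwise bound $|\vec{w}|\lesssim_\Lambda 1$ on $N$, never any $L^\infty$ smallness of $\vec{w}$, which is precisely why the algebraic identities $\langle\vec{v},\vec{w}\rangle_L=-\tfrac12|\vec{w}|_L^2$ and $\langle\vec{u}+\vec{v},\vec{w}\rangle_L=0$ are doing the real work.
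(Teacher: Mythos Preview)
Your route is genuinely different from the paper's, and considerably more elaborate. The paper does \emph{not} invoke the constraint identities $\langle\vec{u},\Dso\vec{u}\rangle_L=-\tfrac12 H_{\Dso}(T_L(\vec{u}),\vec{u})$ or $\langle\vec{v},\vec{w}\rangle_L=-\tfrac12|\vec{w}|_L^2$ for this lemma at all. Instead it simply writes $\langle\vec{v},\vec{w}\rangle_L=\sideset{}{'}\sum_{j}\vec{v}^j\int_0^1(\partial\psi^j/\partial p^\ell)\Tilde{\vec{w}}^\ell\,ds$, applies \cref{comm_lem} with the bounded smooth coefficient $g=\vec{v}^j\int_0^1(\partial\psi^j/\partial p^\ell)\,ds$ to peel $\Tilde{\vec{w}}^\ell\Dso\vec{u}$ out of the commutator, and then estimates the three resulting pieces by \cref{lem:prod_rule}, \cref{lem:prod_rule_frac}, and \cref{lem:gag_in}; the key step is \cref{lem:prod_rule_frac}, which gives $\|\Dso^{\sigma}(\Tilde{\vec{w}}^\ell\Dso\vec{u})\|_{L^{2d/(d+2\sigma-1)}}\lesssim\|\Dso\Tilde{\vec{w}}\|_{L^2}\|\Dso^{1+\sigma}\vec{u}\|_{L^{2d/(2(1+\sigma)-1)}}$, together with a lengthy but elementary bound on $\|\Dso^{1-\sigma}(\vec{v}\,\vec{v}^j\!\int d\psi)\|_{L^{2d/(2(1-\sigma)-1)}}$. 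The first estimate is then dispatched by ``Similarly.'' What the paper buys is that no term quadratic in $\vec{w}$ ever appears, so there is no need for the pointwise bound $|\vec{w}|\lesssim_\Lambda 1$ or for the kernel-integral lemmas \cref{double_comm_int}, \cref{triple_comm_int}, \cref{triple_comm_int_3}. Your approach, by contrast, deliberately introduces $\langle\Dso\vec{u},\vec{w}\rangle_L\vec{w}$ and $|\vec{w}|_L^2\Dso\vec{u}$ and then works hard to undo the damage via the integral representation; this is plausible (your exponent arithmetic for the triple-kernel terms can be made to close along the lines you indicate), but it is extra machinery for no gain here---the constraints are doing real work in \cref{la:weirdHest1} and \cref{prod_frac_vec}, not in this lemma. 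Your sketch also leaves the parameter verification entirely to the reader, whereas the paper's route makes the H\"older balance essentially automatic.
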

\begin{proof}
From small \eqref{w_eq}, we see that
\begin{align*}
    &H_{\Dso}\brac{\vec{v} \wedge_L, \langle \vec{v},\vec{w} \rangle_L \Dso \vec{u}} \\
    =& \sum_{\ell=1}^m \sideset{}{'}{\sum}_{j=1}^3  H_{\Dso}\brac{\vec{v} \wedge_L, \vec{v}^j \int_0^1 \frac{\partial \psi^j_{\Tilde{\gamma}(s)}}{\partial p^\ell} \Tilde{\vec{w}}^\ell \ ds \Dso \vec{u}}.
\end{align*}
Using \cref{comm_lem},
\begin{align}
    & \lVert H_{\Dso}\brac{\vec{v} \wedge_L, \vec{v}^j \int_0^1\frac{\partial \psi^j_{\Tilde{\gamma}(s)}}{\partial p^\ell} \Tilde{\vec{w}}^\ell \ ds \Dso \vec{u}}\rVert_{L^{2}(\R^d)}\nonumber\\
    \lesssim & \lVert H_{\Dso}\brac{\vec{v} \vec{v}^j \int_0^1 \frac{\partial \psi^j_{\Tilde{\gamma}(s)}}{\partial p^\ell} \ ds \wedge_L, \Tilde{\vec{w}}^\ell \Dso \vec{u} }\rVert_{L^{2}(\R^d)} \label{comm_lem_1}\\
    + & \lVert H_{\Dso}\brac{\vec{v}, \vec{v}^j \int_0^1 \frac{\partial \psi^j_{\Tilde{\gamma}(s)}}{\partial p^\ell} \ ds } \wedge_L\Tilde{\vec{w}}^\ell \Dso \vec{u} \rVert_{L^{2}(\R^d)}\label{comm_lem_2}\\
    +& \lVert \vec{v} \wedge_L H_{\Dso}\brac{\vec{v}^j \int_0^1 \frac{\partial \psi^j_{\Tilde{\gamma}(s)}}{\partial p^\ell} \ ds, \Tilde{\vec{w}}^\ell \Dso \vec{u}} \rVert_{L^{2}(\R^d)}\label{comm_lem_3}.
\end{align}
For $\sigma \in (0,\frac{1}{2})$
\begin{align*}
    &\Dso^{1-\sigma} \brac{\vec{v} \vec{v}^j \int_0^1 \frac{\partial \psi^j_{\Tilde{\gamma}(s)}}{\partial p^\ell} \ ds}\\
    &= \Dso^{1-\sigma} \vec{v} \vec{v}^j \int_0^1 \frac{\partial \psi^j_{\Tilde{\gamma}(s)}}{\partial p^\ell} \ ds\\
    &+ \vec{v} \Dso^{1-\sigma}\vec{v}^j \int_0^1 \frac{\partial \psi^j_{\Tilde{\gamma}(s)}}{\partial p^\ell} \ ds\\
    &+ \vec{v} \vec{v}^j \int_0^1 \Dso^{1-\sigma}\frac{\partial \psi^j_{\Tilde{\gamma}(s)}}{\partial p^\ell} \ ds\\
    &+ H_{\Dso^{1-\sigma}}\brac{\vec{v} \vec{v}^j, \int_0^1 \frac{\partial \psi^j_{\Tilde{\gamma}(s)}}{\partial p^\ell} \ ds}\\
    &+ H_{\Dso^{1-\sigma}}\brac{\vec{v}, \vec{v}^j} \int_0^1 \frac{\partial \psi^j_{\Tilde{\gamma}(s)}}{\partial p^\ell} \ ds.
\end{align*}
So by \cref{lem:prod_rule} for a small $\sigma \in (0,\frac{1}{2})$ and $\gamma \in (\frac{1}{4},\frac{1}{2})$, \cref{lem:gag_in}, and \cref{isom_est}, we have
\begin{align*}
    &\lVert \Dso^{1-\sigma}\brac{\vec{v} \vec{v}^j \int_0^1 \frac{\partial \psi^j_{\Tilde{\gamma}(s)}}{\partial p^\ell} \ ds} \rVert_{L^{\frac{2d}{2(1-\sigma)-1}}(\R^d)}\\
    &\lesssim_{\Lambda} \brac{\lVert \Dso \vec{u} \rVert_{L^{2d}(\R^d)} + \lVert \Dso \vec{v} \rVert_{L^{2d}(\R^d)}}\\
    &+ \lVert H_{\Dso^{1-\sigma}}\brac{\vec{v} \vec{v}^j, \int_0^1 \frac{\partial \psi^j_{\Tilde{\gamma}(s)}}{\partial p^\ell} \ ds} \rVert_{L^{\frac{2d}{2(1-\sigma)-1}}(\R^d)} \\
    &+ \lVert H_{\Dso^{1-\sigma}}\brac{\vec{v}, \vec{v}^j} \rVert_{L^{\frac{2d}{2(1-\sigma)-1}}(\R^d)}\\
    &\lesssim_{\Lambda} \brac{\lVert \Dso \vec{u} \rVert_{L^{2d}(\R^d)} + \lVert \Dso \vec{v} \rVert_{L^{2d}(\R^d)}}\\
    &+ \lVert H_{\Dso^{1-\sigma}}\brac{\vec{v} \vec{v}^j, \int_0^1 \frac{\partial \psi^j_{\Tilde{\gamma}(s)}}{\partial p^\ell} \ ds} \rVert_{L^{\frac{2d}{2(1-\sigma)-1}}(\R^d)} \\
    &+ \lVert \Dso^{1-\sigma - \gamma}\vec{v} \rVert_{L^{\frac{2d}{2(1-\sigma -\gamma)-\frac{1}{2}}}(\R^d)} \lVert \Dso^\gamma \vec{v}^j\rVert_{L^{\frac{2d}{2\gamma-\frac{1}{2}}}(\R^d)}\\
    &\lesssim_{\Lambda} \brac{\lVert \Dso \vec{u} \rVert_{L^{2d}(\R^d)} + \lVert \Dso \vec{v} \rVert_{L^{2d}(\R^d)}}\\
    &+ \lVert H_{\Dso^{1-\sigma}}\brac{\vec{v} \vec{v}^j, \int_0^1 \frac{\partial \psi^j_{\Tilde{\gamma}(s)}}{\partial p^\ell} \ ds} \rVert_{L^{\frac{2d}{2(1-\sigma)-1}}(\R^d)} \\
    &+ \lVert H_{\Dso^{1-\sigma}}\brac{\vec{v}, \vec{v}^j} \rVert_{L^{\frac{2d}{2(1-\sigma)-1}}(\R^d)}\\
    & \overset{\cref{lem:gag_in}}{\lesssim_{\Lambda}} \brac{\lVert \Dso \vec{u} \rVert_{L^{2d}(\R^d)} + \lVert \Dso \vec{v} \rVert_{L^{2d}(\R^d)}}\\
    &+ \lVert H_{\Dso^{1-\sigma}}\brac{\vec{v} \vec{v}^j, \int_0^1 \frac{\partial \psi^j_{\Tilde{\gamma}(s)}}{\partial p^\ell} \ ds} \rVert_{L^{\frac{2d}{2(1-\sigma)-1}}(\R^d)} \\
    &+ \lVert \Dso\vec{v} \rVert_{L^{2d}(\R^d)}^\frac{1}{2} \lVert \Dso \vec{v}^j\rVert_{L^{2d}(\R^d)}^{\frac{1}{2}}\\
    &\lesssim_{\Lambda} \brac{\lVert \Dso \vec{u} \rVert_{L^{2d}(\R^d)} + \lVert \Dso \vec{v} \rVert_{L^{2d}(\R^d)}}\\
    &+ \lVert H_{\Dso^{1-\sigma}}\brac{\vec{v} \vec{v}^j, \int_0^1 \frac{\partial \psi^j_{\Tilde{\gamma}(s)}}{\partial p^\ell} \ ds} \rVert_{L^{\frac{2d}{2(1-\sigma)-1}}(\R^d)}.
\end{align*}
For the last term above, we again use \cref{lem:prod_rule} twice with $\gamma \in (\frac{1}{4}, \frac{1}{2})$ and $\gamma' \in (\frac{1}{2}, \gamma)$, and finally use \cref{lem:gag_in} and \cref{isom_est} to get 
\begin{align*}
    &\lVert H_{\Dso^{1-\sigma}}\brac{\vec{v} \vec{v}^j, \int_0^1 \frac{\partial \psi^j_{\Tilde{\gamma}(s)}}{\partial p^\ell} \ ds} \rVert_{L^{\frac{2d}{2(1-\sigma)-1}}(\R^d)}\\
    &\lesssim \lVert \Dso^{1-\sigma -\gamma} \brac{\vec{v} \vec{v}^j} \rVert_{L^{\frac{2d}{2(1-\sigma-\gamma) - \frac{1}{2}}}(\R^d)} \ \lVert \int_0^1 \Dso^\gamma \frac{\partial \psi^j_{\Tilde{\gamma}(s)}}{\partial p^\ell} \ ds \rVert_{L^{\frac{2d}{2\gamma - \frac{1}{2}}}(\R^d)}\\
    &\lesssim \brac{\lVert \Dso^{1 - \sigma - \gamma} \vec{v} \rVert_{L^{\frac{2d}{2(1-\sigma-\gamma) - \frac{1}{2}}}(\R^d)} + \lVert H_{\Dso^{1 - \sigma - \gamma}}\brac{\vec{v}, \vec{v}^j} \rVert_{L^{\frac{}{}}(\R^d)}} \ \lVert \int_0^1 \Dso^\gamma \frac{\partial \psi^j_{\Tilde{\gamma}(s)}}{\partial p^\ell} \ ds \rVert_{L^{\frac{2d}{2\gamma - \frac{1}{2}}}(\R^d)}\\
    &\lesssim \lVert \Dso^{1 - \sigma - \gamma} \vec{v} \rVert_{L^{\frac{2d}{2(1-\sigma-\gamma) - \frac{1}{2}}}(\R^d)} \ \lVert \int_0^1 \Dso^\gamma \frac{\partial \psi^j_{\Tilde{\gamma}(s)}}{\partial p^\ell} \ ds \rVert_{L^{\frac{2d}{2\gamma - \frac{1}{2}}}(\R^d)}\\
    &+\lVert \Dso^{1 - \sigma - \gamma - \gamma'}\vec{v} \lVert_{L^{\frac{2d}{2(1-\sigma-\gamma - \gamma')- \frac{1}{4}}}(\R^d)} \ \lVert \Dso^{\gamma'}\vec{v}^j \rVert_{L^{\frac{2d}{2\gamma' - \frac{1}{4}}}(\R^d)} \ \lVert \int_0^1 \Dso^\gamma \frac{\partial \psi^j_{\Tilde{\gamma}(s)}}{\partial p^\ell} \ ds \rVert_{L^{\frac{2d}{2\gamma - \frac{1}{2}}}(\R^d)}\\
    &\overset{\cref{lem:gag_in}}{\lesssim} \lVert \Dso \vec{v} \rVert_{L^{2d}(\R^d)}^{\frac{1}{2}} \ \lVert \int_0^1 \Dso \frac{\partial \psi^j_{\Tilde{\gamma}(s)}}{\partial p^\ell} \ ds \rVert_{L^{2d}(\R^d)}^{\frac{1}{2}}\\
    &+ \lVert \Dso\vec{v} \rVert_{L^{2d}(\R^d)}^{\frac{1}{4}} \ \lVert \Dso\vec{v}^j \rVert_{L^{2d}(\R^d)}^{\frac{1}{4}} \ \lVert \int_0^1 \Dso \frac{\partial \psi^j_{\Tilde{\gamma}(s)}}{\partial p^\ell} \ ds \rVert_{L^{2d}(\R^d)}^\frac{1}{2}\\
    &\lesssim_{\Lambda} \lVert \Dso \vec{v} \rVert_{L^{2d}(\R^d)}^{\frac{1}{2}} \ \brac{\lVert \Dso \vec{u} \rVert_{L^{2d}(\R^d)} + \lVert \Dso \vec{v} \rVert_{L^{2d}(\R^d)}}^{\frac{1}{2}}\\
    &+ \lVert \Dso\vec{v} \rVert_{L^{2d}(\R^d)}^{\frac{1}{4}} \ \lVert \Dso\vec{v}^j \rVert_{L^{2d}(\R^d)}^{\frac{1}{4}} \ \brac{\lVert \Dso \vec{u} \rVert_{L^{2d}(\R^d)} + \lVert \Dso \vec{v} \rVert_{L^{2d}(\R^d)}}^{\frac{1}{2}}\\
    &\lesssim_{\Lambda} \lVert \Dso \vec{v} \rVert_{L^{2d}(\R^d)}^{\frac{1}{2}} \ \brac{\lVert \Dso \vec{u} \rVert_{L^{2d}(\R^d)}^{\frac{1}{2}} + \lVert \Dso \vec{v} \rVert_{L^{2d}(\R^d)}^{\frac{1}{2}}}\\
    &+ \lVert \Dso\vec{v} \rVert_{L^{2d}(\R^d)}^{\frac{1}{4}} \ \lVert \Dso\vec{v}^j \rVert_{L^{2d}(\R^d)}^{\frac{1}{4}} \ \brac{\lVert \Dso \vec{u} \rVert_{L^{2d}(\R^d)}^{\frac{1}{2}} + \lVert \Dso \vec{v} \rVert_{L^{2d}(\R^d)}^{\frac{1}{2}}}\\
    &\lesssim_{\Lambda} \lVert \Dso \vec{u} \rVert_{L^{2d}(\R^d)} + \lVert \Dso \vec{v} \rVert_{L^{2d}(\R^d)}
\end{align*}
Hence, for \eqref{comm_lem_1}, using \cref{lem:prod_rule}, \cref{lem:prod_rule_frac}, and the above,
\begin{align*}
    &\lVert H_{\Dso}\brac{\vec{v} \vec{v}^j \int_0^1 \frac{\partial \psi^j_{\Tilde{\gamma}(s)}}{\partial p^\ell} \ ds \ \wedge_L, \Tilde{\vec{w}}^\ell \Dso \vec{u} }\rVert_{L^{2}(\R^d)}\\
    \lesssim & \lVert \Dso^{1-\sigma} \brac{\vec{v} \vec{v}^j \int_0^1 \frac{\partial \psi^j_{\Tilde{\gamma}(s)}}{\partial p^\ell} \ ds} \rVert_{L^{\frac{2d}{2(1-\sigma)-1}}(\R^d)} \ \lVert \Dso^\sigma\brac{\Tilde{\vec{w}}^\ell \Dso \vec{u}}\rVert_{L^{\frac{2d}{d+2\sigma - 1}}(\R^d)} \\
    \lesssim_{\Lambda} &\brac{\lVert \Dso \vec{u} \rVert_{L^{2d}(\R^d)} + \lVert \Dso \vec{v} \rVert_{L^{2d}(\R^d)}} \ \lVert \Dso^{1+\sigma} \vec{u}\rVert_{L^{\frac{2d}{2(1+\sigma)-1}}(\R^d)} \ \lVert \Dso \Tilde{\vec{w}} \rVert_{L^2(\R^d)}\\
    \lesssim_{\Lambda} & \brac{\|\Ds{1+\sigma} \vec{u}\|_{L^{\frac{2d}{2(1+\sigma)-1}}(\R^d)}^2 + \|\Ds{1+\sigma} \vec{v}\|_{L^{\frac{2d}{2(1+\sigma)-1}}(\R^d)}^2} \ \|\nabla \Tilde{\vec{w}}\|_{L^2(\R^d)}.
\end{align*}

Now for \eqref{comm_lem_2}, we use \cref{lem:prod_rule}, \cref{lem:gag_in}, and \cref{lem:prod_rule_frac} to get
\begin{align*}
    &\lVert H_{\Dso}\brac{\vec{v}, \vec{v}^j \int_0^1 \frac{\partial \psi^j_{\Tilde{\gamma}(s)}}{\partial p^\ell} \ ds } \wedge_L\Tilde{\vec{w}}^\ell \Dso \vec{u} \rVert_{L^{2}(\R^d)}\\
    \lesssim & \lVert H_{\Dso}\brac{\vec{v}, \vec{v}^j\int_0^1 \frac{\partial \psi^j_{\Tilde{\gamma}(s)}}{\partial p^\ell} \ ds} \rVert_{L^{2d}(\R^d)} \ \lVert \Tilde{\vec{w}}^\ell \Dso \vec{u} \rVert_{L^{\frac{2d}{d-1}}(R^d)}\\
    \lesssim & \lVert \Dso^{1-\sigma} \vec{v} \rVert_{L^{\frac{2d}{1-\sigma}}(\R^d)} \ \lVert \Dso^{\sigma} \brac{\vec{v}^j \int_0^1 \frac{\partial \psi^j_{\Tilde{\gamma}(s)}}{\partial p^\ell} \ ds }\rVert_{L^{\frac{2d}{\sigma}}(\R^d)} \ \lVert \Dso \vec{u} \rVert_{L^{2d}(\R^d)} \ \lVert \Dso \Tilde{\vec{w}} \rVert_{L^2(R^d)}.
\end{align*}
Observe that for $\gamma \in (\frac{1}{4}\sigma, \frac{3}{4}\sigma)$, we use \cref{lem:prod_rule} and \cref{lem:gag_in} to get
\begin{align*}
    &\lVert \Dso^{\sigma} \brac{\vec{v}^j \int_0^1 \frac{\partial \psi^j_{\Tilde{\gamma}(s)}}{\partial p^\ell} \ ds }\rVert_{L^{\frac{2d}{\sigma}}(\R^d)}\\
    &\lesssim \lVert \Dso^{\sigma} \vec{v} \rVert_{L^{\frac{2d}{\sigma}}(\R^d)} + \lVert \int_0^1 \Dso^\sigma\frac{\partial \psi^j_{\Tilde{\gamma}(s)}}{\partial p^\ell} \ ds \rVert_{L^{\frac{2d}{\sigma}}(\R^d)} \\
    &+ \lVert H_{\Dso^\sigma}\brac{\vec{v}^j, \int_0^1 \frac{\partial \psi^j_{\Tilde{\gamma}(s)}}{\partial p^\ell} \ ds } \rVert_{L^{\frac{2d}{\sigma}}(\R^d)}\\
    &\lesssim \lVert \Dso^{\sigma} \vec{v} \rVert_{L^{\frac{2d}{\sigma}}(\R^d)} + \lVert \int_0^1 \Dso^\sigma\frac{\partial \psi^j_{\Tilde{\gamma}(s)}}{\partial p^\ell} \ ds \rVert_{L^{\frac{2d}{\sigma}}(\R^d)} \\
    &+ \lVert \Dso^{\sigma-\gamma}\vec{v}^j \rVert_{L^{\frac{2d}{2(\sigma-\alpha) - \frac{\sigma}{2}}}(\R^d)} \ \lVert \int_0^1 \Dso^{\gamma}\frac{\partial \psi^j_{\Tilde{\gamma}(s)}}{\partial p^\ell} \ ds \rVert_{L^{\frac{2d}{2\gamma - \frac{\sigma}{2}}}(\R^d)}\\
    &\lesssim \lVert \Dso \vec{v} \rVert_{L^{2d}(\R^d)}^\sigma + \lVert \int_0^1 \Dso\frac{\partial \psi^j_{\Tilde{\gamma}(s)}}{\partial p^\ell} \ ds \rVert_{L^{2d}(\R^d)}^\sigma \\
    &+ \lVert \Dso\vec{v}^j \rVert_{L^{2d}(\R^d)}^{\frac{\sigma}{2}} \ \lVert \int_0^1 \Dso\frac{\partial \psi^j_{\Tilde{\gamma}(s)}}{\partial p^\ell} \ ds \rVert_{L^{2d}(\R^d)}^{\frac{\sigma}{2}}\\
    &\lesssim_{\Lambda} \lVert \Dso \vec{v} \rVert_{L^{2d}(\R^d)}^\sigma + \lVert \Dso\vec{u} \rVert_{L^{2d}(\R^d)}^\sigma + \lVert \Dso\vec{v} \rVert_{L^{2d}(\R^d)}^\sigma \\
    &+ \lVert \Dso\vec{v}^j \rVert_{L^{2d}(\R^d)}^{\frac{\sigma}{2}} \ \brac{\lVert \Dso\vec{u} \rVert_{L^{2d}(\R^d)}^{\frac{\sigma}{2}} + \lVert \Dso\vec{v} \rVert_{L^{2d}(\R^d)}^{\frac{\sigma}{2}}}\\
    &\lesssim_{\Lambda} \lVert \Dso\vec{u} \rVert_{L^{2d}(\R^d)}^\sigma + \lVert \Dso\vec{v} \rVert_{L^{2d}(\R^d)}^\sigma.
\end{align*}
Therefore,
\begin{align*}
    &\lVert H_{\Dso}\brac{\vec{v}, \vec{v}^j \int_0^1 \frac{\partial \psi^j_{\Tilde{\gamma}(s)}}{\partial p^\ell} \ ds } \wedge_L\Tilde{\vec{w}}^\ell \Dso \vec{u} \rVert_{L^{2}(\R^d)}\\
    \lesssim & \lVert \Dso^{1-\sigma} \vec{v} \rVert_{L^{\frac{2d}{1-\sigma}}(\R^d)} \ \lVert \Dso^{\sigma} \brac{\vec{v}^j \int_0^1 \frac{\partial \psi^j_{\Tilde{\gamma}(s)}}{\partial p^\ell} \ ds }\rVert_{L^{\frac{2d}{\sigma}}(\R^d)} \ \lVert \Dso \vec{u} \rVert_{L^{2d}(\R^d)} \ \lVert \Dso \Tilde{\vec{w}} \rVert_{L^2(R^d)}\\
    \lesssim_{\Lambda} & \lVert \Dso \vec{v} \rVert_{L^{2d}(\R^d)}^{1-\sigma} \ \brac{\lVert \Dso\vec{u} \rVert_{L^{2d}(\R^d)}^\sigma + \lVert \Dso\vec{v} \rVert_{L^{2d}(\R^d)}^\sigma} \ \lVert \Dso \vec{u} \rVert_{L^{2d}(\R^d)} \ \lVert \Dso \Tilde{\vec{w}} \rVert_{L^2(R^d)}\\
    \lesssim_{\Lambda} & \lVert \Dso\vec{u} \rVert_{L^{2d}(\R^d)}^2 + \lVert \Dso\vec{v} \rVert_{L^{2d}(\R^d)}^2\\
\end{align*}
Similar to \eqref{comm_lem_1}, for \eqref{comm_lem_3} we have
\begin{align*}
    &\lVert \vec{v} \wedge_LH_{\Dso}\brac{\vec{v}^j \int_0^1 \frac{\partial \psi^j_{\Tilde{\gamma}(s)}}{\partial p^\ell} \ ds, \Tilde{\vec{w}}^\ell \Dso \vec{u}} \rVert_{L^{2}(\R^d)}\\
    \lesssim_{\Lambda} & \brac{\|\Ds{1+\sigma} \vec{u}\|_{L^{\frac{2d}{2(1+\sigma)-1}}(\R^d)}^2 + \|\Ds{1+\sigma} \vec{v}\|_{L^{\frac{2d}{2(1+\sigma)-1}}(\R^d)}^2} \ \|\Dso \Tilde{\vec{w}}\|_{L^2(\R^d)}.
\end{align*}
Thus, for $\sigma \in (0,\frac{1}{2}]$
\begin{align*}
&\|H_{\Dso}\brac{\vec{v} \wedge_L, \langle \Dso \vec{u}, \vec{v} \rangle_L \vec{w}} \|_{L^{2}(\R^d)} \\
&\lesssim_{\Lambda} \|\nabla \Tilde{\vec{w}}\|_{L^2(\R^d)}\, \brac{\|\Ds{1+\sigma} \vec{u}\|_{L^{\frac{2d}{2(1+\sigma)-1}}(\R^d)}^2 + \|\Ds{1+\sigma} \vec{v}\|_{L^{\frac{2d}{2(1+\sigma)-1}}(\R^d)}^2}.
\end{align*}
Similarly,
\begin{align*}
&\|H_{\Dso}\brac{\vec{v} \wedge_L, \langle \Dso \vec{u}, \vec{v} \rangle_L \vec{w}} \|_{L^{2}(\R^d)} \\
&\lesssim_{\Lambda}\brac{\|\Ds{1+\sigma} \vec{u}\|_{L^{\frac{2d}{2(1+\sigma)-1}}(\R^d)} + \|\Ds{1+\sigma} \vec{v}\|_{L^{\frac{2d}{2(1+\sigma)-1}}(\R^d)}} \ \|\Dso \Tilde{\vec{w}}\|_{L^2(\R^d)},
\end{align*} 
so we conclude.
\end{proof}
We pause here to address an issue in estimating \eqref{eq:diffenergy_3_3}.  If we were to use the definition of the Leibniz rule operator, we would get 
\begin{align*}
    \Dso \vec{w} = \int_0^1 \Dso d\psi_{\Tilde{\gamma}(s)} \Tilde{\vec{w}} \ ds + \int_0^1 d\psi_{\Tilde{\gamma}(s)} \Dso\Tilde{\vec{w}} \ ds + \int_0^1 H_{\Dso}\brac{\Dso d\psi_{\Tilde{\gamma}(s)}, \Tilde{\vec{w}}} \ ds.
\end{align*}
The term containing $H_{\Dso}\Big(\vec{v} \wedge_L, \int_0^1 \Dso d\psi_{\Tilde{\gamma}(s)} \Tilde{\vec{w}} \ ds\Big)$ will lead to the $\Dso d\psi_{\Tilde{\gamma}(s)}$ term taking more than one fractional derivative, which leads to either $\vec{u}$ or $\vec{v}$ taking more fractional derivatives than desired. If we instead use the Riesz transform expansion of the fractional Laplacian on $\Dso \vec{w}$, $\vec{u}$ or $\vec{v}$ take the desired amount of fractional derivatives. We denote the Riesz trasform by $\mathcal{R}^e$. Note that $\Dso = \brac{-\Delta}^\frac{1}{2} = c\sum_{e=1}^d \mathcal{R}^e \partial_e$ for some constant $c$. So,
\begin{align}
    \Dso \vec{w}
    &= c\sum_{e=1}^d\mathcal{R}^e \partial_e\brac{\int_0^1 d \psi_{\Tilde{\gamma}(s)} \Tilde{\vec{w}} \ ds}\nonumber\\
    &= \int_0^1 c \sum_{e=1}^{d}\mathcal{R}^e \brac{d^2 \psi_{\Tilde{\gamma}(s)} \partial_e \Tilde{\gamma}(s) \ \Tilde{\vec{w}}} \ ds + \int_0^1 c_2 \sum_{e=1}^{d}\mathcal{R}^e \brac{d \psi_{\Tilde{\gamma}(s)} \partial_e \Tilde{\vec{w}}} \ ds\nonumber\\
    &= \int_0^1 sc \sum_{e=1}^{d}\mathcal{R}^e \brac{d^2 \psi_{\Tilde{\gamma}(s)} \partial_e \Tilde{\vec{u}} \ \Tilde{\vec{w}}} \ ds
    + \int_0^1 (1-s)c \sum_{e=1}^{d}\mathcal{R}^e \brac{d^2 \psi_{\Tilde{\gamma}(s)} \partial_e \Tilde{\vec{v}} \ \Tilde{\vec{w}} } \ ds\nonumber\\
    &+ \int_0^1 c \sum_{e=1}^{d}\mathcal{R}^e \brac{d \psi_{\Tilde{\gamma}(s)} \partial_e \Tilde{\vec{w}}} \ ds\nonumber\\
    &= \int_0^1 sc \sum_{e=1}^{d}\mathcal{R}^e \brac{d^2 \psi_{\Tilde{\gamma}(s)} d\varphi_{\vec{u}} \partial_e \vec{u} \ \Tilde{\vec{w}}} \ ds \label{riesz_1}\\
    &+ \int_0^1 (1-s)c \sum_{e=1}^{d}\mathcal{R}^e \brac{d^2 \psi_{\Tilde{\gamma}(s)} d\varphi_{\vec{v}} \partial_e \vec{v} \ \Tilde{\vec{w}}} \ ds\label{riesz_2}\\
    &+ \int_0^1 c \sum_{e=1}^{d}\mathcal{R}^e \brac{d \psi_{\Tilde{\gamma}(s)} \partial_e \Tilde{\vec{w}}} \ ds \label{riesz_3}.
\end{align}
So to estimate \eqref{eq:diffenergy_3_3}, we need the following, related to \eqref{riesz_1} and \eqref{riesz_2}.
\begin{lemma} \label{riesz_estimate}
    For  $\sigma \in (0,1)$ and $\vec{f} \in \{\vec{u}, \vec{v}\}$, we have that 
    \begin{align*}
        &\left\lVert H_{\Dso}\Big(\vec{v} \wedge_L, \int_0^1\mathcal{R}^e \brac{d^2 \psi_{\Tilde{\gamma}(s)} d\varphi_{\vec{f}} \partial_e \vec{f} \ \Tilde{\vec{w}}} \ ds\Big) \right\rVert_{L^2(\R^d)} \\
        \lesssim_{\Lambda} & \brac{\lVert \Dso^{1+\sigma}\vec{u} \rVert_{L^{\frac{2d}{2(1+\sigma)-1}}(\R^d)}^2 + \lVert \Dso^{1+\sigma}\vec{v} \rVert_{L^{\frac{2d}{2(1+\sigma)-1}}(\R^d)}^2} \ \lVert \Dso \Tilde{\vec{w}} \rVert_{L^{2}(\R^d)}.
    \end{align*}
    In particular,
    \begin{align*}
         &\int_{\R^d} \langle \vec{v} \wedge_L H_{\Dso}\Big(\vec{v} \wedge_L, \int_0^1\mathcal{R}^e \brac{d^2 \psi_{\Tilde{\gamma}(s)} d\varphi_{\vec{f}} \partial_e \vec{f} \ \Tilde{\vec{w}}} \ ds\Big), \vec{X}^r_{\vec{u}}\rangle_L \ \langle \vec{E}^r_{\Tilde{\vec{u}}}, \partial_t \Tilde{\vec{w}} \rangle_{\R^m}\\
         \lesssim_{\Lambda} & \brac{\lVert \Dso^{1+\sigma}\vec{u} \rVert_{L^{\frac{2d}{2(1+\sigma)-1}}(\R^d)}^2 + \lVert \Dso^{1+\sigma}\vec{v} \rVert_{L^{\frac{2d}{2(1+\sigma)-1}}(\R^d)}^2} \ \brac{\lVert \partial_t \Tilde{\vec{w}} \rVert_{L^{2}(\R^d)}^2 + \lVert \Dso \Tilde{\vec{w}} \rVert_{L^{2}(\R^d)}^2}.\\
    \end{align*}
\end{lemma}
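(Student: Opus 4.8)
The plan is to estimate the Leibniz rule operator $H_{\Dso}(\vec v\wedge_L,g)$ with $g=\int_0^1 \mathcal{R}^e\brac{d^2\psi_{\Tilde\gamma(s)}\,d\varphi_{\vec f}\,\partial_e\vec f\,\Tilde{\vec w}}\,ds$, and then to deduce the integral bound from it. Since the right-hand side is nondecreasing in $\sigma$ by \eqref{chain}, it suffices to treat small $\sigma$, say $\sigma\in(0,\tfrac1{4d})$; the remaining $\sigma\in[\tfrac1{4d},1)$ follow from the case $\sigma=\tfrac1{8d}$. Writing $\Tilde{\vec f}=\varphi\circ\vec f$ we have $d\varphi_{\vec f}\partial_e\vec f=\partial_e\Tilde{\vec f}$, and the finitely many sums over $e$ and the frame indices are harmless throughout. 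First I would apply \cref{lem:prod_rule} with differentiation order $1-\sigma$ on the $\vec v$-slot,
\[
\|H_{\Dso}(\vec v\wedge_L,g)\|_{L^2(\R^d)}\;\lesssim_\Lambda\;\|\Dso^{1-\sigma}\vec v\|_{L^{p_1}(\R^d)}\,\|\Dso^{\sigma}g\|_{L^{p_2}(\R^d)},\qquad \tfrac1{p_1}+\tfrac1{p_2}=\tfrac12,
\]
choosing $p_1$ so that the Sobolev inequality \cref{lem:sob_in} gives $\|\Dso^{1-\sigma}\vec v\|_{L^{p_1}}\lesssim\|\Dso^{1+\sigma}\vec v\|_{L^{\frac{2d}{2(1+\sigma)-1}}}$ with no loss in the power (this is where $\sigma$ must be small). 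Since each $\mathcal{R}^e$ is a Fourier multiplier bounded on $L^{p_2}(\R^d)$ and commuting with $\Dso^{\sigma}$, the problem reduces to bounding $\|\Dso^{\sigma}\brac{d^2\psi_{\Tilde\gamma(s)}\,\partial_e\Tilde{\vec f}\,\Tilde{\vec w}}\|_{L^{p_2}(\R^d)}$.

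For this I would distribute $\Dso^{\sigma}$ over the triple product by iterating the Leibniz identities \cref{comm_lem}, \cref{def_comm}, \cref{lem:prod_rule} and \cref{lem:prod_rule_frac}, as in the proof of \cref{comm_lemm}. Concretely, isolate $\Tilde{\vec w}$,
\[
\Dso^{\sigma}\brac{(d^2\psi_{\Tilde\gamma(s)}\partial_e\Tilde{\vec f})\,\Tilde{\vec w}}=(d^2\psi_{\Tilde\gamma(s)}\partial_e\Tilde{\vec f})\Dso^{\sigma}\Tilde{\vec w}+\Dso^{\sigma}(d^2\psi_{\Tilde\gamma(s)}\partial_e\Tilde{\vec f})\,\Tilde{\vec w}+H_{\Dso^{\sigma}}\brac{d^2\psi_{\Tilde\gamma(s)}\partial_e\Tilde{\vec f},\,\Tilde{\vec w}},
\]
and split each occurrence of $\Dso^{\sigma}(d^2\psi_{\Tilde\gamma(s)}\partial_e\Tilde{\vec f})$ (or the corresponding $H_{\Dso^{\sigma}}$) into a part where $\Dso^{\sigma}$ falls on $\partial_e\Tilde{\vec f}$ and a part where it falls on the geometric factor $d^2\psi_{\Tilde\gamma(s)}$. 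The guiding point — and the reason the Riesz-transform expansion \eqref{riesz_1}--\eqref{riesz_2} replaces the naive Leibniz expansion of $\Dso\vec w$ — is that $\partial_e\Tilde{\vec f}$ already carries one derivative, so the term in which $\Dso^{\sigma}$ lands on it produces exactly $\Dso^{1+\sigma}\Tilde{\vec f}$, matching the target norm.

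In each of the finitely many resulting terms I would then apply Hölder's inequality factor by factor: $\Tilde{\vec w}$ (or $\Dso^{\sigma}\Tilde{\vec w}$) is controlled by $\|\Dso\Tilde{\vec w}\|_{L^2(\R^d)}$ via \cref{comp} and \cref{lem:sob_in}, or — when $\partial_e\vec f$ has been grouped with $\Tilde{\vec w}$ — by \cref{lem:prod_rule_frac} with the controlled factor $f=\Tilde{\vec w}$ and $g\in\{\vec u,\vec v\}$; the smooth geometric factors $d^2\psi_{\Tilde\gamma(s)}$ and $d\varphi_{\vec f}$ contribute a $\Lambda$-dependent constant when they carry no derivative, and are controlled by \cref{isom_est} (hence by $\|\Dso\vec u\|_{L^{2d}}+\|\Dso\vec v\|_{L^{2d}}$, and through \eqref{chain} by $\|\Dso^{1+\sigma}\vec u\|_{L^{\frac{2d}{2(1+\sigma)-1}}}+\|\Dso^{1+\sigma}\vec v\|_{L^{\frac{2d}{2(1+\sigma)-1}}}$) when they do; and $\partial_e\vec f$ never absorbs more than $\Dso^{\sigma}$, producing $\|\Dso^{1+\sigma}\vec f\|_{L^{\frac{2d}{2(1+\sigma)-1}}}$. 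Summing and using Young's inequality to turn a product of a $\vec u$-norm and a $\vec v$-norm into $\|\Dso^{1+\sigma}\vec u\|^2+\|\Dso^{1+\sigma}\vec v\|^2$ gives the first estimate. The ``in particular'' part then follows by bounding the integrand pointwise — $\vec v\wedge_L(\cdot)$ has operator norm $\lesssim_\Lambda 1$ since $|\vec v|\lesssim_\Lambda1$, and $\vec X^r_{\vec u},\vec E^r_{\Tilde{\vec u}}$ are bounded, so it is $\lesssim_\Lambda\abs{H_{\Dso}(\vec v\wedge_L,\ldots)}\,\abs{\partial_t\Tilde{\vec w}}$ — followed by Cauchy--Schwarz in $x$, the first estimate, and Young's inequality $\|\Dso\Tilde{\vec w}\|_{L^2}\|\partial_t\Tilde{\vec w}\|_{L^2}\le\tfrac12(\|\Dso\Tilde{\vec w}\|_{L^2}^2+\|\partial_t\Tilde{\vec w}\|_{L^2}^2)$.

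The main obstacle I expect is exactly the bookkeeping in the middle steps: arranging the iterated fractional Leibniz rules and the Hölder triples so that, simultaneously, (i) $\Tilde{\vec w}$ enters only through $\|\Dso\Tilde{\vec w}\|_{L^2}$, (ii) neither $\vec u$ nor $\vec v$ is differentiated more than $1+\sigma$ times, and (iii) every intermediate Lebesgue exponent lies in $(1,\infty)$. The tight spot is the term in which $\Dso^{\sigma}$ falls on the composition $d^2\psi_{\Tilde\gamma(s)}$: since \cref{isom_est} only controls fractional derivatives of the geometric factors of order in $(\tfrac12,1)$, one must either keep $d^2\psi_{\Tilde\gamma(s)}$ grouped with $\partial_e\Tilde{\vec f}$ and route through \cref{lem:prod_rule_frac}, or extract from it a derivative of order in $(\tfrac12,1)$ and absorb the remaining (negative-order) factor by a Riesz-potential Sobolev estimate — which, together with (iii), is what pins down the admissible range of $\sigma$ and of the auxiliary splitting parameters.
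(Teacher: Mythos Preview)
Your plan and the paper's proof follow the same underlying strategy: put $1-\sigma$ derivatives on $\vec v$ via \cref{lem:prod_rule}, pass $\Dso^\sigma$ through the Riesz transform, and then distribute it over the product $d^2\psi_{\tilde\gamma(s)}\,d\varphi_{\vec f}\,\partial_e\vec f\,\tilde{\vec w}$, estimating each piece by H\"older, \cref{lem:gag_in}, \cref{isom_est}, and Sobolev. Your simplification $d\varphi_{\vec f}\partial_e\vec f=\partial_e\tilde{\vec f}$ reduces the paper's seven pieces to fewer, but the estimates are the same.

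The one organizational difference is worth noting. You apply \cref{lem:prod_rule} \emph{first}, committing to a single pair $(p_1,p_2)$, and then expand $\Dso^\sigma g$ inside $L^{p_2}$. The paper instead writes $g=\Dso^{-\sigma}\bigl(\Dso^\sigma(\text{product})\bigr)$, expands $\Dso^\sigma(\text{product})$ into its seven pieces \emph{before} invoking \cref{lem:prod_rule}, and then chooses the H\"older split term by term. For most pieces it uses your Sobolev-scale split $p_1=\frac{2d}{2(1-\sigma)-1}$, but for the pieces where $\Dso^\sigma$ lands on a geometric factor (your ``tight spot'') it switches to the Gagliardo--Nirenberg scale $p_1=\frac{2d}{1-\sigma}$: then $\|\Dso^{1-\sigma}\vec v\|_{L^{p_1}}\lesssim_\Lambda\|\Dso\vec v\|_{L^{2d}}^{\,1-\sigma}$ supplies only a \emph{fractional} power, which combines with $\|\Dso^\sigma d^2\psi_{\tilde\gamma}\|_{L^{2d/\sigma}}\lesssim_\Lambda(\|\Dso\vec u\|_{L^{2d}}+\|\Dso\vec v\|_{L^{2d}})^{\sigma}$ from \cref{lem:gag_in} and with $\|\partial_e\vec f\|_{L^{2d}}$ to produce exactly two full norm factors. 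With a fixed Sobolev-scale $p_1$ this particular term is awkward because $\|\Dso^\sigma d^2\psi_{\tilde\gamma}\|$ must then be bounded by a $\Lambda$-constant in an exponent where neither \cref{lem:gag_in} nor \cref{isom_est} applies directly. Your two suggested workarounds (grouping $d^2\psi$ with $\partial_e\tilde{\vec f}$, or a Riesz-potential shift to reach order in $(\tfrac12,1)$) can be made to work, but in effect they amount to relaxing the single-split commitment --- which is exactly what the paper's ordering does from the outset.
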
 
\begin{proof}
Using the definition of the commutator operator, we have
\begin{align*}
    &\mathcal{R}^e \brac{d^2 \psi_{\Tilde{\gamma}(s)} d\varphi_{\vec{f}}\partial_e \vec{f} \ \Tilde{\vec{w}}}\\
    &=\Dso^{-\sigma}\Dso^\sigma\mathcal{R}^e \brac{d^2 \psi_{\Tilde{\gamma}(s)} d\varphi_{\vec{f}}\partial_e \vec{f} \ \Tilde{\vec{w}}}\\
    &= \Dso^{-\sigma}\mathcal{R}^e \brac{\Dso^\sigma \brac{d^2\psi_{\Tilde{\gamma}(s)}}d\varphi_{\vec{f}}\partial_e \vec{f} \ \Tilde{\vec{w}}}\\
    &+ \Dso^{-\sigma}\mathcal{R}^e \brac{d^2 \psi_{\Tilde{\gamma}(s)} \Dso^\sigma d\varphi_{\vec{f}}\partial_e \vec{f}   \ \Tilde{\vec{w}}}\\
    &+ \Dso^{-\sigma}\mathcal{R}^e \brac{d^2 \psi_{\Tilde{\gamma}(s)} d\varphi_{\vec{f}}\Dso^\sigma\partial_e \vec{f}  \ \Tilde{\vec{w}}}\\
    &+ \Dso^{-\sigma}\mathcal{R}^e \brac{d^2 \psi_{\Tilde{\gamma}(s)} d\varphi_{\vec{f}}\partial_e \vec{f} \ \Dso^\sigma\Tilde{\vec{w}}}\\
    &+ \Dso^{-\sigma}\mathcal{R}^e H_{\Dso^\sigma}\Big(d^2 \psi_{\Tilde{\gamma}(s)} d\varphi_{\vec{f}}, \partial_e \vec{f} \ \Tilde{\vec{w}}\Big)\\
    &+ \Dso^{-\sigma}\mathcal{R}^e\Big( H_{\Dso^\sigma}\brac{d^2 \psi_{\Tilde{\gamma}(s)}, d\varphi_{\vec{f}}}\partial_e \vec{f} \ \Tilde{\vec{w}}\Big)\\
    &+ \Dso^{-\sigma}\mathcal{R}^e \Big( d^2 \psi_{\Tilde{\gamma}(s)} d\varphi_{\vec{f}}  H_{\Dso^\sigma}(\partial_e \vec{f}, \Tilde{\vec{w}}) \Big).
\end{align*}
Hence,
\begin{align}
     &\left\lVert H_{\Dso}\brac{\vec{v} \wedge_L, \int_0^1\mathcal{R}^e \brac{d^2 \psi_{\Tilde{\gamma}(s)} d\varphi_{\vec{f}} \partial_e \vec{f} \ \Tilde{\vec{w}}} \ ds }   \right\rVert_{L^2(\R^d)} \nonumber\\
     \lesssim & \left\lVert H_{\abs{\nabla}}\brac{\vec{v} \wedge_L,\int_0^1 \Dso^{-\sigma}\mathcal{R}^e \Big(\Dso^\sigma \brac{d^2\psi_{\Tilde{\gamma}(s)}}d\varphi_{\vec{f}}\partial_e \vec{f} \ \Tilde{\vec{w}} \Big) \ ds } \right\rVert_{L^2(\R^d)} \label{final_1}\\
     &+ \left\lVert H_{\abs{\nabla}}\brac{\vec{v} \wedge_L,\int_0^1 \Dso^{-\sigma}\mathcal{R}^e \Big( d^2 \psi_{\Tilde{\gamma}(s)} \Dso^\sigma \brac{d\varphi_{\vec{f}} } \partial_e \vec{f}  \ \Tilde{\vec{w}} \Big) \ ds} \right\rVert_{L^2(\R^d)}\label{final_2}\\
     &+ \left\lVert H_{\abs{\nabla}}\brac{ \vec{v} \wedge_L,\int_0^1 \Dso^{-\sigma} \mathcal{R}^e \Big( d^2 \psi_{\Tilde{\gamma}(s)} d\varphi_{\vec{f}}\Dso^\sigma\brac{\partial_e \vec{f}} \ \Tilde{\vec{w}} \Big) \ ds} \right \rVert_{L^2(\R^d)}\label{final_3}\\
     &+ \left\lVert H_{\abs{\nabla}}\brac{\vec{v} \wedge_L,\int_0^1 \Dso^{-\sigma}\mathcal{R}^e \Big( d^2 \psi_{\Tilde{\gamma}(s)} d\varphi_{\vec{f}}\partial_e \vec{f} \ \Dso^\sigma\Tilde{\vec{w}} \Big) \ ds} \right\rVert_{L^2(\R^d)}\label{final_4}\\
     &+ \left\lVert H_{\abs{\nabla}}\brac{\vec{v} \wedge_L,\int_0^1 \Dso^{-\sigma} \mathcal{R}^e H_{\Dso^\sigma}\Big(d^2 \psi_{\Tilde{\gamma}(s)} d\varphi_{\vec{f}}, \partial_e \vec{f} \ \Tilde{\vec{w}}\Big) \ ds} \right\rVert_{L^2(\R^d)}\label{final_5}\\
     &+ \left\lVert H_{\abs{\nabla}}\brac{\vec{v} \wedge_L,\int_0^1 \Dso^{-\sigma}\mathcal{R}^e \Big( H_{\Dso^\sigma}\brac{d^2 \psi_{\Tilde{\gamma}(s)}, d\varphi_{\vec{f}}}\partial_e \vec{f} \ \Tilde{\vec{w}} \Big)                                      \ ds }\right\rVert_{L^2(\R^d)}\label{final_6}\\
     &+ \left\lVert H_{\abs{\nabla}}\brac{\vec{v} \wedge_L,\int_0^1 \Dso^{-\sigma}\mathcal{R}^e \Big( d^2 \psi_{\Tilde{\gamma}(s)} H_{\Dso^\sigma}( d\varphi_{\vec{f}}, \partial_e \Tilde{\vec{f}} ) \ \Tilde{\vec{w}} \Big) \ ds} \right\rVert_{L^2(\R^d)}\label{final_7}.
\end{align}
So for \eqref{final_1}, using \cref{lem:prod_rule} and \cref{lem:gag_in}, we have
\begin{align*}
    & \left\lVert H_{\abs{\nabla}}\brac{\vec{v} \wedge_L,\int_0^1 \Dso^{-\sigma}\mathcal{R}^e \Big(\Dso^\sigma \brac{d^2\psi_{\Tilde{\gamma}(s)}}d\varphi_{\vec{f}}\partial_e \vec{f} \ \Tilde{\vec{w}} \Big) \ ds } \right\rVert_{L^2(\R^d)}\\
    \lesssim & \lVert \Dso^{1-\sigma} \vec{v} \rVert_{L^{\frac{2d}{1-\sigma}}(\R^d)} \ \left\lVert \int_0^1 \mathcal{R}^e \Big( \Dso^\sigma \brac{d^2\psi_{\Tilde{\gamma}(s)}}d\varphi_{\vec{f}}\partial_e \vec{f} \ \Tilde{\vec{w}} \Big) \ ds \right\rVert_{L^{\frac{2d}{d+\sigma-1}}(\R^d)}\\
    \lesssim & \lVert \Dso^{1-\sigma} \vec{v} \rVert_{L^{\frac{2d}{1-\sigma}}(\R^d)} \ \left\lVert \int_0^1 \Dso^\sigma \brac{d^2\psi_{\Tilde{\gamma}(s)}}d\varphi_{\vec{f}}\partial_e \vec{f} \ \Tilde{\vec{w}} \ ds \right\rVert_{L^{\frac{2d}{d+\sigma-1}}(\R^d)}\\
    \lesssim & \lVert \Dso^{1-\sigma} \vec{v} \rVert_{L^{\frac{2d}{1-\sigma}}(\R^d)} \ \lVert \int_0^1 \Dso^\sigma \brac{d^2\psi_{\Tilde{\gamma}(s)}}d\varphi_{\vec{f}}\partial_e \vec{f} \ ds  \rVert_{L^{\frac{2d}{1+\sigma}}(\R^d)} \ \lVert \Tilde{\vec{w}} \rVert_{L^{\frac{2d}{d-2}}(\R^d)}\\
    \lesssim & \lVert \Dso^{1-\sigma} \vec{v} \rVert_{L^{\frac{2d}{1-\sigma}}(\R^d)} \ \lVert \int_0^1\Dso^\sigma d^2\psi_{\Tilde{\gamma}(s)} \ ds \rVert_{L^{\frac{2d}{\sigma}}(\R^d)} \ \lVert \partial_e \vec{f} \rVert_{L^{2d}(\R^d)} \ \lVert \Tilde{\vec{w}} \rVert_{L^{\frac{2d}{d-2}}(\R^d)}\\
    \lesssim & \lVert \Dso \vec{v} \rVert_{L^{2d}(\R^d)}^{1-\sigma} \ \lVert \int_0^1\Dso d^2\psi_{\Tilde{\gamma}(s)} \ ds \rVert_{L^{2d}(\R^d)}^\sigma \ \lVert \nabla \vec{f} \rVert_{L^{2d}(\R^d)} \ \lVert \Dso \Tilde{\vec{w}} \rVert_{L^{2}(\R^d)}\\
    \lesssim & \lVert \Dso \vec{v} \rVert_{L^{2d}(\R^d)}^{1-\sigma} \ \lVert \int_0^1\Dso d^2\psi_{\Tilde{\gamma}(s)} \ ds \rVert_{L^{2d}(\R^d)}^\sigma \ \lVert \Dso \vec{f} \rVert_{L^{2d}(\R^d)} \ \lVert \Dso \Tilde{\vec{w}} \rVert_{L^{2}(\R^d)}\\
    \lesssim & \brac{\lVert \Dso \vec{v} \rVert_{L^{2d}(\R^d)} + \lVert \int_0^1\Dso d^2\psi_{\Tilde{\gamma}(s)} \ ds \rVert_{L^{2d}(\R^d)}} \ \lVert \Dso \vec{f} \rVert_{L^{2d}(\R^d)} \ \lVert \Dso \Tilde{\vec{w}} \rVert_{L^{2}(\R^d)}\\
    \lesssim_{\Lambda} & \brac{\lVert \Dso \vec{u} \rVert_{L^{2d}(\R^d)} + \lVert \Dso \vec{v} \rVert_{L^{2d}(\R^d)}} \ \lVert \Dso \vec{f} \rVert_{L^{2d}(\R^d)} \ \lVert \Dso \Tilde{\vec{w}} \rVert_{L^{2}(\R^d)}\\
    \lesssim_{\Lambda} & \brac{\lVert \Dso \vec{u} \rVert_{L^{2d}(\R^d)}^2 + \lVert \Dso \vec{v} \rVert_{L^{2d}(\R^d)}^2} \ \lVert \Dso \Tilde{\vec{w}} \rVert_{L^{2}(\R^d)}.
\end{align*}
For \eqref{final_2}, using \cref{lem:prod_rule} with a small $\sigma$ and\cref{lem:gag_in} we have
\begin{align*}
    & \left\lVert H_{\abs{\nabla}}\brac{\vec{v} \wedge_L,\int_0^1 \Dso^{-\sigma} \mathcal{R}^e \brac{d^2 \psi_{\Tilde{\gamma}(s)} \Dso^\sigma \brac{d\varphi_{\vec{f}}} \partial_e \vec{f} \ \Tilde{\vec{w}}} \ ds} \right\rVert_{L^2(\R^d)}\\
    \lesssim & \lVert \Dso^{1-\sigma} \vec{v} \rVert_{L^{\frac{2d}{1-\sigma}}(\R^d)} \ \left\lVert \int_0^1 \mathcal{R}^e \brac{d^2 \psi_{\Tilde{\gamma}(s)} \Dso^\sigma \brac{d\varphi_{\vec{f}}}\partial_e \vec{f} \ \Tilde{\vec{w}} } \ ds \right\rVert_{L^{\frac{2d}{d + \sigma - 1}}(\R^d)}\\
     \lesssim & \lVert \Dso^{1-\sigma} \vec{v} \rVert_{L^{\frac{2d}{1-\sigma}}(\R^d)} \ \left\lVert \int_0^1 d^2 \psi_{\Tilde{\gamma}(s)} \Dso^\sigma \brac{d\varphi_{\vec{f}}} \partial_e \vec{f} \ \Tilde{\vec{w}} \ ds \right\rVert_{L^{\frac{2d}{d + \sigma - 1}}(\R^d)}\\
    \lesssim & \lVert \Dso^{1-\sigma} \vec{v} \rVert_{L^{\frac{2d}{1-\sigma}}(\R^d)} \ \lVert \int_0^1 d^2 \psi_{\Tilde{\gamma}(s)} \Dso^\sigma d\varphi_{\vec{f}}\partial_e \vec{f} \ ds \rVert_{L^{\frac{2d}{1+\sigma}}(\R^d)} \ \lVert \Tilde{\vec{w}} \rVert_{L^{\frac{2d}{d - 2}}(\R^d)}\\
    \lesssim & \lVert \Dso^{1-\sigma} \vec{v} \rVert_{L^{\frac{2d}{1-\sigma}}(\R^d)} \ \lVert \Dso^\sigma d\varphi_{\vec{f}}\partial_e \vec{f} \rVert_{L^{\frac{2d}{1+\sigma}}(\R^d)} \ \lVert \Dso \Tilde{\vec{w}} \rVert_{L^{2}(\R^d)}\\
    \lesssim & \lVert \Dso^{1-\sigma} \vec{v} \rVert_{L^{\frac{2d}{1-\sigma}}(\R^d)} \ \lVert \Dso^\sigma d\varphi_{\vec{f}} \rVert_{L^{\frac{2d}{\sigma}}(\R^d)} \ \lVert \partial_e \vec{f} \rVert_{L^{2d}(\R^d)} \ \lVert \Dso \Tilde{\vec{w}} \rVert_{L^{2}(\R^d)}\\
    \lesssim & \lVert \Dso \vec{v} \rVert_{L^{2d}(\R^d)}^{1-\sigma} \ \lVert \Dso d\varphi_{\vec{f}} \rVert_{L^{2d}(\R^d)}^\sigma \ \lVert \partial_e \vec{f} \rVert_{L^{2d}(\R^d)} \ \lVert \Dso \Tilde{\vec{w}} \rVert_{L^{2}(\R^d)}\\
    \lesssim & \lVert \Dso \vec{v} \rVert_{L^{2d}(\R^d)}^{1-\sigma} \ \lVert \nabla d\varphi_{\vec{f}} \rVert_{L^{2d}(\R^d)}^\sigma \ \lVert \partial_e \vec{f} \rVert_{L^{2d}(\R^d)} \ \lVert \Dso \Tilde{\vec{w}} \rVert_{L^{2}(\R^d)}\\
    \lesssim & \lVert \Dso \vec{v} \rVert_{L^{2d}(\R^d)}^{1-\sigma} \ \lVert d^2\varphi_{\vec{f}} \nabla \vec{f} \rVert_{L^{2d}(\R^d)}^\sigma \ \lVert \partial_e \vec{f} \rVert_{L^{2d}(\R^d)} \ \lVert \Dso \Tilde{\vec{w}} \rVert_{L^{2}(\R^d)}\\
    \lesssim_{\Lambda} & \lVert \Dso \vec{v} \rVert_{L^{2d}(\R^d)}^{1-\sigma} \ \lVert \nabla \vec{f} \rVert_{L^{2d}(\R^d)}^\sigma \ \lVert \partial_e \vec{f} \rVert_{L^{2d}(\R^d)} \ \lVert \Dso \Tilde{\vec{w}} \rVert_{L^{2}(\R^d)}\\
    \lesssim_{\Lambda} & \brac{\lVert \Dso \vec{u} \rVert_{L^{2d}(R^d)}^2 + \lVert \Dso \vec{v} \rVert_{L^{2d}(\R^d)}^2} \ \lVert \Dso \Tilde{\vec{w}} \rVert_{L^{2}(\R^d)}
\end{align*}
For \eqref{final_3}, using \cref{lem:prod_rule}, we have
\begin{align*}
    & \left\lVert H_{\abs{\nabla}}\brac{\vec{v} \wedge_L,\int_0^1 \Dso^{-\sigma}\mathcal{R}^e \brac{d^2 \psi_{\Tilde{\gamma}(s)} d\varphi_{\vec{f}}\Dso^\sigma\brac{\partial_e \vec{f}}  \ \Tilde{\vec{w}}} \ ds} \right\rVert_{L^2(\R^d)}\\
    \lesssim & \lVert \Dso^{1-\sigma} \vec{v} \rVert_{L^{\frac{2d}{2(1-\sigma)-1}}(\R^d)} \ \left\lVert \int_0^1 \mathcal{R}^e \brac{d^2 \psi_{\Tilde{\gamma}(s)} d\varphi_{\vec{f}}\Dso^\sigma\partial_e \vec{f} \ \Tilde{\vec{w}} }\ ds \right\rVert_{L^{\frac{2d}{d + 2\sigma - 1}}(\R^d)}\\
    \lesssim & \lVert \Dso^{1-\sigma} \vec{v} \rVert_{L^{\frac{2d}{2(1-\sigma)-1}}(\R^d)} \ \left\lVert \int_0^1 d^2 \psi_{\Tilde{\gamma}(s)} d\varphi_{\vec{f}}\Dso^\sigma\partial_e \vec{f} \ \Tilde{\vec{w}} \ ds \right\rVert_{L^{\frac{2d}{d + 2\sigma - 1}}(\R^d)}\\
    \lesssim & \lVert \Dso^{1-\sigma} \vec{v} \rVert_{L^{\frac{2d}{2(1-\sigma)-1}}(\R^d)} \ \left\lVert \int_0^1 \mathcal{R}^e \brac{d^2 \psi_{\Tilde{\gamma}(s)} d\varphi_{\vec{f}}\Dso^\sigma\partial_e \vec{f} } \ ds \right\rVert_{L^{\frac{2d}{2(1+\sigma)-1}}(\R^d)} \ \lVert \Tilde{\vec{w}} \rVert_{L^{\frac{2d}{d - 2}}(\R^d)}\\
    \lesssim & \lVert \Dso^{1-\sigma} \vec{v} \rVert_{L^{\frac{2d}{2(1-\sigma)-1}}(\R^d)} \ \left\lVert \int_0^1 d^2 \psi_{\Tilde{\gamma}(s)} d\varphi_{\vec{f}}\Dso^\sigma\partial_e \vec{f} \ ds \right\rVert_{L^{\frac{2d}{2(1+\sigma)-1}}(\R^d)} \ \lVert \Tilde{\vec{w}} \rVert_{L^{\frac{2d}{d - 2}}(\R^d)}\\
    \lesssim_{\Lambda} & \lVert \Dso^{1-\sigma} \vec{v} \rVert_{L^{\frac{2d}{2(1-\sigma)-1}}(\R^d)} \ \lVert\Dso^\sigma\partial_e \vec{f} \rVert_{L^{\frac{2d}{2(1+\sigma)-1}}(\R^d)} \ \lVert \Dso \Tilde{\vec{w}} \rVert_{L^{2}(\R^d)}\\
    \lesssim_{\Lambda} & \lVert \Dso^{1-\sigma} \vec{v} \rVert_{L^{\frac{2d}{2(1-\sigma)-1}}(\R^d)} \ \lVert\Dso^{1+\sigma} \vec{f} \rVert_{L^{\frac{2d}{2(1+\sigma)-1}}(\R^d)} \ \lVert \Dso \Tilde{\vec{w}} \rVert_{L^{2}(\R^d)}\\
    \lesssim_{\Lambda} & \brac{\lVert\Dso^{1+\sigma} \vec{u} \rVert_{L^{\frac{2d}{2(1+\sigma)-1}}(\R^d)}^2 + \lVert\Dso^{1+\sigma} \vec{v} \rVert_{L^{\frac{2d}{2(1+\sigma)-1}}(\R^d)}^2} \ \lVert \Dso \Tilde{\vec{w}} \rVert_{L^{2}(\R^d)}.
\end{align*}
For \eqref{final_4}, we again use \cref{lem:prod_rule}
\begin{align*}
    & \lVert H_{\abs{\nabla}}\brac{\vec{v} \wedge_L,\int_0^1 \Dso^{-\sigma}\mathcal{R}^e \brac{d^2 \psi_{\Tilde{\gamma}(s)} d\varphi_{\vec{f}}\partial_e \vec{f} \ \Dso^\sigma\Tilde{\vec{w}}} \ ds} \rVert_{L^2(\R^d)}\\
    \lesssim & \lVert \Dso^{1-\sigma} \vec{v} \rVert_{L^{\frac{2d}{2(1-\sigma)-1}}(R^d)} \ \lVert \int_0^1 \mathcal{R}^e \brac{d^2 \psi_{\Tilde{\gamma}(s)} d\varphi_{\vec{f}}\partial_e \vec{f} \ \Dso^\sigma\Tilde{\vec{w}} } \ ds \rVert_{L^{\frac{2d}{d+2\sigma-1}}(\R^d)}\\
    \lesssim & \lVert \Dso^{1-\sigma} \vec{v} \rVert_{L^{\frac{2d}{2(1-\sigma)-1}}(R^d)} \ \lVert \int_0^1 d^2 \psi_{\Tilde{\gamma}(s)} d\varphi_{\vec{f}}\partial_e \vec{f} \ \Dso^\sigma\Tilde{\vec{w}} \ ds \rVert_{L^{\frac{2d}{d+2\sigma-1}}(\R^d)}\\
    \lesssim & \lVert \Dso^{1-\sigma} \vec{v} \rVert_{L^{\frac{2d}{2(1-\sigma)-1}}(R^d)} \ \lVert \int_0^1 d^2 \psi_{\Tilde{\gamma}(s)} d\varphi_{\vec{f}}\partial_e \vec{f} \ ds \rVert_{L^{2d}(\R^d)} \lVert \Dso^\sigma\Tilde{\vec{w}} \rVert_{L^{\frac{2d}{d-2(1-\sigma)}}(\R^d)}\\
    \lesssim_{\Lambda} & \lVert \Dso^{1-\sigma} \vec{v} \rVert_{L^{\frac{2d}{2(1-\sigma)-1}}(R^d)} \ \lVert \partial_e \vec{f} \rVert_{L^{2d}(\R^d)} \lVert \Dso^\sigma\Tilde{\vec{w}} \rVert_{L^{\frac{2d}{d-2(1-\sigma)}}(\R^d)}\\
    \lesssim_{\Lambda} & \lVert \Dso^{1-\sigma} \vec{v} \rVert_{L^{\frac{2d}{2(1-\sigma)-1}}(R^d)} \ \lVert \Dso\vec{f} \rVert_{L^{2d}(\R^d)} \ \lVert \Dso^\sigma\Tilde{\vec{w}} \rVert_{L^{\frac{2d}{d-2(1-\sigma)}}(\R^d)}\\
    \lesssim_{\Lambda} & \brac{\lVert \Dso \vec{u} \rVert_{L^{2d}(R^d)}^2 + \lVert \Dso \vec{v} \rVert_{L^{2d}(\R^d)}^2} \ \lVert \Dso\Tilde{\vec{w}} \rVert_{L^{2}(\R^d)}.
\end{align*}
Next, for \eqref{final_4} we use \cref{lem:prod_rule} twice with $\sigma$ and small $\gamma \in (0,\sigma)$, and \cref{lem:gag_in}
\begin{align*}
    &\left\lVert H_{\abs{\nabla}}\brac{\vec{v} \wedge_L,\int_0^1 \Dso^{-\sigma} \mathcal{R}^e H_{\Dso^\sigma}\Big(d^2 \psi_{\Tilde{\gamma}(s)} d\varphi_{\vec{f}}, \partial_e \vec{f} \ \Tilde{\vec{w}}\Big) \ ds}\right\rVert_{L^2(\R^d)}\\
    \lesssim & \lVert \Dso^{1-\sigma} \vec{v} \rVert_{L^{\frac{2d}{1-\sigma}}(R^d)} \ \left\lVert \int_0^1 \mathcal{R}^e H_{\Dso^\sigma}\brac{d^2 \psi_{\Tilde{\gamma}(s)} d\varphi_{\vec{f}}, \partial_e \vec{f} \ \Tilde{\vec{w}}} \ ds \right\rVert_{L^{\frac{2d}{d+\sigma-1}}(\R^d)}\\
    \lesssim & \lVert \Dso^{1-\sigma} \vec{v} \rVert_{L^{\frac{2d}{1-\sigma}}(R^d)} \ \left\lVert \int_0^1 H_{\Dso^\sigma}\brac{d^2 \psi_{\Tilde{\gamma}(s)} d\varphi_{\vec{f}}, \partial_e \vec{f} \ \Tilde{\vec{w}}} \ ds \right\rVert_{L^{\frac{2d}{d+\sigma-1}}(\R^d)}\\
    \lesssim & \lVert \Dso^{1-\sigma} \vec{v} \rVert_{L^{\frac{2d}{1-\sigma}}(R^d)} \ \left\lVert \int_0^1 \Dso^{\sigma-\gamma} \brac{d^2\psi_{\Tilde{\gamma}(s)} d\varphi_{\vec{f}}} \ ds \right\rVert_{L^{\frac{2d}{2(\sigma-\gamma)-\sigma}}(\R^d)} \ \lVert \Dso^\gamma \brac{\partial_e \vec{f} \ \Tilde{\vec{w}} } \rVert_{L^{\frac{2d}{d+2\gamma-1}}(\R^d)}\\
    \lesssim & \lVert \Dso \vec{v} \rVert_{L^{2d}(R^d)}^{1-\sigma} \ \left\lVert \int_0^1 \Dso\brac{d^2\psi_{\Tilde{\gamma}(s)} d\varphi_{\vec{f}}} \ ds \right\rVert_{L^{2d}(\R^d)}^{\sigma} \ \lVert \Dso^{1+\gamma} \vec{f} \rVert_{L^{\frac{2d}{2(1+\gamma)-1}}(\R^d)}\ \lVert \Dso\Tilde{\vec{w}} \rVert_{L^{2}(\R^d)}\\
    &\overset{\cref{isom_est}}{\lesssim_{\Lambda}} \lVert \Dso \vec{v} \rVert_{L^{2d}(R^d)}^{1-\sigma} \ \brac{\lVert \Dso\vec{u}\rVert_{L^{2d}(\R^d)} + \lVert \Dso\vec{v} \rVert_{L^{2d}(\R^d)}}^{\sigma} \ \lVert \Dso^{1+\gamma} \vec{f} \rVert_{L^{\frac{2d}{2(1+\gamma)-1}}(\R^d)}\ \lVert \Dso\Tilde{\vec{w}} \rVert_{L^{2}(\R^d)}\\
    \lesssim_{\Lambda} & \brac{\lVert \Dso \vec{u} \rVert_{L^{2d}(R^d)} + \lVert \Dso \vec{v} \rVert_{L^{2d}(R^d)}}^{1-\sigma} \ \brac{\lVert \Dso\vec{u}\rVert_{L^{2d}(\R^d)} + \lVert \Dso\vec{v} \rVert_{L^{2d}(\R^d)}}^{\sigma} \ \lVert \Dso^{1+\gamma} \vec{f} \rVert_{L^{\frac{2d}{2(1+\gamma)-1}}(\R^d)}\ \lVert \Dso\Tilde{\vec{w}} \rVert_{L^{2}(\R^d)}\\
    \lesssim_{\Lambda} & \brac{\lVert \Dso \vec{u} \rVert_{L^{2d}(R^d)} + \lVert \Dso \vec{v} \rVert_{L^{2d}(R^d)}} \ \lVert \Dso^{1+\gamma} \vec{f} \rVert_{L^{\frac{2d}{2(1+\gamma)-1}}(\R^d)}\ \lVert \Dso\Tilde{\vec{w}} \rVert_{L^{2}(\R^d)}\\
    \lesssim_{\Lambda} & \brac{\lVert\Dso^{1+\sigma} \vec{u} \rVert_{L^{\frac{2d}{2(1+\sigma)-1}}(\R^d)}^2 + \lVert\Dso^{1+\sigma} \vec{v} \rVert_{L^{\frac{2d}{2(1+\sigma)-1}}(\R^d)}^2} \ \lVert \Dso \Tilde{\vec{w}} \rVert_{L^{2}(\R^d)}.
\end{align*}
For \eqref{final_5} we use the same lemmata as above to get
\begin{align*}
    & \left\lVert H_{\abs{\nabla}}\brac{\vec{v} \wedge_L,\int_0^1 \Dso^{-\sigma}\mathcal{R}^e \Big( H_{\Dso^\sigma}\brac{d^2 \psi_{\Tilde{\gamma}(s)}, d\varphi_{\vec{f}}} \partial_e \vec{f} \ \Tilde{\vec{w}} \Big) \ ds} \right\rVert_{L^2(\R^d)}\\
    \lesssim & \lVert \Dso^{1-\sigma} \vec{v} \rVert_{L^{\frac{2d}{1-\sigma}}(R^d)} \ \left\lVert \int_0^1 \mathcal{R}^e \Big( H_{\Dso^\sigma}\brac{d^2 \psi_{\Tilde{\gamma}(s)}, d\varphi_{\vec{f}}} \partial_e \vec{f} \ \Tilde{\vec{w}} \Big) \ ds \right\rVert_{L^{\frac{2d}{d+\sigma-1}}(\R^d)}\\
    \lesssim & \lVert \Dso^{1-\sigma} \vec{v} \rVert_{L^{\frac{2d}{1-\sigma}}(R^d)} \ \left\lVert \int_0^1 H_{\Dso^\sigma}\brac{d^2 \psi_{\Tilde{\gamma}(s)}, d\varphi_{\vec{f}}} \partial_e \vec{f} \ \Tilde{\vec{w}} \ ds \right\rVert_{L^{\frac{2d}{d+\sigma-1}}(\R^d)}\\
    \lesssim & \lVert \Dso^{1-\sigma} \vec{v} \rVert_{L^{\frac{2d}{1-\sigma}}(R^d)} \ \left\lVert \int_0^1 H_{\Dso^\sigma}\brac{d^2 \psi_{\Tilde{\gamma}(s)}, d\varphi_{\vec{f}}} 
    \ ds \right\rVert_{L^{\frac{2d}{\sigma}}(\R^d)} \ \lVert \partial_e \vec{f} \ \Tilde{\vec{w}} \rVert_{L^{\frac{2d}{d-1}}(\R^d)}\\
    \lesssim & \lVert \Dso^{1-\sigma} \vec{v} \rVert_{L^{\frac{2d}{1-\sigma}}(R^d)} \ \left\lVert \int_0^1 H_{\Dso^\sigma}\brac{d^2 \psi_{\Tilde{\gamma}(s)}, d\varphi_{\vec{f}}} 
    \ ds \right\rVert_{L^{\frac{2d}{\sigma}}(\R^d)} \ \lVert \partial_e \vec{f} \rVert_{L^{2d}(\R^d)}\ \lVert \Tilde{\vec{w}} \rVert_{L^{\frac{2d}{d-2}}(\R^d)}\\
    \lesssim & \lVert \Dso^{1-\sigma} \vec{v} \rVert_{L^{\frac{2d}{1-\sigma}}(R^d)} \ \left\lVert \int_0^1 \Dso^{\sigma-\gamma}d^2 \psi_{\Tilde{\gamma}(s)} \ ds\right\rVert_{L^{\frac{2d}{\sigma-\gamma}}(\R^d)} \ \lVert \Dso^\gamma d\varphi_{\vec{f}} \rVert_{L^{\frac{2d}{\gamma}}(\R^d)} \ \lVert \partial_e \vec{f} \rVert_{L^{2d}(\R^d)}\ \lVert \Tilde{\vec{w}} \rVert_{L^{\frac{2d}{d-2}}(\R^d)}\\
    \lesssim & \lVert \Dso^{1-\sigma} \vec{v} \rVert_{L^{\frac{2d}{1-\sigma}}(R^d)} \ \left\lVert \int_0^1 \Dso d^2 \psi_{\Tilde{\gamma}(s)} \ ds\right\rVert_{L^{2d}(\R^d)}^{\sigma - \gamma} \ \lVert \Dso d\varphi_{\vec{f}} \rVert_{L^{2d}(\R^d)}^\gamma \ \lVert \partial_e \vec{f} \rVert_{L^{2d}(\R^d)}\ \lVert \Tilde{\vec{w}} \rVert_{L^{\frac{2d}{d-2}}(\R^d)}\\
    \lesssim_{\Lambda} & \lVert \Dso \vec{v} \rVert_{L^{2d}(R^d)}^{1-\sigma} \ \brac{\lVert \Dso \vec{u} \rVert_{L^{2d}(\R^d)} + \lVert \Dso \vec{v} \rVert_{L^{2d}(\R^d)}}^\sigma \ \lVert \partial_e \vec{f} \rVert_{L^{2d}(\R^d)}\ \lVert \Dso\Tilde{\vec{w}} \rVert_{L^{2d}(\R^d)}\\
    \lesssim_{\Lambda} & \brac{\lVert \Dso \vec{u} \rVert_{L^{2d}(R^d)} + \lVert \Dso \vec{v} \rVert_{L^{2d}(R^d)}} \ \lVert \partial_e \vec{f} \rVert_{L^{2d}(\R^d)}\ \lVert \Dso\Tilde{\vec{w}} \rVert_{L^{2d}(\R^d)}\\
    \lesssim_{\Lambda} & \brac{\lVert \Dso \vec{u} \rVert_{L^{2d}(R^d)}^2 + \lVert \Dso \vec{v} \rVert_{L^{2d}(R^d)}^2} \ \lVert \Dso\Tilde{\vec{w}} \rVert_{L^{2d}(\R^d)}.
\end{align*}
Finally for \eqref{final_6}, 
\begin{align*}
    & \left\lVert H_{\abs{\nabla}}\brac{\vec{v} \wedge_L,\int_0^1 \Dso^{-\sigma}\mathcal{R}^e \Big(d^2 \psi_{\Tilde{\gamma}(s)} d\varphi_{\vec{f}} \ H_{\Dso^\sigma}\brac{\partial_e \vec{f}, \Tilde{\vec{w}}} \Big) \ ds} \right\rVert_{L^2(\R^d)}\\
    \lesssim & \lVert \Dso^{1-\sigma} \vec{v} \rVert_{L^{\frac{2d}{2(1-\sigma)-1}}(R^d)} \ \left\lVert \int_0^1 \mathcal{R}^e \Big(d^2 \psi_{\Tilde{\gamma}(s)} d\varphi_{\vec{f}} \ H_{\Dso^\sigma}\brac{\partial_e \vec{f}, \Tilde{\vec{w}}} \Big) \ ds \right\rVert_{L^{\frac{2d}{d+2\sigma-1}}(\R^d)}\\
    \lesssim & \lVert \Dso^{1-\sigma} \vec{v} \rVert_{L^{\frac{2d}{2(1-\sigma)-1}}(R^d)} \ \left\lVert \int_0^1 d^2 \psi_{\Tilde{\gamma}(s)} d\varphi_{\vec{f}} \ H_{\Dso^\sigma}\brac{\partial_e \vec{f}, \Tilde{\vec{w}}} \ ds \right\rVert_{L^{\frac{2d}{d+2\sigma-1}}(\R^d)}\\
    \lesssim_{\Lambda} & \lVert \Dso^{1-\sigma} \vec{v} \rVert_{L^{\frac{2d}{2(1-\sigma)-1}}(R^d)} \ \left\lVert H_{\Dso^\sigma}\brac{\partial_e \vec{f}, \Tilde{\vec{w}}} \right\rVert_{L^{\frac{2d}{d+2\sigma-1}}(\R^d)}\\
    \lesssim_{\Lambda} & \lVert \Dso^{1-\sigma} \vec{v} \rVert_{L^{\frac{2d}{2(1-\sigma)-1}}(R^d)} \ \lVert \Dso^{\sigma-\gamma}\partial_e \vec{f} \rVert_{L^{\frac{2d}{2(1+ \sigma - \gamma) - 1}}(\R^d)} \ \lVert \Dso^\gamma\Tilde{\vec{w}} \rVert_{L^{\frac{2d}{d-2(1-\gamma)}}(\R^d)}\\
    \lesssim_{\Lambda} & \lVert \Dso^{1-\sigma} \vec{v} \rVert_{L^{\frac{2d}{2(1-\sigma)-1}}(R^d)} \ \lVert \Dso^{\sigma-\gamma}\partial_e \vec{f} \rVert_{L^{\frac{2d}{2(1+ \sigma - \gamma) - 1}}(\R^d)} \ \lVert \Dso^\gamma\Tilde{\vec{w}} \rVert_{L^{\frac{2d}{d-2(1-\gamma)}}(\R^d)}\\
    \lesssim_{\Lambda} & \brac{\lVert\Dso^{1+\sigma} \vec{u} \rVert_{L^{\frac{2d}{2(1+\sigma)-1}}(\R^d)}^2 + \lVert\Dso^{1+\sigma} \vec{v} \rVert_{L^{\frac{2d}{2(1+\sigma)-1}}(\R^d)}^2} \ \lVert \Dso\Tilde{\vec{w}} \rVert_{L^{2d}(\R^d)}.
\end{align*}
Hence,
\begin{align*}
         &\int_{\R^d} \langle \vec{v} \wedge_L H_{\Dso}\Big(\vec{v} \wedge_L, \Dso \Big(\int_0^1\mathcal{R}^e \brac{d^2 \psi_{\Tilde{\gamma}(s)} d\varphi_{\vec{f}} \partial_e \vec{f} \ \Tilde{\vec{w}}} \ ds\Big)\Big), \vec{X}^r_{\vec{u}}\rangle_L \ \langle \vec{E}^r_{\Tilde{\vec{u}}}, \partial_t \Tilde{\vec{w}} \rangle_{\R^m}\\
         \lesssim_{\Lambda} & \brac{\lVert \Dso^{1+\sigma}\vec{u} \rVert_{L^{\frac{2d}{2(1+\sigma)-1}}(\R^d)}^2 + \lVert \Dso^{1+\sigma}\vec{v} \rVert_{L^{\frac{2d}{2(1+\sigma)-1}}(\R^d)}^2} \ \brac{\lVert \partial_t \Tilde{\vec{w}} \rVert_{L^{2}(\R^d)}^2 + \lVert \Dso \Tilde{\vec{w}} \rVert_{L^{2}(\R^d)}^2}.\\
    \end{align*}
So we conclude.
\end{proof}
We continue on the route of estimating \eqref{eq:diffenergy_3_3}. To finish the estimate, we need to address the estimate related to \eqref{riesz_3}. We first expand
\begin{align*}
 \partial_t \vec{w} =& \vec{u} \wedge_L\Dso \vec{u} - \vec{v}\wedge_L \Dso \vec{v}\\
 =& \vec{w} \wedge_L\Dso \vec{u} + \vec{v} \wedge_L \Dso \vec{w}
\end{align*}
 
\medskip

and then we expand
\begin{align*}
    &\langle\mathbf{E}_{\Tilde{\vec{u}}}^r, \partial_t\Tilde{\vec{w}} \rangle_{\R^m}\\
    =&\langle\mathbf{E}_{\Tilde{\vec{u}}}^r, \partial_t\Tilde{\vec{u}} - \partial_t\Tilde{\vec{v}} \rangle_{\R^m}\\
    =&\langle\mathbf{E}_{\Tilde{\vec{u}}}^r, d\varphi_{\vec{u}}\partial_t\vec{u} \rangle_{\R^m} - \langle \mathbf{E}_{\Tilde{\vec{u}}}^r, d\varphi_{\vec{v}}\partial_t\vec{v} \rangle_{\R^m}\\
    =&\langle\mathbf{X}_{\vec{u}}^r, \partial_t\vec{u} \rangle_L - \langle \mathbf{E}_{\Tilde{\vec{v}}}^r, d\varphi_{\vec{v}}\partial_t\vec{v} \rangle_{\R^m} - \langle \mathbf{E}_{\Tilde{\vec{u}}}^r - \mathbf{E}_{\Tilde{\vec{v}}}^r, d\varphi_{\vec{v}}\partial_t\vec{v} \rangle_{\R^m}\\
    =&\langle\mathbf{X}_{\vec{u}}^r, \partial_t\vec{u} \rangle_L - \langle \mathbf{X}_{\vec{v}}^r, \partial_t\vec{v} \rangle_L - \langle \mathbf{E}_{\Tilde{\vec{u}}}^r - \mathbf{E}_{\Tilde{\vec{v}}}^r, d\varphi_{\vec{v}}\partial_t\vec{v} \rangle_{\R^m}\\
    =&\langle\mathbf{X}_{\vec{u}}^r - \mathbf{X}_{\vec{v}}^r, \partial_t\vec{u} \rangle_L + \langle \mathbf{X}_{\vec{v}}^r, \partial_t\vec{u} - \partial_t\vec{v} \rangle_L - \langle \mathbf{E}_{\Tilde{\vec{u}}}^r - \mathbf{E}_{\Tilde{\vec{v}}}^r, d\varphi_{\vec{v}}\partial_t\vec{v} \rangle_{\R^m}\\
    =&\langle\mathbf{X}_{\vec{u}}^r - \mathbf{X}_{\vec{v}}^r, \partial_t\vec{u} \rangle_L - \langle \mathbf{E}_{\Tilde{\vec{u}}}^r - \mathbf{E}_{\Tilde{\vec{v}}}^r, d\varphi_{\vec{v}}\partial_t\vec{v} \rangle_{\R^m} + \langle \mathbf{X}_{\vec{v}}^r, \vec{w} \wedge_L\Dso \vec{u}\rangle_L + \langle \mathbf{X}_{\vec{v}}^r, \vec{v} \wedge_L\Dso \vec{w}\rangle_L\\
    =&\langle\mathbf{X}_{\vec{u}}^r - \mathbf{X}_{\vec{v}}^r, \vec{u} \wedge_L \Dso\vec{u} \rangle_L - \langle \mathbf{E}_{\Tilde{\vec{u}}}^r - \mathbf{E}_{\Tilde{\vec{v}}}^r, d\varphi_{\vec{v}}\brac{\vec{v} \wedge_L \Dso\vec{v}} \rangle_{\R^m} + \langle \mathbf{X}_{\vec{v}}^r, \vec{w} \wedge_L\Dso \vec{u}\rangle_L + \langle \mathbf{X}_{\vec{v}}^r, \vec{v} \wedge_L\Dso \vec{w}\rangle_L.
\end{align*}
To finish estimating \eqref{eq:diffenergy_3_3}, we take \eqref{riesz_3} and expand
\begin{align}
    &\langle\vec{v} \wedge_L H_{\abs{\nabla}}\brac{\vec{v} \wedge_L,\int_0^1 \mathcal{R}^e \brac{d \psi_{\Tilde{\gamma}(s)} \partial_e \Tilde{\vec{w}}} \ ds}, \mathbf{X}_\vec{u}^r\rangle_L\langle\mathbf{E}_{\Tilde{\vec{u}}}^r, \Tilde{\vec{w}}_t \rangle_{\R^m}\nonumber\\
    =&\langle\vec{v} \wedge_L H_{\abs{\nabla}}\brac{\vec{v} \wedge_L,\int_0^1 \mathcal{R}^e \brac{d \psi_{\Tilde{\gamma}(s)} \partial_e \Tilde{\vec{w}}} \ ds}, \mathbf{X}_\vec{u}^r\rangle_L \langle\mathbf{X}_{\vec{u}}^r - \mathbf{X}_{\vec{v}}^r, \vec{u} \wedge_L \Dso\vec{u} \rangle_L\label{eq:last_3}\\
    -&\langle\vec{v} \wedge_L H_{\abs{\nabla}}\brac{\vec{v} \wedge_L,\int_0^1 \mathcal{R}^e \brac{d \psi_{\Tilde{\gamma}(s)} \partial_e \Tilde{\vec{w}}} \ ds}, \mathbf{X}_\vec{u}^r\rangle_L \langle \mathbf{E}_{\Tilde{\vec{u}}}^r - \mathbf{E}_{\Tilde{\vec{v}}}^r, d\varphi_{\vec{v}}\brac{\vec{v} \wedge_L \Dso\vec{v}} \rangle_{\R^m}\label{eq:last_4}\\
    +&\langle\vec{v} \wedge_L H_{\abs{\nabla}}\brac{\vec{v} \wedge_L,\int_0^1 \mathcal{R}^e \brac{d \psi_{\Tilde{\gamma}(s)} \partial_e \Tilde{\vec{w}}} \ ds}, \mathbf{X}_\vec{u}^r\rangle_L \langle \mathbf{X}_{\vec{v}}^r, \vec{w} \wedge_L\Dso \vec{u}\rangle_L\label{eq:last_1}\\
    +&\langle\vec{v} \wedge_L H_{\abs{\nabla}}\brac{\vec{v} \wedge_L,\int_0^1 \mathcal{R}^e \brac{d \psi_{\Tilde{\gamma}(s)} \partial_e \Tilde{\vec{w}}} \ ds}, \mathbf{X}_\vec{u}^r\rangle_L \langle \mathbf{X}_{\vec{v}}^r, \vec{v} \wedge_L\Dso \vec{w}\rangle_L\label{eq:last_2}.
\end{align}
Now we are left with four more terms to estimate, which will conclude the estimation of \eqref{eq:diffenergy_3_3}.
\medskip
Thus, we continue and estimate \eqref{eq:last_1}.
\begin{lemma}
For $d \geq 3$ and any $\sigma \in (0,1/2]$ we have 
\begin{align*}
 &\abs{\int_{\R^d} \langle \vec{v} \wedge_L H_{\Dso} (\vec{v} \wedge_L, \int_0^1 \mathcal{R}^e \brac{d \psi_{\Tilde{\gamma}(s)} \partial_e \Tilde{\vec{w}}} \ ds), \ \vec{w} \wedge_L\Dso{} \vec{u}\rangle_L}\\
 &\lesssim_{\Lambda} \|\Dso \Tilde{\vec{w}}\|_{L^{2}(\R^d)}^2\, \big(\|\Dso{}^{1+\sigma} \vec{u}\|_{\frac{2d}{2(1+\sigma)-1}}^2 + \|\Dso{}^{1+\sigma} \vec{v}\|_{\frac{2d}{2(1+\sigma)-1}}^2\big)
\end{align*}
\end{lemma}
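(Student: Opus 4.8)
\subsection*{Proof proposal}

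The plan is to estimate \eqref{eq:last_1} by a three–factor H\"older inequality that isolates the Leibniz–operator factor, and then to exploit the Riesz–transform form of \eqref{riesz_3} so that the single gradient sitting on $\Tilde{\vec{w}}$ is never differentiated a second time; the surplus fractional derivatives are all shifted onto $\vec{u},\vec{v},d\psi$. Throughout I will abbreviate $g:=\int_0^1\sum_e\mathcal{R}^e\bigl(d\psi_{\Tilde{\gamma}(s)}\partial_e\Tilde{\vec{w}}\bigr)\,ds$, and by \eqref{chain} it suffices to treat $\sigma\in(0,\sigma_0)$ with $\sigma_0=\sigma_0(d)$ small.

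First I would use $|\vec{v}|,|\vec{u}|\lesssim_\Lambda 1$ and the pointwise bound $|\vec{a}\wedge_L\vec{b}|\le|\vec a||\vec b|$ to reduce the integrand to $|H_{\Dso}(\vec{v}\wedge_L,g)|\,|\vec{w}|\,|\Dso\vec{u}|$, and then apply H\"older with exponents $\tfrac{2d}{d+1},\tfrac{2d}{d-2},2d$:
\begin{align*}
\text{\eqref{eq:last_1}}\ \lesssim_\Lambda\ \lVert H_{\Dso}(\vec{v}\wedge_L,g)\rVert_{L^{2d/(d+1)}(\R^d)}\;\lVert\vec{w}\rVert_{L^{2d/(d-2)}(\R^d)}\;\lVert\Dso\vec{u}\rVert_{L^{2d}(\R^d)}.
\end{align*}
By \eqref{w_eq} and \cref{lem:sob_in} one has $\lVert\vec{w}\rVert_{L^{2d/(d-2)}}\lesssim_\Lambda\lVert\Dso\Tilde{\vec{w}}\rVert_{L^2}$, and again by \cref{lem:sob_in} $\lVert\Dso\vec{u}\rVert_{L^{2d}}\lesssim\lVert\Dso^{1+\sigma}\vec{u}\rVert_{L^{2d/(2(1+\sigma)-1)}}$, so after an AM--GM it remains to prove
\begin{align*}
\lVert H_{\Dso}(\vec{v}\wedge_L,g)\rVert_{L^{2d/(d+1)}}\lesssim_\Lambda\Bigl(\lVert\Dso^{1+\sigma}\vec{u}\rVert_{L^{2d/(2(1+\sigma)-1)}}+\lVert\Dso^{1+\sigma}\vec{v}\rVert_{L^{2d/(2(1+\sigma)-1)}}+\lVert\Dso\vec{u}\rVert_{L^{2d}}+\lVert\Dso\vec{v}\rVert_{L^{2d}}\Bigr)\lVert\Dso\Tilde{\vec{w}}\rVert_{L^2}.
\end{align*}

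The key step is to rewrite $g$. Using $\Dso=c\sum_e\mathcal{R}^e\partial_e$, the fact that $\sum_e(\mathcal{R}^e)^2$ is a multiple of the identity, and the elementary identity $[\mathcal{R}^e,b]\Dso\phi=-(\partial_e b)\phi-\mathcal{R}^e\bigl((\Dso b)\phi\bigr)-\mathcal{R}^e H_{\Dso}(b,\phi)$ (obtained by expanding $\mathcal{R}^e(b\Dso\phi)$ and $H_{\Dso}(b,\phi)$ through \cref{def_comm}), one decomposes
\begin{align*}
g=c''\!\int_0^1\! d\psi_{\Tilde{\gamma}(s)}\,\Dso\Tilde{\vec{w}}\,ds\;+\;g_2,\qquad g_2=\int_0^1\!\Bigl[-\!\sum_e\mathcal{R}^e\bigl((\partial_e d\psi_{\Tilde{\gamma}(s)})\Tilde{\vec{w}}\bigr)+(\Dso d\psi_{\Tilde{\gamma}(s)})\Tilde{\vec{w}}+H_{\Dso}(d\psi_{\Tilde{\gamma}(s)},\Tilde{\vec{w}})\Bigr]ds,
\end{align*}
in which the remainder $g_2$ carries $\Tilde{\vec{w}}$ \emph{undifferentiated} and at most one derivative of $d\psi$, i.e.\ of $\vec{u},\vec{v}$. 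Thus $H_{\Dso}(\vec{v}\wedge_L,g_2)$ is estimated exactly as in \cref{riesz_estimate}: apply \cref{lem:prod_rule} with $\Dso^{1-\sigma}$ on $\vec{v}$ and $\Dso^{\sigma}$ on $g_2$, commute $\mathcal{R}^e$ past $\Dso^{\sigma}$, and Leibniz--expand $\Dso^{\sigma}$ using \cref{def_comm,comm_lem}; the surplus $\Dso^{\sigma}$ either lands on $\Tilde{\vec{w}}$, where $\Dso^{\sigma}\Tilde{\vec{w}}\in L^{2d/(d-2(1-\sigma))}$ is controlled by $\lVert\Dso\Tilde{\vec{w}}\rVert_{L^2}$ via \cref{lem:sob_in}, or on $d\psi$/$\partial_e d\psi$/$\Dso d\psi$, handled by \cref{lem:gag_in,isom_est}, while the residual $\Dso^{\sigma}H_{\Dso}(d\psi,\Tilde{\vec{w}})$ is dispatched by \cref{lem:prod_rule_2} with a weight $\gamma\in(\sigma,1)$ so that only $\Dso^{1+\sigma-\gamma}\Tilde{\vec{w}}$, of order $<1$, ever appears.

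The remaining, and hardest, piece is the principal part $H_{\Dso}\bigl(\vec{v}\wedge_L,d\psi_{\Tilde{\gamma}(s)}\,\Dso\Tilde{\vec{w}}\bigr)$, where $\Tilde{\vec{w}}$ is already at the top of the available regularity. Here I would first split with \cref{comm_lem},
\begin{align*}
H_{\Dso}\bigl(\vec{v}\wedge_L,d\psi\,\Dso\Tilde{\vec{w}}\bigr)=H_{\Dso}\bigl(\vec{v}\wedge_L d\psi,\Dso\Tilde{\vec{w}}\bigr)+H_{\Dso}\bigl(\vec{v}\wedge_L,d\psi\bigr)\,\Dso\Tilde{\vec{w}}-\vec{v}\wedge_L\,H_{\Dso}\bigl(d\psi,\Dso\Tilde{\vec{w}}\bigr),
\end{align*}
the middle term being immediate from \cref{lem:prod_rule,lem:gag_in,isom_est} since no $\Tilde{\vec{w}}$ sits inside the commutator. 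For the two commutators $H_{\Dso}(b,\Dso\Tilde{\vec{w}})$ with $b\in\{\vec{v}\wedge_L d\psi,\,d\psi\}$ I would not differentiate $\Dso\Tilde{\vec{w}}$ again; instead, use the pointwise representation \cref{lem:comm_int} for $H_{\Dso}(b,\partial_e\Tilde{\vec{w}})$ together with $\partial_e=-c\,\mathcal{R}^e\Dso$, and integrate by parts in $y$ against the kernel $|x-y|^{-d-1}$, which transfers the extra derivative off $\Dso\Tilde{\vec{w}}$ onto $b$ (hence onto $\vec{u},\vec{v},d\psi$) at the cost of Calder\'on--Zygmund operators (bounded on every $L^p$) acting on products $(\partial_e b)\,\Dso\Tilde{\vec{w}}$, plus pointwise double-- and triple--difference kernels controlled by \cref{double_comm_int,triple_comm_int}. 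After this every term is (a Calder\'on--Zygmund operator applied to) $\bigl(\text{at most one derivative of }\vec{u},\vec{v},d\psi\bigr)\times\bigl(\Dso\Tilde{\vec{w}}\bigr)$, which by H\"older and \cref{lem:gag_in,isom_est,lem:sob_in} is bounded in $L^{2d/(d+1)}$ by the right-hand side above. Collecting $g_2$ and the principal part and recalling the initial H\"older split finishes the proof. The single obstacle is the bookkeeping of this last paragraph: guaranteeing, through the Riesz substitution and the integration by parts in the commutator kernel, that the lone gradient on $\Tilde{\vec{w}}$ is never promoted to order $>1$, so that the estimate closes with the energy quantity $\lVert\Dso\Tilde{\vec{w}}\rVert_{L^2}^2$ and no uncontrolled norm of $\Tilde{\vec{w}}$.
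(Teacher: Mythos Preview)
Your initial H\"older split and the treatment of $g_2$ are reasonable, but the ``principal part'' is where your argument breaks down, and this is precisely the place where the paper's proof takes a completely different route.

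The gap is in your handling of $H_{\Dso}(b,\Dso\Tilde{\vec w})$ with $b\in\{\vec v\wedge_L d\psi,\,d\psi\}$. You propose to ``use the pointwise representation for $H_{\Dso}(b,\partial_e\Tilde{\vec w})$ together with $\partial_e=-c\,\mathcal R^e\Dso$, and integrate by parts in $y$ against the kernel $|x-y|^{-d-1}$''. This does not work as stated: the object you actually need to control is $H_{\Dso}(b,\Dso\Tilde{\vec w})=c\sum_e H_{\Dso}(b,\mathcal R^e\partial_e\Tilde{\vec w})$, and the Riesz transform sits \emph{between} the commutator and the local derivative. Commuting $\mathcal R^e$ past $b$ produces terms like $[\mathcal R^e,b]\,\Dso\partial_e\Tilde{\vec w}$, which still carry two derivatives on $\Tilde{\vec w}$; the Coifman--Rochberg--Weiss estimate does not reduce derivative count. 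Alternatively, integrating by parts in $y$ in $\int\frac{(b(x)-b(y))(\Dso\Tilde{\vec w}(x)-\Dso\Tilde{\vec w}(y))}{|x-y|^{d+1}}dy$ is ill-posed because $\Dso\Tilde{\vec w}(y)$ is not a local $y$-derivative. What you would really need is the endpoint $\sigma=1$ of \cref{lem:prod_rule}, namely $\|H_{\Dso}(b,h)\|_{p}\lesssim\|\Dso b\|_{p_1}\|h\|_{p_2}$, and that estimate is neither in the paper's toolkit nor derivable from the kernel manipulation you sketch.

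The paper never estimates $\|H_{\Dso}(\vec v\wedge_L,g)\|$ at all. Instead it uses the self-adjointness of $\Dso$ to derive the swap identity
\[
\int_{\R^d}\langle H_{\Dso}(\vec a\wedge_L,\vec b),\vec c\rangle_L=\int_{\R^d}\langle H_{\Dso}(\vec a\wedge_L,\vec c),\vec b\rangle_L+2\int_{\R^d}\langle\Dso\vec a\wedge_L\vec c,\vec b\rangle_L,
\]
together with the BAC--CAB rule $\vec v\wedge_L(\vec w\wedge_L\Dso\vec u)=\langle\vec w,\vec v\rangle_L\Dso\vec u-\langle\Dso\vec u,\vec v\rangle_L\vec w$, to transfer the commutator from $g$ onto $\langle\vec v,\Dso\vec u\rangle_L\vec w$ and $\langle\vec v,\vec w\rangle_L\Dso\vec u$. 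After the swap, $g$ appears only as an undifferentiated $L^2$ factor (where $\|g\|_{L^2}\lesssim_\Lambda\|\Dso\Tilde{\vec w}\|_{L^2}$), and the new commutator terms are exactly the quantities bounded in \cref{comm_lemm}. The two non-commutator ``$2\Dso\vec v\wedge_L\cdots$'' terms are then direct products handled by H\"older and Sobolev. This sidesteps your principal-part difficulty entirely: no second derivative ever threatens $\Tilde{\vec w}$.
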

\begin{proof}
Using fractional integration by parts, observe that 
\begin{align*}
    &\int_{\mathbb{R}^d} \langle H_{\Dso}(\vec{a}\wedge_L,\vec{b}), \vec{c}\rangle_L\\
    =&\int_{\mathbb{R}^d} \langle\Dso(\vec{a}\wedge_L \vec{b}), \vec{c}\rangle_L -\langle\Dso\vec{a}\wedge_L \vec{b}, \vec{c}\rangle_L - \langle\vec{a}\wedge_L \Dso\vec{b}, \vec{c}\rangle_L\\
    =&\int_{\mathbb{R}^d} \langle\vec{a}\wedge_L \vec{b}, \Dso\vec{c}\rangle_L + \langle\Dso\vec{a}\wedge_L \vec{c}, \vec{b}\rangle_L + \langle\vec{a}\wedge_L \vec{c}, \Dso\vec{b}\rangle_L\\
    =&\int_{\mathbb{R}^d} -\langle\vec{a}\wedge_L \Dso\vec{c}, \vec{b}\rangle_L + \langle\Dso\vec{a}\wedge_L \vec{c}, \vec{b}\rangle_L + \langle\Dso(\vec{a}\wedge_L \vec{c}), \vec{b}\rangle_L\\
    =&\int_{\mathbb{R}^d} \langle H_{\Dso}(\vec{a}\wedge_L,\vec{c}),\vec{b}\rangle_L +2\langle\Dso\vec{a}\wedge_L \vec{c}, \vec{b}\rangle_L.
\end{align*}
We recall the formula
\begin{align*}
\vec{a} \wedge_L(\vec{b} \wedge_L\vec{c})= \langle\vec{b}, \vec{a}\rangle_L\vec{c} - \langle\vec{c}, \vec{a}\rangle_L\vec{b} .
\end{align*}

Using the above, we have
\begin{align}
    &\int_{\R^d} \langle \vec{v} \wedge_L H_{\Dso} (\vec{v} \wedge_L,\int_0^1 \mathcal{R}^e \brac{d \psi_{\Tilde{\gamma}(s)} \partial_e \Tilde{\vec{w}}} \ ds), \ \vec{w} \wedge_L\Dso{} \vec{u}\rangle_L\nonumber\\
    =& -\int_{\R^d} \langle H_{\Dso} (\vec{v} \wedge_L, \int_0^1 \mathcal{R}^e \brac{d \psi_{\Tilde{\gamma}(s)} \partial_e \Tilde{\vec{w}}} \ ds), \ \vec{v} \wedge_L(\vec{w} \wedge_L\Dso{} \vec{u})\rangle_L \nonumber\\
    =&-\int_{\R^d} \Big\langle H_{\Dso} (\vec{v} \wedge_L,\int_0^1 \mathcal{R}^e \brac{d \psi_{\Tilde{\gamma}(s)} \partial_e \Tilde{\vec{w}}} \ ds), \ \langle \vec{v},\Dso\vec{u}\rangle_L \vec{w}\Big\rangle_L \nonumber\\
    &+ \Big\langle H_{\Dso} (\vec{v} \wedge_L,\int_0^1 \mathcal{R}^e \brac{d \psi_{\Tilde{\gamma}(s)} \partial_e \Tilde{\vec{w}}} \ ds), \ \langle \vec{v},\vec{w}\rangle_L \Dso\vec{u}\Big\rangle_L \nonumber\\
    =&-\int_{\R^d} \Big\langle H_{\Dso} \brac{\vec{v} \wedge_L, \langle \vec{v},\Dso\vec{u}\rangle_L \vec{w}}, \ \int_0^1 \mathcal{R}^e \brac{d \psi_{\Tilde{\gamma}(s)} \partial_e \Tilde{\vec{w}}} \ ds)\Big\rangle_L\label{comm_lem_a}\\
    &- 2\int_{\R^d} \Big\langle \Dso\vec{v} \wedge_L\langle \vec{v},\Dso\vec{u}\rangle_L \vec{w}, \ \int_0^1 \mathcal{R}^e \brac{d \psi_{\Tilde{\gamma}(s)} \partial_e \Tilde{\vec{w}}} \ ds\Big\rangle_L\label{comm_lem_b}\\
    &+ \int_{\R^d}\Big\langle H_{\Dso}\brac{\vec{v} \wedge_L, \langle \vec{v},\vec{w}\rangle_L \Dso\vec{u}}, \ \int_0^1 \mathcal{R}^e \brac{d \psi_{\Tilde{\gamma}(s)} \partial_e \Tilde{\vec{w}}} \ ds\Big\rangle_L\label{comm_lem_c}\\
    &+ 2\int_{\R^d}\Big\langle \Dso\vec{v} \wedge_L\langle \vec{v},\vec{w}\rangle_L \Dso\vec{u}, \ \int_0^1 \mathcal{R}^e \brac{d \psi_{\Tilde{\gamma}(s)} \partial_e \Tilde{\vec{w}}} \ ds\Big\rangle_L \label{comm_lem_d}.
\end{align}
Using $\cref{comm_lemm}$, we have that 
\begin{align*}
&\|H_{\Dso}\brac{\vec{v} \wedge_L, \langle \Dso \vec{u}, \vec{v} \rangle_L \vec{w}} \|_{L^{d}(\R^d)} \\
&\lesssim_{\Lambda} \|\nabla \Tilde{\vec{w}}\|_{L^2(\R^d)}\, \brac{\|\Ds{1+\sigma} \vec{u}\|_{L^{\frac{2d}{2(1+\sigma)-1}}(\R^d)} + \|\Ds{1+\sigma} \vec{v}\|_{L^{\frac{2d}{2(1+\sigma)-1}}(\R^d)}},
\end{align*}
and 
\begin{align*}
&\|H_{\Dso}\brac{\vec{v} \wedge_L, \langle \vec{v},\vec{w} \rangle_L \Dso \vec{u}}  \|_{L^{d}(\R^d)} \\
&\lesssim_{\Lambda} \|\nabla \Tilde{\vec{w}}\|_{L^2(\R^d)}\, \brac{\|\Ds{1+\sigma} \vec{u}\|_{L^{\frac{2d}{2(1+\sigma)-1}}(\R^d)} + \|\Ds{1+\sigma} \vec{v}\|_{L^{\frac{2d}{2(1+\sigma)-1}}(\R^d)}}.
\end{align*}
Thus, for \eqref{comm_lem_a}
\begin{align*}
    &\int_{\R^d} \Big\langle H_{\Dso} \brac{\vec{v} \wedge_L, \langle \vec{v},\Dso\vec{u}\rangle_L \vec{w}}, \ \int_0^1 \mathcal{R}^e \brac{d \psi_{\Tilde{\gamma}(s)} \partial_e \Tilde{\vec{w}}} \ ds)\Big\rangle_L\\
    &\lesssim_{\Lambda} \|\nabla \Tilde{\vec{w}}\|_{L^2(\R^d)}^2\, \brac{\|\Ds{1+\sigma} \vec{u}\|_{L^{\frac{2d}{2(1+\sigma)-1}}(\R^d)} + \|\Ds{1+\sigma} \vec{v}\|_{L^{\frac{2d}{2(1+\sigma)-1}}(\R^d)}}
\end{align*}
and for $\eqref{comm_lem_c}$
\begin{align*}
    &\int_{\R^d}\Big\langle H_{\Dso}\brac{\vec{v} \wedge_L, \langle \vec{v},\vec{w}\rangle_L \Dso\vec{u}}, \ \int_0^1 \mathcal{R}^e \brac{d \psi_{\Tilde{\gamma}(s)} \partial_e \Tilde{\vec{w}}} \ ds\Big\rangle_L\\
    &\lesssim_{\Lambda} \|\nabla \Tilde{\vec{w}}\|_{L^2(\R^d)}^2\, \brac{\|\Ds{1+\sigma} \vec{u}\|_{L^{\frac{2d}{2(1+\sigma)-1}}(\R^d)} + \|\Ds{1+\sigma} \vec{v}\|_{L^{\frac{2d}{2(1+\sigma)-1}}(\R^d)}}
\end{align*}
Now for $\eqref{comm_lem_b}$, we have that
\begin{align*}
    &\int_{\R^d} \Big\langle \Dso\vec{v} \wedge_L\langle \vec{v},\Dso\vec{u}\rangle_L \vec{w}, \ \int_0^1 \mathcal{R}^e \brac{d \psi_{\Tilde{\gamma}(s)} \partial_e \Tilde{\vec{w}}} \ ds\Big\rangle_L\\
    \lesssim_{\Lambda} & \lVert \Dso \vec{v} \rVert_{L^{2d}(\R^d)} \ \lVert \Dso\vec{u}\rVert_{L^{2d}(\R^d)} \ \lVert \Tilde{\vec{w}} \rVert_{L^{\frac{2d}{d-2}}(\R^d)} \ \lVert \Dso \Tilde{\vec{w}} \rVert_{L^{2}(\R^d)}\\
    \lesssim_{\Lambda} & \brac{\lVert \Dso \vec{u} \rVert_{L^{2d}(\R^d)}^2 + \lVert \Dso\vec{u}\rVert_{L^{2d}(\R^d)}^2} \ \lVert \Dso\Tilde{\vec{w}} \rVert_{L^{}(\R^d)}^2
\end{align*}
and for \eqref{comm_lem_d}, we have
\begin{align*}
    &\int_{\R^d}\Big\langle \Dso\vec{v} \wedge_L\langle \vec{v},\vec{w}\rangle_L \Dso\vec{u}, \int_0^1 \mathcal{R}^e \brac{d \psi_{\Tilde{\gamma}(s)} \partial_e \Tilde{\vec{w}}} \ ds \ ds\Big\rangle_L\\
    \lesssim_{\Lambda} & \lVert \Dso \vec{v} \rVert_{L^{2d}(\R^d)} \ \lVert \Dso\vec{u}\rVert_{L^{2d}(\R^d)} \ \lVert \Tilde{\vec{w}} \rVert_{L^{\frac{2d}{d-2}}(\R^d)} \ \lVert \Dso\vec{w} \rVert_{L^{2}(\R^d)}\\
    \lesssim_{\Lambda} & \brac{\lVert \Dso \vec{u} \rVert_{L^{2d}(\R^d)}^2 + \lVert \Dso\vec{u}\rVert_{L^{2d}(\R^d)}^2} \ \lVert \Dso\Tilde{\vec{w}} \rVert_{L^{}(\R^d)}^2
\end{align*}
We conclude.
\end{proof} 
Now that we have estimated \eqref{eq:last_1}, we estimate \eqref{eq:last_2}.
Using 
\begin{align*}
    \vec{a} \wedge_L(\vec{b} \wedge_L\vec{c})= \langle\vec{b}, \vec{a}\rangle_L\vec{c}  - \langle\vec{c}, \vec{a}\rangle_L\vec{b},
\end{align*}
we have
\begin{align}
    & \vec{v}\wedge_LH_{\abs{\nabla}}\brac{\vec{v} \wedge_L, \int_0^1 \mathcal{R}^e \brac{d \psi_{\Tilde{\gamma}(s)} \partial_e \Tilde{\vec{w}}} \ ds} \nonumber\\
    &= \sideset{}{'}{\sum}_{j=1}^3  \vec{v}^jH_{\abs{\nabla}}\brac{\vec{v}^j, \int_0^1 \mathcal{R}^e \brac{d \psi_{\Tilde{\gamma}(s)} \partial_e \Tilde{\vec{w}}} \ ds} - \sideset{}{'}{\sum}_{j=1}^3  \sum_{\ell=1}^m\vec{v}^j H_{\abs{\nabla}}\brac{\vec{v}, \int_0^1 \mathcal{R}^e\brac{\frac{\partial \psi^j_{\Tilde{\gamma}(s)}}{\partial p^\ell} \ \partial_e \Tilde{\vec{w}}^\ell} \ ds} \label{split}.
\end{align}
We expand the second term of \eqref{split} using the definition of the Leibniz operator to get
\begin{align}
    &\sideset{}{'}{\sum}_{j=1}^3 \sum_{\ell=1}^m\langle \vec{v}^jH_{\abs{\nabla}}\brac{\vec{v}, \int_0^1 \mathcal{R}^e\brac{\frac{\partial \psi^j_{\Tilde{\gamma}(s)}}{\partial p^\ell} \ \partial_e \Tilde{\vec{w}}^\ell} \ ds}, \vec{v} \wedge_L\Dso\vec{w} \rangle_L\nonumber\\
    =& \sideset{}{'}{\sum}_{j=1}^3 \sum_{\ell=1}^m\langle \vec{v}^jH_{\abs{\nabla}}\brac{\vec{v}, \int_0^1 \mathcal{R}^e\brac{\frac{\partial \psi^j_{\Tilde{\gamma}(s)}}{\partial p^\ell} \ \partial_e \Tilde{\vec{w}}^\ell} \ ds}, \vec{v} \wedge_L\int_0^1 \Dso d\psi_{\Tilde{\gamma}(s)}\Tilde{\vec{w}} \ ds \rangle_L\label{split_1}\\
    &+ \sideset{}{'}{\sum}_{j=1}^3 \sum_{\ell=1}^m\langle \vec{v}^jH_{\abs{\nabla}}\brac{\vec{v}, \int_0^1 \mathcal{R}^e\brac{\frac{\partial \psi^j_{\Tilde{\gamma}(s)}}{\partial p^\ell} \ \partial_e \Tilde{\vec{w}}^\ell} \ ds}, \vec{v} \wedge_L\int_0^1 d \psi_{\Tilde{\gamma}(s)} \Dso \Tilde{\vec{w}} \ ds \rangle_L\label{split_2}\\
    &+ \sideset{}{'}{\sum}_{j=1}^3 \sum_{\ell=1}^m \langle \vec{v}^jH_{\abs{\nabla}}\brac{\vec{v}, \int_0^1 \mathcal{R}^e\brac{\frac{\partial \psi^j_{\Tilde{\gamma}(s)}}{\partial p^\ell} \ \partial_e \Tilde{\vec{w}}^\ell} \ ds}, \vec{v} \wedge_L\int_0^1  H_{\Dso}\brac{d\psi_{\Tilde{\gamma}(s)}, \Tilde{\vec{w}}} \ ds \rangle_L\label{split_3}.
\end{align}
To estimate \eqref{eq:last_2}, we need estimates related to the first and second term of \eqref{split}. We start by getting estimates related to the second term, and in particular, those related to \eqref{split_1}.
\begin{lemma}
For $d \geq 3$ and any $\sigma \in (0,1/2]$ we have 
\begin{align*}
    &\abs{\int_{\R^d} \sideset{}{'}{\sum}_{j=1}^3 \sum_{\ell=1}^m \langle \vec{v}^jH_{\abs{\nabla}}\brac{\vec{v}, \int_0^1 \mathcal{R}^e\brac{\frac{\partial \psi^j_{\Tilde{\gamma}(s)}}{\partial p^\ell} \ \partial_e \Tilde{\vec{w}}^\ell} \ ds}, \vec{v} \wedge_L\int_0^1 \Dso d\psi_{\Tilde{\gamma}(s)}\Tilde{\vec{w}} \ ds \rangle_L } \\
    &\lesssim_{\Lambda} \lVert \abs{\nabla} \Tilde{\vec{w}}\rVert_{L^{2}(\R^d)}^2\, \brac{\lVert\abs{\nabla}^{1+\sigma} \vec{u}\rVert_{L^\frac{2d}{2(1+\sigma)-1}(\mathbb{R}^d)}^2 + \lVert\abs{\nabla}^{1+\sigma} \vec{v}\rVert_{L^\frac{2d}{2(1+\sigma)-1}(\mathbb{R}^d)}^2}.
\end{align*}
\end{lemma}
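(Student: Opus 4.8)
The plan is to bound the integral by the pointwise inequality $|\langle\cdot,\cdot\rangle_L|\lesssim|\cdot|\,|\cdot|$ followed by Hölder's inequality, viewing the integrand as the product of the transport factor $Q:=\vec v\wedge_L\int_0^1\Dso d\psi_{\tilde\gamma(s)}\,\tilde{\vec w}\,ds$ and the commutator factor $\vec v^jH_{\Dso}(\vec v,A^{j\ell})$, where $A^{j\ell}:=\int_0^1\mathcal{R}^e\big(\tfrac{\partial\psi^j_{\tilde\gamma(s)}}{\partial p^\ell}\,\partial_e\tilde{\vec w}^\ell\big)\,ds$. Since $\tilde{\vec w}$ is independent of $s$, $Q=\vec v\wedge_L(\Psi'\tilde{\vec w})$ with $\Psi':=\int_0^1\Dso d\psi_{\tilde\gamma(s)}\,ds$, and by \cref{comp}, the chain rule and \cref{isom_est} one has $\|\Psi'\|_{L^{2d}(\R^d)}\lesssim_\Lambda\|\Dso\vec u\|_{L^{2d}(\R^d)}+\|\Dso\vec v\|_{L^{2d}(\R^d)}$; combined with the Sobolev embedding $\|\tilde{\vec w}\|_{L^{2d/(d-2)}(\R^d)}\lesssim\|\nabla\tilde{\vec w}\|_{L^2(\R^d)}$ (\cref{comp,lem:sob_in}) this yields $\|Q\|_{L^{2d/(d-1)}(\R^d)}\lesssim_\Lambda(\|\Dso\vec u\|_{L^{2d}}+\|\Dso\vec v\|_{L^{2d}})\|\nabla\tilde{\vec w}\|_{L^2}$, and by \eqref{chain} the first factor is $\lesssim\|\Dso^{1+\sigma}\vec u\|_{L^{2d/(2(1+\sigma)-1)}}+\|\Dso^{1+\sigma}\vec v\|_{L^{2d/(2(1+\sigma)-1)}}$.

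It then remains to pair $\vec v^jH_{\Dso}(\vec v,A^{j\ell})$ against $Q$ and show this is $\lesssim_\Lambda\|\nabla\tilde{\vec w}\|_{L^2}$ times one further copy of $\|\Dso^{1+\sigma}\vec u\|_{L^{2d/(2(1+\sigma)-1)}}+\|\Dso^{1+\sigma}\vec v\|_{L^{2d/(2(1+\sigma)-1)}}$. The key structural point is that $A^{j\ell}$ already carries one spatial derivative of $\tilde{\vec w}$: using $\Dso=c\sum_e\mathcal{R}^e\partial_e$, the $L^p$-boundedness of $\mathcal{R}^e$ and its commutation with $\Dso^s$, one reorganizes $A^{j\ell}$ into a bounded $d\psi_{\tilde\gamma(s)}$-coefficient times $\Dso\tilde{\vec w}$ (plus a more regular Riesz-commutator remainder), applies \cref{lem:prod_rule} to $H_{\Dso}(\vec v,A^{j\ell})$ so as to split off a small $\Dso^{\sigma}$ onto the $d\psi$-coefficient and $\Dso^{1-\sigma}$ onto $\vec v$, and expands $\Dso^{\sigma}$ of the remaining product by the Leibniz rule (\cref{def_comm}, \cref{comm_lem}). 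Each resulting piece is controlled by \cref{lem:prod_rule,lem:prod_rule_2,lem:prod_rule_frac}, the last of which is tailored to bound $\Dso^{\sigma}(f\Dso g)$ by $\|\Dso f\|_{L^2}\,\|\Dso^{1+\sigma}g\|_{L^{2d/(2(1+\sigma)-1)}}$ with $f=\tilde{\vec w}$ and $g\in\{\vec u,\vec v\}$; Gagliardo–Nirenberg (\cref{lem:gag_in}) and \cref{isom_est} trade full derivatives on the composition data $d\psi,d\varphi$ for $\|\Dso\vec u\|_{L^{2d}}+\|\Dso\vec v\|_{L^{2d}}$, and \eqref{chain} lifts these to the $\Dso^{1+\sigma}$-norms. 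At the few spots where the commutator would otherwise push a fractional derivative onto $\tilde{\vec w}$, one instead integrates by parts in the full integral so that the surplus derivative falls on $\vec v$, producing a harmless $\Dso\vec v\in L^{2d}$. A final application of Young's inequality $ab\le\tfrac12(a^2+b^2)$ assembles the two factors into the stated bound.

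The main obstacle is exactly this bookkeeping. Since $A^{j\ell}$ costs one derivative of $\tilde{\vec w}$ and $H_{\Dso}(\vec v,\cdot)$ inevitably produces an additional fractional derivative that \cref{lem:prod_rule} cannot place entirely on $\vec v$, a naive estimate would require $\Dso^{1+\sigma}\tilde{\vec w}\in L^2$, which the energy does not control; the Riesz-transform reformulation of $\Dso\vec w$ was introduced precisely so that the $d\psi_{\tilde\gamma(s)}$- and $d^2\psi_{\tilde\gamma(s)}$-coefficients, which for the purposes of fractional differentiation behave like the bounded maps $\vec u,\vec v$ themselves, absorb that surplus derivative, after which \cref{lem:prod_rule_frac} recombines it with $\Dso\tilde{\vec w}$. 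One must also check at every splitting that no term forces more than $1+\sigma$ derivatives on $\vec u$ or $\vec v$ (so that only the norms in the statement appear) and that every Hölder exponent stays in the admissible range of \cref{lem:prod_rule,lem:prod_rule_2,lem:prod_rule_frac}; these are the constraints that force $\sigma$ small and $d\geq3$.
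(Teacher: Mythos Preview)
Your proposal contains a genuine gap in the treatment of the commutator factor. The structure of $A^{j\ell}=\int_0^1\mathcal{R}^e\big(\tfrac{\partial\psi^j_{\tilde\gamma(s)}}{\partial p^\ell}\,\partial_e\tilde{\vec w}^\ell\big)\,ds$ is $\Phi\cdot\partial_e\tilde{\vec w}$ with $\Phi$ a bounded composition coefficient; the differentiated factor is $\tilde{\vec w}$, not $\vec u$ or $\vec v$. When you apply \cref{lem:prod_rule} to $H_{\Dso}(\vec v,A^{j\ell})$ and then expand $\Dso^{\sigma}A^{j\ell}$ by the Leibniz rule, one of the pieces is inevitably $\Phi\,\Dso^{\sigma}\partial_e\tilde{\vec w}$. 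Your appeal to \cref{lem:prod_rule_frac} with ``$f=\tilde{\vec w}$ and $g\in\{\vec u,\vec v\}$'' has the roles reversed: in the product at hand the form is $f\Dso g$ with $f=\Phi$ and $g=\tilde{\vec w}$, so \cref{lem:prod_rule_frac} would output $\|\Dso\Phi\|_{L^2}\,\|\Dso^{1+\sigma}\tilde{\vec w}\|_{L^{2d/(2(1+\sigma)-1)}}$, which the energy does not control. The earlier lemmas where \cref{lem:prod_rule_frac} is used (e.g.\ \cref{riesz_estimate}) genuinely have $\tilde{\vec w}$ as the undifferentiated multiplier and $\partial_e\vec f$, $\vec f\in\{\vec u,\vec v\}$, as the derivative factor; that is exactly what fails here.

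The paper does not try to estimate $H_{\Dso}(\vec v,A^{j\ell})$ in a Lebesgue norm at all. Instead it uses the commutator--swap identity
\[
\int_{\R^d} \langle H_{\Dso}(\vec a,\vec b),\vec c\rangle_L
=\int_{\R^d} \langle H_{\Dso}(\vec a,\vec c),\vec b\rangle_L
+2\int_{\R^d}\langle \Dso\vec a\wedge_L\vec c,\vec b\rangle_L,
\]
which moves the commutator off $A^{j\ell}$ and onto $\vec v^j\,\vec v\wedge_L Q$. Since $Q=\vec v\wedge_L\int_0^1\Dso d\psi_{\tilde\gamma(s)}\,\tilde{\vec w}\,ds$ contains $\tilde{\vec w}$ \emph{without} derivatives, the extra fractional derivative produced by $H_{\Dso}(\vec v,\cdot)$ can now land on $\tilde{\vec w}$ and be absorbed by Sobolev into $\|\Dso\tilde{\vec w}\|_{L^2}$; after a further application of \cref{comm_lem} the resulting pieces fall under \cref{riesz_estimate}. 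Your brief mention of ``integrating by parts so that the surplus derivative falls on $\vec v$'' does not describe this mechanism: the swap does not dump the derivative on $\vec v$ (that term is the easy $2\int(\Dso\vec v)\cdot(\cdot)\cdot A^{j\ell}$ remainder), it transfers the commutator from the bad factor $A^{j\ell}$ to the good factor $Q$. Without this swap the argument cannot close.
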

\begin{proof}
Using fractional integration by parts, observe that 
\begin{align*}
    &\int_{\mathbb{R}^d} \langle H_{\Dso}(\vec{a}\wedge_L,\vec{b}), \vec{c}\rangle_L\\
    =&\int_{\mathbb{R}^d} \langle\Dso(\vec{a}\wedge_L \vec{b}), \vec{c}\rangle_L -\langle\Dso\vec{a}\wedge_L \vec{b}, \vec{c}\rangle_L - \langle\vec{a}\wedge_L \Dso\vec{b}, \vec{c}\rangle_L\\
    =&\int_{\mathbb{R}^d} \langle\vec{a}\wedge_L \vec{b}, \Dso\vec{c}\rangle_L + \langle\Dso\vec{a}\wedge_L \vec{c}, \vec{b}\rangle_L + \langle\vec{a}\wedge_L \vec{c}, \Dso\vec{b}\rangle_L\\
    =&\int_{\mathbb{R}^d} -\langle\vec{a}\wedge_L \Dso\vec{c}, \vec{b}\rangle_L + \langle\Dso\vec{a}\wedge_L \vec{c}, \vec{b}\rangle_L + \langle\Dso(\vec{a}\wedge_L \vec{c}), \vec{b}\rangle_L\\
    =&\int_{\mathbb{R}^d} \langle H_{\Dso}(\vec{a}\wedge_L,\vec{c}),\vec{b}\rangle_L +2\langle\Dso\vec{a}\wedge_L \vec{c}, \vec{b}\rangle_L
\end{align*}
and using \cref{comm_lemm}, we have
\begin{align}
    &\int_{\R^d} \langle \vec{v}^jH_{\abs{\nabla}}\brac{\vec{v}, \int_0^1 \mathcal{R}^e\brac{\frac{\partial \psi^j_{\Tilde{\gamma}(s)}}{\partial p^\ell} \ \partial_e \Tilde{\vec{w}}^\ell} \ ds}, \vec{v} \wedge_L\int_0^1 \Dso d\psi_{\Tilde{\gamma}(s)}\Tilde{\vec{w}} \ ds \rangle_L \nonumber\\
    &= \int_{\R^d} \langle H_{\abs{\nabla}}\brac{\vec{v}, \vec{v}^j\vec{v} \wedge_L\int_0^1 \Dso d\psi_{\Tilde{\gamma}(s)}\Tilde{\vec{w}} \ ds}\rangle_L \ \int_0^1 \mathcal{R}^e\brac{\frac{\partial \psi^j_{\Tilde{\gamma}(s)}}{\partial p^\ell} \ \partial_e \Tilde{\vec{w}}^\ell} \ ds \nonumber\\
    &+2\int_{\R^d} \langle \abs{\nabla}\vec{v}, \vec{v}^j\vec{v} \wedge_L\int_0^1 \Dso d\psi_{\Tilde{\gamma}(s)}\Tilde{\vec{w}} \ ds\rangle_L \ \int_0^1 \mathcal{R}^e\brac{\frac{\partial \psi^j_{\Tilde{\gamma}(s)}}{\partial p^\ell} \ \partial_e \Tilde{\vec{w}}^\ell} \ ds \nonumber\\
    &= \int_{\R^d} \langle \textcolor{blue}{H_{\abs{\nabla}}}\brac{\vec{v}, \vec{v}^j\vec{v} \wedge_L \textcolor{blue}{,} \int_0^1 \Dso d\psi_{\Tilde{\gamma}(s)}\Tilde{\vec{w}} \ ds}\rangle_L \ \int_0^1 \mathcal{R}^e\brac{\frac{\partial \psi^j_{\Tilde{\gamma}(s)}}{\partial p^\ell} \ \partial_e \Tilde{\vec{w}}^\ell} \ ds\label{1_almost_1}\\
    &+ \int_{\R^d} \langle H_{\abs{\nabla}}
    \brac{\vec{v}, \vec{v}^j\vec{v}} \wedge_L\int_0^1 \Dso d\psi_{\Tilde{\gamma}(s)}\Tilde{\vec{w}} \ ds\rangle_L \ \int_0^1 \mathcal{R}^e\brac{\frac{\partial \psi^j_{\Tilde{\gamma}(s)}}{\partial p^\ell} \ \partial_e \Tilde{\vec{w}}^\ell} \ ds \label{1_almost_2}\\
    &+ \int_{\R^d} \langle \vec{v}, H_{\abs{\nabla}}
    \brac{\vec{v}^j\vec{v} \wedge_L, \int_0^1 \Dso d\psi_{\Tilde{\gamma}(s)}\Tilde{\vec{w}} \ ds}\rangle_L \ \int_0^1 \mathcal{R}^e\brac{\frac{\partial \psi^j_{\Tilde{\gamma}(s)}}{\partial p^\ell} \ \partial_e \Tilde{\vec{w}}^\ell} \ ds\label{1_almost_3}\\
    &+ 2\int_{\R^d} \langle \abs{\nabla}\vec{v}, \vec{v}^j\vec{v} \wedge_L\int_0^1 \Dso d\psi_{\Tilde{\gamma}(s)}\Tilde{\vec{w}} \ ds\rangle_L \ \int_0^1 \mathcal{R}^e\brac{\frac{\partial \psi^j_{\Tilde{\gamma}(s)}}{\partial p^\ell} \ \partial_e \Tilde{\vec{w}}^\ell} \ ds \label{1_almost_4}.
\end{align} 
For \eqref{1_almost_1}, by \cref{riesz_estimate}, we have
\begin{align*}
    &\int_{\R^d} \langle \textcolor{blue}{H_{\abs{\nabla}}}\brac{\vec{v}, \vec{v}^j\vec{v} \wedge_L \textcolor{blue}{,} \int_0^1 \Dso d\psi_{\Tilde{\gamma}(s)}\Tilde{\vec{w}} \ ds}\rangle_L \ \int_0^1 \mathcal{R}^e\brac{\frac{\partial \psi^j_{\Tilde{\gamma}(s)}}{\partial p^\ell} \ \partial_e \Tilde{\vec{w}}^\ell} \ ds   \\
    \lesssim_{\Lambda} & \brac{\lVert \Dso^{1+\sigma}\vec{u} \rVert_{L^{\frac{2d}{2(1+\sigma)-1}}(\R^d)}^2 + \lVert \Dso^{1+\sigma}\vec{v} \rVert_{L^{\frac{2d}{2(1+\sigma)-1}}(\R^d)}^2}  \ \lVert \Dso \Tilde{\vec{w}} \rVert_{L^{2}(\R^d)}^2.
\end{align*}

For \eqref{1_almost_2}, we use \cref{lem:prod_rule_frac} and \cref{lem:gag_in} to get
\begin{align*}
    &\int_{\R^d} \langle H_{\abs{\nabla}}
    \brac{\vec{v}, \vec{v}^j\vec{v}} \wedge_L\int_0^1 \Dso d\psi_{\Tilde{\gamma}(s)}\Tilde{\vec{w}} \ ds\rangle_L \ \int_0^1 \mathcal{R}^e\brac{\frac{\partial \psi^j_{\Tilde{\gamma}(s)}}{\partial p^\ell} \ \partial_e \Tilde{\vec{w}}^\ell} \ ds\\
    \lesssim & \lVert H_{\abs{\nabla}}
    \brac{\vec{v}, \vec{v}^j\vec{v}} \rVert_{L^{2d}(\R^d)} \ \lVert \int_0^1 \Dso d\psi_{\Tilde{\gamma}(s)} \ ds \rVert_{L^{2d}(\R^d)} \ \lVert \Tilde{\vec{w}} \rVert_{L^{\frac{2d}{d-2}}(\R^d)} \ \lVert \Dso\Tilde{\vec{w}} \lVert_{L^{2}(\R^d)}\\
    \lesssim & \lVert \Dso^{1-\sigma}\vec{v} \rVert_{L^{\frac{2d}{1-\sigma}}(\R^d)} \ \lVert \Dso^{\sigma}\brac{\vec{v}^j\vec{v}} \rVert_{L^{\frac{2d}{\sigma}}(\R^d)} \ \lVert \int_0^1 \Dso d\psi_{\Tilde{\gamma}(s)} \ ds \rVert_{L^{2d}(\R^d)} \ \lVert \Dso\Tilde{\vec{w}} \lVert_{L^{2}(\R^d)}^2\\
    \lesssim & \lVert \Dso\vec{v} \rVert_{L^{2d}(\R^d)}^{1-\sigma} \ \lVert \Dso\brac{\vec{v}^j\vec{v}} \rVert_{L^{2d}(\R^d)}^{\sigma} \ \lVert \int_0^1 \Dso d\psi_{\Tilde{\gamma}(s)} \ ds \rVert_{L^{2d}(\R^d)} \ \lVert \Dso\Tilde{\vec{w}} \lVert_{L^{2}(\R^d)}^2\\
    \lesssim & \lVert \Dso\vec{v} \rVert_{L^{2d}(\R^d)}^{1-\sigma} \ \lVert \nabla\brac{\vec{v}^j\vec{v}} \rVert_{L^{2d}(\R^d)}^{\sigma} \ \lVert \int_0^1 \Dso d\psi_{\Tilde{\gamma}(s)} \ ds \rVert_{L^{2d}(\R^d)} \ \lVert \Dso\Tilde{\vec{w}} \lVert_{L^{2}(\R^d)}^2\\
    \lesssim & \lVert \Dso\vec{v} \rVert_{L^{2d}(\R^d)}^{1-\sigma} \ \lVert \nabla \vec{v} \rVert_{L^{2d}(\R^d)}^{\sigma} \ \lVert \int_0^1 \Dso d\psi_{\Tilde{\gamma}(s)} \ ds \rVert_{L^{2d}(\R^d)} \ \lVert \Dso\Tilde{\vec{w}} \lVert_{L^{2}(\R^d)}^2\\
    \lesssim & \lVert \Dso\vec{v} \rVert_{L^{2d}(\R^d)}^{1-\sigma} \ \lVert \Dso \vec{v} \rVert_{L^{2d}(\R^d)}^{\sigma} \ \lVert \int_0^1 \Dso d\psi_{\Tilde{\gamma}(s)} \ ds \rVert_{L^{2d}(\R^d)} \ \lVert \Dso\Tilde{\vec{w}} \lVert_{L^{2}(\R^d)}^2\\
    \lesssim & \lVert \Dso\vec{v} \rVert_{L^{2d}(\R^d)} \ \lVert \int_0^1 \Dso d\psi_{\Tilde{\gamma}(s)} \ ds \rVert_{L^{2d}(\R^d)} \ \lVert \Dso\Tilde{\vec{w}} \lVert_{L^{2}(\R^d)}^2\\
    \lesssim_{\Lambda} & \brac{\lVert \Dso\vec{u} \rVert_{L^{2d,2}(\R^d)}^2 + \lVert \Dso\vec{v} \rVert_{L^{2d,2}(\R^d)}^2} \ \lVert \Dso\Tilde{\vec{w}} \rVert_{L^{2}(\R^d)}^2.
\end{align*}
Next, for \eqref{1_almost_3}, we use \cref{riesz_estimate} to get
\begin{align*}
    &\int_{\R^d} \langle \vec{v}, H_{\abs{\nabla}}
    \brac{\vec{v}^j\vec{v} \wedge_L, \int_0^1 \mathcal{R}^e\brac{\frac{\partial \psi^j_{\Tilde{\gamma}(s)}}{\partial p^\ell} \ \partial_e \Tilde{\vec{w}}^\ell} \ ds}\rangle_L \ \int_0^1 \frac{\partial \psi_{\Tilde{\gamma}(s)}}{\partial p^\ell} \Dso \Tilde{\vec{w}}^\ell \ ds\\
    \lesssim_{\Lambda} & \brac{\lVert \Dso^{1+\sigma}\vec{u} \rVert_{L^{\frac{2d}{2(1+\sigma)-1}}(\R^d)}^2 + \lVert \Dso^{1+\sigma}\vec{v} \rVert_{L^{\frac{2d}{2(1+\sigma)-1}}(\R^d)}^2}  \ \lVert \Dso \Tilde{\vec{w}} \rVert_{L^{2}(\R^d)}^2.
\end{align*}

For \eqref{1_almost_4},
\begin{align*}
    &\int_{\R^d} \langle \abs{\nabla}\vec{v}, \vec{v}^j\vec{v} \wedge_L\int_0^1 \Dso d\psi_{\Tilde{\gamma}(s)}\Tilde{\vec{w}} \ ds\rangle_L \ \int_0^1 \mathcal{R}^e\brac{\frac{\partial \psi^j_{\Tilde{\gamma}(s)}}{\partial p^\ell} \ \partial_e \Tilde{\vec{w}}^\ell} \ ds\\
    \lesssim_{\Lambda} & \brac{\lVert \Dso\vec{u} \rVert_{L^{2d,2}(\R^d)}^2 + \lVert \Dso\vec{v} \rVert_{L^{2d,2}(\R^d)}^2} \ \lVert \Dso\Tilde{\vec{w}} \rVert_{L^{2}(\R^d)}^2.
\end{align*}
Hence, we conclude.
\end{proof}
Continuing in our pursuit of estimating \eqref{eq:last_2}, we estimate \eqref{split_2}.
\begin{lemma}
For $d \geq 3$ and any $\sigma \in (0,1/2)$ we have 
\begin{align*}
    &\abs{\int_{\R^d} \sideset{}{'}{\sum}_{j=1}^3 \sum_{\ell=1}^m \langle \vec{v}^j H_{\abs{\nabla}}\brac{\vec{v}, \int_0^1 \mathcal{R}^e\brac{\frac{\partial \psi^j_{\Tilde{\gamma}(s)}}{\partial p^\ell} \ \partial_e \Tilde{\vec{w}}^\ell} \ ds}, \vec{v} \wedge_L \int_0^1 d \psi_{\Tilde{\gamma}(s)} \Dso \Tilde{\vec{w}} \ ds \rangle_L} \\
    &\lesssim_{\Lambda} \lVert \abs{\nabla} \Tilde{\vec{w}}\rVert_{L^{2}(\R^d)}^2\, \brac{\lVert\abs{\nabla}^{1+\sigma} \vec{u}\rVert_{L^\frac{2d}{2(1+\sigma)-1}(\mathbb{R}^d)}^2 + \lVert\abs{\nabla}^{1+\sigma} \vec{v}\rVert_{L^\frac{2d}{2(1+\sigma)-1}(\mathbb{R}^d)}^2}.
\end{align*}
\end{lemma}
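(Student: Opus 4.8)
The plan is to estimate \eqref{split_2} by the same machinery as in the estimate of \eqref{split_1}, the structural difference being that now the Leibniz-operator factor carries the Riesz-smoothed gradient $G_{j\ell}:=\int_0^1\sum_e\mathcal{R}^e\big(\tfrac{\partial\psi^j_{\Tilde\gamma(s)}}{\partial p^\ell}\,\partial_e\Tilde{\vec w}^\ell\big)\,ds$, while the companion factor $\vec v\wedge_L\int_0^1 d\psi_{\Tilde\gamma(s)}\,\Dso\Tilde{\vec w}\,ds$ carries a full derivative of $\Tilde{\vec w}$. First I would peel off one copy of the energy norm: since $\vec u,\vec v$ take values in a fixed compact subset of $\mathbb H^2$, the maps $\psi,\varphi$ and all their derivatives along $\Tilde\gamma(s)=s\Tilde{\vec u}+(1-s)\Tilde{\vec v}$ are uniformly bounded, so by \cref{comp} the companion factor has $L^2$-norm $\lesssim_\Lambda\|\Dso\Tilde{\vec w}\|_{L^2}\approx\|\nabla\Tilde{\vec w}\|_{L^2}$. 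Hence, by Cauchy--Schwarz and the boundedness of $\vec v^j$, the lemma reduces to the fixed-$(j,\ell)$ estimate
\[
 \|H_{\Dso}(\vec v,G_{j\ell})\|_{L^2(\R^d)}\lesssim_\Lambda\|\nabla\Tilde{\vec w}\|_{L^2(\R^d)}\,\Big(\|\Dso^{1+\sigma}\vec u\|_{L^{\frac{2d}{2(1+\sigma)-1}}(\R^d)}^2+\|\Dso^{1+\sigma}\vec v\|_{L^{\frac{2d}{2(1+\sigma)-1}}(\R^d)}^2\Big).
\]

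The crucial point is the one that already forced the passage to the Riesz-transform expansion \eqref{riesz_1}--\eqref{riesz_3}: the surplus derivative inside $G_{j\ell}$ must never be allowed to pile onto $\Tilde{\vec w}$, since in the Gr\"onwall scheme of \cref{big_thm} the energy controls $\Tilde{\vec w}$ only through $\|\nabla\Tilde{\vec w}\|_{L^2}$ (and, via Sobolev, through $\|\Tilde{\vec w}\|_{L^{2d/(d-2)}}$). Using $\sum_e\mathcal{R}^e\partial_e=c\Dso$ and writing $b_s:=\tfrac{\partial\psi^j_{\Tilde\gamma(s)}}{\partial p^\ell}$, I would therefore decompose
\[
 G_{j\ell}=\int_0^1\Big(c\,\Dso\big(b_s\,\Tilde{\vec w}^\ell\big)-\sum_e\mathcal{R}^e\big((\partial_e b_s)\,\Tilde{\vec w}^\ell\big)\Big)\,ds,\qquad \partial_e b_s=d^2\psi_{\Tilde\gamma(s)}\big(s\,\partial_e\Tilde{\vec u}+(1-s)\,\partial_e\Tilde{\vec v}\big),
\]
so that in the second summand $\Tilde{\vec w}$ carries no derivative and the lost derivative sits on $\vec u$ or $\vec v$; for the first summand I would keep the $\Dso$ together with the Leibniz operator and, inside the original integral, use one fractional integration by parts to reduce $H_{\Dso}(\vec v,\Dso(b_s\Tilde{\vec w}^\ell))$ to expressions of the form $H_{\Dso}(\vec v,b_s\Tilde{\vec w}^\ell)$, $(\Dso\vec v)\,b_s\Tilde{\vec w}^\ell$ and $\Dso^{\sigma}H_{\Dso^{1-\sigma}}(\vec v,b_s\Tilde{\vec w}^\ell)$, which are precisely of the type handled by \cref{lem:prod_rule}, \cref{lem:prod_rule_2} and \cref{lem:comm_difference}.

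From there the estimate follows the now-familiar bookkeeping. Each resulting term is treated by \cref{lem:prod_rule}: the single available derivative is split as $\Dso^{1-\sigma'}$ on $\vec v$ — controlled via the Gagliardo--Nirenberg inequality, \cref{lem:gag_in} — against $\Dso^{\sigma'}$ of the $\Tilde{\vec w}$-bearing factor, with $\sigma'$ small; that $\Dso^{\sigma'}$ is then expanded with the Leibniz rule of \cref{def_comm} and \cref{comm_lem}, and in each piece it either falls on $d^2\psi$, $d\varphi$ and $\partial_e\Tilde\gamma(s)$, producing a factor $\|\Dso^{1+\sigma'}\vec u\|_{L^{2d/(2(1+\sigma')-1)}}+\|\Dso^{1+\sigma'}\vec v\|_{L^{2d/(2(1+\sigma')-1)}}$ after \cref{isom_est} and \cref{lem:gag_in}, or falls on $\Tilde{\vec w}$ at order $<1$, which is absorbed by the Sobolev embedding (\cref{lem:sob_in}) into $\|\nabla\Tilde{\vec w}\|_{L^2}$; the residual commutators are handled by a further application of \cref{lem:prod_rule}. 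The cleanest pieces — those in which $\Tilde{\vec w}$ carries no derivative at all — can be fed directly into \cref{riesz_estimate} once the Riesz-smoothed structure is recognized. Finally \eqref{chain} upgrades every $\|\Dso\vec u\|_{L^{2d}}$, $\|\Dso\vec v\|_{L^{2d}}$ and every $\|\Dso^{1+\sigma'}\cdot\|$ to $\|\Dso^{1+\sigma}\cdot\|_{L^{2d/(2(1+\sigma)-1)}}$, and Young's inequality folds any sub-quadratic product of these into the square sum on the right.

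The main obstacle is exactly the frequency bookkeeping around the summand $c\,\Dso(b_s\Tilde{\vec w}^\ell)$: unlike in the estimate for \eqref{split_1}, where the derivative in the companion factor sat harmlessly on an embedding map, here both $G_{j\ell}$ and the companion factor contain a derivative of $\Tilde{\vec w}$, so a naive expansion of either produces a term involving $\|\Dso^{1+\sigma'}\Tilde{\vec w}\|$ that the energy cannot control. The resolution is to commit to the decomposition above and to use fractional integration by parts so that the only derivative ever resting on $\Tilde{\vec w}$ is the single $\Dso$ already present, with every remaining fractional derivative rerouted onto the uniformly bounded maps $\psi,\varphi$ or onto $\vec u,\vec v$, where it is affordable because $\|\Dso^{1+\sigma}\vec u\|_{L^{2d/(2(1+\sigma)-1)}}$ and $\|\Dso^{1+\sigma}\vec v\|_{L^{2d/(2(1+\sigma)-1)}}$ are exactly the quantities assumed finite in \eqref{first}.
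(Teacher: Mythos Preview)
Your reduction via Cauchy--Schwarz to an $L^2$ bound, and your identification of the Riesz-correction piece $\sum_e\mathcal R^e\big((\partial_e b_s)\Tilde{\vec w}^\ell\big)$ as the part governed by \cref{riesz_estimate}, both match the paper. The gap is in your treatment of the principal piece $c\,\Dso(b_s\Tilde{\vec w}^\ell)$. By fixing $(j,\ell)$ and absorbing $\vec v^j$ into $L^\infty$, you discard the Lorentzian sum $\sideset{}{'}{\sum}_j\vec v^j(\cdot)^j=\langle\vec v,\cdot\rangle_L$, and that structure is not cosmetic here: it is the only mechanism available for moving the surplus derivative off $\Tilde{\vec w}$.

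For a single $(j,\ell)$ there is no way to place $H_{\Dso}\big(\vec v,\Dso(b_s\Tilde{\vec w}^\ell)\big)$ in $L^2$ using only $\|\nabla\Tilde{\vec w}\|_{L^2}$. Any admissible split in \cref{lem:prod_rule} forces $\Dso^{1+\epsilon}(b_s\Tilde{\vec w}^\ell)$, hence $\Dso^{1+\epsilon}\Tilde{\vec w}$; the endpoint would demand $\|\Dso\vec v\|_{L^\infty}$, which is not assumed. Your proposed fractional integration by parts inside the original integral does not escape this either, since the companion factor already carries $\Dso\Tilde{\vec w}$: shifting any positive order onto it produces the same overshoot on the other copy of $\Tilde{\vec w}$. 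And there is no identity turning $H_{\Dso}(\vec v,\Dso h)$ into $\Dso^{\sigma}H_{\Dso^{1-\sigma}}(\vec v,h)$; those are distinct bilinear operators. The paper instead keeps the $j$-sum intact, uses the integral representation \cref{lem:comm_int} to rewrite $\sideset{}{'}{\sum}_j\vec v^j H_{\Dso}(\vec v,G^j)$ as $H_{\Dso}\big(\vec v,\langle\vec v,G\rangle_L\big)$ minus a triple-difference remainder handled by \cref{triple_comm_int}, then recognizes $c\sum_e\langle\vec v,\mathcal R^e(d\psi\,\partial_e\Tilde{\vec w})\rangle_L=\langle\vec v,\Dso\vec w\rangle_L+\text{(Riesz corrections)}$ and invokes \cref{prod_frac_vec}. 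That lemma is exactly where the hyperbolic constraint $|\vec u|_L^2=|\vec v|_L^2=-1$ enters: it yields $\langle\vec u+\vec v,\vec w\rangle_L=0$, which permits an algebraic rewriting of $\langle\vec v,\Dso\vec w\rangle_L$ in which $\vec w$ never carries more than one derivative. Without this geometric cancellation the estimate does not close.
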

\begin{proof}
Using \cref{lem:comm_int}, we have that
\begin{align*}
    &H_{\Dso}(\vec{v},\int_0^1 \mathcal{R}^e\brac{\frac{\partial \psi^j_{\Tilde{\gamma}(s)}}{\partial p^\ell} \ \partial_e \Tilde{\vec{w}}^\ell} \ ds)(x)\\
    &= \int_{\mathbb{R}^n} \frac{\big(\vec{v}(x) - \vec{v}(y)\big)\big(\int_0^1 \mathcal{R}^e\brac{\frac{\partial \psi^j_{\Tilde{\gamma}(s)}}{\partial p^\ell} \ \partial_e \Tilde{\vec{w}}^\ell} \ ds(x) - \int_0^1 \mathcal{R}^e\brac{\frac{\partial \psi^j_{\Tilde{\gamma}(s)}}{\partial p^\ell} \ \partial_e \Tilde{\vec{w}}^\ell} \ ds(y)\big)}{ |x-y|^{d+1}} \ dy.
\end{align*}
Therefore, 
\begin{align}
    &\sideset{}{'}{\sum}_{j=1}^3 \sum_{\ell=1}^m \vec{v}^j(x)H_{\Dso}\brac{\vec{v},\int_0^1 \mathcal{R}^e\brac{\frac{\partial \psi^j_{\Tilde{\gamma}(s)}}{\partial p^\ell} \ \partial_e \Tilde{\vec{w}}^\ell} \ ds} \nonumber\\
    =& c \int_{\mathbb{R}^d} \frac{\big(\vec{v}(x) - \vec{v}(y)\big)\Big(\big\langle \vec{v}(x),\int_0^1 \mathcal{R}^e \brac{d\psi_{\Tilde{\gamma}(s)} \partial_e \Tilde{\vec{w}}} (x) \ ds\big\rangle_L - \big\langle \vec{v}(x),\int_0^1 \mathcal{R}^e \brac{d\psi_{\Tilde{\gamma}(s)} \partial_e \Tilde{\vec{w}}} (y) \ ds\big\rangle_L\Big)}{|x-y|^{d+1}} \ dy\nonumber\\
    =& c \int_{\mathbb{R}^d} \frac{\big(\vec{v}(x) - \vec{v}(y)\big)\Big(\big\langle \vec{v}(x),\int_0^1 \mathcal{R}^e \brac{d\psi_{\Tilde{\gamma}(s)} \partial_e \Tilde{\vec{w}}} (x) \ ds\big\rangle_L - \big\langle \vec{v}(y),\int_0^1 \mathcal{R}^e \brac{d\psi_{\Tilde{\gamma}(s)} \partial_e \Tilde{\vec{w}}} (y) \ ds\big\rangle_L\Big)}{|x-y|^{d+1}} \ dy\nonumber\\
    &-c\int_{\mathbb{R}^d}\frac{\big(\vec{v}(x) - \vec{v}(y)\big)\big\langle \vec{v}(x) - \vec{v}(y),\int_0^1 \mathcal{R}^e \brac{d\psi_{\Tilde{\gamma}(s)} \partial_e \Tilde{\vec{w}}} (y) \ ds \big\rangle_L}{ |x-y|^{d+1}} \ dy\nonumber\\
    =& H_{\Dso}\big(\vec{v}, \langle \vec{v}, \int_0^1 \mathcal{R}^e \brac{d\psi_{\Tilde{\gamma}(s)} \partial_e \Tilde{\vec{w}} } \ ds \rangle_L\big) \label{nice_1}\\
    &-c\int_{\mathbb{R}^d}\frac{\big(\vec{v}(x) - \vec{v}(y)\big)\big\langle \vec{v}(x) - \vec{v}(y),\int_0^1 \mathcal{R}^e \brac{d\psi_{\Tilde{\gamma}(s)} \partial_e \Tilde{\vec{w}}} (y) \ ds\big\rangle_L}{ |x-y|^{d+1}} \ dy \label{nice_2}.
\end{align}

Observe that 
\begin{align*}
    c\sum_{e=1}^d\mathcal{R}^e\brac{\int_0^1 d\psi_{\Tilde{\gamma}(s)} \partial_e \Tilde{\vec{w}} \ ds} 
    &= c\sum_{e=1}^d \mathcal{R}^e \partial_e \brac{\int_0^1 d\psi_{\Tilde{\gamma}(s)}\Tilde{\vec{w}} \ ds} - c\sum_{e=1}^d \mathcal{R}^e\brac{\int_0^1 d^2\psi_{\Tilde{\gamma}(s)} \partial_e \Tilde{\gamma}(s) \Tilde{\vec{w}} \ ds}\\
    &= \Dso \vec{w} - c\sum_{e=1}^d \mathcal{R}^e\brac{\int_0^1 d^2\psi_{\Tilde{\gamma}(s)} \partial_e \Tilde{\gamma}(s) \Tilde{\vec{w}} \ ds}\\
    &= \Dso \vec{w}\\
    &+\int_0^1 sc \sum_{e=1}^{d}\mathcal{R}^e \brac{d^2 \psi_{\Tilde{\gamma}(s)} d\varphi_{\vec{u}} \partial_e \vec{u} \ \Tilde{\vec{w}}} \ ds\\
    &+ \int_0^1 (1-s)c \sum_{e=1}^{d}\mathcal{R}^e \brac{d^2 \psi_{\Tilde{\gamma}(s)} d\varphi_{\vec{v}} \partial_e \vec{v} \ \Tilde{\vec{w}}}.
\end{align*}
Hence, for \eqref{nice_1}
\begin{align}
    &\left\lVert H_{\Dso}\brac{\vec{v},\left\langle \vec{v},c\sum_{e=1}^d\mathcal{R}^e\brac{\int_0^1 d\psi_{\Tilde{\gamma}(s)} \partial_e \Tilde{\vec{w}} \ ds} \right\rangle_L} \right\rVert_{L^2(\mathbb{R}^d)}\nonumber\\
    \lesssim & \lVert H_{\Dso}(\vec{v},\langle \vec{v},\Dso\vec{w}\rangle_L) \rVert_{L^2(\mathbb{R}^d)}\label{2_comm_lem_a}\\
    &+ \sum_{\ell=1}^m \sum_{e=1}^d\lVert H_{\Dso}(\vec{v},\langle \vec{v}, \mathcal{R}^e\brac{\int_0^1 d^2\psi_{\Tilde{\gamma}(s)} \partial_e \Tilde{\gamma}(s) \Tilde{\vec{w}} \ ds} \rangle_L) \rVert_{L^2(\mathbb{R}^d)}\label{2_comm_lem_b}.
\end{align}
 
For \eqref{2_comm_lem_a}, by \cref{prod_frac_vec}, we have
\begin{align*}
    &\lVert H_{\Dso}\brac{\vec{v}, \langle \vec{v},\Dso\vec{w} \rangle_L} \rVert_{L^{2}(\R^d)}\\
    \lesssim_{\Lambda} & \brac{\Dso^{1+\sigma}\vec{u} \rVert_{L^{\frac{2d}{2(1+\sigma) - 1}}(\R^d)}^2 + \lVert \Dso^{1+\sigma}\vec{v} \rVert_{L^{\frac{2d}{2(1+\sigma) - 1}}(\R^d)}^2}  \ \lVert \Dso\Tilde{\vec{w}} \rVert_{L^{2}(\R^d)}.
\end{align*}
For \eqref{2_comm_lem_b}, using \cref{comm_lem}, we have
\begin{align}
    &\lVert H_{\Dso}(\vec{v},\langle \vec{v}, \int_0^1 \mathcal{R}^e \brac{d^2 \psi_{\Tilde{\gamma}(s)} d\varphi_{\vec{v}} \partial_e \vec{v} \ \Tilde{\vec{w}}} \ ds \rangle_L) \rVert_{L^2(\mathbb{R}^d)}\nonumber\\
    \lesssim & \lVert H_{\Dso}\brac{\vec{v}\langle \vec{v},\int_0^1 \mathcal{R}^e \brac{d^2 \psi_{\Tilde{\gamma}(s)} d\varphi_{\vec{v}} \partial_e \vec{v} \ \Tilde{\vec{w}}} \ ds \rangle_L} \rVert_{L^2(\mathbb{R}^d)}\label{comm_lem_x}\\
    &+ \lVert H_{\Dso}\brac{\vec{v},\langle \vec{v}},\int_0^1 \mathcal{R}^e \brac{d^2 \psi_{\Tilde{\gamma}(s)} d\varphi_{\vec{v}} \partial_e \vec{v} \ \Tilde{\vec{w}}} \ ds \rangle_L\rVert_{L^2(\mathbb{R}^d)}\label{comm_lem_y}\\
    &+ \lVert\vec{v}\langle H_{\Dso}\brac{\vec{v},\int_0^1 \mathcal{R}^e \brac{d^2 \psi_{\Tilde{\gamma}(s)} d\varphi_{\vec{v}} \partial_e \vec{v} \ \Tilde{\vec{w}}} \ ds}\rangle_L \rVert_{L^2(\mathbb{R}^d)}\label{comm_lem_z}.
\end{align}
With small $\sigma \in (0,\frac{1}{2}]$, we use \cref{riesz_estimate} on \eqref{comm_lem_x} to get
\begin{align*}
    &\lVert H_{\Dso}\brac{\vec{v}\langle \vec{v},\int_0^1 \mathcal{R}^e \brac{d^2 \psi_{\Tilde{\gamma}(s)} d\varphi_{\vec{v}} \partial_e \vec{v} \ \Tilde{\vec{w}}} \ ds \rangle_L} \rVert_{L^2(\mathbb{R}^d)}\\
    \lesssim_{\Lambda} & \brac{\lVert \Dso^{1+\sigma}\vec{u} \rVert_{L^{\frac{2d}{2(1+\sigma)-1}}(\R^d)}^2 + \lVert \Dso^{1+\sigma}\vec{v} \rVert_{L^{\frac{2d}{2(1+\sigma)-1}}(\R^d)}^2}  \ \lVert \Dso \Tilde{\vec{w}} \rVert_{L^{2}(\R^d)}.
\end{align*}

For \eqref{comm_lem_y}, we use \cref{lem:prod_rule} and \cref{lem:gag_in} to get
\begin{align*}
    &\lVert H_{\Dso}\brac{\vec{v},\langle \vec{v}},\int_0^1 \mathcal{R}^e \brac{d^2 \psi_{\Tilde{\gamma}(s)} d\varphi_{\vec{v}} \partial_e \vec{v} \ \Tilde{\vec{w}}} \ ds \rangle_L\rVert_{L^2(\mathbb{R}^d)}\\
    \lesssim & \sum_{\ell=1}^m\sum_{j=1}^3\lVert H_{\Dso}\brac{\vec{v}, \vec{v}^j} \rVert_{L^{2d}(\R^d)} \ \lVert \int_0^1 \mathcal{R}^e \brac{d^2 \psi_{\Tilde{\gamma}(s)} d\varphi_{\vec{v}} \partial_e \vec{v} \ \Tilde{\vec{w}}} \ ds \rVert_{L^{\frac{2d}{d-1}}(\R^d)}\\
    \lesssim_{\Lambda} & \sum_{\ell=1}^m\sum_{j=1}^3\lVert H_{\Dso}\brac{\vec{v}, \vec{v}^j} \rVert_{L^{2d}(\R^d)} \ \lVert \int_0^1 d^2 \psi_{\Tilde{\gamma}(s)} d\varphi_{\vec{v}} \partial_e \vec{v} \ \Tilde{\vec{w}} \ ds \rVert_{L^{\frac{2d}{d-1}}(\R^d)}\\
    \lesssim_{\Lambda} & \sum_{\ell=1}^m\sum_{j=1}^3\lVert H_{\Dso}\brac{\vec{v}, \vec{v}^j} \rVert_{L^{2d}(\R^d)} \ \lVert \partial_e \vec{v} \ \Tilde{\vec{w}} \rVert_{L^{\frac{2d}{d-1}}(\R^d)}\\
    \lesssim & \sum_{\ell=1}^m\sum_{j=1}^3\lVert H_{\Dso}\brac{\vec{v}, \vec{v}^j} \rVert_{L^{2d}(\R^d)} \ \lVert \partial_e \vec{v} \rVert_{L^{2d}(\R^d)} \ \lVert \Tilde{\vec{w}} \rVert_{L^{\frac{2d}{d-2}}(\R^d)}\\
    \lesssim_{\Lambda} & \sum_{\ell=1}^m\sum_{j=1}^3\lVert \Dso^{1-\sigma}\vec{v} \rVert_{L^{\frac{2d}{1-\sigma}}(|R^d)} \ \lVert \Dso^\sigma\vec{v}^j \rVert_{L^{\frac{2d}{\sigma}}(\R^d)} \ \lVert \partial_e \vec{v} \rVert_{L^{2d}(\R^d)} \ \lVert \Tilde{\vec{w}} \rVert_{L^{\frac{2d}{d-2}}(\R^d)}\\
    \lesssim_{\Lambda} & \sum_{\ell=1}^m\sum_{j=1}^3\lVert \Dso\vec{v} \rVert_{L^{2d}(|R^d)}^{1-\sigma} \ \lVert \Dso\vec{v}^j \rVert_{L^{2d}(\R^d)}^\sigma \ \lVert \Dso \vec{v} \rVert_{L^{2d}(\R^d)} \ \lVert \Dso\Tilde{\vec{w}} \rVert_{L^{2d}(\R^d)}\\
    \lesssim_{\Lambda} & \brac{\lVert \Dso\vec{u} \rVert_{L^{2d}(|R^d)}^2 + \lVert \Dso\vec{v} \rVert_{L^{2d}(\R^d)}^2} \ \lVert \Dso\Tilde{\vec{w}} \rVert_{L^{2}(\R^d)}.
\end{align*}

For \eqref{comm_lem_z}, we use \cref{riesz_estimate}
\begin{align*}
    &\lVert\vec{v}\langle H_{\Dso}\brac{\vec{v},\int_0^1 \mathcal{R}^e \brac{d^2 \psi_{\Tilde{\gamma}(s)} d\varphi_{\vec{v}} \partial_e \vec{v} \ \Tilde{\vec{w}}} \ ds }\rangle_L \rVert_{L^2(\mathbb{R}^d)}\\
    \lesssim_{\Lambda} & \brac{\lVert \Dso^{1+\sigma}\vec{u} \rVert_{L^{\frac{2d}{2(1+\sigma)-1}}(\R^d)}^2 + \lVert \Dso^{1+\sigma}\vec{v} \rVert_{L^{\frac{2d}{2(1+\sigma)-1}}(\R^d)}^2}  \ \lVert \Dso \Tilde{\vec{w}} \rVert_{L^{2}(\R^d)}.
\end{align*}
For \eqref{nice_2}, we can use \cref{triple_comm_int} by taking $\alpha_1 = \alpha_2 = \frac{1 + \beta}{2}$ for any $\beta \in (0,1)$ and $p_1 = p_2 = \frac{2d}{\beta}$, $p_3 = \frac{2d}{d - 2\beta}$, we have
\begin{align*}
    &\brac{\int_{\mathbb{R}^d}\abs{\int_{\mathbb{R}^d} \frac{\big(\vec{v}(x) - \vec{v}(y)\big)\big\langle \vec{v}(x) - \vec{v}(y),\int_0^1 \mathcal{R}^e \brac{d\psi_{\Tilde{\gamma}(s)} \partial_e \Tilde{\vec{w}}} (y) \ ds \big\rangle_L}{ |x-y|^{d+1}} \ dy}^2 }^{\frac{1}{2}}\\
    &\lesssim \lVert \Ds{\frac{1 + \beta}{2}}\vec{v}\rVert_{L^\frac{2d}{\beta}(\mathbb{R}^d)} \ \lVert \Ds{\frac{1 + \beta}{2}}\vec{v} \rVert_{L^\frac{2d}{\beta}(\mathbb{R}^d)} \ \lVert \int_0^1 \mathcal{R}^e \brac{d\psi_{\Tilde{\gamma}(s)} \partial_e \Tilde{\vec{w}}} \ ds \rVert_{L^2(\mathbb{R}^d)}\\
    &\lesssim_{\Lambda} \lVert \Dso{\vec{v}}\rVert_{L^{2d}(\mathbb{R}^d)}^2 \ \lVert \Dso \Tilde{\vec{w}} \rVert_{L^2(\mathbb{R}^d)}.
\end{align*}
Therefore,
\begin{align*}
    &\abs{\int_{\R^d} \sideset{}{'}{\sum}_{j=1}^3 \langle \vec{v}^jH_{\abs{\nabla}}\brac{\vec{v}, \int_0^1 \mathcal{R}^e\brac{\frac{\partial \psi^j_{\Tilde{\gamma}(s)}}{\partial p^\ell} \ \partial_e \Tilde{\vec{w}}^\ell} \ ds11}, \vec{v} \wedge_L\int_0^1 d\psi_{\Tilde{\gamma}(s)} \abs\nabla \Tilde{\vec{w}} \ ds \rangle_L } \\
    &\lesssim_{\Lambda} \lVert \abs{\nabla} \Tilde{\vec{w}}\rVert_{L^{2}(\R^d)}^2\, \brac{\lVert\abs{\nabla}^{1+\sigma} \vec{u}\rVert_{L^\frac{2d}{2(1+\sigma)-1}(\mathbb{R}^d)}^2 + \lVert\abs{\nabla}^{1+\sigma} \vec{v}\rVert_{L^\frac{2d}{2(1+\sigma)-1}(\mathbb{R}^d)}^2}
\end{align*}
for small $\sigma \in (0,\frac{1}{2}]$.
\end{proof}
\medskip
After we estimate \eqref{split_3}, the estimate for the second term of \eqref{split} will be complete.
 \begin{lemma}
For $d \geq 3$ and any $\sigma \in (0,1/2]$ we have 
\begin{align*}
    &\abs{\int_{\R^d} \sideset{}{'}{\sum}_{j=1}^3 \sum_{\ell=1}^m\langle \vec{v}^jH_{\abs{\nabla}}\brac{\vec{v}, \int_0^1 \mathcal{R}^e\brac{\frac{\partial \psi^j_{\Tilde{\gamma}(s)}}{\partial p^\ell} \ \partial_e \Tilde{\vec{w}}^\ell} \ ds}, \vec{v} \wedge_L\int_0^1 H_{\Dso}\brac{d\psi_{\Tilde{\gamma}(s)}, \Tilde{\vec{w}}} \ ds \rangle_L} \\
    &\lesssim_{\Lambda} \lVert \abs{\nabla} \Tilde{\vec{w}}\rVert_{L^{2}(\R^d)}^2\, \brac{\lVert\abs{\nabla}^{1+\sigma} \vec{u}\rVert_{L^\frac{2d}{2(1+\sigma)-1}(\mathbb{R}^d)}^2 + \lVert\abs{\nabla}^{1+\sigma} \vec{v}\rVert_{L^\frac{2d}{2(1+\sigma)-1}(\mathbb{R}^d)}^2}.
\end{align*}
\end{lemma}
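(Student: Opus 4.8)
The plan is to estimate \eqref{split_3} along the same lines as the preceding lemma treated \eqref{split_2}, the only change being that the inner factor $\int_0^1 d\psi_{\Tilde{\gamma}(s)}\Dso\Tilde{\vec{w}}\,ds$ is replaced by $B:=\int_0^1 H_{\Dso}\brac{d\psi_{\Tilde{\gamma}(s)},\Tilde{\vec{w}}}\,ds$. Abbreviate also $A^{j\ell}:=\int_0^1\mathcal R^e\brac{\tfrac{\partial\psi^j_{\Tilde{\gamma}(s)}}{\partial p^\ell}\,\partial_e\Tilde{\vec{w}}^\ell}\,ds$. The main difficulty is that $A^{j\ell}$ (through the classical derivative $\partial_e\Tilde{\vec{w}}^\ell$) and $B$ (by \cref{lem:comm_int}, $B$ carries exactly one derivative of $\Tilde{\vec{w}}$) are both first order in $\Tilde{\vec{w}}$, while the right-hand side only permits the quadratic factor $\norm{\nabla\Tilde{\vec{w}}}_{L^2}^2\le 2E(t)$; thus we must not let the extra $\Dso$ coming from $H_{\Dso}(\vec v,\cdot)$ fall on $A^{j\ell}$. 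As in the estimates of \eqref{eq:last_1}, \eqref{split_1} and \eqref{split_2}, the remedy is fractional integration by parts: using
\begin{align*}
\int_{\R^d}\langle H_{\Dso}(\vec a\wedge_L,\vec b),\vec c\rangle_L
=\int_{\R^d}\langle H_{\Dso}(\vec a\wedge_L,\vec c),\vec b\rangle_L+2\langle\Dso\vec a\wedge_L\vec c,\vec b\rangle_L
\end{align*}
and its scalar-pairing analogue (already used in the analysis of \eqref{split_1}), together with $|\vec v|_L^2=-1$, $\langle\vec v,\mathbf X_{\vec v}^r\rangle_L=0$ and $\vec a\wedge_L(\vec b\wedge_L\vec c)=\langle\vec b,\vec a\rangle_L\vec c-\langle\vec c,\vec a\rangle_L\vec b$, one rewrites the integral in \eqref{split_3}, up to a sign and a sum over $j,\ell$, as
\begin{align*}
\int_{\R^d}\Big(H_{\Dso}\big(\vec v\cdot_L,(\vec v^j\vec v)\wedge_L B\big)+2\langle\Dso\vec v,(\vec v^j\vec v)\wedge_L B\rangle_L\Big)\,A^{j\ell},
\end{align*}
in which the dangerous derivative now falls only on $\vec v,\vec v^j$ and on $B$.

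The new ingredient is then the treatment of $B$ and of its fractional derivatives. By \cref{lem:comm_int} and \cref{lem:prod_rule} (putting $1-\epsilon$ of the derivative on the smooth factor $d\psi_{\Tilde{\gamma}(s)}$ for a small $\epsilon$), together with \cref{lem:gag_in} and \cref{isom_est}, one gets $\norm{B}_{L^q}\lesssim_{\Lambda}\brac{\norm{\Dso\vec u}_{L^{2d}}+\norm{\Dso\vec v}_{L^{2d}}}^{1-\epsilon}\,\norm{\Dso^\epsilon\Tilde{\vec{w}}}_{L^{r}}$ for the relevant $q,r$, and $\norm{\Dso^\epsilon\Tilde{\vec{w}}}_{L^{r}}\lesssim\norm{\Dso\Tilde{\vec{w}}}_{L^2}$ by the Sobolev embedding \cref{lem:sob_in}; whenever an extra $\Dso^\epsilon$ is forced onto $B$ (this occurs after expanding the outer commutator and the product $(\vec v^j\vec v)\wedge_L B$ via \cref{comm_lem}) one applies \cref{lem:prod_rule_2} to $\Dso^\epsilon H_{\Dso}\brac{d\psi_{\Tilde{\gamma}(s)},\Tilde{\vec{w}}}$ to redistribute the derivatives so that $\Tilde{\vec{w}}$ still carries at most one derivative in $L^2$ and $d\psi_{\Tilde{\gamma}(s)}$ carries $<1$, which is again absorbed into $\norm{\Dso\vec u}_{L^{2d}}+\norm{\Dso\vec v}_{L^{2d}}$. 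Since $\mathcal R^e$ is bounded on $L^2$ and $\tfrac{\partial\psi^j}{\partial p^\ell}$ is bounded, $\norm{A^{j\ell}}_{L^2}\lesssim_{\Lambda}\norm{\nabla\Tilde{\vec{w}}}_{L^2}$, so a Cauchy--Schwarz pairing with $A^{j\ell}$ produces the required $\norm{\nabla\Tilde{\vec{w}}}_{L^2}^2$.

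With these preparations the two pieces are routine. The error term $2\int\langle\Dso\vec v,(\vec v^j\vec v)\wedge_L B\rangle_L\,A^{j\ell}$ is bounded by H\"older with $\Dso\vec v\in L^{2d}$, $B\in L^{2d/(d-1)}$ and $A^{j\ell}\in L^2$, after which $\norm{\Dso\vec v}_{L^{2d}}\brac{\norm{\Dso\vec u}_{L^{2d}}+\norm{\Dso\vec v}_{L^{2d}}}$ is controlled by $\norm{\Ds{1+\sigma}\vec u}_{L^{\frac{2d}{2(1+\sigma)-1}}}^2+\norm{\Ds{1+\sigma}\vec v}_{L^{\frac{2d}{2(1+\sigma)-1}}}^2$ thanks to \eqref{chain}. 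For the main term $\int H_{\Dso}\big(\vec v\cdot_L,(\vec v^j\vec v)\wedge_L B\big)A^{j\ell}$ one expands $H_{\Dso}\big(\vec v\cdot_L,(\vec v^j\vec v)\wedge_L B\big)$ and $\Dso^\bullet\big((\vec v^j\vec v)\wedge_L B\big)$ by \cref{comm_lem}; in the sub-pieces that carry a genuine second derivative of $\psi$ one substitutes the Riesz reformulation $B=\Dso\vec w-\int_0^1\Dso d\psi_{\Tilde{\gamma}(s)}\,\Tilde{\vec{w}}\,ds-\int_0^1 d\psi_{\Tilde{\gamma}(s)}\Dso\Tilde{\vec{w}}\,ds$ and \eqref{riesz_1}--\eqref{riesz_3}. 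The resulting finite list of terms is then handled by the already-established \cref{comm_lemm}, \cref{prod_frac_vec} and \cref{riesz_estimate}, together with direct applications of \cref{double_comm_int} and \cref{triple_comm_int} (after writing the relevant Leibniz operators pointwise via \cref{lem:comm_int}) and of \cref{lem:prod_rule}, \cref{lem:prod_rule_2}, \cref{lem:prod_rule_frac} and \cref{lem:gag_in}; every term closes after Cauchy--Schwarz against $\norm{A^{j\ell}}_{L^2}\lesssim_{\Lambda}\norm{\nabla\Tilde{\vec{w}}}_{L^2}$.

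The hard part will be purely the bookkeeping. After the integration by parts and the repeated Leibniz and commutator expansions one faces a long but finite list of terms, and for each one must pick the auxiliary exponents -- $\epsilon$ for the outer commutator, the reallocation exponent inside $\Dso^\epsilon H_{\Dso}(d\psi_{\Tilde{\gamma}(s)},\Tilde{\vec{w}})$, the final $\sigma\in(0,\tfrac12]$, and the interpolation parameters in the repeated use of \cref{lem:gag_in} -- so that simultaneously (i) $\Tilde{\vec{w}}$ is never hit by more than one derivative measured in $L^2$, so that the estimate is controlled by $\norm{\nabla\Tilde{\vec{w}}}_{L^2}^2$ and by no stronger norm of $\Tilde{\vec{w}}$, and (ii) $\vec u$ and $\vec v$ never carry more than $1+\sigma$ derivatives, and those only in the scaling critical space $L^{\frac{2d}{2(1+\sigma)-1}}$ that appears in $\Sigma(t)$. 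Since $d\ge 3$ all of these constraints are jointly satisfiable with room to spare; conceptually no new idea beyond the earlier lemmas is needed -- the one structural observation is that $\int_0^1 H_{\Dso}\brac{d\psi_{\Tilde{\gamma}(s)},\Tilde{\vec{w}}}\,ds$ is a first order object in $\Tilde{\vec{w}}$ whose single derivative may be freely reallocated onto the smooth factor $d\psi_{\Tilde{\gamma}(s)}$, exactly as $\Dso\vec w$ was handled through its Riesz transform reformulation.
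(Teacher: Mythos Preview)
Your high-level plan is correct and matches the paper: both use the fractional integration-by-parts identity to move the outer $H_{\Dso}$ off of $A^{j\ell}$, then expand $H_{\Dso}\big(\vec v,\vec v^j\vec v\wedge_L B\big)$ via \cref{comm_lem}, yielding the four pieces \eqref{almost_3_1}--\eqref{almost_4}. The difference is in the execution, where the paper is considerably leaner than what you sketch. No Riesz reformulation of $B$ and no appeal to \cref{comm_lemm}, \cref{prod_frac_vec}, or \cref{riesz_estimate} is needed here. The key observation (which you do identify) is that $B=\int_0^1 H_{\Dso}\brac{d\psi_{\Tilde\gamma(s)},\Tilde{\vec w}}\,ds$ is itself a Leibniz defect, so \cref{lem:prod_rule_2} applies \emph{directly}: choosing $\gamma\in(\sigma,1)$ one gets
\[
\big\|\Dso^\sigma B\big\|_{L^{\frac{2d}{d+2\sigma-1}}}
\lesssim\Big\|\int_0^1\Dso^\gamma d\psi_{\Tilde\gamma(s)}\,ds\Big\|_{L^{\frac{2d}{2\gamma-1}}}\,
\big\|\Dso^{1+\sigma-\gamma}\Tilde{\vec w}\big\|_{L^{\frac{2d}{d-2(\gamma-\sigma)}}},
\]
after which Sobolev embedding bounds the $\Tilde{\vec w}$ factor by $\|\Dso\Tilde{\vec w}\|_{L^2}$ and \cref{isom_est} handles the $d\psi$ factor by $\|\Dso\vec u\|_{L^{2d}}+\|\Dso\vec v\|_{L^{2d}}$. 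With this single step, \eqref{almost_3_1} and \eqref{almost_3_3} close after one application of \cref{lem:prod_rule}; \eqref{almost_3_2} is handled by \cref{infty_bound} on $H_{\Dso}(\vec v,\vec v^j\vec v)$ plus \cref{lem:prod_rule} on $B$; and the IBP error \eqref{almost_4} is a direct H\"older estimate. So your plan would work, but the paper shows the extra machinery (and in particular the expansion of $B$ back through \eqref{riesz_1}--\eqref{riesz_3}) is unnecessary for this term.
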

\begin{proof}
Using \cref{comm_lemm}, we have
\begin{align}
    &\int_{\R^d} \langle \vec{v}^jH_{\abs{\nabla}}\brac{\vec{v}, \int_0^1 \mathcal{R}^e\brac{\frac{\partial \psi^j_{\Tilde{\gamma}(s)}}{\partial p^\ell} \ \partial_e \Tilde{\vec{w}}^\ell} \ ds}, \vec{v} \wedge_L\int_0^1 H_{\Dso}\brac{d\psi_{\Tilde{\gamma}(s)}, \Tilde{\vec{w}}} \ ds \rangle_L \nonumber\\
    &= \int_{\R^d} \langle H_{\abs{\nabla}}\brac{\vec{v}, \vec{v}^j\vec{v} \wedge_L\int_0^1 H_{\Dso}\brac{d\psi_{\Tilde{\gamma}(s)}, \Tilde{\vec{w}}} \ ds}\rangle_L \ \int_0^1 \mathcal{R}^e\brac{\frac{\partial \psi^j_{\Tilde{\gamma}(s)}}{\partial p^\ell} \ \partial_e \Tilde{\vec{w}}^\ell} \ ds\nonumber\\
    &+2\int_{\R^d} \langle \abs{\nabla}\vec{v}, \vec{v}^j\vec{v} \wedge_L\int_0^1 H_{\Dso}\brac{d\psi_{\Tilde{\gamma}(s)}, \Tilde{\vec{w}}} \ ds\rangle_L \ \int_0^1 \mathcal{R}^e\brac{\frac{\partial \psi^j_{\Tilde{\gamma}(s)}}{\partial p^\ell} \ \partial_e \Tilde{\vec{w}}^\ell} \ ds \nonumber\\
    &= \int_{\R^d} \langle \textcolor{blue}{H_{\abs{\nabla}}}\brac{\vec{v}, \vec{v}^j\vec{v} \wedge_L \textcolor{blue}{,} \int_0^1 H_{\Dso}\brac{d\psi_{\Tilde{\gamma}(s)}, \Tilde{\vec{w}}} \ ds}\rangle_L \ \int_0^1 \mathcal{R}^e\brac{\frac{\partial \psi^j_{\Tilde{\gamma}(s)}}{\partial p^\ell} \ \partial_e \Tilde{\vec{w}}^\ell} \ ds\label{almost_3_1}\\
    &+ \int_{\R^d} \langle H_{\abs{\nabla}}
    \brac{\vec{v}, \vec{v}^j\vec{v}} \wedge_L\int_0^1 H_{\Dso}\brac{d\psi_{\Tilde{\gamma}(s)}, \Tilde{\vec{w}}} \ ds\rangle_L \ \int_0^1 \mathcal{R}^e\brac{\frac{\partial \psi^j_{\Tilde{\gamma}(s)}}{\partial p^\ell} \ \partial_e \Tilde{\vec{w}}^\ell} \ ds\label{almost_3_2}\\
    &+ \int_{\R^d} \langle \vec{v}, H_{\abs{\nabla}}
    \brac{\vec{v}^j\vec{v} \wedge_L, \int_0^1 H_{\Dso}\brac{d\psi_{\Tilde{\gamma}(s)}, \Tilde{\vec{w}}} \ ds}\rangle_L \ \int_0^1 \mathcal{R}^e\brac{\frac{\partial \psi^j_{\Tilde{\gamma}(s)}}{\partial p^\ell} \ \partial_e \Tilde{\vec{w}}^\ell} \ ds\label{almost_3_3}\\
    &+ 2\int_{\R^d} \langle \abs{\nabla}\vec{v}, \vec{v}^j\vec{v} \wedge_L\int_0^1 H_{\Dso}\brac{d\psi_{\Tilde{\gamma}(s)}, \Tilde{\vec{w}}} \ ds\rangle_L \ \int_0^1 \mathcal{R}^e\brac{\frac{\partial \psi^j_{\Tilde{\gamma}(s)}}{\partial p^\ell} \ \partial_e \Tilde{\vec{w}}^\ell} \ ds  \label{almost_4}.
\end{align} 
For \eqref{almost_3_1}, we use \cref{lem:prod_rule} and \cref{lem:prod_rule_2} for $\gamma \in (0,1)$ such that $1+\sigma - \gamma \in (0,1)$ to get
\begin{align*}
    &\int_{\R^d} \langle \textcolor{blue}{H_{\abs{\nabla}}}\brac{\vec{v}, \vec{v}^j\vec{v} \wedge_L \textcolor{blue}{,} \int_0^1 H_{\Dso}\brac{d\psi_{\Tilde{\gamma}(s)}, \Tilde{\vec{w}}} \ ds}\rangle_L \ \int_0^1 \mathcal{R}^e\brac{\frac{\partial \psi^j_{\Tilde{\gamma}(s)}}{\partial p^\ell} \ \partial_e \Tilde{\vec{w}}^\ell} \ ds\\
    \lesssim & \lVert \langle \textcolor{blue}{H_{\abs{\nabla}}}\brac{\vec{v}, \vec{v}^j\vec{v} \wedge_L \textcolor{blue}{,} \int_0^1 H_{\Dso}\brac{d\psi_{\Tilde{\gamma}(s)}, \Tilde{\vec{w}}} \ ds}\rangle_L \rVert_{L^{2}(\R^d)} \ \lVert \Dso \Tilde{\vec{w}} \rVert_{L^{2}(\R^d)}\\
    \lesssim & \lVert \Dso^{1-\sigma}\vec{v} \rVert_{L^{\frac{2d}{2(1-\sigma)-1}}(\R^d)} \ \lVert \Dso^\sigma \brac{\int_0^1 H_{\Dso}\brac{d\psi_{\Tilde{\gamma}(s)}, \Tilde{\vec{w}}} \ ds} \lVert_{L^{\frac{2d}{d + 2\sigma -1}}(\R^d)} \ \lVert \Dso \Tilde{\vec{w}} \rVert_{L^{2}(\R^d)}\\
    \lesssim & \lVert \Dso^{1-\sigma}\vec{v} \rVert_{L^{\frac{2d}{2(1-\sigma)-1}}(\R^d)} \ \lVert \int_0^1 \Dso^{\gamma} \frac{\partial \psi_{\Tilde{\gamma}(s)}}{\partial p^\ell} \ ds \rVert_{L^{\frac{2d}{2\gamma-1}}(\R^d)} \ \lVert \Dso^{1+\sigma - \gamma} \Tilde{\vec{w}}^\ell \lVert_{L^{\frac{2d}{d - 2(1+\sigma -\gamma)}}(\R^d)} \ \lVert \Dso\Tilde{\vec{w}} \lVert_{L^{2}(\R^d)}\\
    \lesssim_{\Lambda} & \brac{\lVert \Dso \vec{u} \rVert_{L^{2d}(\R^d)}^2 + \lVert \Dso \vec{v} \rVert_{L^{2d}(\R^d)}^2} \ \lVert \Dso\Tilde{\vec{w}} \lVert_{L^{2}(\R^d)}^2.
\end{align*}
For \eqref{almost_3_2}, we use \cref{infty_bound} and \cref{lem:prod_rule_2} to get
\begin{align*}
    &\int_{\R^d} \langle H_{\abs{\nabla}}
    \brac{\vec{v}, \vec{v}^j\vec{v}} \wedge_L\int_0^1 H_{\Dso}\brac{d\psi_{\Tilde{\gamma}(s)}, \Tilde{\vec{w}}} \ ds\rangle_L \ \int_0^1 \mathcal{R}^e\brac{\frac{\partial \psi^j_{\Tilde{\gamma}(s)}}{\partial p^\ell} \ \partial_e \Tilde{\vec{w}}^\ell} \ ds\\
    \lesssim & \lVert H_{\abs{\nabla}}
    \brac{\vec{v}, \vec{v}^j\vec{v}} \rVert_{L^\infty(\R^d)} \ \lVert \int_0^1 H_{\Dso}\brac{d\psi_{\Tilde{\gamma}(s)}, \Tilde{\vec{w}}} \ ds \rVert_{L^{\frac{2d}{d-1}}(\R^d)} \ \lVert \Dso\Tilde{\vec{w}} \lVert_{L^{2}(\R^d)}\\
    \lesssim & \lVert \Dso\vec{v} \rVert_{L^{2d,2}(\R^d)} \ \lVert \Dso\brac{\vec{v}^j\vec{v}} \rVert_{L^{2d,2}(\R^d)} \ \lVert \int_0^1 H_{\Dso}\brac{d\psi_{\Tilde{\gamma}(s)}, \Tilde{\vec{w}}} \ ds \rVert_{L^{\frac{2d}{d-1}}(\R^d)} \ \lVert \Dso\Tilde{\vec{w}} \lVert_{L^{2}(\R^d)}\\ 
    \lesssim & \lVert \Dso\vec{v} \rVert_{L^{2d,2}(\R^d)} \ \lVert \int_0^1 H_{\Dso}\brac{d\psi_{\Tilde{\gamma}(s)}, \Tilde{\vec{w}}} \ ds \rVert_{L^{\frac{2d}{d-1}}(\R^d)} \ \lVert \Dso\Tilde{\vec{w}} \lVert_{L^{2}(\R^d)}\\ 
    \lesssim & \lVert \Dso\vec{v} \rVert_{L^{2d,2}(\R^d)} \ \lVert \int_0^1 \Dso^{\gamma} d\psi_{\Tilde{\gamma}(s)}\ ds \rVert_{L^{\frac{2d}{2\gamma-1}}(\R^d)} \ \lVert \Dso^{1+\sigma - \gamma} \Tilde{\vec{w}}^\ell \lVert_{L^{\frac{2d}{d - 2(1+\sigma -\gamma)}}(\R^d)} \ \lVert \Dso\Tilde{\vec{w}} \lVert_{L^{2}(\R^d)}\\ 
    \lesssim_{\Lambda} & \brac{\lVert \Dso\vec{u} \rVert_{L^{2d,2}(\R^d)}^2 + \lVert \Dso\vec{v} \rVert_{L^{2d,2}(\R^d)}^2} \ \lVert \Dso\Tilde{\vec{w}} \rVert_{L^{2}(\R^d)}^2.
\end{align*}
Next, for \eqref{almost_3_3}, we use \cref{lem:prod_rule_frac} to get
\begin{align*}
    &\int_{\R^d} \langle \vec{v}, H_{\abs{\nabla}}
    \brac{\vec{v}^j\vec{v} \wedge_L, \int_0^1 H_{\Dso}\brac{d\psi_{\Tilde{\gamma}(s)}, \Tilde{\vec{w}}} \ ds}\rangle_L \ \int_0^1 \mathcal{R}^e\brac{\frac{\partial \psi^j_{\Tilde{\gamma}(s)}}{\partial p^\ell} \ \partial_e \Tilde{\vec{w}}^\ell} \ ds\\
    \lesssim & \lVert H_{\abs{\nabla}}
    \brac{\vec{v}^j\vec{v} \wedge_L, \int_0^1 H_{\Dso}\brac{d\psi_{\Tilde{\gamma}(s)}, \Tilde{\vec{w}}} \ ds} \rVert_{L^{2}(\R^d)} \ \lVert \Dso \Tilde{\vec{w}}\rVert_{L^{2}(\R^d)}\\
    \lesssim & \lVert \abs{\nabla}
    \brac{\vec{v}^j\vec{v}} \rVert_{L^{\frac{2d}{2(1-\sigma)-1}}(\R^d)} \ \lVert \Dso^\sigma \brac{\int_0^1 H_{\Dso}\brac{d\psi_{\Tilde{\gamma}(s)}, \Tilde{\vec{w}}} \ ds} \lVert_{L^{\frac{2d}{d + 2\sigma -1}}(\R^d)} \ \lVert \Dso \Tilde{\vec{w}} \rVert_{L^{2}(\R^d)}\\
    \lesssim & \lVert \Dso^{1-\sigma}\vec{v} \rVert_{L^{\frac{2d}{2(1-\sigma)-1}}(\R^d)} \ \lVert \int_0^1 \Dso^{\gamma} d\psi_{\Tilde{\gamma}(s)} \ ds \rVert_{L^{\frac{2d}{2\gamma-1}}(\R^d)} \ \lVert \Dso^{1+\sigma - \gamma} \Tilde{\vec{w}}^\ell \lVert_{L^{\frac{2d}{d - 2(1+\sigma -\gamma)}}(\R^d)} \ \lVert \Dso\Tilde{\vec{w}} \lVert_{L^{2}(\R^d)}\\
    \lesssim_{\Lambda} & \brac{\lVert \Dso^{1+\sigma}\vec{u} \rVert_{L^{\frac{2d}{2(1-\sigma)-1}}(\R^d)}^2 + \lVert \Dso^{1 + \sigma} \vec{v} \rVert_{L^{\frac{2d}{2(1+\sigma) - 1}}(\R^d)}^2} \ \lVert \Dso\Tilde{\vec{w}} \lVert_{L^{2}(\R^d)}^2.
\end{align*}
Hence, we conclude.
\end{proof}
Since we have finished estimating the second term of \eqref{split}, we now expand the first term of \eqref{split}.
\begin{align}
    &\langle \vec{v}^jH_{\abs{\nabla}}\brac{\vec{v}^j, \int_0^1 \mathcal{R}^e \brac{d \psi_{\Tilde{\gamma}(s)} \partial_e \Tilde{\vec{w}}} \ ds}, \vec{v} \wedge_L\Dso\vec{w} \rangle_L\nonumber\\
    =& cs\sum_{e=1}^d\langle \vec{v}^jH_{\abs{\nabla}}\brac{\vec{v}^j, \int_0^1 \mathcal{R}^e \brac{d \psi_{\Tilde{\gamma}(s)} \partial_e \Tilde{\vec{w}}} \ ds}, \vec{v} \wedge_L \int_0^1 \mathcal{R}^e \brac{d^2 \psi_{\Tilde{\gamma}(s)} d\varphi_{\vec{u}} \partial_e \vec{u} \ \Tilde{\vec{w}}} \ ds \rangle_L\label{split_2_1}\\
    &+ c(1-s)\sum_{e=1}^d\langle \vec{v}^jH_{\abs{\nabla}}\brac{\vec{v}^j, \int_0^1 \mathcal{R}^e \brac{d \psi_{\Tilde{\gamma}(s)} \partial_e \Tilde{\vec{w}}} \ ds}, \vec{v} \wedge_L \int_0^1 \mathcal{R}^e \brac{d^2 \psi_{\Tilde{\gamma}(s)} d\varphi_{\vec{v}} \partial_e \vec{v} \ \Tilde{\vec{w}}} \ ds \rangle_L\label{split_2_2}\\
    &+ c\sum_{e=1}^d\langle \vec{v}^jH_{\abs{\nabla}}\brac{\vec{v}^j, \int_0^1 \mathcal{R}^e \brac{d \psi_{\Tilde{\gamma}(s)} \partial_e \Tilde{\vec{w}}} \ ds}, \vec{v} \wedge_L \int_0^1 \mathcal{R}^e \brac{d \psi_{\Tilde{\gamma}(s)} \partial_e \Tilde{\vec{w}}} \ ds \rangle_L\label{split_2_3}.
\end{align}
We now have three more terms to estimate. The following gives us estimates for \eqref{split_2_1} and \eqref{split_2_2}.
\begin{lemma}
For $d \geq 2$ and $\vec{f} \in \{\vec{u},\vec{v}\}$,
\begin{align*}
    &\abs{\int_{\R^d} \sideset{}{'}{\sum}_{j=1}^3 \Big\langle \vec{v}^jH_{\Dso}\brac{\vec{v}^j, \int_0^1 \mathcal{R}^e \brac{d \psi_{\Tilde{\gamma}(s)} \partial_e \Tilde{\vec{w}}} \ ds}, \vec{v} \wedge_L
    \int_0^1 \mathcal{R}^e \brac{d^2 \psi_{\Tilde{\gamma}(s)} d\varphi_{\vec{f}} \partial_e \vec{f} \ \Tilde{\vec{w}}} \ ds\Big\rangle_L}\\
     \lesssim_{\Lambda} & \brac{\lVert \Dso^{1+\sigma}\vec{u} \rVert_{L^{\frac{2d}{2(1+\sigma)-1}}(\R^d)}^2 + \lVert \Dso^{1+\sigma}\vec{v} \rVert_{L^{\frac{2d}{2(1+\sigma)-1}}(\R^d)}^2} \ \lVert \Dso \Tilde{\vec{w}} \rVert_{L^{2}(\R^d)}^2.
\end{align*}
\end{lemma}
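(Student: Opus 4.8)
The plan is to follow closely the two preceding lemmas, where the structurally identical expressions arising from \eqref{split_1} and \eqref{split_3} were treated via the decompositions \eqref{1_almost_1}--\eqref{1_almost_4} and \eqref{almost_3_1}--\eqref{almost_4}. Write $\vec{A}:=\int_0^1\mathcal{R}^e\brac{d\psi_{\Tilde{\gamma}(s)}\partial_e\Tilde{\vec{w}}}\,ds$ for the Riesz factor of \eqref{riesz_3} and $\vec{B}_{\vec{f}}:=\int_0^1\mathcal{R}^e\brac{d^2\psi_{\Tilde{\gamma}(s)}\,d\varphi_{\vec{f}}\,\partial_e\vec{f}\,\Tilde{\vec{w}}}\,ds$, so that the integrand is of the form $\vec{v}^jH_{\Dso}\brac{\vec{v}^j,\vec{A}}$ paired Lorentz-wise against $\vec{v}\wedge_L\vec{B}_{\vec{f}}$. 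The decisive structural point is that $\vec{A}$ already carries one full derivative of $\Tilde{\vec{w}}$, so it must never be differentiated again: indeed $\|\vec{A}\|_{L^2(\R^d)}\lesssim_{\Lambda}\|\Dso\Tilde{\vec{w}}\|_{L^2(\R^d)}$ by boundedness of $\mathcal{R}^e$ and of $d\psi$ on the relevant compact set, whereas a Leibniz expansion of $\Dso^{1-\sigma}\vec{A}$ would produce $\Dso^{2-\sigma}\Tilde{\vec{w}}$, which is uncontrolled. First I would record that $\vec{B}_{\vec{f}}$ costs exactly one derivative of $\vec{f}$ and one factor of $\Tilde{\vec{w}}$: $L^{2d/(d-1)}$-boundedness of $\mathcal{R}^e$, H\"older, \cref{comp}, the Sobolev embedding \cref{lem:sob_in} and the chain \eqref{chain} give $\|\vec{B}_{\vec{f}}\|_{L^{2d/(d-1)}(\R^d)}\lesssim_{\Lambda}\|\Ds{1+\sigma}\vec{f}\|_{L^{2d/(2(1+\sigma)-1)}(\R^d)}\,\|\Dso\Tilde{\vec{w}}\|_{L^2(\R^d)}$.

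Next I would perform fractional integration by parts inside the $\langle\cdot,\cdot\rangle_L$-pairing, exactly as in the two preceding lemmas, combining the identity $\int\langle H_{\Dso}(\vec{a}\wedge_L,\vec{b}),\vec{c}\rangle_L=\int\langle H_{\Dso}(\vec{a}\wedge_L,\vec{c}),\vec{b}\rangle_L+2\langle\Dso\vec{a}\wedge_L\vec{c},\vec{b}\rangle_L$ with \cref{comm_lem} and \cref{lem:comm_int}, so as to transfer every fractional Laplacian off $\vec{A}$ and onto products of $\vec{v}$-factors. This rewrites the quantity as a finite sum of integrals in each of which $\vec{A}$ appears undifferentiated, paired against products and Leibniz-rule operators assembled only from $\vec{u},\vec{v},d\varphi_{\vec{u}},d\varphi_{\vec{v}},d\psi_{\Tilde{\gamma}(s)},d^2\psi_{\Tilde{\gamma}(s)},\Tilde{\vec{w}}$ and $\vec{B}_{\vec{f}}$, each carrying at most one derivative of $\Tilde{\vec{w}}$ and at most $1+\sigma$ derivatives of $\vec{u}$ or $\vec{v}$; here the orthogonality relations $\langle\vec{u}+\vec{v},\vec{u}-\vec{v}\rangle_L=0$ and $\langle\vec{u},\mathbf{X}^r_{\vec{u}}\rangle_L=0$ produce the cancellations that cast several pieces into the form handled by \cref{comm_lemm} and \cref{riesz_estimate}.

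Finally, each term in this sum is dispatched as before: \cref{riesz_estimate} absorbs the contributions in which $\vec{B}_{\vec{f}}$ sits inside an operator $H_{\Dso}(\vec{v}\wedge_L,\,\cdot\,)$, \cref{comm_lemm} the ones of the shape $H_{\Dso}(\vec{v}\wedge_L,\langle\vec{v},\Tilde{\vec{w}}\rangle_L\Dso\vec{u})$ and $H_{\Dso}(\vec{v}\wedge_L,\langle\Dso\vec{u},\vec{v}\rangle_L\Tilde{\vec{w}})$, \cref{infty_bound} together with \cref{lem:gag_in} the factors that must live in $L^\infty$ or in $L^{(2d,2)}$, and \cref{lem:prod_rule}, \cref{lem:prod_rule_2}, \cref{lem:prod_rule_frac}, \cref{isom_est}, \cref{lem:gag_in} and the singular-integral bounds \cref{double_comm_int}, \cref{triple_comm_int} the remaining product-rule and triple-commutator terms, the fractional derivatives of the $\psi$-coefficients being absorbed into $\|\Dso\vec{u}\|_{L^{2d}}+\|\Dso\vec{v}\|_{L^{2d}}$. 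Collecting the estimates, H\"older pairs $\|\vec{A}\|_{L^2}\lesssim_{\Lambda}\|\Dso\Tilde{\vec{w}}\|_{L^2}$ with an $L^2$-norm bounded by $\lesssim_{\Lambda}\brac{\|\Ds{1+\sigma}\vec{u}\|_{L^{2d/(2(1+\sigma)-1)}}+\|\Ds{1+\sigma}\vec{v}\|_{L^{2d/(2(1+\sigma)-1)}}}\|\Dso\Tilde{\vec{w}}\|_{L^2}$, and a final use of Young's inequality together with \eqref{chain} yields the asserted bound. The main obstacle throughout is precisely this derivative-counting bookkeeping — keeping $\Tilde{\vec{w}}$ at one derivative and $\vec{u},\vec{v}$ at $1+\sigma$ derivatives — which is the reason $\vec{B}_{\vec{f}}$ is carried in Riesz-transform form and the reason the fractional integration-by-parts device must replace a naive product-rule estimate on $H_{\Dso}(\vec{v}^j,\vec{A})$.
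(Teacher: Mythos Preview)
Your strategy is correct and matches the paper's: fractional integration by parts to transfer $H_{\Dso}$ off $\vec{A}$, then \cref{comm_lem} to split, then estimate each piece with \cref{riesz_estimate} and product-rule lemmas. Two points of comparison are worth flagging.

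First, the decisive algebraic cancellation is not the orthogonality $\langle\vec{u}+\vec{v},\vec{u}-\vec{v}\rangle_L=0$ or $\langle\vec{u},\mathbf{X}^r_{\vec{u}}\rangle_L=0$ that you cite; it is simply $\sideset{}{'}{\sum}_{j=1}^3\vec{v}^j\vec{v}^j=\langle\vec{v},\vec{v}\rangle_L=-1$. After the integration-by-parts step and \cref{comm_lem}, the primed sum over $j$ collapses the leading term $\sideset{}{'}{\sum}_{j}H_{\Dso}\brac{\vec{v}^j\vec{v}^j\vec{v}\wedge_L,\vec{B}_{\vec{f}}}$ into $-H_{\Dso}\brac{\vec{v}\wedge_L,\vec{B}_{\vec{f}}}$, which is exactly the input of \cref{riesz_estimate}; similarly the boundary term becomes $\langle\Dso\vec{v},\vec{v}\rangle_L$ times a product that is estimated directly by H\"older.

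Second, the paper's proof is considerably leaner than your outline suggests: only four terms survive (labelled \eqref{almost_2_1}--\eqref{almost_2_4}), handled respectively by \cref{riesz_estimate}, \cref{lem:prod_rule} with \cref{lem:gag_in}, \cref{riesz_estimate} again, and a plain H\"older bound. None of \cref{comm_lemm}, \cref{infty_bound}, \cref{double_comm_int}, \cref{triple_comm_int}, or \cref{lem:prod_rule_2} is needed for this particular lemma, and no triple-commutator integral arises. Your derivative bookkeeping is right, but you can drop the extra machinery.
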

\begin{proof}
Using \cref{comm_lemm}, we have
\begin{align}
    &\sideset{}{'}{\sum}_{j=1}^3\int_{\R^d} \langle \vec{v}^jH_{\abs{\nabla}}\brac{\vec{v}^j, \int_0^1 \mathcal{R}^e \brac{d \psi_{\Tilde{\gamma}(s)} \partial_e \Tilde{\vec{w}}} \ ds}, \vec{v} \wedge_L\int_0^1 \mathcal{R}^e \brac{d^2 \psi_{\Tilde{\gamma}(s)} d\varphi_{\vec{f}} \partial_e \vec{f} \ \Tilde{\vec{w}}} \ ds \rangle_L \nonumber\\
    &= \sideset{}{'}{\sum}_{j=1}^3\int_{\R^d} \langle H_{\abs{\nabla}}\brac{\vec{v}^j, \vec{v}^j\vec{v} \wedge_L\int_0^1 \mathcal{R}^e \brac{d^2 \psi_{\Tilde{\gamma}(s)} d\varphi_{\vec{f}} \partial_e \vec{f} \ \Tilde{\vec{w}}} \ ds}, \int_0^1 \mathcal{R}^e \brac{d \psi_{\Tilde{\gamma}(s)} \partial_e \Tilde{\vec{w}}} \ ds \rangle_L\nonumber\\
    &+2\sideset{}{'}{\sum}_{j=1}^3\int_{\R^d} \langle \abs{\nabla}\vec{v}^j \vec{v}^j\vec{v} \wedge_L\int_0^1 \mathcal{R}^e \brac{d^2 \psi_{\Tilde{\gamma}(s)} d\varphi_{\vec{f}} \partial_e \vec{f} \ \Tilde{\vec{w}}} \ ds, \ \int_0^1 \mathcal{R}^e \brac{d \psi_{\Tilde{\gamma}(s)} \partial_e \Tilde{\vec{w}}} \ ds \rangle_L \nonumber\\
    &= \sideset{}{'}{\sum}_{j=1}^3\int_{\R^d} \langle \textcolor{blue}{H_{\abs{\nabla}}}\brac{\vec{v}^j \vec{v}^j\vec{v} \wedge_L \textcolor{blue}{,} \int_0^1 \mathcal{R}^e \brac{d^2 \psi_{\Tilde{\gamma}(s)} d\varphi_{\vec{f}} \partial_e \vec{f} \ \Tilde{\vec{w}}} \ ds}, \int_0^1 \mathcal{R}^e \brac{d \psi_{\Tilde{\gamma}(s)} \partial_e \Tilde{\vec{w}}} \ ds \rangle_L\nonumber\\
    &+ \sideset{}{'}{\sum}_{j=1}^3\int_{\R^d} \langle H_{\abs{\nabla}}
    \brac{\vec{v}^j, \vec{v}^j\vec{v}} \wedge_L\int_0^1 \mathcal{R}^e \brac{d^2 \psi_{\Tilde{\gamma}(s)} d\varphi_{\vec{f}} \partial_e \vec{f} \ \Tilde{\vec{w}}} \ ds,  \int_0^1 \mathcal{R}^e \brac{d \psi_{\Tilde{\gamma}(s)} \partial_e \Tilde{\vec{w}}} \ ds\rangle_L \nonumber\\
    &+ \sideset{}{'}{\sum}_{j=1}^3\int_{\R^d} \langle \vec{v}^j H_{\abs{\nabla}}
    \brac{\vec{v}^j\vec{v} \wedge_L, \int_0^1 \mathcal{R}^e \brac{d^2 \psi_{\Tilde{\gamma}(s)} d\varphi_{\vec{f}} \partial_e \vec{f} \ \Tilde{\vec{w}}} \ ds}, \int_0^1 \mathcal{R}^e \brac{d \psi_{\Tilde{\gamma}(s)} \partial_e \Tilde{\vec{w}}} \ ds\rangle_L\nonumber\\
    &+ 2\sideset{}{'}{\sum}_{j=1}^3\int_{\R^d} \langle \abs{\nabla}\vec{v}^j \vec{v}^j\vec{v} \wedge_L\int_0^1 \mathcal{R}^e \brac{d^2 \psi_{\Tilde{\gamma}(s)} d\varphi_{\vec{f}} \partial_e \vec{f} \ \Tilde{\vec{w}}} \ ds, \ \int_0^1 \mathcal{R}^e \brac{d \psi_{\Tilde{\gamma}(s)} \partial_e \Tilde{\vec{w}}} \ ds\rangle_L \nonumber\\
    &= -\int_{\R^d} \langle \textcolor{blue}{H_{\abs{\nabla}}}\brac{\vec{v} \wedge_L \textcolor{blue}{,} \int_0^1 \mathcal{R}^e \brac{d^2 \psi_{\Tilde{\gamma}(s)} d\varphi_{\vec{f}} \partial_e \vec{f} \ \Tilde{\vec{w}}} \ ds}, \int_0^1 \mathcal{R}^e \brac{d \psi_{\Tilde{\gamma}(s)} \partial_e \Tilde{\vec{w}}} \ ds \rangle_L\label{almost_2_1}\\
    &+ \sideset{}{'}{\sum}_{j=1}^3\int_{\R^d} \langle H_{\abs{\nabla}}
    \brac{\vec{v}^j, \vec{v}^j\vec{v}} \wedge_L\int_0^1 \mathcal{R}^e \brac{d^2 \psi_{\Tilde{\gamma}(s)} d\varphi_{\vec{f}} \partial_e \vec{f} \ \Tilde{\vec{w}}} \ ds,  \int_0^1 \mathcal{R}^e \brac{d \psi_{\Tilde{\gamma}(s)} \partial_e \Tilde{\vec{w}}} \ ds\rangle_L \label{almost_2_2}\\
    &+ \sideset{}{'}{\sum}_{j=1}^3\int_{\R^d} \langle \vec{v}^j H_{\abs{\nabla}}
    \brac{\vec{v}^j\vec{v} \wedge_L, \int_0^1 \mathcal{R}^e \brac{d^2 \psi_{\Tilde{\gamma}(s)} d\varphi_{\vec{f}} \partial_e \vec{f} \ \Tilde{\vec{w}}} \ ds}, \int_0^1 \mathcal{R}^e \brac{d \psi_{\Tilde{\gamma}(s)} \partial_e \Tilde{\vec{w}}} \ ds\rangle_L\label{almost_2_3}\\
    &+ 2\int_{\R^d} \langle \abs{\nabla}\vec{v}, \vec{v} \rangle_L \ \langle \vec{v} \wedge_L\int_0^1 \mathcal{R}^e \brac{d^2 \psi_{\Tilde{\gamma}(s)} d\varphi_{\vec{f}} \partial_e \vec{f} \ \Tilde{\vec{w}}} \ ds, \ \int_0^1 \mathcal{R}^e \brac{d \psi_{\Tilde{\gamma}(s)} \partial_e \Tilde{\vec{w}}} \ ds\rangle_L  \label{almost_2_4}.
\end{align} 
For \eqref{almost_2_1}, we use \cref{riesz_estimate} to get
\begin{align*}
    &\int_{\R^d} \langle \textcolor{blue}{H_{\abs{\nabla}}}\brac{\vec{v} \wedge_L \textcolor{blue}{,} \int_0^1 \mathcal{R}^e \brac{d^2 \psi_{\Tilde{\gamma}(s)} d\varphi_{\vec{f}} \partial_e \vec{f} \ \Tilde{\vec{w}}} \ ds},  \int_0^1 \mathcal{R}^e \brac{d \psi_{\Tilde{\gamma}(s)} \partial_e \Tilde{\vec{w}}} \ ds\rangle_L \\
     \lesssim_{\Lambda} & \brac{\lVert \Dso^{1+\sigma}\vec{u} \rVert_{L^{\frac{2d}{2(1+\sigma)-1}}(\R^d)}^2 + \lVert \Dso^{1+\sigma}\vec{v} \rVert_{L^{\frac{2d}{2(1+\sigma)-1}}(\R^d)}^2} \ \lVert \Dso \Tilde{\vec{w}} \rVert_{L^{2}(\R^d)}^2.
\end{align*}
For \eqref{almost_2_2}, we use \cref{lem:prod_rule_frac} and \cref{lem:gag_in} to get
\begin{align*}
    &\int_{\R^d} \langle H_{\abs{\nabla}}
    \brac{\vec{v}^j, \vec{v}^j\vec{v}} \wedge_L\int_0^1 \mathcal{R}^e \brac{d^2 \psi_{\Tilde{\gamma}(s)} d\varphi_{\vec{f}} \partial_e \vec{f} \ \Tilde{\vec{w}}} \ ds, \int_0^1 \mathcal{R}^e \brac{d \psi_{\Tilde{\gamma}(s)} \partial_e \Tilde{\vec{w}}} \ ds\rangle_L \\
    \lesssim_{\Lambda} & \lVert H_{\abs{\nabla}}
    \brac{\vec{v}^j, \vec{v}^j\vec{v}} \rVert_{L^{2d}(\R^d)} \ \lVert \partial_e \vec{f} \rVert_{L^{2d}(\R^d)} \ \lVert \Tilde{\vec{w}} \rVert_{L^{\frac{2d}{d-2}}(\R^d)} \ \lVert \Dso\Tilde{\vec{w}} \lVert_{L^{2}(\R^d)}\\
    \lesssim_{\Lambda} & \lVert \Dso^{1-\sigma}\vec{v}^j \rVert_{L^{\frac{2d}{1-\sigma}}(\R^d)} \ \lVert \Dso^\sigma\brac{\vec{v}^j\vec{v}} \rVert_{L^{\frac{2d}{\sigma}}(\R^d)} \ \lVert \partial_e \vec{f} \rVert_{L^{2d}(\R^d)} \ \lVert \Dso\Tilde{\vec{w}} \lVert_{L^{2}(\R^d)}^2\\
    \lesssim_{\Lambda} & \lVert \Dso\vec{v}^j \rVert_{L^{\frac{2d}{1-\sigma}}(\R^d)}^{1-\sigma} \ \lVert \Dso\brac{\vec{v}^j\vec{v}} \rVert_{L^{\frac{2d}{\sigma}}(\R^d)}^\sigma \ \lVert \partial_e \vec{f} \rVert_{L^{2d}(\R^d)} \ \lVert \Dso\Tilde{\vec{w}} \lVert_{L^{2}(\R^d)}^2\\
    \lesssim_{\Lambda} & \brac{\lVert \Dso\vec{u} \rVert_{L^{2d}(\R^d)}^2 + \lVert \Dso\vec{v} \rVert_{L^{2d}(\R^d)}^2} \ \lVert \Dso\Tilde{\vec{w}} \rVert_{L^{2}(\R^d)}^2.
\end{align*}
Next, for \eqref{almost_2_3}, we again use \cref{riesz_estimate} to get
\begin{align*}
    &\int_{\R^d} \langle \vec{v}^j H_{\abs{\nabla}}
    \brac{\vec{v}^j\vec{v} \wedge_L, \int_0^1 \mathcal{R}^e \brac{d^2 \psi_{\Tilde{\gamma}(s)} d\varphi_{\vec{f}} \partial_e \vec{f} \ \Tilde{\vec{w}}} \ ds}, \int_0^1 \mathcal{R}^e \brac{d \psi_{\Tilde{\gamma}(s)} \partial_e \Tilde{\vec{w}}} \ ds\rangle_L\\
    \lesssim_{\Lambda} & \brac{\lVert \Dso^{1+\sigma}\vec{u} \rVert_{L^{\frac{2d}{2(1+\sigma)-1}}(\R^d)}^2 + \lVert \Dso^{1+\sigma}\vec{v} \rVert_{L^{\frac{2d}{2(1+\sigma)-1}}(\R^d)}^2} \ \lVert \Dso \Tilde{\vec{w}} \rVert_{L^{2}(\R^d)}^2.
\end{align*}
For \eqref{almost_2_4}, we have
\begin{align*}
    &\int_{\R^d} \langle \abs{\nabla}\vec{v}, \vec{v} \rangle_L \ \langle \vec{v} \wedge_L\int_0^1 \mathcal{R}^e \brac{d^2 \psi_{\Tilde{\gamma}(s)} d\varphi_{\vec{f}} \partial_e \vec{f} \ \Tilde{\vec{w}}} \ ds \ \int_0^1 \mathcal{R}^e \brac{d \psi_{\Tilde{\gamma}(s)} \partial_e \Tilde{\vec{w}}} \ ds\rangle_L\\
    \lesssim_{\Lambda} & \lVert \Dso \vec{v} \rVert_{L^{2d}(\R^d)} \ \lVert \partial_e \vec{f} \rVert_{L^{2d}(\R^d)} \ \lVert \Tilde{\vec{w}} \rVert_{L^{\frac{2d}{d-2}}(\R^d)} \ \lVert \Dso \Tilde{\vec{w}} \rVert_{L^{2}(\R^d)}\\
    \lesssim_{\Lambda} & \brac{\lVert \Dso \vec{u} \rVert_{L^{2d}(\R^d)}^2 + \lVert \Dso \vec{v} \rVert_{L^{2d}(\R^d)}^2} \ \lVert \Dso \Tilde{\vec{w}} \rVert_{L^{2}(\R^d)}^2.
\end{align*}
We conclude.
\end{proof}
We next estimate \eqref{split_2_3}, which will conclude our estimate of \eqref{eq:last_2}.
\begin{lemma}
For $d \geq 2$,
\begin{align*}
    &\abs{\sideset{}{'}{\sum}_{j=1}^3\int_{\R^d} \Big\langle \vec{v}^jH_{\Dso}\brac{\vec{v}^j, \int_0^1 \mathcal{R}^e \brac{d \psi_{\Tilde{\gamma}(s)} \partial_e \Tilde{\vec{w}}} \ ds }, \vec{v} \wedge_L\int_0^1 \mathcal{R}^e \brac{d \psi_{\Tilde{\gamma}(s)} \partial_e \Tilde{\vec{w}}} \ ds \Big\rangle_L}\\
    & \lesssim_{\Lambda} \lVert \Dso \Tilde{\vec{w}}\rVert_{L^{2}(\R^d)}^2\, \lVert\Dso \vec{v}\rVert_{L^{2d}(\mathbb{R}^d)}^2.
\end{align*}
\end{lemma}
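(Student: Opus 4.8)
The plan is to treat this as the mildest of the three terms coming from the Riesz expansion of $\Dso\vec{w}$, following the same template as the lemma just proved for \eqref{split_2_1} and \eqref{split_2_2}. Write $F=\int_0^1\mathcal{R}^e\brac{d\psi_{\Tilde{\gamma}(s)}\partial_e\Tilde{\vec{w}}}\,ds$, so the quantity to bound is $\sideset{}{'}{\sum}_{j}\int_{\R^d}\langle\vec{v}^j H_{\Dso}(\vec{v}^j,F),\vec{v}\wedge_L F\rangle_L$. The one structural fact that drives everything is that $F$ already carries a full derivative of $\Tilde{\vec{w}}$, while the energy controls only $\|\Dso\Tilde{\vec{w}}\|_{L^2}$: thus $F$ may never be fractionally differentiated and can only appear as a bare $L^2$ factor, estimated via $\|F\|_{L^p}\lesssim_\Lambda\|\partial_e\Tilde{\vec{w}}\|_{L^p}\lesssim_\Lambda\|\Dso\Tilde{\vec{w}}\|_{L^p}$ (\cref{comp}); where possible the ordinary $\partial_e$ inside $F$ should be integrated by parts to trade $\partial_e\Tilde{\vec{w}}$ for a bare $\Tilde{\vec{w}}\in L^{2d/(d-2)}$ (Sobolev, \cref{lem:sob_in}).

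The argument proceeds by relocating the Leibniz operator off the two copies of $F$. First use the fractional integration-by-parts identity for $\wedge_L$ recorded earlier to write $\langle\vec{v}^j H_{\Dso}(\vec{v}^j,F),\vec{v}\wedge_L F\rangle_L=\langle H_{\Dso}(\vec{v}^j,F),\vec{v}^j\vec{v}\wedge_L F\rangle_L$ and then integrate by parts in the commutated slot, pulling one $F$ outside and producing, besides the term with one $F$ paired against $H_{\Dso}(\vec{v}^j,\vec{v}^j\vec{v}\wedge_L F)$, a term with a clean factor $\Dso\vec{v}^j$. Expanding $H_{\Dso}(\vec{v}^j,\vec{v}^j(\vec{v}\wedge_L F))$ by the triple-term rule \cref{comm_lem} and $\vec{v}\wedge_L F$ componentwise by multilinearity, every summand becomes of the form: (bare $F$)$\times$(bare $F$)$\times$(a commutator of two $\vec{v}$-factors such as $H_{\Dso}((\vec{v}^j)^2,\vec{v}^a)$, or a single $\Dso\vec{v}^j$)$\times$(undifferentiated $\vec{v}$'s), plus ``leftover'' pieces still shaped like $\vec{v}^j H_{\Dso}(\vec{v}^j,F)$, which are absorbed through \cref{comm_lemm} and \cref{riesz_estimate} --- using the identity that rewrites $c\sum_e F$ as $\Dso\vec{w}$ minus the lower-order Riesz remainders \eqref{riesz_1}, \eqref{riesz_2}, so that a fractional derivative only ever touches those remainders, in which $\Tilde{\vec{w}}$ is undifferentiated --- together with the multiple-integral bounds \cref{double_comm_int}, \cref{triple_comm_int}, \cref{triple_comm_int_3}. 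Each summand is closed by Cauchy--Schwarz and H\"older: the two bare $F$'s pair as $\|F\,F\|_{L^1}\le\|F\|_{L^2}^2\lesssim_\Lambda\|\Dso\Tilde{\vec{w}}\|_{L^2}^2$; a commutator of two $\vec{v}$-factors contributes a product of two (fractional) derivatives of $\vec{v}$ by \cref{lem:prod_rule} (or, in $L^\infty$, by \cref{infty_bound}), each converted to $\|\Dso\vec{v}\|_{L^{2d}}$ via Gagliardo--Nirenberg (\cref{lem:gag_in}) --- this is exactly the source of the factor $\|\Dso\vec{v}\|_{L^{2d}}^2$ --- and the smooth transition maps $d\psi_{\Tilde{\gamma}(s)}$ are handled by \cref{isom_est}.

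I expect the main obstacle to be the exponent bookkeeping forced by this rigid derivative budget rather than any individual hard estimate: since $F$ is untouchable, every fractional derivative generated by the commutator manipulations must be routed onto $\vec{v}$ (where it must stay strictly below order one for \cref{lem:gag_in} to apply cleanly) or onto $d\psi_{\Tilde{\gamma}(s)}$, the single genuine derivative of the original commutator must be realized as a commutator of two $\vec{v}$-factors so as to yield the square $\|\Dso\vec{v}\|_{L^{2d}}^2$, and one must check that each of the many resulting H\"older triples has conjugate exponents summing to one --- in particular that no term covertly demands more than one derivative of $\Tilde{\vec{w}}$, or $\Tilde{\vec{w}}$ in an integrability class stronger than Sobolev provides. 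Once the expansion is organized so that the two bare $F$'s always land in $L^2$, no summand exceeds $\|\Dso\Tilde{\vec{w}}\|_{L^2}^2\|\Dso\vec{v}\|_{L^{2d}}^2$, which finishes the estimate of \eqref{eq:last_2} and, with the preceding lemmas, completes the proof of \cref{big_thm}.
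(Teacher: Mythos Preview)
Your outline has a genuine gap: the generic commutator-shuffling you describe cannot close this term, because the estimate demands \emph{two} full derivatives of $\vec v$ and \emph{zero} fractional derivatives on either copy of $F=\int_0^1\mathcal R^e(d\psi_{\Tilde\gamma(s)}\partial_e\Tilde{\vec w})\,ds$, and a plain expansion via \cref{comm_lem} and the wedge integration-by-parts identity does not achieve that budget. Concretely, after your first move you will meet terms of the form $\int\langle\Dso\vec v^{\,j}\,(\vec v^{\,j}\vec v\wedge_L F),F\rangle_L$ carrying only \emph{one} factor $\Dso\vec v$, and terms of the form $\int\langle H_{\Dso}(\text{products of }\vec v,\;F),F\rangle_L$ in which any application of \cref{lem:prod_rule} forces $\Dso^{1-\sigma}F$ and hence a $(2-\sigma)$-th derivative of $\Tilde{\vec w}$. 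Routing these ``leftovers'' through \cref{comm_lemm} or \cref{riesz_estimate} does not help: those lemmas apply to commutators whose second argument is either $\langle\vec v,\Dso\vec w\rangle_L$ (where orthogonality of $\vec u+\vec v$ and $\vec u-\vec v$ is exploited) or the Riesz remainders \eqref{riesz_1}--\eqref{riesz_2} (where $\Tilde{\vec w}$ appears undifferentiated), not to a bare $F$ with $\partial_e\Tilde{\vec w}$ inside.

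What the paper actually does is an exact cancellation argument tailored to the triple-product structure. Writing $\vec\Gamma=F$ and using $\vec a\cdot_L(\vec b\wedge_L\vec c)=\det(\vec a,\vec b,\vec c)$, the integrand is $\det\!\big(\sideset{}{'}{\sum}_j\vec v^{\,j}H_{\Dso}(\vec v^{\,j},\vec\Gamma),\,\vec v,\,\vec\Gamma\big)$. Expanding $H_{\Dso}$ by its definition, the dangerous piece $\sideset{}{'}{\sum}_j\vec v^{\,j}(\Dso\vec v^{\,j})\vec\Gamma$ sits in a column collinear with $\vec\Gamma$ and vanishes by antisymmetry; the piece $\sideset{}{'}{\sum}_j\vec v^{\,j}\vec v^{\,j}\Dso\vec\Gamma$ collapses via $\langle\vec v,\vec v\rangle_L=-1$ to $-\Dso\vec\Gamma$. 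After symmetrizing into two halves and integrating $\Dso$ by parts in one half, the $\vec\Gamma\Dso\vec v$ contributions again give determinants with two collinear $\vec\Gamma$ columns and vanish, leaving only
\[
-\tfrac12\sum_{\alpha,\beta,n}\epsilon_{\alpha\beta n}\int_{\R^d}\vec\Gamma^\alpha\sideset{}{'}{\sum}_{j}\vec v^{\,j}\Big(H_{\Dso}(\vec v^{\,j}\vec\Gamma^\beta,\vec v^{\,n})-\vec v^{\,j}H_{\Dso}(\vec\Gamma^\beta,\vec v^{\,n})\Big).
\]
This specific commutator difference is precisely what \cref{lem:comm_difference} is designed for: it places \emph{both} derivatives on $\vec v$ and leaves $\vec\Gamma^\beta$ bare in $L^2$, giving $\|\vec\Gamma\|_{L^2}^2\|\Dso\vec v\|_{L^{2d}}^2\lesssim_\Lambda\|\Dso\Tilde{\vec w}\|_{L^2}^2\|\Dso\vec v\|_{L^{2d}}^2$. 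You never invoke \cref{lem:comm_difference}, nor the determinant cancellations, and without them the derivative count does not balance.
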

\begin{proof}
For the reader's convenience, we set $\vec{\Gamma} = \int_0^1 \mathcal{R}^e \brac{d \psi_{\Tilde{\gamma}(s)} \partial_e \Tilde{\vec{w}}} \ ds$. With $\vec{a} \cdot_L (\vec{b} \wedge_L \vec{c}) = \det(\vec{a},\vec{b},\vec{c})$, we have
\begin{align*}
    &\sideset{}{'}{\sum}_{j=1}^3\int_{\R^d} \Big\langle \vec{v}^jH_{\Dso}\brac{\vec{v}^j, \vec{\Gamma}}, \vec{v} \wedge_L\vec{\Gamma} \Big\rangle_L\\
    &= \int_{\R^d} \det\brac{\sideset{}{'}{\sum}_{j=1}^3 \vec{v}^j H_{\Dso}(\vec{v}^j, \vec{\Gamma}) \ , \ \vec{v} \ , \ \vec{\Gamma}}\\
    &= -\frac{1}{2}\int_{\R^d} \det\brac{\sideset{}{'}{\sum}_{j=1}^3 \vec{v}^j H_{\Dso}(\vec{v}^j, \vec{\Gamma}) \ , \ \vec{\Gamma} \ , \ \vec{v}}\\
    &+ \frac{1}{2}\int_{\R^d} \det\brac{\vec{\Gamma} \ , \ \sideset{}{'}{\sum}_{j=1}^3 \vec{v}^j H_{\Dso}(\vec{v}^j, \vec{\Gamma}) \ , \ \vec{v}}.
\end{align*}
Using the fact that two colinear entries in the determinant yields $0$, we get
\begin{align*}
    &\sideset{}{'}{\sum}_{j=1}^3\int_{\R^d}\Big\langle \vec{v}^jH_{\Dso}\brac{\vec{v}^j,\vec{\Gamma}}, \vec{v} \wedge_L\abs\nabla \vec{w} \Big\rangle_L\\
    &= -\frac{1}{2}\int_{\R^d} \det\brac{\sideset{}{'}{\sum}_{j=1}^3 \vec{v}^j \Dso\brac{\vec{v}^j \vec{\Gamma}} - \vec{v}^j\vec{v}^j\Dso\brac{\vec{\Gamma}} \ , \ \vec{\Gamma} \ , \ \vec{v}}\\
    &+ \frac{1}{2}\int_{\R^d} \det\brac{\vec{\Gamma} \ , \ \sideset{}{'}{\sum}_{j=1}^3 \vec{v}^j \Dso\brac{\vec{v}^j \vec{\Gamma}} - \vec{v}^j\vec{v}^j\Dso\brac{\vec{\Gamma}} \ , \ \vec{v}}\\
    &\overset{\langle \vec{v},\vec{v}\rangle_L = -1}{=} -\frac{1}{2}\int_{\R^d} \det\brac{\sideset{}{'}{\sum}_{j=1}^3 \vec{v}^j \Dso\brac{\vec{v}^j \vec{\Gamma}} + \Dso\brac{\vec{\Gamma}} \ , \ \vec{\Gamma} \ , \ \vec{v}}\\
    &+ \frac{1}{2}\int_{\R^d} \det\brac{\vec{\Gamma} \ , \ \sideset{}{'}{\sum}_{j=1}^3 \vec{v}^j \Dso\brac{\vec{v}^j \vec{\Gamma}} + \Dso\brac{\vec{\Gamma}} \ , \ \vec{v}}\\
    &=-\frac{1}{2}\int_{\R^d} \det\brac{\sideset{}{'}{\sum}_{j=1}^3 \vec{v}^j \Dso\brac{\vec{v}^j \vec{\Gamma}} \ , \ \vec{\Gamma} \ , \ \vec{v}}\\
    &- \frac{1}{2}\int_{\R^d} \det\brac{\Dso\brac{\vec{\Gamma}} \ , \ \vec{\Gamma} \ , \ \vec{v}}\\
    &+ \frac{1}{2}\int_{\R^d} \det\brac{\vec{\Gamma} \ , \ \sideset{}{'}{\sum}_{j=1}^3 \vec{v}^j \Dso\brac{\vec{v}^j \vec{\Gamma}} + \Dso\brac{\vec{\Gamma}} \ , \ \vec{v}}.
\end{align*}
Now we expand the determinant, denoting the Levi-Cevita tensor by $\epsilon_{\alpha.\beta,n} \in \{-1,0,1\}$.
\begin{align*}
    &=-\frac{1}{2}\sum_{\alpha,\beta,n = 1}^3 \epsilon_{\alpha,\beta,n} \int_{\R^d} \sideset{}{'}{\sum}_{j=1}^3 \vec{v}^j \Dso\brac{\vec{v}^j \vec{\Gamma}^\alpha} \ \vec{\Gamma}^\beta \ \vec{v}^n\\
    &- \frac{1}{2}\sum_{\alpha,\beta,n = 1}^3 \epsilon_{\alpha,\beta,n}\int_{\R^d} \Dso \brac{\vec{\Gamma}^\alpha} \ \vec{\Gamma}^\beta \ \vec{v}^n\\
    &+ \frac{1}{2}\sum_{\alpha,\beta,n = 1}^3 \epsilon_{\alpha,\beta,n}\int_{\R^d} \vec{\Gamma}^\alpha\ \sideset{}{'}{\sum}_{j=1}^3 \vec{v}^j \Dso\brac{\vec{v}^j \vec{\Gamma}^\beta} \vec{v}^n\\
    &+ \frac{1}{2}\sum_{\alpha,\beta,n = 1}^3 \epsilon_{\alpha,\beta,n}\int_{\R^d} \vec{\Gamma}^\alpha\ \sideset{}{'}{\sum}_{j=1}^3 \Dso \brac{\vec{\Gamma}^\beta} \ \vec{v}^n.
\end{align*}
We perform an integration by parts to factor out $\vec{\Gamma}^\alpha$ to obtain
\begin{align*}
    &=-\frac{1}{2} \sum_{\alpha,\beta,n = 1}^3 \epsilon_{\alpha,\beta,n} \int_{\R^d} \vec{\Gamma}^\alpha\sideset{}{'}{\sum}_{j=1}^3 \vec{v}^j \Dso\brac{\vec{v}^j \vec{\Gamma}^\beta \vec{v}^n}\\
    &- \frac{1}{2}\sum^3_{\alpha,\beta,m = 1} \epsilon_{\alpha,\beta,n}\int_{\R^d} \vec{\Gamma}^\alpha\ \Dso\brac{\vec{\Gamma}^\beta \vec{v}^n}\\
    &+ \frac{1}{2}\sum_{\alpha,\beta,n = 1}^3 \epsilon_{\alpha,\beta,n}\int_{\R^d} \vec{\Gamma}^\alpha \sideset{}{'}{\sum}_{j=1}^3 \vec{v}^j \Dso\brac{\vec{v}^j \vec{\Gamma}^\beta} \vec{v}^n\\
    &+ \frac{1}{2}\sum_{\alpha,\beta,n = 1}^3 \epsilon_{\alpha,\beta,n}\int_{\R^d} \vec{\Gamma}^\alpha \sideset{}{'}{\sum}_{j=1}^3 \Dso \brac{\vec{\Gamma}^\beta} \vec{v}^n.
\end{align*}
We regroup into
\begin{align*}
    &=-\frac{1}{2}\sum_{\alpha,\beta,n = 1}^3 \epsilon_{\alpha,\beta,n} \int_{\R^d} \vec{\Gamma}^\alpha \brac{\sideset{}{'}{\sum}_{j=1}^3 \brac{\vec{v}^j \Dso\brac{\vec{v}^j \vec{\Gamma}^\beta \ \vec{v}^n}} - \sideset{}{'}{\sum}_{j=1}^3 \brac{\vec{v}^j \Dso\brac{{\vec{v}^j} \vec{\Gamma}^\beta} \ \vec{v}^n}}\\
    &- \frac{1}{2}\sum_{\alpha,\beta,n = 1}^3 \epsilon_{\alpha,\beta,n}\int_{\R^d} \vec{\Gamma}^\alpha \ \brac{ \Dso\brac{\vec{\Gamma}^\beta \ \vec{v}^n} + \Dso \vec{\Gamma}^\beta \ \vec{v}^n}\\
    &=-\frac{1}{2}\sum_{\alpha,\beta,n = 1}^3 \epsilon_{\alpha,\beta,n} \int_{\R^d} \vec{\Gamma}^\alpha \sideset{}{'}{\sum}_{j=1}^3 \vec{v}^j \brac{H_{\Dso}\brac{\vec{v}^j \vec{\Gamma}^\beta, \vec{v}^n} + \vec{v}^j \vec{\Gamma}^\beta \Dso {\vec{v}^n}}\\
    &- \frac{1}{2}\sum_{\alpha,\beta,n = 1}^3 \epsilon_{\alpha,\beta,n}\int_{\R^d} \vec{\Gamma}^\alpha \brac{ H_{\Dso}\brac{\vec{\Gamma}^\beta, \vec{v}^n} - \vec{\Gamma}^\beta \Dso{\vec{v}^n}}.
\end{align*}

Since

\begin{align*}
    \sum^3_{\alpha,\beta,m = 1} \epsilon_{k\ell m} \vec{\Gamma}^\alpha \sideset{}{'}{\sum}_{j=1}^3\vec{v}^j\vec{v}^j\vec{\Gamma}^\beta\Dso \vec{v}^n = \det\brac{\vec{\Gamma},\ \sideset{}{'}{\sum}_{j=1}^3 \vec{v}^j\vec{v}^j\vec{\Gamma},\ \Dso \vec{v}} = 0
\end{align*}
and
\begin{align*}
    \sum^3_{\alpha,\beta,m = 1} \epsilon_{k\ell m} \vec{\Gamma}^\alpha\vec{\Gamma}^\beta\Dso \vec{v}^n = \det\brac{\vec{\Gamma},\ \sideset{}{'}{\sum}_{j=1}^3 \vec{v}^j\vec{v}^j\vec{\Gamma},\ \Dso \vec{v}} = 0.
\end{align*}
We finally obtain
\begin{align*}
    &\sideset{}{'}{\sum}_{j=1}^3\int_{\R^d}\Big\langle \vec{v}^jH_{\Dso}\brac{\vec{v}^j, \vec{\Gamma}}, \vec{v} \wedge_L\vec{\Gamma} \Big\rangle_L\\
    &=-\frac{1}{2}\sum_{\alpha,\beta,n = 1}^3 \epsilon_{\alpha,\beta,n} \int_{\R^d} \vec{\Gamma}^\alpha \sideset{}{'}{\sum}_{j=1}^3 \vec{v}^j H_{\Dso}\brac{\vec{v}^j \vec{\Gamma}^\beta, \vec{v}^n} \\
    &+ \frac{1}{2}\sum_{\alpha,\beta,n = 1}^3 \epsilon_{\alpha,\beta,n}\int_{\R^d} \vec{\Gamma}^\alpha \sideset{}{'}{\sum}_{j=1}^3 \vec{v}^j \vec{v}^j H_{\Dso}\brac{\vec{\Gamma}^\beta, \vec{v}^n}\\
    &=-\frac{1}{2}\sum_{\alpha,\beta,n = 1}^3 \epsilon_{\alpha,\beta,n} \int_{\R^d} \vec{\Gamma}^\alpha \sideset{}{'}{\sum}_{j=1}^3 \vec{v}^j\brac{H_{\Dso}\brac{\vec{v}^j \vec{\Gamma}^\beta, \vec{v}^n} - \vec{v}^j H_{\Dso}\brac{\vec{\Gamma}^\beta, \vec{v}^n}}.
\end{align*}

Consequently,
\begin{align*}
    &\abs{\sideset{}{'}{\sum}_{j=1}^3 \int_{\R^d} \langle \vec{v}^jH_{\Dso}(\vec{v}^j,\vec{\Gamma}), \vec{v} \wedge_L\vec{\Gamma} \rangle_L}\\
    &\lesssim \lVert \vec{\Gamma}\rVert_{L^2(\R^d)} \lVert \ \max_{\alpha \beta n} \lVert H_{\Dso}(\vec{v}^j\vec{\Gamma}^\beta,\vec{v}^n) - \vec{v}^jH_{\Dso}(\vec{\Gamma}^\beta,\vec{v}^n)\rVert_{L^2(\R^d)}\\
    &\lesssim_{\Lambda} \lVert \Dso\tilde{\vec{w}}\rVert_{L^2(\R^d)} \lVert \ \max_{\alpha \beta n} \lVert H_{\Dso}(\vec{v}^j\vec{\Gamma}^\beta,\vec{v}^n) - \vec{v}^jH_{\Dso}(\vec{\Gamma}^\beta,\vec{v}^n)\rVert_{L^2(\R^d)}.
\end{align*}
From \cref{lem:comm_difference} for any $\alpha \in (\frac{1}{2},1)$ ($d\ge 2$),
\begin{align*}
    &\lVert H_{\Dso}(\vec{v}^j\vec{\Gamma}^\beta,\vec{v}^n) - \vec{v}^jH_{\Dso}(\vec{\Gamma}^\beta,\vec{v}^n)\rVert_{L^2(\R^d)}\\
    &\lesssim \lVert \vec{\Gamma}\rVert_{L^2(\R^d)} \ \lVert \Dso^\alpha\vec{v}\rVert_{L^{\frac{2d}{2\alpha - 1}}(\R^d)}\\
    &\lesssim \lVert \vec{\Gamma}\rVert_{L^2(\R^d)} \ \lVert \Dso\vec{v}\rVert_{L^{2d}(\R^d)}\\
    &\lesssim_{\Lambda} \lVert \Dso\Tilde{\vec{w}}\rVert_{L^2(\R^d)} \ \lVert \Dso\vec{v}\rVert_{L^{2d}(\R^d)}.
\end{align*}
So we conclude.
\end{proof}
To complete the proof of \cref{thm:1}, we estimate the following two terms. To estimate \eqref{eq:last_3}, we use
\begin{align*}
    \vec{X}^r_{\vec{u}} - \vec{X}^r_{\vec{v}} 
    &= \int_0^1 d\vec{X}^r_{\gamma(s)}\vec{w} \ ds\\
    &= \int_0^1 \int_0^1 d\vec{X}^r_{\gamma(s)} d\psi_{\Tilde{\gamma}(s')} \Tilde{\vec{w}} \ ds \ ds'\\
    &= \int_0^1 \int_0^1  \sum_{k=1}^3\sum_{\ell = 1}^m \frac{\partial\vec{X}^{r}_{\gamma(s)}}{\partial p^k} \frac{\partial\psi^{k}_{\Tilde{\gamma}(s')}}{\partial q^\ell} \Tilde{\vec{w}}^\ell \ ds \ ds',
\end{align*}

where $\gamma(s) = s\vec{u} + (1-s)\vec{v}$.
\begin{lemma} Let $d\ge 3$ and $\sigma \in (1, d + \frac{1}{2})$. Then we have
    \begin{align*}
        &\abs{\int_{\R^d} \langle \vec{v} \wedge_LH_{\Dso}(\vec{v} \wedge_L, \int_0^1 \mathcal{R}^e \brac{d \psi_{\Tilde{\gamma}(s)} \partial_e \Tilde{\vec{w}}} \ ds), \vec{X}_{\vec{u}}^r \rangle_L \langle \vec{v} \wedge_L\Dso{\vec{v}, \vec{X}_{\vec{u}}^r} - \vec{X}_{\vec{v}}^r \rangle_L}\\
        \lesssim_{\Lambda} & \ \lVert \Dso\Tilde{\vec{w}} \rVert_{L^2(\R^d)}^2 \brac{\lVert \nabla^{1+\sigma}\vec{u} \rVert_{L^{\frac{2d}{2(1+\sigma)-1}}(\R^d)}^2 + \lVert \nabla^{1 + \sigma}\vec{v} \rVert_{L^{\frac{2d}{2(1+\sigma)-1}}(\R^d)}^2}.
    \end{align*}
\end{lemma}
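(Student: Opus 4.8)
The plan is to follow the scheme already used for \eqref{eq:last_1}. First I peel off all uniformly bounded geometric factors. Since $|\vec{X}^r_{\vec{u}}|\lesssim_{\Lambda}1$, and writing the scalar $\phi:=\langle\vec{v}\wedge_L\Dso\vec{v},\vec{X}^r_{\vec{u}}-\vec{X}^r_{\vec{v}}\rangle_L$, the identity recorded just above the statement,
\[
\vec{X}^r_{\vec{u}}-\vec{X}^r_{\vec{v}}=\int_0^1\int_0^1\sum_{k,\ell}\frac{\partial\vec{X}^r_{\gamma(s)}}{\partial p^k}\,\frac{\partial\psi^k_{\Tilde{\gamma}(s')}}{\partial q^\ell}\,\Tilde{\vec{w}}^\ell\,ds\,ds',
\]
together with the uniform bounds on $d\vec{X}^r,d\psi,\vec{v},\vec{X}^r_{\vec{u}}$ on the compact set $N$, shows that $|\phi|\lesssim_{\Lambda}|\Dso\vec{v}|\,|\Tilde{\vec{w}}|$ and, more importantly, that the vector field $\vec{Z}:=\phi\,(\vec{v}\wedge_L\vec{X}^r_{\vec{u}})$ has the schematic form $\vec{Z}=G(\vec{u},\vec{v})\,\Dso\vec{v}\,\Tilde{\vec{w}}$ with $G$ smooth and, with all derivatives, uniformly bounded.

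Next I use the cyclic symmetry $\langle\vec{a}\wedge_L\vec{b},\vec{c}\rangle_L=-\langle\vec{b},\vec{a}\wedge_L\vec{c}\rangle_L$ (a rewriting of $\vec{a}\cdot_L(\vec{b}\wedge_L\vec{c})=\det(\vec{a},\vec{b},\vec{c})$, as in the earlier lemmas) to recast the integrand as $-\langle H_{\Dso}(\vec{v}\wedge_L,\vec{\Gamma}),\vec{Z}\rangle_L$, where $\vec{\Gamma}:=\int_0^1\sum_e\mathcal{R}^e\brac{d\psi_{\Tilde{\gamma}(s)}\partial_e\Tilde{\vec{w}}}\,ds$ is the quantity from \eqref{riesz_3}, and then apply the fractional integration-by-parts identity
\[
\int_{\R^d}\langle H_{\Dso}(\vec{a}\wedge_L,\vec{b}),\vec{c}\rangle_L=\int_{\R^d}\langle H_{\Dso}(\vec{a}\wedge_L,\vec{c}),\vec{b}\rangle_L+2\int_{\R^d}\langle\Dso\vec{a}\wedge_L\vec{c},\vec{b}\rangle_L
\]
(established in the proof of the \eqref{eq:last_1} estimate) with $\vec{a}=\vec{v}$, $\vec{b}=\vec{\Gamma}$, $\vec{c}=\vec{Z}$, so that the integral in the statement equals
\[
-\int_{\R^d}\langle H_{\Dso}(\vec{v}\wedge_L,\vec{Z}),\vec{\Gamma}\rangle_L-2\int_{\R^d}\langle\Dso\vec{v}\wedge_L\vec{Z},\vec{\Gamma}\rangle_L.
\]
The purpose of this step is that $\vec{\Gamma}$ already absorbs one derivative of $\Tilde{\vec{w}}$ (through $\partial_e\Tilde{\vec{w}}\in L^2$); by keeping $\vec{\Gamma}$ \emph{outside} the Leibniz operator we never differentiate it again and may use the clean bound $\|\vec{\Gamma}\|_{L^2(\R^d)}\lesssim_{\Lambda}\|\Dso\Tilde{\vec{w}}\|_{L^2(\R^d)}$, which is immediate from the $L^2$-boundedness of the Riesz transforms, the uniform boundedness of $d\psi$, and \cref{comp}.

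It then remains to estimate the two pieces by Cauchy--Schwarz against $\|\vec{\Gamma}\|_{L^2}$. For the second, $\|\Dso\vec{v}\wedge_L\vec{Z}\|_{L^2}\lesssim_{\Lambda}\|\,|\Dso\vec{v}|^2|\Tilde{\vec{w}}|\,\|_{L^2}\leq\|\Dso\vec{v}\|_{L^{2d}}^2\,\|\Tilde{\vec{w}}\|_{L^{\frac{2d}{d-2}}}\lesssim_{\Lambda}\|\Dso^{1+\sigma}\vec{v}\|_{L^{\frac{2d}{2(1+\sigma)-1}}}^2\,\|\Dso\Tilde{\vec{w}}\|_{L^2}$ by H\"older, the Sobolev inequality \cref{lem:sob_in}, and \eqref{chain}. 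For the first piece I would bound $\|H_{\Dso}(\vec{v}\wedge_L,\vec{Z})\|_{L^2}$ exactly as in the proof of \cref{comm_lemm}: $\vec{Z}$ has the same structure as the term $\langle\Dso\vec{u},\vec{v}\rangle_L\vec{w}$ treated there (a uniformly bounded coefficient times a first derivative of $\vec{u}$ or $\vec{v}$ times $\vec{w}\simeq\Tilde{\vec{w}}$), so one expands $\vec{Z}$ via \eqref{w_eq} and \cref{comm_lem} and then applies \cref{lem:prod_rule}, \cref{lem:prod_rule_frac}, \cref{lem:gag_in} and \cref{isom_est} with a small splitting parameter, which yields $\|H_{\Dso}(\vec{v}\wedge_L,\vec{Z})\|_{L^2}\lesssim_{\Lambda}\|\Dso\Tilde{\vec{w}}\|_{L^2}\brac{\|\Dso^{1+\sigma}\vec{u}\|_{L^{\frac{2d}{2(1+\sigma)-1}}}^2+\|\Dso^{1+\sigma}\vec{v}\|_{L^{\frac{2d}{2(1+\sigma)-1}}}^2}$; multiplying by $\|\vec{\Gamma}\|_{L^2}$ gives the claim. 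The main obstacle, as in all of these lemmas, is the bookkeeping in this last step: one must keep exactly one derivative falling on $\Tilde{\vec{w}}$ (the single $\Dso\Tilde{\vec{w}}\in L^2$ left after the Leibniz expansion, the other having been spent on $\vec{\Gamma}$) and avoid any cubic dependence on $\|\Dso\vec{u}\|_{L^{2d}},\|\Dso\vec{v}\|_{L^{2d}}$, which is what forces the endpoint choices in \cref{lem:gag_in}.
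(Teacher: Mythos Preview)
Your proposal is correct and follows essentially the same route as the paper: both rewrite the integrand via cyclicity as $\langle H_{\Dso}(\vec{v}\wedge_L,\vec{\Gamma}),\vec{Z}\rangle_L$ with $\vec{Z}$ of the schematic form $G(\vec{u},\vec{v})\,\Dso\vec{v}\,\Tilde{\vec{w}}$, apply the fractional integration-by-parts identity to swap $\vec{\Gamma}$ and $\vec{Z}$, and then estimate $\|H_{\Dso}(\vec{v}\wedge_L,\vec{Z})\|_{L^2}$ by the \cref{comm_lem} splitting together with \cref{lem:prod_rule_frac} and \cref{lem:gag_in}. The only cosmetic difference is that the paper writes out the bounded coefficient $G(\vec{u},\vec{v})=\vec{v}^\alpha G^{n,k}_\ell(\vec{X}^r_{\vec{u}}\wedge_L\vec{v})$ explicitly and estimates the resulting four pieces \eqref{end_1_1}--\eqref{end_1_4} one by one, whereas you appeal to the structure of \cref{comm_lemm} in bulk; note also that $\vec{Z}$ already carries $\Tilde{\vec{w}}$, so the reference to \eqref{w_eq} is unnecessary here.
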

\begin{proof}
\begin{align*}
    &\int_{\R^d} \langle \vec{v} \wedge_LH_{\Dso}(\vec{v} \wedge_L, \int_0^1 \mathcal{R}^e \brac{d \psi_{\Tilde{\gamma}(s)} \partial_e \Tilde{\vec{w}}} \ ds), \vec{X}_{\vec{u}}^r \rangle_L \langle \vec{v} \wedge_L\Dso{\vec{v}, \vec{X}_{\vec{u}}^r} - \vec{X}_{\vec{v}}^r \rangle_L\\
    =&\int_{\R^d} \langle \vec{v} \wedge_LH_{\Dso}(\vec{v} \wedge_L, \int_0^1 \mathcal{R}^e \brac{d \psi_{\Tilde{\gamma}(s)} \partial_e \Tilde{\vec{w}}} \ ds), \vec{X}_{\vec{u}}^r \rangle_L \left\langle \vec{v} \wedge_L\Dso\vec{v}, \int_0^1 d\vec{X}^r_{\gamma(s)}\vec{w} \ ds \right\rangle_L\\
    =&\int_{\R^d} \langle \vec{v} \wedge_LH_{\Dso}(\vec{v} \wedge_L, \int_0^1 \mathcal{R}^e \brac{d^2 \psi_{\Tilde{\gamma}(s)} d\varphi_{\vec{f}} \partial_e \vec{f} \ \Tilde{\vec{w}}} ds), \vec{X}_{\vec{u}}^r \rangle_L \left\langle \vec{v} \wedge_L\Dso\vec{v}, \int_0^1 \int_0^1  \sum_{k,\ell}\frac{\partial\vec{X}^{r}_{\gamma(s)}}{\partial p^k} \frac{\partial\psi^{k}_{\gamma(s)}}{\partial q^\ell} \Tilde{\vec{w}}^\ell \ ds \ ds'\right\rangle_L\\
    =&\sum_{\alpha,\beta,n =1}^3 \epsilon_{\alpha,\beta,n} \int_{\R^d} \langle \vec{v} \wedge_LH_{\Dso}(\vec{v} \wedge_L, \int_0^1 \mathcal{R}^e \brac{d^2 \psi_{\Tilde{\gamma}(s)} d\varphi_{\vec{f}} \partial_e \vec{f} \ \Tilde{\vec{w}}} ds), \vec{X}_{\vec{u}}^r \rangle_L  \vec{v}^\alpha \Dso\vec{v}^\beta \int_0^1 \int_0^1  \sum_{k,\ell}\frac{\partial\vec{X}^{r,n}_{\gamma(s)}}{\partial p^k} \frac{\partial\psi^{k}_{\gamma(s)}}{\partial q^\ell} \Tilde{\vec{w}}^\ell \ ds ds'.
\end{align*}
Let $G^{n,k}_\ell = \int_0^1 \int_0^1 \frac{\partial\vec{X}^{r,n}_{\gamma(s)}}{\partial p^k} \frac{\partial\psi^{k}_{\gamma(s)}}{\partial q^\ell} \ ds ds'$. So we have
\begin{align}
    &\int_{\R^d} \langle \vec{v} \wedge_LH_{\Dso}(\vec{v} \wedge_L, \int_0^1 \mathcal{R}^e \brac{d \psi_{\Tilde{\gamma}(s)} \partial_e \Tilde{\vec{w}}} \ ds), \vec{X}_{\vec{u}}^r \rangle_L \vec{v}^\alpha \Dso\vec{v}^\beta G^{n,k}_\ell \vec{w}^\ell \nonumber\\
    =& \int_{\R^d} \langle H_{\Dso}(\vec{v} \wedge_L, \int_0^1 \mathcal{R}^e \brac{d \psi_{\Tilde{\gamma}(s)} \partial_e \Tilde{\vec{w}}} \ ds), \vec{X}_{\vec{u}}^r \wedge_L\vec{v} \rangle_L \vec{v}^\alpha \Dso\vec{v}^\beta G^{n,k}_\ell \vec{w}^\ell\nonumber\\
    =& \int_{\R^d} \langle H_{\Dso}\brac{\vec{v} \wedge_L, \vec{v}^\alpha \Dso\vec{v}^\beta G^{n,k}_\ell \vec{w}^\ell\brac{\vec{X}_{\vec{u}}^r \wedge_L\vec{v}}}, \int_0^1 \mathcal{R}^e \brac{d \psi_{\Tilde{\gamma}(s)} \partial_e \Tilde{\vec{w}}} \ ds \rangle_L\nonumber\\
    &+ 2\int_{\R^d} \langle \Dso\vec{v} \wedge_L\vec{v}^\alpha \Dso\vec{v}^\beta G^{n,k}_\ell \vec{w}^\ell\brac{\vec{X}_{\vec{u}}^r \wedge_L\vec{v}}, \int_0^1 \mathcal{R}^e \brac{d \psi_{\Tilde{\gamma}(s)} \partial_e \Tilde{\vec{w}}} \ ds \rangle_L\nonumber\\
    =& \int_{\R^d} \langle H_{\Dso}\brac{\vec{v} \wedge_L\vec{v}^\alpha G^{n,k}_\ell \brac{\vec{X}_{\vec{u}}^r \wedge_L\vec{v}},\Dso\vec{v}^\beta \vec{w}^\ell}, \int_0^1 \mathcal{R}^e \brac{d \psi_{\Tilde{\gamma}(s)} \partial_e \Tilde{\vec{w}}} \ ds \rangle_L\label{end_1_1}\\
    &+ \int_{\R^d} \langle H_{\Dso}\brac{\vec{v} \wedge_L, \vec{v}^\alpha G^{n,k}_\ell \brac{\vec{X}_{\vec{u}}^r \wedge_L\vec{v}}}\Dso\vec{v}^\beta \vec{w}^\ell, \int_0^1 \mathcal{R}^e \brac{d \psi_{\Tilde{\gamma}(s)} \partial_e \Tilde{\vec{w}}} \ ds \rangle_L\label{end_1_2}\\
    &- \int_{\R^d} \langle \vec{v} \wedge_LH_{\Dso}\brac{\vec{v}^\alpha G^{n,k}_\ell \brac{\vec{X}_{\vec{u}}^r \wedge_L\vec{v}},\Dso\vec{v}^\beta \vec{w}^\ell}, \int_0^1 \mathcal{R}^e \brac{d \psi_{\Tilde{\gamma}(s)} \partial_e \Tilde{\vec{w}}} \ ds \rangle_L\label{end_1_3}\\
    &+ 2\int_{\R^d} \langle \Dso\vec{v} \wedge_L\vec{v}^\alpha \Dso\vec{v}^\beta G^{n,k}_\ell \vec{w}^\ell\brac{\vec{X}_{\vec{u}}^r \wedge_L\vec{v}}, \int_0^1 \mathcal{R}^e \brac{d \psi_{\Tilde{\gamma}(s)} \partial_e \Tilde{\vec{w}}} \ ds \rangle_L\label{end_1_4}.
\end{align}
For \eqref{end_1_1}, we use \cref{lem:prod_rule_frac} to get
\begin{align*}
    &\int_{\R^d} \langle H_{\Dso}\brac{\vec{v} \wedge_L\vec{v}^\alpha G^{n,k}_\ell \brac{\vec{X}_{\vec{u}}^r \wedge_L\vec{v}},\Dso\vec{v}^\beta \vec{w}^\ell}, \int_0^1 \mathcal{R}^e \brac{d \psi_{\Tilde{\gamma}(s)} \partial_e \Tilde{\vec{w}}} \ ds \rangle_L\\
    \lesssim_{\Lambda} & \lVert \Dso^{1-\sigma}\brac{\vec{v} \wedge_L\vec{v}^\alpha G^{n,k}_\ell \brac{\vec{X}_{\vec{u}}^r \wedge_L\vec{v}}} \rVert_{L^{\frac{2d}{2(1-\sigma) - 1}}(\R^d)} \ \lVert\Dso^{\sigma}\brac{\Dso\vec{v}^\beta \vec{w}^\ell} \rVert_{L^{\frac{2d}{d+2\sigma -1}}(\R^d)} \ \lVert \Dso \Tilde{\vec{w}} \rVert_{L^{2}(\R^d)}\\
    \lesssim_{\Lambda} & \lVert \Dso\brac{\vec{v} \wedge_L\vec{v}^\alpha G^{n,k}_\ell \brac{\vec{X}_{\vec{u}}^r \wedge_L\vec{v}}} \rVert_{L^{2d}(\R^d)} \ \lVert\Dso^{\sigma}\brac{\Dso\vec{v}^\beta \vec{w}^\ell} \rVert_{L^{\frac{2d}{d+2\sigma -1}}(\R^d)} \ \lVert \Dso \Tilde{\vec{w}} \rVert_{L^{2}(\R^d)}\\
    \overset{\cref{comp}}{\lesssim_{\Lambda}} & \lVert \nabla\brac{\vec{v} \wedge_L\vec{v}^\alpha G^{n,k}_\ell \brac{\vec{X}_{\vec{u}}^r \wedge_L\vec{v}}} \rVert_{L^{2d}(\R^d)} \ \lVert\Dso^{\sigma}\brac{\Dso\vec{v}^\beta \vec{w}^\ell} \rVert_{L^{\frac{2d}{d+2\sigma -1}}(\R^d)} \ \lVert \Dso \Tilde{\vec{w}} \rVert_{L^{2}(\R^d)}\\
    \lesssim_{\Lambda} & \brac{\lVert \Dso^{1+\sigma}\vec{u} \rVert_{L^{\frac{2d}{2(1+\sigma) - 1}}(\R^d)}^2 + \lVert \Dso^{1+\sigma}\vec{v} \rVert_{L^{\frac{2d}{2(1+\sigma) - 1}}(\R^d)}^2} \ \lVert \Dso \Tilde{\vec{w}} \rVert_{L^{2}(\R^d)}^2.
\end{align*}
For \eqref{end_1_2}, we use \cref{lem:gag_in} to get
\begin{align*}
    &\int_{\R^d} \langle H_{\Dso}\brac{\vec{v} \wedge_L, \vec{v}^\alpha G^{n,k}_\ell \brac{\vec{X}_{\vec{u}}^r \wedge_L\vec{v}}}\Dso\vec{v}^\beta \vec{w}^\ell, \int_0^1 \mathcal{R}^e \brac{d \psi_{\Tilde{\gamma}(s)} \partial_e \Tilde{\vec{w}}} \ ds \rangle_L\\
    \lesssim_{\Lambda} & \lVert \Dso^{1-\sigma} \vec{v} \rVert_{L^{\frac{2d}{1-\sigma}}(\R^d)} \ \lVert \Dso^\sigma\brac{\vec{v}^\alpha G^{n,k}_\ell \brac{\vec{X}_{\vec{u}}^r \wedge_L\vec{v}}} \rVert_{L^{\frac{2d}{\sigma}}(\R^d)} \ \lVert \Dso\vec{v}^\beta \rVert_{L^{2d}(\R^d)} \ \lVert \vec{w}^\ell \rVert_{L^{\frac{2d}{d-2}}(\R^d)} \ \lVert \Dso \Tilde{\vec{w}} \rVert_{L^{2}(\R^d)}\\
    \lesssim_{\Lambda} & \lVert \Dso \vec{v} \rVert_{L^{2d}(\R^d)}^{1-\sigma} \ \lVert \Dso\brac{\vec{v}^\alpha G^{n,k}_\ell \brac{\vec{X}_{\vec{u}}^r \wedge_L\vec{v}}} \rVert_{L^{2d}(\R^d)}^\sigma \ \lVert \Dso\vec{v} \rVert_{L^{2d}(\R^d)} \ \lVert \Dso \Tilde{\vec{w}} \rVert_{L^{2}(\R^d)}^2\\
    \lesssim_{\Lambda} & \lVert \Dso \vec{v} \rVert_{L^{2d}(\R^d)}^{1-\sigma} \ \lVert \nabla\brac{\vec{v}^\alpha G^{n,k}_\ell \brac{\vec{X}_{\vec{u}}^r \wedge_L\vec{v}}} \rVert_{L^{2d}(\R^d)}^\sigma \ \lVert \Dso\vec{v} \rVert_{L^{2d}(\R^d)} \ \lVert \Dso \Tilde{\vec{w}} \rVert_{L^{2}(\R^d)}^2\\
    \lesssim_{\Lambda} & \brac{\lVert \Dso \vec{u} \rVert_{L^{2d}(\R^d)}^2 + \lVert \Dso \vec{v} \rVert_{L^{2d}(\R^d)}^2} \ \lVert \Dso \Tilde{\vec{w}} \rVert_{L^{2}(\R^d)}^2.
\end{align*}
For \eqref{end_1_3}, using \cref{lem:prod_rule_frac}, we get
\begin{align*}
    &\int_{\R^d} \langle \vec{v} \wedge_LH_{\Dso}\brac{\vec{v}^\alpha G^{n,k}_\ell \brac{\vec{X}_{\vec{u}}^r \wedge_L\vec{v}},\Dso\vec{v}^\beta \vec{w}^\ell}, \int_0^1 \mathcal{R}^e \brac{d \psi_{\Tilde{\gamma}(s)} \partial_e \Tilde{\vec{w}}} \ ds \rangle_L\\
    \lesssim_{\Lambda} & \lVert \Dso^{1-\sigma}\brac{\vec{v}^\alpha G^{n,k}_\ell \brac{\vec{X}_{\vec{u}}^r \wedge_L\vec{v}}} \rVert_{L^{\frac{2d}{2(1-\sigma)-1}}(\R^d)} \ \lVert \Dso^\sigma\brac{\Dso\vec{v}^\beta \vec{w}^\ell} \rVert_{L^{\frac{2d}{d + 2\sigma -1}}(\R^d)} \ \lVert \Dso \Tilde{\vec{w}} \rVert_{L^{2}(\R^d)}\\
    \lesssim_{\Lambda} & \lVert \Dso\brac{\vec{v}^\alpha G^{n,k}_\ell \brac{\vec{X}_{\vec{u}}^r \wedge_L\vec{v}}} \rVert_{L^{2d}(\R^d)} \ \lVert \Dso^\sigma\brac{\Dso\vec{v}^\beta \vec{w}^\ell} \rVert_{L^{\frac{2d}{d + 2\sigma -1}}(\R^d)} \ \lVert \Dso \Tilde{\vec{w}} \rVert_{L^{2}(\R^d)}\\
    \overset{\cref{comp}}{\lesssim_{\Lambda}} & \lVert \nabla\brac{\vec{v}^\alpha G^{n,k}_\ell \brac{\vec{X}_{\vec{u}}^r \wedge_L\vec{v}}} \rVert_{L^{2d}(\R^d)} \ \lVert \Dso^\sigma\brac{\Dso\vec{v}^\beta \vec{w}^\ell} \rVert_{L^{\frac{2d}{d + 2\sigma -1}}(\R^d)} \ \lVert \Dso \Tilde{\vec{w}} \rVert_{L^{2}(\R^d)}\\
    \lesssim_{\Lambda} & \brac{\lVert \Dso^{1+\sigma} \vec{u} \rVert_{L^{\frac{2d}{2(1+\sigma) - 1}}(\R^d)}^2 + \lVert \Dso^{1+\sigma} \vec{v} \rVert_{L^{\frac{2d}{2(1+\sigma) - 1}}(\R^d)}^2} \ \lVert \Dso \Tilde{\vec{w}} \rVert_{L^{2}(\R^d)}^2.
\end{align*}
Finally, for \eqref{end_1_4} we have
\begin{align*}
    &\int_{\R^d} \langle \Dso\vec{v} \wedge_L\vec{v}^\alpha \Dso\vec{v}^\beta G^{n,k}_\ell \vec{w}^\ell\brac{\vec{X}_{\vec{u}}^r \wedge_L\vec{v}}, \int_0^1 \mathcal{R}^e \brac{d \psi_{\Tilde{\gamma}(s)} \partial_e \Tilde{\vec{w}}} \ ds \rangle_L\\
    \lesssim_{\Lambda} & \lVert \Dso\vec{v} \rVert_{L^{2d}(\R^d)}^2 \lVert \Tilde{\vec{w}} \rVert_{L^{\frac{2d}{d-2}}(\R^d)} \ \lVert \Dso \Tilde{\vec{w}} \rVert_{L^{2}(\R^d)}\\
    \lesssim_{\Lambda} & \lVert \Dso\vec{v} \rVert_{L^{2d}(\R^d)}^2 \ \lVert \Dso \Tilde{\vec{w}} \rVert_{L^{2}(\R^d)}^2.
\end{align*}
Therefore,
\begin{align*}
    &\abs{\int_{\R^d} \langle \vec{v} \wedge_LH_{\Dso}(\vec{v} \wedge_L, \int_0^1 \mathcal{R}^e \brac{d \psi_{\Tilde{\gamma}(s)} \partial_e \Tilde{\vec{w}}} \ ds), \vec{X}_{\vec{u}}^r \rangle_L \langle \vec{v} \wedge_L\Dso{\vec{v}, \vec{X}_{\vec{u}}^r} - \vec{X}_{\vec{v}}^r \rangle_L}\\
    \lesssim_{\Lambda} & \ \lVert \Dso\Tilde{\vec{w}} \rVert_{L^2(\R^d)}^2 \brac{\lVert \nabla^{1+\sigma}\vec{u} \rVert_{L^{\frac{2d}{2(1+\sigma)-1}}(\R^d)}^2 + \lVert \nabla^{1 + \sigma}\vec{v} \rVert_{L^{\frac{2d}{2(1+\sigma)-1}}(\R^d)}^2}.
 \end{align*}

\end{proof}

Recall that 
\begin{align*}
    \vec{E}^r_{\vec{u}} - \vec{E}^r_{\vec{v}} 
    &= \int_0^1 d\vec{E}^r_{\Tilde{\gamma}(s)}\Tilde{\vec{w}} \ ds\\
    &= \int_0^1 \sum_{\ell=1}^m\frac{\partial \vec{E}^r_{\Tilde{\gamma}(s)}}{\partial p^\ell} \Tilde{\vec{w}}^\ell \ ds,
\end{align*}
where $\Tilde{\gamma}(s) = s\Tilde{\vec{u}} + (1-s)\Tilde{\vec{v}}$.

\begin{lemma} Let $d \ge 3$ and $\sigma \in (1, d + \frac{1}{2})$. Then we have
    \begin{align*}
        &\abs{\int_{\R^d}\langle\vec{v} \wedge_L H_{\abs{\nabla}}\brac{\vec{v} \wedge_L, \int_0^1 \mathcal{R}^e \brac{d \psi_{\Tilde{\gamma}(s)} \partial_e \Tilde{\vec{w}}} \ ds}, \mathbf{X}_\vec{u}^r\rangle_L\langle \vec{v} \wedge_L\abs\nabla \vec{v}, \vec{X}_{\vec{v}}^k\rangle_L\langle\mathbf{E}_{\Tilde{\vec{u}}}^r, \vec{E}^k_{\Tilde{\vec{u}}}-\vec{E}^k_{\Tilde{\vec{v}}}\rangle}\\
        \lesssim_{\Lambda} & \brac{\lVert \nabla^{1+\sigma}\vec{u} \rVert_{L^{\frac{2d}{2(1+\sigma)-1}}(\R^d)}^2 + \lVert \nabla^{1 + \sigma}\vec{v} \rVert_{L^{\frac{2d}{2(1+\sigma)-1}}(\R^d)}^2}\lVert \Dso\vec{w} \rVert_{L^2(\R^d)}^2.
    \end{align*}
\end{lemma}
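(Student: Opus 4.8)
The plan is to estimate the term \eqref{eq:last_4} in close analogy with the treatment of \eqref{eq:last_3} in the previous lemma. Set $\vec{\Gamma} := \int_0^1 \mathcal{R}^e\brac{d\psi_{\Tilde{\gamma}(s)}\partial_e\Tilde{\vec{w}}}\,ds$. The first step is to expose a copy of $\Tilde{\vec{w}}$: using the frame--difference identity recorded just above the statement,
\[
    \vec{E}^k_{\Tilde{\vec{u}}} - \vec{E}^k_{\Tilde{\vec{v}}} = \int_0^1 \sum_{\ell=1}^m \frac{\partial \vec{E}^k_{\Tilde{\gamma}(s)}}{\partial p^\ell}\, \Tilde{\vec{w}}^\ell\,ds ,
\]
the factor $\langle \mathbf{E}_{\Tilde{\vec{u}}}^r, \vec{E}^k_{\Tilde{\vec{u}}}-\vec{E}^k_{\Tilde{\vec{v}}}\rangle$ becomes $\int_0^1\sum_\ell \langle \mathbf{E}^r_{\Tilde{\vec{u}}}, \partial_{p^\ell}\vec{E}^k_{\Tilde{\gamma}(s)}\rangle\, \Tilde{\vec{w}}^\ell\,ds$ with uniformly bounded coefficients, since $\operatorname{Im}\vec{u},\operatorname{Im}\vec{v}$ lie in the compact set $N$ and hence $\varphi$, $d\vec{X}$, $d\vec{E}$ together with all their derivatives are bounded there. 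I would likewise write $\langle \vec{v}\wedge_L\Dso\vec{v}, \vec{X}^k_{\vec{v}}\rangle_L = \langle \Dso\vec{v}, \vec{X}^k_{\vec{v}}\wedge_L\vec{v}\rangle_L$ --- a bounded coefficient contracted with $\Dso\vec{v}$ --- and use $\langle \vec{v}\wedge_L H_{\Dso}(\vec{v}\wedge_L,\vec{\Gamma}), \vec{X}^r_{\vec{u}}\rangle_L = \langle H_{\Dso}(\vec{v}\wedge_L,\vec{\Gamma}), \vec{X}^r_{\vec{u}}\wedge_L\vec{v}\rangle_L$. After these reductions the integral is a finite sum, over $r,k,\ell$ and against $\int_0^1\cdots\,ds$, of expressions of the form $\int_{\R^d}\langle H_{\Dso}(\vec{v}\wedge_L,\vec{\Gamma}), \vec{X}^r_{\vec{u}}\wedge_L\vec{v}\rangle_L\,\Psi\,\Dso\vec{v}\,\Tilde{\vec{w}}^\ell$, where $\Psi$ denotes a uniformly bounded smooth function of $(\vec{u},\vec{v})$.

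Next I would apply the fractional integration by parts identity used in the two preceding lemmas,
\[
    \int_{\R^d}\langle H_{\Dso}(\vec{a}\wedge_L,\vec{b}),\vec{c}\rangle_L = \int_{\R^d}\langle H_{\Dso}(\vec{a}\wedge_L,\vec{c}),\vec{b}\rangle_L + 2\int_{\R^d}\langle \Dso\vec{a}\wedge_L\vec{c},\vec{b}\rangle_L ,
\]
with $\vec{a}=\vec{v}$, $\vec{b}=\vec{\Gamma}$ and $\vec{c} = \Psi\,\Dso\vec{v}\,\Tilde{\vec{w}}^\ell\,(\vec{X}^r_{\vec{u}}\wedge_L\vec{v})$. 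The purpose of this step is that $\vec{\Gamma}$ afterwards appears as a bare $L^2$ factor in every term and is never hit by a fractional derivative; as explained in the remark preceding \cref{riesz_estimate}, this is exactly what avoids a loss of derivatives, since $\vec{\Gamma}$ already carries, morally, a full derivative of $\vec{w}$. One then simply uses $\lVert\vec{\Gamma}\rVert_{L^2(\R^d)} \lesssim_\Lambda \lVert d\psi_{\Tilde{\gamma}(s)}\partial_e\Tilde{\vec{w}}\rVert_{L^2(\R^d)} \lesssim_\Lambda \lVert\nabla\Tilde{\vec{w}}\rVert_{L^2(\R^d)} \approx \lVert\Dso\vec{w}\rVert_{L^2(\R^d)}$, by $L^2$-boundedness of $\mathcal{R}^e$, \cref{comp} and \cref{isom_est}.

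It remains to bound $\int_{\R^d}\langle H_{\Dso}(\vec{v}\wedge_L, \vec{c}),\vec{\Gamma}\rangle_L$ and the commutator-free term $\int_{\R^d}\langle \Dso\vec{v}\wedge_L\vec{c},\vec{\Gamma}\rangle_L$. For the latter, H\"older's inequality with $\lVert\Dso\vec{v}\rVert_{L^{2d}}^2\,\lVert\Tilde{\vec{w}}\rVert_{L^{2d/(d-2)}}\,\lVert\vec{\Gamma}\rVert_{L^2}$, the Sobolev embedding \cref{lem:sob_in} and the chain \eqref{chain} already give the claimed bound. For the former, I would expand $H_{\Dso}(\vec{v}\wedge_L,\ \cdot\ )$ by \cref{comm_lem} (iterated) so that the single fractional derivative falls on exactly one of the factors of $\vec{c}$: on $\Psi$ or $\vec{X}^r_{\vec{u}}\wedge_L\vec{v}$ (bounded smooth functions of $(\vec{u},\vec{v})$, whose $\Dso^\sigma$ is controlled through \cref{lem:prod_rule}, \cref{comp}, \cref{isom_est} and \cref{lem:gag_in} by $\lVert\Dso\vec{u}\rVert_{L^{2d}}+\lVert\Dso\vec{v}\rVert_{L^{2d}}$), on the $\Dso\vec{v}$ factor paired with $\Tilde{\vec{w}}^\ell$ (controlled by \cref{lem:prod_rule_frac} in terms of $\lVert\Dso^{1+\sigma}\vec{v}\rVert_{L^{2d/(2(1+\sigma)-1)}}\lVert\Dso\Tilde{\vec{w}}\rVert_{L^2}$), or on $\Tilde{\vec{w}}^\ell$. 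Distributing and invoking \cref{lem:prod_rule}, \cref{lem:prod_rule_frac}, \cref{lem:gag_in}, \cref{lem:sob_in}, the chain \eqref{chain} (to pass between $\lVert\Dso\vec{v}\rVert_{L^{2d}}$ and $\lVert\Dso^{1+\sigma}\vec{v}\rVert_{L^{2d/(2(1+\sigma)-1)}}$) and \cref{isom_est}, every summand is bounded by $\brac{\lVert\Dso^{1+\sigma}\vec{u}\rVert_{L^{2d/(2(1+\sigma)-1)}}^2+\lVert\Dso^{1+\sigma}\vec{v}\rVert_{L^{2d/(2(1+\sigma)-1)}}^2}\lVert\Dso\vec{w}\rVert_{L^2}^2$, exactly as in the estimates of \eqref{end_1_1}--\eqref{end_1_4}; as there, it suffices to run the argument with a small exponent, the general statement then following from \eqref{chain}. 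Summing over $r,k,\ell$ and integrating in $s$ gives the lemma.

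The main obstacle is the bookkeeping of the fractional derivatives rather than any new analytic difficulty: the commutator $H_{\Dso}$ carries only one derivative, so the splitting must be arranged so that no copy of $\vec{u}$ or $\vec{v}$ ever receives more than $1+\sigma$ derivatives in $L^{2d/(2(1+\sigma)-1)}$ and neither $\vec{w}$ nor $\Tilde{\vec{w}}$ more than one in $L^2$. The delicate factor is $\vec{\Gamma}$, which must be kept derivative-free --- this is precisely why the fractional integration by parts against the Lorentzian cross product is performed first --- after which each piece is a routine application of the commutator and product estimates of \hyperref[sec2]{Section 2}.
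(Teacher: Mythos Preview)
Your proposal is correct and follows essentially the same route as the paper's own argument: expose $\Tilde{\vec{w}}$ via the frame--difference formula, move $\vec{\Gamma}$ out of the commutator slot by the fractional integration--by--parts identity $\int\langle H_{\Dso}(\vec{a}\wedge_L,\vec{b}),\vec{c}\rangle_L = \int\langle H_{\Dso}(\vec{a}\wedge_L,\vec{c}),\vec{b}\rangle_L + 2\int\langle \Dso\vec{a}\wedge_L\vec{c},\vec{b}\rangle_L$, split $H_{\Dso}(\vec{v}\wedge_L,\vec{c})$ by \cref{comm_lem}, and then estimate each piece with \cref{lem:prod_rule}, \cref{lem:prod_rule_frac}, \cref{lem:gag_in}, \cref{lem:sob_in} and \cref{isom_est}. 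The paper only differs in presentation: instead of your abstract bounded factor $\Psi$, it writes the Levi--Civita expansion $\langle\vec{v}\wedge_L\Dso\vec{v},\vec{X}^k_{\vec{v}}\rangle_L = \sum_{\alpha,\beta,n}\epsilon_{\alpha,\beta,n}\vec{v}^\alpha\Dso\vec{v}^\beta\vec{X}^{k,n}_{\vec{v}}$ explicitly and labels the resulting four terms \eqref{end_2_1}--\eqref{end_2_4}, which are the exact analogues of \eqref{end_1_1}--\eqref{end_1_4} you invoke.
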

\begin{proof}
\begin{align*}
    &\int_{\R^d}\langle\vec{v} \wedge_L H_{\abs{\nabla}}\brac{\vec{v} \wedge_L, \int_0^1 \mathcal{R}^e \brac{d \psi_{\Tilde{\gamma}(s)} \partial_e \Tilde{\vec{w}}} \ ds}, \mathbf{X}_\vec{u}^r\rangle_L\langle \vec{v} \wedge_L\abs\nabla \vec{v}, \vec{X}_{\vec{v}}^k\rangle_L\langle\mathbf{E}_{\Tilde{\vec{u}}}^r, \vec{E}^k_{\Tilde{\vec{u}}}-\vec{E}^k_{\Tilde{\vec{v}}}\rangle_{\R^m}\\
    &=\int_{\R^d} \langle \vec{v} \wedge_LH_{\Dso}(\vec{v} \wedge_L, \int_0^1 \mathcal{R}^e \brac{d \psi_{\Tilde{\gamma}(s)} \partial_e \Tilde{\vec{w}}} \ ds), \vec{X}_{\vec{u}}^r \rangle_L \langle \vec{v} \wedge_L\Dso\vec{v}, \vec{X}^k_{\vec{v}} \rangle_L \left\langle \vec{E}^r_{\Tilde{\vec{u}}}, \int_0^1 d\vec{E}^k_{\gamma(s)} \Tilde{\vec{w}}^\ell\ ds \right\rangle\\
    =&\sum_{j=1}^m \int_{\R^d} \langle \vec{v} \wedge_LH_{\Dso}(\vec{v} \wedge_L, \int_0^1 \mathcal{R}^e \brac{d^2 \psi_{\Tilde{\gamma}(s)} d\varphi_{\vec{f}} \partial_e \vec{f} \ \Tilde{\vec{w}}} ds), \vec{X}_{\vec{u}}^r \rangle_L  \langle \vec{v} \wedge_L\Dso\vec{v}, \vec{X}^k_{\vec{v}} \rangle_L \vec{E}^{k,j}_{\Tilde{\vec{u}}}\int_0^1  \sum_{\ell=1}^m \frac{\partial\vec{E}^{r,j}_{\Tilde{\gamma}(s)}}{\partial q^\ell} \Tilde{\vec{w}}^\ell \ ds.
\end{align*}
Let $F^{j}_\ell = \int_0^1\frac{\partial\vec{E}^{k,j}_{\Tilde{\gamma}(s)}}{\partial p^{\ell}} \ ds$. So we have
\begin{align}
    &\int_{\R^d} \langle \vec{v} \wedge_LH_{\Dso}(\vec{v} \wedge_L, \int_0^1 \mathcal{R}^e \brac{d \psi_{\Tilde{\gamma}(s)} \partial_e \Tilde{\vec{w}}} \ ds), \vec{X}_{\vec{u}}^r \rangle_L \vec{v}^\alpha \Dso\vec{v}^\beta\vec{X}^{k,n}_{\vec{v}} \vec{E}^{r,j}_{\Tilde{\vec{u}}} F^{j}_\ell \Tilde{\vec{w}}^\ell \nonumber\\
    =& \int_{\R^d} \langle H_{\Dso}(\vec{v} \wedge_L, \int_0^1 \mathcal{R}^e \brac{d \psi_{\Tilde{\gamma}(s)} \partial_e \Tilde{\vec{w}}} \ ds), \vec{X}_{\vec{u}}^r \wedge_L\vec{v} \rangle_L \vec{v}^\alpha \Dso\vec{v}^\beta\vec{X}^{k,n}_{\vec{v}} \vec{E}^{r,j}_{\Tilde{\vec{u}}} F^{j}_\ell \Tilde{\vec{w}}^\ell\nonumber\\
    =& \int_{\R^d} \langle H_{\Dso}\brac{\vec{v} \wedge_L, \vec{v}^\alpha \Dso\vec{v}^\beta\vec{X}^{k,n}_{\vec{v}} \vec{E}^{r,j}_{\Tilde{\vec{u}}} F^{j}_\ell \Tilde{\vec{w}}^\ell\brac{\vec{X}_{\vec{u}}^r \wedge_L\vec{v}}}, \int_0^1 \mathcal{R}^e \brac{d \psi_{\Tilde{\gamma}(s)} \partial_e \Tilde{\vec{w}}} \ ds \rangle_L\nonumber\\
    &+ 2\int_{\R^d} \langle \Dso\vec{v} \wedge_L\vec{v}^\alpha \Dso\vec{v}^\beta\vec{X}^{k,n}_{\vec{v}} \vec{E}^{r,j}_{\Tilde{\vec{u}}} F^{j}_\ell \Tilde{\vec{w}}^\ell\brac{\vec{X}_{\vec{u}}^r \wedge_L\vec{v}}, \int_0^1 \mathcal{R}^e \brac{d \psi_{\Tilde{\gamma}(s)} \partial_e \Tilde{\vec{w}}} \ ds \rangle_L\nonumber\\
    =& \int_{\R^d} \langle H_{\Dso}\brac{\vec{v} \wedge_L\vec{v}^\alpha \vec{X}^{k,n}_{\vec{v}} \vec{E}^{r,j}_{\Tilde{\vec{u}}} F^{j}_\ell  \brac{\vec{X}_{\vec{u}}^r \wedge_L\vec{v}},\Dso\vec{v}^\beta \Tilde{\vec{w}}^\ell}, \int_0^1 \mathcal{R}^e \brac{d \psi_{\Tilde{\gamma}(s)} \partial_e \Tilde{\vec{w}}} \ ds \rangle_L\label{end_2_1}\\
    &+ \int_{\R^d} \langle H_{\Dso}\brac{\vec{v} \wedge_L, \vec{v}^\alpha \vec{X}^{k,n}_{\vec{v}} \vec{E}^{r,j}_{\Tilde{\vec{u}}} F^{j}_\ell  \brac{\vec{X}_{\vec{u}}^r \wedge_L\vec{v}}}\Dso\vec{v}^\beta \Tilde{\vec{w}}^\ell, \int_0^1 \mathcal{R}^e \brac{d \psi_{\Tilde{\gamma}(s)} \partial_e \Tilde{\vec{w}}} \ ds \rangle_L\label{end_2_2}\\
    &- \int_{\R^d} \langle \vec{v} \wedge_LH_{\Dso}\brac{\vec{v}^\alpha \vec{X}^{k,n}_{\vec{v}} \vec{E}^{r,j}_{\Tilde{\vec{u}}} F^{j}_\ell  \brac{\vec{X}_{\vec{u}}^r \wedge_L\vec{v}},\Dso\vec{v}^\beta \Tilde{\vec{w}}^\ell}, \int_0^1 \mathcal{R}^e \brac{d \psi_{\Tilde{\gamma}(s)} \partial_e \Tilde{\vec{w}}} \ ds \rangle_L\label{end_2_3}\\
    &+ 2\int_{\R^d} \langle \Dso\vec{v} \wedge_L\vec{v}^\alpha \Dso\vec{v}^\beta\vec{X}^{k,n}_{\vec{v}} \vec{E}^{r,j}_{\Tilde{\vec{u}}} F^{j}_\ell \Tilde{\vec{w}}^\ell\brac{\vec{X}_{\vec{u}}^r \wedge_L\vec{v}}, \int_0^1 \mathcal{R}^e \brac{d \psi_{\Tilde{\gamma}(s)} \partial_e \Tilde{\vec{w}}} \ ds \rangle_L\label{end_2_4}.
\end{align}
For \eqref{end_2_1}, we use \cref{lem:prod_rule_frac} to get
\begin{align*}
    &\int_{\R^d} \left\langle H_{\Dso}\brac{\vec{v} \wedge_L\vec{v}^\alpha \vec{X}^{k,n}_{\vec{v}} \vec{E}^{r,j}_{\Tilde{\vec{u}}} F^{j}_\ell  \brac{\vec{X}_{\vec{u}}^r \wedge_L\vec{v}},\Dso\vec{v}^\beta \Tilde{\vec{w}}^\ell}, \int_0^1 \mathcal{R}^e \brac{d \psi_{\Tilde{\gamma}(s)} \partial_e \Tilde{\vec{w}}} \ ds \right\rangle_L\\
    \lesssim_{\Lambda} & \lVert \Dso^{1-\sigma}\brac{\vec{v} \wedge_L\vec{v}^\alpha \vec{X}^{k,n}_{\vec{v}} \vec{E}^{r,j}_{\Tilde{\vec{u}}} F^{j}_\ell  \brac{\vec{X}_{\vec{u}}^r \wedge_L\vec{v}}} \rVert_{L^{\frac{2d}{2(1-\sigma) - 1}}(\R^d)} \ \lVert\Dso^{\sigma}\brac{\Dso\vec{v}^\beta \Tilde{\vec{w}}^\ell} \rVert_{L^{\frac{2d}{d+2\sigma -1}}(\R^d)} \ \lVert \Dso \Tilde{\vec{w}} \rVert_{L^{2}(\R^d)}\\
    \lesssim_{\Lambda} & \lVert \Dso\brac{\vec{v} \wedge_L\vec{v}^\alpha \vec{X}^{k,n}_{\vec{v}} \vec{E}^{r,j}_{\Tilde{\vec{u}}} F^{j}_\ell  \brac{\vec{X}_{\vec{u}}^r \wedge_L\vec{v}}} \rVert_{L^{2d}(\R^d)} \ \lVert\Dso^{\sigma}\brac{\Dso\vec{v}^\beta \Tilde{\vec{w}}^\ell} \rVert_{L^{\frac{2d}{d+2\sigma -1}}(\R^d)} \ \lVert \Dso \Tilde{\vec{w}} \rVert_{L^{2}(\R^d)}\\
    \lesssim_{\Lambda} & \lVert \nabla\brac{\vec{v} \wedge_L\vec{v}^\alpha \vec{X}^{k,n}_{\vec{v}} \vec{E}^{r,j}_{\Tilde{\vec{u}}} F^{j}_\ell  \brac{\vec{X}_{\vec{u}}^r \wedge_L\vec{v}}} \rVert_{L^{2d}(\R^d)} \ \lVert\Dso^{\sigma}\brac{\Dso\vec{v}^\beta \Tilde{\vec{w}}^\ell} \rVert_{L^{\frac{2d}{d+2\sigma -1}}(\R^d)} \ \lVert \Dso \Tilde{\vec{w}} \rVert_{L^{2}(\R^d)}\\
    \lesssim_{\Lambda} & \brac{\lVert \Dso^{1+\sigma}\vec{u} \rVert_{L^{\frac{2d}{2(1+\sigma) - 1}}(\R^d)}^2 + \lVert \Dso^{1+\sigma}\vec{u} \rVert_{L^{\frac{2d}{2(1+\sigma) - 1}}(\R^d)}^2} \ \lVert \Dso \Tilde{\vec{w}} \rVert_{L^{2}(\R^d)}^2.
\end{align*}
For \eqref{end_2_2}, we use \cref{lem:gag_in} to get
\begin{align*}
    &\int_{\R^d} \langle H_{\Dso}\brac{\vec{v} \wedge_L, \vec{v}^\alpha \vec{X}^{k,n}_{\vec{v}} \vec{E}^{r,j}_{\Tilde{\vec{u}}} F^{j}_\ell  \brac{\vec{X}_{\vec{u}}^r \wedge_L\vec{v}}}\Dso\vec{v}^\beta \Tilde{\vec{w}}^\ell, \int_0^1 \mathcal{R}^e \brac{d \psi_{\Tilde{\gamma}(s)} \partial_e \Tilde{\vec{w}}} \ ds \rangle_L\\
    \lesssim_{\Lambda} & \lVert \Dso^{1-\sigma} \vec{v} \rVert_{L^{\frac{2d}{1-\sigma}}(\R^d)} \ \lVert \Dso^\sigma\brac{\vec{v}^\alpha \vec{X}^{k,n}_{\vec{v}} \vec{E}^{r,j}_{\Tilde{\vec{u}}} F^{j}_\ell  \brac{\vec{X}_{\vec{u}}^r \wedge_L\vec{v}}} \rVert_{L^{\frac{2d}{\sigma}}(\R^d)} \ \lVert \Dso\vec{v}^\beta \rVert_{L^{2d}(\R^d)} \ \lVert \Tilde{\vec{w}}^\ell \rVert_{L^{\frac{2d}{d-2}}(\R^d)} \ \lVert \Dso \Tilde{\vec{w}} \rVert_{L^{2}(\R^d)}\\
    \lesssim_{\Lambda} & \lVert \Dso \vec{v} \rVert_{L^{2d}(\R^d)}^{1-\sigma} \ \lVert \Dso\brac{\vec{v}^\alpha \vec{X}^{k,n}_{\vec{v}} \vec{E}^{r,j}_{\Tilde{\vec{u}}} F^{j}_\ell  \brac{\vec{X}_{\vec{u}}^r \wedge_L\vec{v}}} \rVert_{L^{2d}(\R^d)}^\sigma \ \lVert \Dso\vec{v} \rVert_{L^{2d}(\R^d)} \ \lVert \Dso \Tilde{\vec{w}} \rVert_{L^{2}(\R^d)}^2\\
    \lesssim_{\Lambda} & \lVert \Dso \vec{v} \rVert_{L^{2d}(\R^d)}^{1-\sigma} \ \lVert \nabla\brac{\vec{v}^\alpha \vec{X}^{k,n}_{\vec{v}} \vec{E}^{r,j}_{\Tilde{\vec{u}}} F^{j}_\ell  \brac{\vec{X}_{\vec{u}}^r \wedge_L\vec{v}}} \rVert_{L^{2d}(\R^d)}^\sigma \ \lVert \Dso\vec{v} \rVert_{L^{2d}(\R^d)} \ \lVert \Dso \Tilde{\vec{w}} \rVert_{L^{2}(\R^d)}^2\\
    \lesssim_{\Lambda} & \brac{\lVert \Dso \vec{u} \rVert_{L^{2d}(\R^d)}^2 + \lVert \Dso \vec{v} \rVert_{L^{2d}(\R^d)}^2} \ \lVert \Dso \Tilde{\vec{w}} \rVert_{L^{2}(\R^d)}^2.
\end{align*}
For \eqref{end_2_3}, using \cref{lem:prod_rule_frac}, we get
\begin{align*}
    &\int_{\R^d} \langle \vec{v} \wedge_LH_{\Dso}\brac{\vec{v}^\alpha \vec{X}^{k,n}_{\vec{v}} \vec{E}^{r,j}_{\Tilde{\vec{u}}} F^{j}_\ell  \brac{\vec{X}_{\vec{u}}^r \wedge_L\vec{v}},\Dso\vec{v}^\beta \Tilde{\vec{w}}^\ell}, \int_0^1 \mathcal{R}^e \brac{d \psi_{\Tilde{\gamma}(s)} \partial_e \Tilde{\vec{w}}} \ ds \rangle_L\\
    \lesssim_{\Lambda} & \lVert \Dso^{1-\sigma}\brac{\vec{v}^\alpha \vec{X}^{k,n}_{\vec{v}} \vec{E}^{r,j}_{\Tilde{\vec{u}}} F^{j}_\ell  \brac{\vec{X}_{\vec{u}}^r \wedge_L\vec{v}}} \rVert_{L^{\frac{2d}{2(1-\sigma)-1}}(\R^d)} \ \lVert \Dso^\sigma\brac{\Dso\vec{v}^\beta \Tilde{\vec{w}}^\ell} \rVert_{L^{\frac{2d}{d + 2\sigma -1}}(\R^d)} \ \lVert \Dso \Tilde{\vec{w}} \rVert_{L^{2}(\R^d)}\\
    \lesssim_{\Lambda} & \lVert \Dso\brac{\vec{v}^\alpha \vec{X}^{k,n}_{\vec{v}} \vec{E}^{r,j}_{\Tilde{\vec{u}}} F^{j}_\ell  \brac{\vec{X}_{\vec{u}}^r \wedge_L\vec{v}}} \rVert_{L^{2d}(\R^d)} \ \lVert \Dso^\sigma\brac{\Dso\vec{v}^\beta \Tilde{\vec{w}}^\ell} \rVert_{L^{\frac{2d}{d + 2\sigma -1}}(\R^d)} \ \lVert \Dso \Tilde{\vec{w}} \rVert_{L^{2}(\R^d)}\\
    \lesssim_{\Lambda} & \lVert \nabla\brac{\vec{v}^\alpha \vec{X}^{k,n}_{\vec{v}} \vec{E}^{r,j}_{\Tilde{\vec{u}}} F^{j}_\ell  \brac{\vec{X}_{\vec{u}}^r \wedge_L\vec{v}}} \rVert_{L^{2d}(\R^d)} \ \lVert \Dso^\sigma\brac{\Dso\vec{v}^\beta \Tilde{\vec{w}}^\ell} \rVert_{L^{\frac{2d}{d + 2\sigma -1}}(\R^d)} \ \lVert \Dso \Tilde{\vec{w}} \rVert_{L^{2}(\R^d)}\\
    \lesssim_{\Lambda} & \brac{\lVert \Dso^{1+\sigma} \vec{u} \rVert_{L^{\frac{2d}{2(1+\sigma) - 1}}(\R^d)}^2 + \lVert \Dso^{1+\sigma} \vec{v} \rVert_{L^{\frac{2d}{2(1+\sigma) - 1}}(\R^d)}^2} \ \lVert \Dso \Tilde{\vec{w}} \rVert_{L^{2}(\R^d)}^2.
\end{align*}
Finally, for \eqref{end_2_4} we have
\begin{align*}
    &\int_{\R^d} \langle \Dso\vec{v} \wedge_L\vec{v}^\alpha \Dso\vec{v}^\beta\vec{X}^{k,n}_{\vec{v}} \vec{E}^{r,j}_{\Tilde{\vec{u}}} F^{j}_\ell \Tilde{\vec{w}}^\ell\brac{\vec{X}_{\vec{u}}^r \wedge_L\vec{v}}, \int_0^1 \mathcal{R}^e \brac{d \psi_{\Tilde{\gamma}(s)} \partial_e \Tilde{\vec{w}}} \ ds \rangle_L\\
    \lesssim_{\Lambda} & \lVert \Dso\vec{v} \rVert_{L^{2d}(\R^d)}^2 \lVert \Tilde{\vec{w}} \rVert_{L^{\frac{2d}{d-2}}(\R^d)} \ \lVert \Dso \Tilde{\vec{w}} \rVert_{L^{2}(\R^d)}\\
    \lesssim_{\Lambda} & \lVert \Dso\vec{v} \rVert_{L^{2d}(\R^d)}^2 \ \lVert \Dso \Tilde{\vec{w}} \rVert_{L^{2}(\R^d)}^2.
\end{align*}
Therefore,
\begin{align*}
    &\abs{\int_{\R^d} \langle \vec{v} \wedge_LH_{\Dso}(\vec{v} \wedge_L, \int_0^1 \mathcal{R}^e \brac{d \psi_{\Tilde{\gamma}(s)} \partial_e \Tilde{\vec{w}}} \ ds), \vec{X}_{\vec{u}}^r \rangle_L \langle \vec{v} \wedge_L\Dso{\vec{v}, \vec{X}_{\vec{u}}^r} - \vec{X}_{\vec{v}}^r \rangle_L}\\
    \lesssim_{\Lambda} & \ \lVert \Dso\Tilde{\vec{w}} \rVert_{L^2(\R^d)}^2 \brac{\lVert \nabla^{1+\sigma}\vec{u} \rVert_{L^{\frac{2d}{2(1+\sigma)-1}}(\R^d)}^2 + \lVert \nabla^{1 + \sigma}\vec{v} \rVert_{L^{\frac{2d}{2(1+\sigma)-1}}(\R^d)}^2}.
 \end{align*}
\end{proof}
This concludes the estimates required for our proof of \cref{thm:1}.
\section{Uniqueness Proof of Theorem~\ref{thm:1}}
By Gr\"{o}nwall's inequality,
\begin{align*}
    E'(t) \lesssim_{\Lambda} \Sigma(t) E(t)
\end{align*}
implies that 
\begin{align*}
    E(t) \lesssim_ E(0) \exp \brac{\int_0^t \Sigma(s) \ ds}
\end{align*} 
for all $t \in [0,T]$.
By assumption \eqref{first} and the estimate for $\Sigma(t)$, $\exp \brac{\int_0^t \Sigma(s) \ ds}$ is finite. If $\vec{u}(0) = \vec{0}$, then $\Tilde{\vec{u}}(0) = \Tilde{\vec{v}}(0)$. Thus, $\nabla \Tilde{\vec{u}}(0) = \nabla \Tilde{\vec{v}}(0)$. Since $\vec{u}$ and $\vec{v}$ solve \eqref{eq:half-wave},
\begin{align*}
    \partial_t \Tilde{\vec{u}} - \partial_t \Tilde{\vec{v}}|_{t=0} =d\varphi_{\vec{u}}\brac{\vec{u} \wedge_L\Dso \vec{u}}|_{t=0} - d\varphi_{\vec{v}}\brac{\vec{v} \wedge_L\Dso \vec{v}}|_{t=0} = 0.
\end{align*}
Therefore, $E(0) = 0$. Hence, $E(t) = 0$ for all $t \in [0,T]$. Hence, $\Tilde{\vec{w}}$ is a constant, but since $\Tilde{\vec{u}}(0) - \Tilde{\vec{v}}(0) = 0$, we have that $\Tilde{\vec{w}} = 0$. Therefore, $\Tilde{\vec{u}} = \Tilde{\vec{v}}$, which implies that $\vec{u} = \vec{v}$.
\appendix

\appendix
\section{Reformulation of Hyperbolic Half-Wave Maps Equation} \label{reform}
Using $\vec{a} \wedge_L\brac{\vec{b} \wedge_L\vec{c}} = \langle \vec{b},\vec{a} \rangle_L \vec{c} - \langle \vec{c}, \vec{a} \rangle_L \vec{b}$,
\begin{align*}
    \vec{u}_{tt} 
    &= \partial_t \vec{u} \wedge_L\Dso \vec{u} +\vec{u} \wedge_L\Dso \partial_t \vec{u}\\
    &= \brac{\vec{u} \wedge_L\Dso \vec{u}} \wedge_L\Dso \vec{u} +\vec{u} \wedge_L\Dso \brac{\vec{u} \wedge_L\Dso \vec{u}}\\
    &= -\langle\vec{u}, \Dso \vec{u}\rangle_L \Dso \vec{u} + \langle \Dso \vec{u}, \Dso \vec{u} \rangle_L\vec{u} +\vec{u} \wedge_L\Dso \brac{\vec{u} \wedge_L\Dso \vec{u}}
\end{align*}
and
\begin{align*}
    -\Delta\vec{u} = \langle\vec{u},\vec{u} \rangle_L \Delta\vec{u} = -\vec{u} \wedge_L\brac{\vec{u} \wedge_L(-\Delta)\vec{u}} - \langle\vec{u}, -\Delta\vec{u} \rangle_L,
\end{align*}
thus
\begin{align*}
    \vec{u}_{tt} - \Delta \vec{u}
    = &-\langle\vec{u}, \Dso \vec{u}\rangle_L \Dso \vec{u} + \langle \Dso \vec{u}, \Dso \vec{u} \rangle_L\vec{u} +\vec{u} \wedge_L\Dso \brac{\vec{u} \wedge_L\Dso \vec{u}} \\
    & -\vec{u} \wedge_L\brac{\vec{u} \wedge_L(-\Delta)\vec{u}} + \langle\vec{u}, \Delta\vec{u} \rangle_L\\
    = &-\langle\vec{u}, \Dso \vec{u}\rangle_L \Dso \vec{u} + \langle \Dso \vec{u}, \Dso \vec{u} \rangle_L\vec{u} +\vec{u} \wedge_L\Dso \brac{\vec{u} \wedge_L\Dso \vec{u}} \\
    & -\vec{u} \wedge_L\brac{\vec{u} \wedge_L(-\Delta)\vec{u}} - \langle \nabla\vec{u}, \nabla\vec{u} \rangle_L.
\end{align*}
Observe that $\langle \Dso \vec{u}, \Dso \vec{u} \rangle_L\vec{u} - \langle\vec{u}, \Dso \vec{u}\rangle_L \Dso \vec{u} = a\vec{u} + b\vec{X}^1_{\vec{u}} + c\vec{X}^2_{\vec{u}}$ for $a,b,c \in \R$. So
\begin{align*}
    a &=\big\langle \langle \Dso \vec{u}, \Dso \vec{u} \rangle_L\vec{u} - \langle\vec{u}, \Dso \vec{u}\rangle_L \Dso \vec{u},\vec{u} \big\rangle_L\\
    &= \langle \langle\vec{u},\vec{u} \rangle_L\Dso \vec{u} - \langle\vec{u} , \Dso \vec{u} \rangle_L\vec{u}, \Dso \vec{u} \rangle_L\\
    &= \langle\vec{u} \wedge_L\brac{\vec{u} \wedge_L\Dso \vec{u}}, \Dso \vec{u} \rangle_L\\
    &= -\langle\vec{u} \wedge_L\Dso \vec{u},\vec{u} \wedge_L\Dso \vec{u} \rangle_L\\
    &= -\langle \partial_t \vec{u}, \partial_t \vec{u} \rangle_L.
\end{align*}
Hence,
\begin{align*}
    \begin{aligned}
    \vec{u}_{tt} - \Delta \vec{u} = &(|\partial_t \vec{u}|_L^2 - |\nabla \vec{u}|_L^2) \vec{u} \\
    &+ \vec{u} \wedge_L|\nabla|(\vec{u} \wedge_L|\nabla|\vec{u}) - \vec{u} \wedge_L(\vec{u} \wedge_L(-\Delta \vec{u})) \\
    &- P_\vec{u}(\langle\vec{u} , |\nabla| \vec{u}\rangle_L|\nabla|\vec{u}).
\end{aligned}
\end{align*}
\section{Computations on the hyperbolic paraboloid}

\begin{lemma} \label{tang_def}
    we show $\brac{T_{\vec{u}}\mathbb{H}^2}^\perp =  \operatorname{span}\{\vec{u}\}$.
\end{lemma}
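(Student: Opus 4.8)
The plan is to realize $\mathbb{H}^2$ as a regular level set, read the tangent space off the differential of the defining function, and then use only \emph{non-degeneracy} of the Lorentzian form (and not positive-definiteness, which fails here). So first I would introduce $F\colon\R^3\to\R$, $F(\vec{p})=\langle\vec{p},\vec{p}\rangle_L=-p_0^2+p_1^2+p_2^2$, so that $\mathbb{H}^2\subset F^{-1}(-1)$. A one-line computation gives $dF_{\vec{u}}(\vec{v})=2\langle\vec{u},\vec{v}\rangle_L$ for every $\vec{v}\in\R^3$; since $\langle\vec{u},\vec{u}\rangle_L=-1\neq 0$ the map $dF_{\vec{u}}$ is surjective, so $-1$ is a regular value of $F$ and $T_{\vec{u}}\mathbb{H}^2=\ker dF_{\vec{u}}=\{\vec{v}\in\R^3:\langle\vec{u},\vec{v}\rangle_L=0\}$. (This is also consistent with the formula for $P_{\vec{u}}$ recorded earlier, which projects exactly onto this subspace.)

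The inclusion $\operatorname{span}\{\vec{u}\}\subseteq(T_{\vec{u}}\mathbb{H}^2)^\perp$ is then immediate: by the description of $T_{\vec{u}}\mathbb{H}^2$ above, every $\vec{v}\in T_{\vec{u}}\mathbb{H}^2$ satisfies $\langle\vec{u},\vec{v}\rangle_L=0$, so $\vec{u}$, hence any scalar multiple of it, is Lorentzian-orthogonal to all of $T_{\vec{u}}\mathbb{H}^2$. Note $\vec{u}\neq\vec{0}$ since $\langle\vec{u},\vec{u}\rangle_L=-1$, so $\operatorname{span}\{\vec{u}\}$ is genuinely one-dimensional.

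For the reverse inclusion I would use the splitting $\R^3=\operatorname{span}\{\vec{u}\}\oplus T_{\vec{u}}\mathbb{H}^2$, valid because $\vec{u}\notin T_{\vec{u}}\mathbb{H}^2$ (as $\langle\vec{u},\vec{u}\rangle_L\neq 0$) and the dimensions add to $3$. Given $\vec{x}\in(T_{\vec{u}}\mathbb{H}^2)^\perp$, write $\vec{x}=\lambda\vec{u}+\vec{y}$ with $\lambda=-\langle\vec{x},\vec{u}\rangle_L$ and $\vec{y}\in T_{\vec{u}}\mathbb{H}^2$; one checks directly that this $\lambda$ makes $\vec{y}\in T_{\vec{u}}\mathbb{H}^2$. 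Since $\operatorname{span}\{\vec{u}\}\subseteq(T_{\vec{u}}\mathbb{H}^2)^\perp$, subtracting gives $\vec{y}=\vec{x}-\lambda\vec{u}\in(T_{\vec{u}}\mathbb{H}^2)^\perp$, so $\vec{y}\in T_{\vec{u}}\mathbb{H}^2\cap(T_{\vec{u}}\mathbb{H}^2)^\perp$. Then $\langle\vec{y},\cdot\rangle_L$ vanishes on $T_{\vec{u}}\mathbb{H}^2$ and also $\langle\vec{y},\vec{u}\rangle_L=0$ because $\vec{y}\in T_{\vec{u}}\mathbb{H}^2$; by the direct-sum decomposition, $\langle\vec{y},\cdot\rangle_L\equiv 0$ on all of $\R^3$, and non-degeneracy of the Lorentzian form forces $\vec{y}=0$, i.e.\ $\vec{x}=\lambda\vec{u}\in\operatorname{span}\{\vec{u}\}$.

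The main point to watch — the one place a careless argument would fail — is that $\langle\cdot,\cdot\rangle_L$ is indefinite, so from $\langle\vec{y},\vec{y}\rangle_L=0$ one cannot conclude $\vec{y}=0$; the vanishing must be established against \emph{every} vector of $\R^3$ and then combined with non-degeneracy. Everything else is elementary linear algebra and a single differentiation of the quadratic form defining $\mathbb{H}^2$.
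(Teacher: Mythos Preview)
Your proof is correct. The approach differs from the paper's in how the tangent space is identified: the paper writes down an explicit global parametrization $\Phi(p_1,p_2)=(\sqrt{1+p_1^2+p_2^2},\,p_1,\,p_2)$ of $\mathbb{H}^2$, computes $d\Phi$, and checks directly that each column of $d\Phi$ is Lorentz-orthogonal to $\vec{u}$; you instead realize $\mathbb{H}^2$ as a regular level set of $F(\vec{p})=\langle\vec{p},\vec{p}\rangle_L$ and read off $T_{\vec{u}}\mathbb{H}^2=\ker dF_{\vec{u}}$. Both routes give the same description $T_{\vec{u}}\mathbb{H}^2=\{\vec{v}:\langle\vec{u},\vec{v}\rangle_L=0\}$, and the paper then simply declares the conclusion by an implicit dimension count. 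Your argument for the reverse inclusion is more explicit and, in fact, more careful: you correctly flag that indefiniteness of $\langle\cdot,\cdot\rangle_L$ prevents concluding $\vec{y}=0$ from $\langle\vec{y},\vec{y}\rangle_L=0$ alone, and instead use the direct-sum splitting together with non-degeneracy. The level-set approach is slightly more economical (no parametrization to write down), while the paper's explicit $\Phi$ has the side benefit of feeding directly into the construction of the global orthonormal frame in the subsequent remark.
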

\begin{proof}
Let $\Phi: \mathbb{R}^2 \to \mathbb{H}^2\subset \mathbb{R}^3$ be defined by $\Phi(p_1,p_2) = \brac{\sqrt{1+p_1^2 + p_2^2}, p_1, p_2}$. Note that $\Phi$ is a diffeomorphism. Then
\begin{align*}
    d\Phi_p 
    =
    \begin{pmatrix}
        \frac{p_1}{\sqrt{1 + p_1 + p_2}} & \frac{p_2}{\sqrt{1 + p_1 + p_2}} \\
        1 & 0 \\
        0 & 1
    \end{pmatrix}
    : T_p \mathbb{H}^2 \to T_{\Phi(p)} \mathbb{H}^2 \subset \R^3.
\end{align*}
So 
\begin{align*}
    \langle d\Phi_{\Phi^{-1}\vec{u}}\vec{x}, \vec{u} \rangle_L 
    &= 
    \left\langle
    \begin{pmatrix}
        \frac{\vec{u}_1\vec{x}_1}{\sqrt{1+\vec{u}_1^2 + \vec{u}_2^2}} + \frac{\vec{u}_2\vec{x}_2}{\sqrt{1+\vec{u}_1^2 + \vec{u}_2^2}}\\
        \vec{x}_1\\
        \vec{x}_2
    \end{pmatrix},
    \begin{pmatrix}
    \vec{u}_0\\
    \vec{u}_1\\
    \vec{u}_2
    \end{pmatrix}
    \right\rangle_L\\
    &= 
    \left\langle
    \begin{pmatrix}
        \frac{\vec{u}_1\vec{x}_1}{\sqrt{1+\vec{u}_1^2 + \vec{u}_2^2}} + \frac{\vec{u}_2\vec{x}_2}{\sqrt{1+\vec{u}_1^2 + \vec{u}_2^2}}\\
        \vec{x}_1\\
        \vec{x}_2
    \end{pmatrix},
    \begin{pmatrix}
    \sqrt{1+\vec{u}_1^2 + \vec{u}_2^2}\\
    \vec{u}_1\\
    \vec{u}_2
    \end{pmatrix}
    \right\rangle_L\\
    &= -\vec{u}_1\vec{x}_1 - \vec{u}_2\vec{x}_2 + \vec{u}_1\vec{x}_1 + \vec{u}_2\vec{x}_2\\
    &=0
\end{align*}
for $\vec{x} \in \mathbb{R}^2$. Hence, $\brac{T_{\vec{u}}\mathbb{H}^2}^\perp =  \operatorname{span}\{\vec{u}\}$.
\end{proof}
\begin{remark} \label{comp_ortho}
    We can construct a global orthonormal frame for $\mathbb{H}^2$. Note that
    \begin{align*}
        d\Phi_p 
        \begin{pmatrix}
            1 \\
            0          
        \end{pmatrix}
        =
        \begin{pmatrix}
            \frac{p_1}{p_0}\\
            1\\
            0
        \end{pmatrix}
    \end{align*}
    and 
    \begin{align*}
        d\Phi_p 
        \begin{pmatrix}
            0 \\
            1          
        \end{pmatrix}
        =
        \begin{pmatrix}
            \frac{p_2}{p_0}\\
            0\\
            1
        \end{pmatrix}.
    \end{align*}
    Using the the Gram-Schmidt process, we have that 
    \begin{align*}
        \vec{X}^1_p
        =
        \begin{pmatrix}
            \frac{p_1}{\sqrt{p^2_0 - p_1^2}}\\
            \frac{p_0}{\sqrt{p^2_0 - p_1^2}}\\
            0
        \end{pmatrix}
    \end{align*}
        and
    \begin{align*}
        \vec{X}^2_p
        =
        \begin{pmatrix}
            \frac{p_2 p_0}{\sqrt{p^2_0 - p_1^2}}\\
            \frac{p_1p_2}{\sqrt{p^2_0 - p_1^2}}\\
            1
        \end{pmatrix}
    \end{align*}
    is a global orthonormal frame for $\mathbb{H}^2$.
\end{remark}
\begin{definition}
    Let $M$ be a smooth manifold. A \textit{\textbf{connection}} in $M$ is a map 
    \begin{align*}
        \nabla: \mathfrak{X}(M) \times \mathfrak{X}(M) \to \mathfrak{X}(M)
    \end{align*}
    written as $(X,Y) \mapsto \nabla_X Y$, satisfying the following:\\
    a) For $f_1,f_2 \in C^\infty(M)$ and $X_1,X_2 \in \mathfrak{X}(M)$,
    \begin{align*}
        \nabla_{f_1X_1 + f_2X_2} Y = f_1 \nabla_{X_1} Y + f_2 \nabla_{X_2} Y
    \end{align*}\\
     b) For $a_1,a_2 \in \R$ and $Y_1,Y_2 \in \mathfrak{X}(M)$,
    \begin{align*}
        \nabla_X\brac{a_1Y_1+a_2Y_2} = a_1\nabla_X Y_1 + a_2 \nabla_X Y_2.
    \end{align*}\\
    c) For $f \in C^\infty(M)$,
    \begin{align}
        \nabla_X(fY) = f\nabla_X Y + (Xf)Y,
    \end{align}
    and $\nabla_X Y$ is called the \textit{\textbf{covariant derivative of $Y$ in the direction of $X$}}.
\end{definition}

\begin{definition}
    Let $\gamma:I \to M$ be a smooth curve. A \textbf{\textit{smooth vector field along $\gamma$}} is a smooth map $V:I \to M$ such that $V(t) \in T_{\gamma(t)}M$ for every $t \in I$. The set of all smooth vector fields along $\gamma$ is denoted by $\mathfrak{X}(\gamma)$.
\end{definition}
\begin{definition} \label{cov_def}
    We say $V \in \mathfrak{X}(\gamma)$ is \textit{\textbf{extendable}} if there is a $\Tilde{V} \in \mathfrak{X}(M)$ such that on a neighborhood $N \subset M$ that contains $\gamma(I)$, we have $V = \Tilde{V} \circ \gamma$. 
\end{definition}
\begin{lemma} \label{unique_cov}
    Let $M$ be a smooth manifold and let $\nabla$ be a connection in $TM$. For each smooth curve $\gamma:I \to M$, the connection determines a unique operator 
    \begin{align}
        D_t: \mathfrak{X}(\gamma) \to \mathfrak{X}(\gamma),
    \end{align}
    called the \textit{\textbf{covariant derivative along $\gamma$}}, satisfying the following\\
    (i) For $a,b \in \mathbb{R}$,
    \begin{align*}
        D_t(aV + bW) = aD_tV + bD_tW.
    \end{align*}
    (ii) For $f \in C^\infty (I)$,
    \begin{align*}
        D_t(fV) = f'V + fD_t V.
    \end{align*}\\
    (iii) If $V \in \mathfrak{X}(\gamma)$ is extendable, then for every extension $\Tilde{V}$ of $V$,
    \begin{align*}
        D_tV(t) = \nabla_{\Dot{\gamma}(t)}\Tilde{V}.
    \end{align*}
\end{lemma}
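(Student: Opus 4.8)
The plan is to follow the classical argument for the existence and uniqueness of the covariant derivative along a curve: first show that axioms (i)--(iii) \emph{force} a unique coordinate formula for $D_t$, which yields uniqueness, and then take that very formula as a \emph{definition}, check it is chart-independent, and verify that it satisfies (i)--(iii).

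For uniqueness, I would fix $t_0 \in I$, choose a smooth chart $(N,(x^i))$ about $\gamma(t_0)$, and work on $J = \gamma^{-1}(N)$. Writing $V(t) = V^k(t)\,\partial_k|_{\gamma(t)}$ with $V^k \in C^\infty(J)$, each coordinate field $\partial_k$ restricted to $\gamma$ is extendable (by $\partial_k$ on $N$ in the sense of \cref{cov_def}), so properties (i), (ii), (iii) together with axioms (a)--(c) of the connection give
\begin{align*}
    D_t V(t) = \dot V^k(t)\,\partial_k|_{\gamma(t)} + V^k(t)\,\nabla_{\dot\gamma(t)}\partial_k = \left(\dot V^k(t) + V^i(t)\,\dot\gamma^j(t)\,\Gamma_{ij}^k(\gamma(t))\right)\partial_k|_{\gamma(t)},
\end{align*}
where the $\Gamma_{ij}^k$ are the Christoffel symbols of $\nabla$ in this chart. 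Since the right-hand side depends only on $\gamma$, $V$, and $\nabla$, any two operators obeying (i)--(iii) agree on $J$; covering $I$ by such subintervals gives uniqueness on all of $I$.

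For existence, I would use the displayed expression to \emph{define} $D_t V$ on each $J$. The first thing to verify is independence of the chart: on an overlap one substitutes $\tilde\partial_k = \frac{\partial x^i}{\partial \tilde x^k}\partial_i$ together with the inhomogeneous transformation rule for the $\Gamma_{ij}^k$, and the second-order terms cancel exactly as in the proof that $\nabla_X Y$ is well defined, so the local operators patch to a global $D_t:\mathfrak{X}(\gamma)\to\mathfrak{X}(\gamma)$. Properties (i) and (ii) are then immediate from the formula, the Leibniz rule in (ii) coming from $\frac{d}{dt}(fV^k) = f'V^k + f\dot V^k$. For (iii), if $V = \tilde V\circ\gamma$ with $\tilde V = \tilde V^k\partial_k$ near $\gamma(t_0)$, then $V^k(t) = \tilde V^k(\gamma(t))$, hence $\dot V^k(t) = \dot\gamma^j(t)\,(\partial_j\tilde V^k)(\gamma(t))$, and substitution recovers precisely the coordinate expression for $\nabla_{\dot\gamma(t)}\tilde V$ via axiom (c) and the definition of $\Gamma_{ij}^k$.

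The main obstacle — indeed essentially the only nontrivial point — is the chart-independence check: one must see that the inhomogeneous term in the transformation of $\Gamma_{ij}^k$ exactly absorbs the extra terms produced by differentiating the components $V^k$ in the new chart. A minor subtlety worth flagging is that the verification of (iii) must not assume $\dot\gamma(t)\neq 0$, but this is automatic because the formula is tensorial in $\dot\gamma$, so no case distinction is required. Once these are settled, covering $I$ by preimages of charts and invoking the already-established uniqueness on overlaps glues the pieces into a single global operator, completing the proof.
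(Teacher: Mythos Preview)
Your proposal is correct and follows essentially the same classical approach as the paper: derive the coordinate formula from axioms (i)--(iii) to obtain uniqueness, then take that formula as a definition in each chart and verify the axioms for existence. If anything, you are slightly more careful than the paper in explicitly flagging chart-independence as the main point to check and in noting that no assumption $\dot\gamma(t)\neq 0$ is needed; the paper simply asserts that the local definitions agree on overlaps without writing out the transformation-law computation.
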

\begin{proof}
    Let $t_0 \in I^\circ$. There is a chart $(\psi, U)$ such that $\gamma(t) \in U$ for any $t \in (t_0 - \epsilon,t+\epsilon)$. So $V(t) \in T_{\gamma(t)}M$. Thus, 
    \begin{align}
        V(t) = \sum_{j} V^j(t)\partial_j|_{\gamma(t)}
    \end{align}
    for $t \in (t_0-\epsilon,t_0\epsilon)$, where $V^1,\dots, V^n$ are smooth real-valued functions defined on $(t_0-\epsilon,t_0+\epsilon)$. Since each $\partial_j|_{\gamma(t)}$ is extendable,
    \begin{align*}
        D_t\partial_j|_{\gamma(t)} 
        &= \nabla_{\dot{\gamma}(t)} \partial_j|_{\gamma(t)}\\
        &= \sum_{i}\Dot{\gamma}^i(t) \nabla_{\partial_i}\partial_j|_{\gamma(t)}\\
         &= \sum_{ik}\Dot{\gamma}^i(t) \Gamma^k_{ij}\brac{\gamma(t)}\partial_k|_{\gamma(t)}.
    \end{align*}
    Thus, 
    \begin{align} 
    \begin{split}
        D_tV(t) 
        &= \sum_{j} \Dot{V}^j(t)\partial_j|_{\gamma(t)} + V^j(t)\nabla_{\dot{\gamma}(t)} \partial_j|_{\gamma(t)}\\
        &= \sum_{j} \Dot{V}^j(t)\partial_j|_{\gamma(t)} + \sum_{ik}\Dot{\gamma}^i(t) V^j(t)\Gamma^k_{ij}\brac{\gamma(t)}\partial_k|_{\gamma(t)}\\
        &= \sum_{ijk} \big(\Dot{V}^k(t) + \Dot{\gamma}^i(t) V^j(t)\Gamma^k_{ij}\brac{\gamma(t)}\big)\partial_k|_{\gamma(t)}. \label{cov.exp}
    \end{split}
    \end{align}
    Now suppose there is another covariant derivative operator $G_t$ along $\gamma$. Then
    \begin{align*}
        G_tV(t) 
        &= \sum_{j} \Dot{V}^j(t)\partial_j|_{\gamma(t)} + V^j(t)\nabla_{\dot{\gamma}(t)} \partial_j|_{\gamma(t)}\\
        &= \sum_{j} \Dot{V}^j(t)\partial_j|_{\gamma(t)} + \sum_{ik}\Dot{\gamma}^i(t) V^j(t)\Gamma^k_{ij}\brac{\gamma(t)}\partial_k|_{\gamma(t)}\\
        &= \sum_{ijk} \big(\Dot{V}^k(t) + \Dot{\gamma}^i(t) V^j(t)\Gamma^k_{ij}\brac{\gamma(t)}\big)\partial_k|_{\gamma(t)}\\
        &= D_tV(t).
    \end{align*}
    Hence, $D_t$ is unique.

    For existence, the image of $\gamma$ can be covered by charts. In each chart, $D_t$ is defined as in \eqref{cov.exp} so that in the overlap, the definitions agree. All that is left is to verify the three properties. 

    For $a,b \in \R$,
    \begin{align*}
        D_t(aV(t) + bW(t))
        &= \sum_{ijk} \big(\brac{a\Dot{V}^k(t) + b\Dot{W}^k(t)} + \Dot{\gamma}^i(t) \brac{aV^j(t)+bW^j}\Gamma^k_{ij}\brac{\gamma(t)}\big)\partial_k|_{\gamma(t)}\\
        &= a\sum_{ijk} \big(\Dot{V}^k(t) + \Dot{\gamma}^i(t) V^j(t)\Gamma^k_{ij}\brac{\gamma(t)}\big)\partial_k|_{\gamma(t)} \\
        &+ b\sum_{ijk} \big(\Dot{W}^k(t) + \Dot{\gamma}^i(t) W^j(t)\Gamma^k_{ij}\brac{\gamma(t)}\big)\partial_k|_{\gamma(t)}\\
        &= a D_t V(t) + b D_t W(t).
    \end{align*}
    For $f \in C^\infty(I)$,
    \begin{align*}
        D_t(fV) 
        &= \sum_{ijk} \big(\Dot{\brac{fV}}^k(t) + \Dot{\gamma}^i(t) fV^j(t)\Gamma^k_{ij}\brac{\gamma(t)}\big)\partial_k|_{\gamma(t)}\\
        &= \sum_{ijk} \big(\Dot{f} V^k(t) + f\Dot{V}^k(t) + \Dot{\gamma}^i(t) fV^j(t)\Gamma^k_{ij}\brac{\gamma(t)}\big)\partial_k|_{\gamma(t)}\\
        &= \sum_{ijk} \Dot{f} V^k(t)\partial_k|_{\gamma(t)} + \sum_{ijk}f\big(\Dot{V}^k(t) + \Dot{\gamma}^i(t) V^j(t)\Gamma^k_{ij}\brac{\gamma(t)}\big)\partial_k|_{\gamma(t)}\\
        &= \Dot{f}V(t) + fD_tV(t).
    \end{align*}
    Finally, for every extension $\Tilde{V}$ of $V$, we have
    \begin{align*}
        \partial_t V^k = \partial_t \brac{\Tilde{V}^k \circ \gamma} = \sum_\ell\Dot{\gamma}^\ell(t)\partial_\ell \Tilde{V}^k.
    \end{align*}
    Thus,
    \begin{align} 
    \begin{split}
        D_tV(t) 
         &= \sum_{ijk} \big(\Dot{V}^k(t) + \Dot{\gamma}^i(t) V^j(t)\Gamma^k_{ij}\brac{\gamma(t)}\big)\partial_k|_{\gamma(t)}\\
         &= \sum_{ijk} \big(\sum_\ell\Dot{\gamma}^\ell(t)\partial_\ell \Tilde{V}^k + \Dot{\gamma}^i(t) V^j(t)\Gamma^k_{ij}\brac{\gamma(t)}\big)\partial_k|_{\gamma(t)}
    \end{split}
    \end{align}
    and 
    \begin{align}
    \begin{split}
        \nabla_{\Dot{\gamma}(t)} \Tilde{V}
        &= \sum_{j} \nabla_{\Dot{\gamma}(t)} \brac{V^j(t)\partial_j|_{\gamma(t)}}\\
        &= \sum_{j} \Dot{\gamma}(t)(\Tilde{V}^j) + \sum_{j}V^j(t)\nabla_{\Dot{\gamma}(t)} \partial_j|_{\gamma(t)}\\
        &= \sum_{j} \sum_\ell\Dot{\gamma}^\ell(t)\partial_\ell \Tilde{V}^j + \sum_{ik}\Dot{\gamma}^i(t) V^j(t)\Gamma^k_{ij}\brac{\gamma(t)}\partial_k|_{\gamma(t)}\\
        &= \sum_{ijk} \big(\sum_\ell\Dot{\gamma}^\ell(t)\partial_\ell \Tilde{V}^k + \Dot{\gamma}^i(t) V^j(t)\Gamma^k_{ij}\brac{\gamma(t)}\big)\partial_k|_{\gamma(t)}.
    \end{split}
    \end{align}
    Thus, $D_tV(t) = \nabla_{\Dot{\gamma}(t)} \Tilde{V}$.
\end{proof}
\begin{lemma} \label{def_cov}
    Let $N$ be an embedded Riemannian submanifold of a pseudo-Riemannian manifold $M$. Let $E^1, \dots, E^n$ be a local orthonormal frame of $N$. Define $\nabla: \mathfrak{X}(N) \times \mathfrak{X}(N) \to \mathfrak{X}(N)$ by
    \begin{align*}
        \nabla_X Y = \langle X(Y), E^1 \rangle E^1 + \dots + \langle X(Y) , E^n \rangle E^n.
    \end{align*}
    This is a connection in $M$.
\end{lemma}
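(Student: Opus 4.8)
The plan is to verify directly the three defining properties (a), (b), (c) of a connection, after first fixing the meaning of the symbol $X(Y)$ and checking that $\nabla$ is well defined. Here $X(Y)$ is to be read as the ambient covariant derivative of $Y$ in the direction $X$: choosing ambient extensions $\tilde X,\tilde Y\in\mathfrak{X}(M)$ of $X,Y\in\mathfrak{X}(N)$, set $X(Y):=\bar\nabla_{\tilde X}\tilde Y$, where $\bar\nabla$ is the Levi-Civita connection of $M$ (in the case $M=\R^m$ of interest this is just the componentwise directional derivative). First I would record the two elementary facts that drive the whole argument: (i) the map $\Pi(V):=\sum_{i=1}^n\langle V,E^i\rangle E^i$ is the fiberwise orthogonal projection $T_pM\to T_pN$ relative to an orthonormal frame, so in particular $\Pi$ is $C^\infty(N)$-linear and fixes every vector already tangent to $N$; and (ii) $(\bar\nabla_{\tilde X}\tilde Y)_p$ depends on $\tilde X$ only through $X_p$ and on $\tilde Y$ only through the values of $Y$ along a curve through $p$ tangent to $X_p$. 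From (ii), $\nabla_XY:=\Pi(X(Y))$ is independent of the chosen ambient extensions, and it lies in $\mathfrak{X}(N)$ by smoothness of $\bar\nabla$, of the $E^i$, and of the metric; on overlaps of orthonormal frames the formulas agree because $\Pi$ does not depend on the frame.

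With this in hand the linearity properties are immediate. Since $(\bar\nabla_{\tilde X}\tilde Y)_p$ is tensorial in $X$, for $f_1,f_2\in C^\infty(N)$ and $X_1,X_2\in\mathfrak{X}(N)$ one has $(f_1X_1+f_2X_2)(Y)=f_1\,X_1(Y)+f_2\,X_2(Y)$ pointwise on $N$, and applying the linear map $\Pi$ gives property (a): $\nabla_{f_1X_1+f_2X_2}Y=f_1\nabla_{X_1}Y+f_2\nabla_{X_2}Y$. Property (b), $\R$-linearity in the second slot, follows from the $\R$-linearity of $\bar\nabla_{\tilde X}(\cdot)$ together with that of $\Pi$.

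The only substantive point is the Leibniz rule (c). For $f\in C^\infty(N)$, extend it to $\tilde f\in C^\infty(M)$; then $\bar\nabla_{\tilde X}(\tilde f\tilde Y)=(\tilde X\tilde f)\tilde Y+\tilde f\,\bar\nabla_{\tilde X}\tilde Y$, which along $N$ reads $X(fY)=(Xf)\,Y+f\,X(Y)$. Applying $\Pi$, using $C^\infty(N)$-linearity of $\Pi$ and the fact that $\Pi(Y)=Y$ since $Y$ is tangent to $N$, yields
\[
\nabla_X(fY)=\Pi\big((Xf)\,Y+f\,X(Y)\big)=(Xf)\,Y+f\,\Pi(X(Y))=(Xf)\,Y+f\,\nabla_XY,
\]
which is exactly (c). This proves $\nabla$ is a connection, and by construction it is the tangential part of the ambient connection, i.e.\ the Levi-Civita connection of the metric induced on $N$. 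I expect the main obstacle to be nothing more than the bookkeeping of step one — making $X(Y)$ unambiguous and frame-independent — since once $\Pi$ is identified as a frame-independent linear projection fixing $TN$, properties (a)--(c) reduce to linearity of $\Pi$ and the corresponding properties of $\bar\nabla$.
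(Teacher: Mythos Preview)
Your proof is correct and follows essentially the same route as the paper: both verify the three connection axioms (a), (b), (c) directly, with the Leibniz rule hinging on the fact that $\Pi(Y)=\sum_j\langle Y,E^j\rangle E^j=Y$ for $Y$ tangent to $N$. Your version is more careful than the paper's in that you make explicit what $X(Y)$ means (the ambient covariant derivative via extensions) and check frame- and extension-independence, whereas the paper simply computes with the formula without unpacking these points; but the substance of the argument is the same.
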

\begin{proof}
    We need to show that $\nabla$ satisfies the three properties in \cref{cov_def}. 
    First, for $f_1, f_2 \in C^\infty(N)$ and $X_1,X_2 \in \mathfrak{X}(N)$, we have
    \begin{align*}
        \nabla_{f_1X_1 + f_2X_2} Y 
        & = \langle (f_1X_1 + f_2X_2)(Y), E^j \rangle E^j\\
        & = f_1\langle X_1(Y), E^j \rangle E^j + f_2\langle X_2(Y), E^j \rangle E^j\\
        & = f_1\nabla_{X_1}Y + f_2\nabla_{X_2}Y.
    \end{align*}
    Second, for $a_1,a_2 \in \R$ and $Y_1,Y_2 \in \mathfrak{X}(N)$,
    \begin{align*}
        \nabla_X(a_1Y_1 + a_2Y_2) 
        &= \langle X(a_1Y_1 + b_2Y_2), E^j \rangle E^j\\
        &= a_1\langle X(Y_1), E^j \rangle E^j + a_2\langle X(Y_2), E^j \rangle E^j\\
        &= a_1\nabla_X Y_1 + a_2\nabla_X Y_2.
    \end{align*}
    Finally, $f \in C^\infty(M)$,
    \begin{align*}
        \nabla_X(fY) 
        &= \langle X(fY), E^j \rangle E^j\\
        &= \langle (Xf)Y, E^j \rangle E^j + \langle fX(Y), E^j \rangle E^j\\ 
        &= Xf\langle Y, E^j \rangle E^j + f\langle X(Y), E^j \rangle E^j\\
        &=  (Xf)Y +  f \nabla_X Y.
    \end{align*}
    Hence, $\nabla$ is a connection in $N$.
\end{proof}
\begin{lemma} \label{cov_der_tang}
    Let $N$ be an embedded Riemannian submanifold of a pseudo-Riemannian manifold $M$ and let $\gamma: I \to N$ be a smooth curve. Let $E^1, \dots, E^n$ be a local orthonormal frame of $N$. Take the connection $\nabla: \mathfrak{X}(N) \times \mathfrak{X}(N) \to \mathfrak{X}(N)$ defined in \cref{def_cov}. Then the operator $D^\alpha: \mathfrak{X}(\gamma) \to \mathfrak{X}(\gamma)$ defined by 
    \begin{align*}
        D_t V = \langle \partial_\alpha V, E^1 \rangle E^1 + \dots + \langle \partial_\alpha V, E^n \rangle E^n
    \end{align*}
    is the covariant derivative along $\gamma$ with respect to $\nabla$.
\end{lemma}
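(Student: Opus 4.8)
The plan is to identify the operator in the statement with the covariant derivative along $\gamma$ by \emph{uniqueness}. By \cref{unique_cov}, for the connection $\nabla$ of \cref{def_cov} there is exactly one operator on $\mathfrak{X}(\gamma)$ enjoying properties (i)--(iii) of that lemma, namely the covariant derivative $D_t$ along $\gamma$; so it is enough to check that the operator $V \mapsto \sum_{j=1}^n \langle \partial_\alpha V, E^j\rangle E^j$ satisfies those three properties with respect to this $\nabla$, after which the two operators must coincide.

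Before that I would record two elementary facts used throughout. First, for each $t$ the vector $\sum_j \langle \partial_\alpha V(t), E^j(\gamma(t))\rangle E^j(\gamma(t))$ is a linear combination of the frame vectors $E^j(\gamma(t))$, hence lies in $T_{\gamma(t)}N$, and it depends smoothly on $t$ since $V$, $\gamma$ and the $E^j$ are smooth and $\langle\cdot,\cdot\rangle$ is bilinear; thus the operator genuinely sends $\mathfrak{X}(\gamma)$ into $\mathfrak{X}(\gamma)$. Second, since $E^1(\gamma(t)),\dots,E^n(\gamma(t))$ is an orthonormal basis of $T_{\gamma(t)}N$, every $W(t)\in T_{\gamma(t)}N$ admits the orthonormal expansion $W(t) = \sum_j \langle W(t), E^j(\gamma(t))\rangle E^j(\gamma(t))$.

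Property (i), $\mathbb{R}$-linearity, is immediate from linearity of $\partial_\alpha$ and bilinearity of $\langle\cdot,\cdot\rangle$. For property (ii) I would expand $\partial_\alpha(fV) = f'V + f\,\partial_\alpha V$ for $f\in C^\infty(I)$, distribute the inner product, and apply the orthonormal expansion to the lower-order term, $\sum_j \langle fV, E^j\rangle E^j = f\sum_j\langle V,E^j\rangle E^j = fV$; this yields the Leibniz rule $D_t(fV) = f'V + f\,D_tV$. For property (iii), let $V$ be extendable (in the sense of \cref{cov_def}) with extension $\tilde V\in\mathfrak{X}(N)$, so that $V=\tilde V\circ\gamma$ on a neighborhood of the point in question. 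The chain rule gives $\partial_\alpha V(t) = \tfrac{d}{dt}\bigl(\tilde V(\gamma(t))\bigr) = \dot\gamma(t)(\tilde V)$, the ambient directional derivative which \cref{def_cov} denotes by $X(Y)$ with $X=\dot\gamma(t)$ and $Y=\tilde V$; hence $\sum_j\langle\partial_\alpha V,E^j\rangle E^j = \sum_j\langle \dot\gamma(t)(\tilde V),E^j\rangle E^j = \nabla_{\dot\gamma(t)}\tilde V$, which is property (iii). With (i)--(iii) verified, \cref{unique_cov} forces the operator in the statement to equal the covariant derivative along $\gamma$ with respect to $\nabla$.

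The one point I expect to require care is (iii): one must be consistent about interpreting both $\partial_\alpha V$ (the parameter derivative along $\gamma$) and the symbol $X(Y)$ of \cref{def_cov} as componentwise derivatives in the ambient embedding $N\hookrightarrow M\subset\mathbb{R}^k$, and then the identification $\partial_t(\tilde V\circ\gamma) = \dot\gamma(t)(\tilde V)$ is just the chain rule in that ambient space. Everything else --- the well-definedness, the orthonormal expansion, and properties (i)--(ii) --- is routine bookkeeping.
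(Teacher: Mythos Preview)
Your proposal is correct and follows essentially the same route as the paper: verify properties (i)--(iii) of \cref{unique_cov} for the operator $V\mapsto \sum_j\langle\partial_\alpha V,E^j\rangle E^j$ and conclude by uniqueness. Your write-up is in fact slightly more careful than the paper's, since you explicitly note why the operator lands in $\mathfrak{X}(\gamma)$ and flag the ambient-derivative interpretation needed for (iii).
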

\begin{proof}
    We have to show that the above definition satisfies the 3 properties in \cref{unique_cov}.
    For $a,b \in \R$, we have
    \begin{align*}
        D_t(aV + aW) 
        &= \langle \partial_\alpha (aV + bW), E^j \rangle E^j\\
        &= a\langle \partial_\alpha V, E^j \rangle E^j + b\langle \partial_\alpha W, E^j \rangle E^j\\
        &= aD_tV + bD_tW.
    \end{align*}
    By the product rule of $\partial_t$, for $f \in C^\infty(I)$ we have that
    \begin{align*}
        D_t(fV) 
        &= \langle \partial_t (fV), E^j \rangle E^j\\
        &= \langle f'V + f \partial_t V, E^j \rangle E^j\\
        &= f' \langle V, E^j \rangle E^j + f \langle \partial_t V, E^j \rangle E^j\\
        &= f' V + f D_t V.
    \end{align*}
    Now let $V \in \mathfrak{X}(\gamma)$ and let $\Tilde{V}$ be an extension of $V$. Since $\Tilde{V}$ is an extension of $V$, we have
    \begin{align*}
        \partial_t V = \partial_t \brac{\Tilde{V} \circ \gamma} = \sum_\ell\Dot{\gamma}^\ell(t)\partial_\ell \Tilde{V}.
    \end{align*}
    Then we have
    \begin{align*}
        D_t V 
        &= \langle \partial_t V, E^j \rangle E^j\\
        &= \Big\langle \sum_\ell\Dot{\gamma(t)}^\ell(t)\partial_\ell \Tilde{V}, E^j \Big\rangle E^j\\
    \end{align*}
    and
     \begin{align*}
        \nabla_{\Dot{\gamma}(t)} \Tilde{V} 
        &= \langle \dot{\gamma(t)}(\Tilde{V}), E^j \rangle E^j\\
        &= \Big\langle \sum_\ell\Dot{\gamma}^\ell(t)\partial_\ell \Tilde{V}, E^j \Big \rangle E^j.
    \end{align*}
    Hence, $D_t V = \nabla_{\Dot{\gamma}(t)} \Tilde{V}$. Thus, $D_t$, as defined, is the covariant derivative along $\gamma$ determined by $\nabla$.
\end{proof}
\begin{lemma} \label{pullback_conn}
   Let $M$ and $\Tilde{M}$ be smooth manifolds. If $\Tilde{\nabla}$ is a connection in $T\Tilde{M}$ and $\varphi$ is a diffemorphism, then the map $\nabla: \mathfrak{X}(M) \times \mathfrak{X}(M) \to \mathfrak{X}(M)$ defined by 
   \begin{align*}
       \nabla_XY = d\varphi^{-1}\brac{\Tilde{\nabla}_{\Tilde{X}}\Tilde{Y}}
   \end{align*}
   is a connection in $TM$, where $\Tilde{X} = d\varphi X$, $\Tilde{Y} = d\varphi Y \in \mathfrak{X}(\Tilde{M})$.
\end{lemma}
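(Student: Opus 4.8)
The plan is to recognize $\nabla$ as the pullback (equivalently pushforward) of the connection $\tilde\nabla$ along the diffeomorphism $\varphi$ and to verify the three defining properties of a connection directly, since the statement is purely formal. The first step is to fix notation: because $\varphi$ is a diffeomorphism, the assignment $X\mapsto\tilde X=d\varphi\,X$ — meaning the vector field $q\mapsto d\varphi_{\varphi^{-1}(q)}X_{\varphi^{-1}(q)}$ — is a well-defined $\R$-linear bijection $\mathfrak{X}(M)\to\mathfrak{X}(\tilde M)$ with inverse $Z\mapsto d\varphi^{-1}Z$, so in particular $d\varphi^{-1}\tilde Y=Y$ and smoothness is preserved at every stage. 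I would then record the two elementary naturality identities that carry the whole argument: for $f\in C^\infty(M)$ and $g\in C^\infty(\tilde M)$,
\begin{align*}
 \widetilde{fX}=(f\circ\varphi^{-1})\,\tilde X,\qquad \tilde X(g)=\bigl(X(g\circ\varphi)\bigr)\circ\varphi^{-1},
\end{align*}
together with the dual statement $d\varphi^{-1}(hZ)=(h\circ\varphi)\,d\varphi^{-1}Z$ for $h\in C^\infty(\tilde M)$ and $Z\in\mathfrak{X}(\tilde M)$; all three follow at once from the chain rule and the definition of the pushforward.

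With these identities available, the three axioms are checked in turn. For $C^\infty(M)$-bilinearity in the first slot I would write $\nabla_{f_1X_1+f_2X_2}Y=d\varphi^{-1}\bigl(\tilde\nabla_{(f_1\circ\varphi^{-1})\tilde X_1+(f_2\circ\varphi^{-1})\tilde X_2}\tilde Y\bigr)$, pull the functions $f_i\circ\varphi^{-1}$ outside using that $\tilde\nabla$ is $C^\infty(\tilde M)$-linear in its first argument, and transport them back across $d\varphi^{-1}$ via $(f_i\circ\varphi^{-1})\circ\varphi=f_i$, obtaining $f_1\nabla_{X_1}Y+f_2\nabla_{X_2}Y$. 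The $\R$-linearity in $Y$ is immediate from the $\R$-linearity of the pushforward and of $\tilde\nabla$ in its second argument. For the Leibniz rule I would expand $\nabla_X(fY)=d\varphi^{-1}\bigl(\tilde\nabla_{\tilde X}\bigl((f\circ\varphi^{-1})\tilde Y\bigr)\bigr)$, apply the Leibniz rule for $\tilde\nabla$ to get $(f\circ\varphi^{-1})\tilde\nabla_{\tilde X}\tilde Y+\bigl(\tilde X(f\circ\varphi^{-1})\bigr)\tilde Y$, and then transport back: the first term yields $f\nabla_XY$, while the second yields $(Xf)Y$ after rewriting $\tilde X(f\circ\varphi^{-1})=(Xf)\circ\varphi^{-1}$ by the naturality identity and using $d\varphi^{-1}\tilde Y=Y$. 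This gives $\nabla_X(fY)=f\nabla_XY+(Xf)Y$, which is exactly property (c).

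Since the argument is entirely formal, there is no genuine analytic obstacle; the only point requiring care is the bookkeeping of the pre- and post-compositions with $\varphi$ and $\varphi^{-1}$ when moving vector fields and functions across $d\varphi$ and $d\varphi^{-1}$, and in particular keeping track of which slot of $\tilde\nabla$ is $C^\infty$-linear and which is only $\R$-linear. In fact $\nabla$ is nothing but the standard pullback connection $\varphi^{*}\tilde\nabla$; in the application $\tilde\nabla$ will be the connection on $\tilde{\mathbb H}^2$ furnished by \cref{def_cov} and $\varphi$ the Nash embedding, and the role of this lemma is merely to guarantee that transporting a connection through a diffeomorphism again produces a connection, so that the covariant derivatives $D^\alpha$ on $\mathbb{H}^2$ and $\tilde D^\alpha$ on $\tilde{\mathbb H}^2$ used in the reformulation of \eqref{eq:wave map} are genuinely compatible via $d\varphi$.
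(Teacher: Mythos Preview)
Your proposal is correct and follows essentially the same route as the paper's own proof: both verify the three connection axioms directly, with the key input being the naturality identity $\widetilde{fX}=(f\circ\varphi^{-1})\tilde X$ (which the paper writes as $d\varphi(fX)=\tilde f\tilde X$ with $\tilde f=f\circ\varphi^{-1}$) together with $\tilde X(\tilde f)=(Xf)\circ\varphi^{-1}$. Your write-up is slightly more systematic in recording the naturality identities up front, but the substance and order of the verifications are the same.
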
 
\begin{proof}
    Let $a,b \in R$ and $Y_1,Y_2 \in \mathfrak{X}(M)$. Then
    \begin{align*}
       \nabla_X \brac{a Y_1 + b Y_2}
       & = d\varphi^{-1} \brac{\Tilde{\nabla}_{\Tilde{X}} a Y_1 + b Y_2}\\
       & = a \ d\varphi^{-1} \brac{\Tilde{\nabla}_{\Tilde{X}} Y_1} + b \ d\varphi^{-1} \brac{\Tilde{\nabla}_{\Tilde{X}} Y_2}\\
       & = a\nabla_X \brac{Y_1} + b \nabla_X \brac{Y_2}.
    \end{align*}
    Let $f_1,f_2 \in C^\infty(M)$ and $X_1,X_2 \in \mathfrak{X}(M)$. Note that 
    \begin{align*}
        d\varphi_p\brac{f(p)X_p} 
        & = f(p) d\varphi_p X_p \\
        & =  (f \circ \varphi^{-1} \circ \varphi)(p) \Tilde{X}_{\varphi(p)}\\
        &= (f \circ \varphi^{-1})(\Tilde{p}) \Tilde{X}_{\Tilde{p}}.
    \end{align*}
    Let us define $\Tilde{f} = f \circ \varphi^{-1}$. Hence, $d\varphi\brac{fX} = \Tilde{f} \Tilde{X}$. Thus,
    \begin{align*}
        \nabla_{fX}Y 
        & = d\varphi^{-1}\Tilde{\nabla}_{\Tilde{f}\Tilde{X}} \Tilde{Y} \\
        & = f d\varphi^{-1}\Tilde{\Tilde{X}} \Tilde{Y}\\
        & = f \nabla_{X} Y.
    \end{align*}
    For $f \in C^\infty(M)$, we have
    \begin{align*}
        \nabla_X \brac{fY} 
        & = d\varphi^{-1}\brac{\Tilde{\nabla}_{\Tilde{X}}\Tilde{f}\Tilde{Y}}\\
        & = d\varphi^{-1}\brac{\Tilde{f}\Tilde{\nabla}_{\Tilde{X}}\Tilde{Y} + \Tilde{X}(\Tilde{f})\Tilde{Y}}\\
        & = f\nabla_{X}Y + X(f)Y.
    \end{align*}
    Hence, $\nabla$ is a connection on $TM$.
\end{proof}

\begin{lemma}\label{no_sec_der}
    Let $M$ and $\Tilde{M}$ be smooth manifolds, and let $\varphi: M \to \Tilde{M}$ is a diffeomorphism. Let $\Tilde{\nabla}$ be a connection in $T\Tilde{M}$ and let $\nabla$ be the connection defined in \cref{pullback_conn}. Let $\gamma: I \to M$ be a smooth curve. Then
    \begin{align*}
        d\varphi D_t V = \Tilde{D}_t\brac{d\varphi V},
   \end{align*}
   where $D_t$ is covariant differentiation along $\gamma$ with respect to $\nabla$, and $\Tilde{D}_t$ is covariant differentiation along $\varphi \circ \gamma$ with respect to $\Tilde{\nabla}$.
\end{lemma}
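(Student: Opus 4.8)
\textbf{Proof proposal for \Cref{no_sec_der}.}

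The plan is to reduce the identity $d\varphi\, D_t V = \tilde D_t(d\varphi\, V)$ to the defining property (iii) of the covariant derivative along a curve, namely that on extendable vector fields the covariant derivative along $\gamma$ agrees with the ambient connection applied to an extension. The key observation is that the connection $\nabla$ on $M$ was \emph{defined} in \Cref{pullback_conn} precisely so as to be the pullback of $\tilde\nabla$ under the diffeomorphism $\varphi$; so what we really need to check is that covariant differentiation along a curve commutes with pulling back the connection. First I would work locally: fix $t_0 \in I$ and a chart around $\gamma(t_0)$, so that $V(t) = \sum_j V^j(t)\,\partial_j|_{\gamma(t)}$ with smooth coefficients $V^j$, exactly as in the proof of \Cref{unique_cov}. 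Each coordinate field $\partial_j$ is extendable, and $d\varphi\,\partial_j$ is (the restriction along $\tilde\gamma := \varphi\circ\gamma$ of) a smooth vector field on $\tilde M$, hence extendable as well.

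The main computation is then a three-line application of the Leibniz-type properties (i)--(iii) of \Cref{unique_cov}, on both sides. On the $M$ side, property (ii) applied to $V = \sum_j V^j \partial_j$ gives $D_t V = \sum_j \dot V^j(t)\,\partial_j|_{\gamma(t)} + \sum_j V^j(t)\,D_t(\partial_j|_{\gamma(t)})$, and property (iii) rewrites $D_t(\partial_j|_{\gamma(t)}) = \nabla_{\dot\gamma(t)}\partial_j$. Applying $d\varphi$ (which is linear on each tangent space and commutes with the scalar coefficients $\dot V^j(t)$, $V^j(t)$) turns this into $d\varphi\,D_t V = \sum_j \dot V^j(t)\,(d\varphi\,\partial_j)|_{\tilde\gamma(t)} + \sum_j V^j(t)\,d\varphi\big(\nabla_{\dot\gamma(t)}\partial_j\big)$. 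Now invoke the definition of $\nabla$ from \Cref{pullback_conn}: $\nabla_{\dot\gamma(t)}\partial_j = d\varphi^{-1}\big(\tilde\nabla_{d\varphi\,\dot\gamma(t)}\,\widetilde{d\varphi\,\partial_j}\big)$, so that $d\varphi\big(\nabla_{\dot\gamma(t)}\partial_j\big) = \tilde\nabla_{\dot{\tilde\gamma}(t)}\,\widetilde{d\varphi\,\partial_j}$, using $d\varphi\,\dot\gamma(t) = \dot{\tilde\gamma}(t)$ (chain rule). On the $\tilde M$ side, write $\tilde V := d\varphi\,V = \sum_j V^j(t)\,(d\varphi\,\partial_j)|_{\tilde\gamma(t)}$; the coefficients of $\tilde V$ in the frame $\{d\varphi\,\partial_j\}$ along $\tilde\gamma$ are the same smooth functions $V^j$, so properties (ii) and (iii) for $\tilde D_t$ give $\tilde D_t\tilde V = \sum_j \dot V^j(t)\,(d\varphi\,\partial_j)|_{\tilde\gamma(t)} + \sum_j V^j(t)\,\tilde\nabla_{\dot{\tilde\gamma}(t)}\,\widetilde{d\varphi\,\partial_j}$. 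The two expressions coincide term by term, which is the claim.

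There are two small technical points I would spell out rather than gloss over. The first is the extendability bookkeeping: $d\varphi\,\partial_j$ is a vector field defined only along $\tilde\gamma$ a priori, but since $\varphi$ is a diffeomorphism and $\partial_j$ extends to a genuine vector field on the chart domain $U\subseteq M$, its pushforward $\varphi_*\partial_j$ is a genuine vector field on $\varphi(U)\subseteq\tilde M$ extending $d\varphi\,\partial_j$ along $\tilde\gamma$, so property (iii) legitimately applies. The second is that the local expressions patch: $D_t V$ and $\tilde D_t\tilde V$ are globally well-defined by \Cref{unique_cov}, so establishing the identity in each coordinate chart covering $\gamma(I)$ suffices, and there is nothing to check on overlaps. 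I expect the only real obstacle to be notational — keeping straight which frame, which base point, and which connection each symbol refers to — rather than any conceptual difficulty; the content is entirely a naturality statement that is forced by the construction of $\nabla$ in \Cref{pullback_conn}.
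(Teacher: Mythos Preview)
Your argument is correct. It differs from the paper's proof in its overall organization: rather than computing both sides in a local frame and matching terms, the paper verifies directly that the operator $d\varphi^{-1}\,\tilde D_t\,d\varphi$ acting on $\mathfrak{X}(\gamma)$ satisfies the three defining properties (i)--(iii) of \Cref{unique_cov}, and then invokes the uniqueness part of that lemma to conclude $D_t = d\varphi^{-1}\,\tilde D_t\,d\varphi$. Your approach unpacks $V$ in a coordinate frame $\{\partial_j\}$ and computes $d\varphi\,D_tV$ and $\tilde D_t(d\varphi\,V)$ explicitly; the paper's approach is coordinate-free and slightly shorter, since it never needs to write out the sums over $j$. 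Both arguments rest on the same substantive input --- the definition of $\nabla$ as the pullback of $\tilde\nabla$, and property~(iii) applied to extendable fields via the pushforward $\varphi_*\partial_j$ --- so the difference is packaging rather than content. Your version has the advantage of being entirely concrete; the paper's has the advantage of making the naturality transparent without any frame choice.
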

\begin{proof}
    we show that $D_t = d\varphi^{-1} \Tilde{D}_t d\varphi$ by uniqueness (\cref{unique_cov}), which will yield the desired equality $d\varphi D_t V = \Tilde{D}_t\brac{d\varphi V}$.

    First, we show that $d\varphi^{-1} \Tilde{D}_t d\varphi$ is linear over $\R$. Let $a,b \in R$. Then
    \begin{align*}
        d\varphi^{-1} \Tilde{D}_t d\varphi(aV + bW) 
        & =  a \ d\varphi^{-1} \Tilde{D}_t d\varphi V + b \ d\varphi^{-1} \Tilde{D}_t d\varphi W.
    \end{align*}
    Now for $f \in C^\infty(I)$, we have
    \begin{align*}
        d\varphi^{-1} \Tilde{D}_t d\varphi(fV)
        & = d\varphi^{-1} \Tilde{D}_t (f d\varphi V)\\
        & = d\varphi^{-1} (f' d\varphi V) + d\varphi^{-1} (f \Tilde{D}_t d\varphi V)\\
        & = f' V + f \ d\varphi^{-1} \Tilde{D}_t d\varphi V.
    \end{align*}
    Finally, for $V \in \mathfrak{X}(\gamma)$ extendable, we have that 
    $$
    d\varphi_\gamma V = d\varphi_\gamma (\Tilde{V} \circ \gamma) = (d\varphi_{\varphi^{-1} \Tilde{\gamma}} \Tilde{V} \circ \varphi^{-1}) \circ \Tilde{\gamma},
    $$
    for an extension $\Tilde{V}$ of $V$. So $\overline{V} = d\varphi_{\varphi^{-1}} \Tilde{V} \circ \varphi^{-1} \in \mathfrak{X}(\Tilde{M})$ is an extension of $d\varphi_{\varphi^{-1} \Tilde{\gamma}} V \in \mathfrak{X}(\Tilde{\gamma})$. Hence,
    \begin{align*}
        d\varphi^{-1}_{\varphi \circ \gamma} \Tilde{D}_t d\varphi_\gamma V
        & = d\varphi^{-1}_{\varphi \circ \gamma} \Tilde{D}_t d\varphi_{\varphi^{-1} \Tilde{\gamma}} V\\
        & = d\varphi^{-1}_{\varphi \circ \gamma} \Tilde{\nabla}_{\Tilde{\gamma}'} \overline{V}\\
        & = \nabla_{\Dot\gamma} \Tilde{V},
    \end{align*}
    where the last line follows from \cref{pullback_conn}. Thus, by uniqueness, $D_t = d\varphi^{-1} \Tilde{D}_t d\varphi$ and the result follows.
\end{proof}
\begin{lemma}\label{lem:cov der is tang der} \cite{Lee}
    (page 124, prop 5.12) Suppose $M$ is an embedded (pseudo-)Riemannian submanifold of a (pseudo-)Euclidean space. Then the Levi-Civita connection on $M$ is equal to the tangential connection $\nabla^\top$.
\end{lemma}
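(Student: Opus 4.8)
The plan is to invoke the characterization of the Levi-Civita connection as the unique torsion-free, metric-compatible connection on $M$, and to verify that the tangential connection $\nabla^\top$ enjoys both of these properties. Here $\nabla^\top$ is defined by $\nabla^\top_X Y := (\bar\nabla_X Y)^\top$, where $\bar\nabla$ is the standard flat connection on the ambient (pseudo-)Euclidean space $\mathbb{R}^N$ and $(\cdot)^\top$ denotes orthogonal projection onto $TM$. First I would record that $\nabla^\top$ is genuinely a connection in $TM$: additivity and $C^\infty(M)$-linearity in the lower slot are immediate, and the Leibniz rule in the upper slot follows from the Leibniz rule for $\bar\nabla$ together with linearity of the projection over functions. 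This is essentially \cref{def_cov} (and \cref{cov_der_tang}) applied with the local orthonormal frame taken to be a frame of $M$ sitting inside $\mathbb{R}^N$.

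Next I would check torsion-freeness. For tangent vector fields $X,Y$ on $M$, extended arbitrarily to $\mathbb{R}^N$, flatness of $\bar\nabla$ gives $\bar\nabla_X Y - \bar\nabla_Y X = [X,Y]$. Since the Lie bracket of two vector fields tangent to $M$ is again tangent to $M$, projecting onto $TM$ yields $\nabla^\top_X Y - \nabla^\top_Y X = [X,Y]^\top = [X,Y]$, so $\nabla^\top$ is symmetric. Then I would check metric compatibility: for tangent vector fields $X,Y,Z$, compatibility of $\bar\nabla$ with the ambient inner product $\langle\cdot,\cdot\rangle$ gives $X\langle Y,Z\rangle = \langle \bar\nabla_X Y, Z\rangle + \langle Y, \bar\nabla_X Z\rangle$. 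Because $Z$ and $Y$ are tangent, the normal parts of $\bar\nabla_X Y$ and $\bar\nabla_X Z$ pair to zero against them, so $\langle \bar\nabla_X Y, Z\rangle = \langle \nabla^\top_X Y, Z\rangle$ and $\langle Y, \bar\nabla_X Z\rangle = \langle Y, \nabla^\top_X Z\rangle$; hence $X\langle Y,Z\rangle = \langle \nabla^\top_X Y, Z\rangle + \langle Y, \nabla^\top_X Z\rangle$. By the uniqueness clause of the fundamental theorem of (pseudo-)Riemannian geometry, $\nabla^\top$ must coincide with the Levi-Civita connection of the induced metric on $M$, which is the assertion.

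The one point that requires care — and the main obstacle in the pseudo-Riemannian setting — is that the orthogonal projection $(\cdot)^\top\colon \mathbb{R}^N \to TM$ is only well-defined, and the splitting $\mathbb{R}^N = TM \oplus (TM)^\perp$ only smooth, when each tangent space $T_pM$ is a non-degenerate subspace of the ambient inner-product space. This non-degeneracy is part of the hypothesis that $M$ is a (pseudo-)Riemannian submanifold; in our application $M = \mathbb{H}^2 \subset \mathbb{R}^3$ it is guaranteed by \cref{tang_def}, where $(T_{\vec u}\mathbb{H}^2)^\perp = \operatorname{span}\{\vec u\}$ with $\langle \vec u,\vec u\rangle_L = -1 \neq 0$. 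Once this is granted, the three verifications above go through without modification. A full write-up would simply cite \cite[Prop.~5.12]{Lee}; for self-containedness one reproduces the short torsion and compatibility computations just sketched.
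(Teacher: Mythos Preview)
Your argument is correct and is in fact the standard proof from \cite{Lee} that the paper is citing. Note, however, that the paper does not supply its own proof of this lemma at all: it merely records the statement with a reference to Lee's Proposition~5.12, so there is nothing in the paper to compare against beyond the citation itself.
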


\begin{lemma}[\textbf{The Gauss Formula Along a Curve}] \label{lem:gauss formula}\cite{Lee}
    (page 229, corollary 8.3) Suppose $(M,g)$ is an embedded Riemannian submanifold of a Riemannian or pseudo-Riemannian manifold $(\Tilde{M},\Tilde{g})$, and $\gamma: I \to M$ is a smooth curve. If $X$ is a smooth vector field along $\gamma$ that is everywhere tangent to $M$, then 
    \begin{align*}
        \Tilde{D}_tX = D_tX + A(\gamma',X),
    \end{align*}
    where $A$ is the second fundamental form of $M$.
\end{lemma}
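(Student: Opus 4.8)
The plan is to reduce the statement to the pointwise Gauss decomposition $\Tilde\nabla_V W = \nabla_V W + A(V,W)$ for $V,W$ tangent to $M$, exploiting the uniqueness of the covariant derivative along a curve (Lemma~\ref{unique_cov}) together with the orthogonal splitting $T_{\gamma(t)}\Tilde M = T_{\gamma(t)}M \oplus (T_{\gamma(t)}M)^\perp$. Throughout I would fix $t_0 \in I$ and work on a neighborhood of $\gamma(t_0)$ on which there is a local orthonormal frame $E^1,\dots,E^n$ for $M$ (as in Remark~\ref{comp_ortho} and Lemma~\ref{cov_der_tang}), extended to a local orthonormal frame $E^1,\dots,E^n,E^{n+1},\dots,E^N$ for $\Tilde M$ by adjoining a local normal frame. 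Since every claim below is about the frame-independent decomposition of $\Tilde D_t X$, the local computations will patch to a global identity on $\gamma(I)$.

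First I would decompose $\Tilde D_t X = (\Tilde D_t X)^\top + (\Tilde D_t X)^\perp$ into tangential and normal parts and show $(\Tilde D_t X)^\top = D_t X$. The idea is to verify that $V \mapsto (\Tilde D_t V)^\top$, acting on vector fields along $\gamma$ tangent to $M$, satisfies the three defining properties of the covariant derivative along $\gamma$ for the induced connection $\nabla$ on $M$: $\R$-linearity is immediate since orthogonal projection is linear; the Leibniz rule $(\Tilde D_t (fV))^\top = f'V + f(\Tilde D_t V)^\top$ for $f\in C^\infty(I)$ holds because $f'V$ is already tangent to $M$; and for an extendable field $V = \Tilde Y\circ\gamma$, choosing an extension $\Tilde Y$ tangent to $M$ and then (via Lemma~\ref{unique_cov}(iii)) a further extension to $\Tilde M$, one gets $(\Tilde D_t V)^\top = (\Tilde\nabla_{\gamma'}\Tilde Y)^\top = \nabla_{\gamma'}\Tilde Y = D_t V$, where the middle identity is Lemma~\ref{lem:cov der is tang der} identifying the Levi--Civita connection of $M$ with the tangential projection of $\Tilde\nabla$. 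By uniqueness (Lemma~\ref{unique_cov}) this forces $(\Tilde D_t\,\cdot\,)^\top = D_t$.

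Next I would identify the normal part. Writing $X = \sum_{i=1}^n X^i E^i$ with $X^i \in C^\infty(I)$ (possible since $X$ is tangent and the $E^i$ form an orthonormal frame), the product rule gives $\Tilde D_t X = \sum_i (X^i)' E^i + \sum_i X^i\,\Tilde\nabla_{\gamma'}E^i$, using that each $E^i$ is a genuine vector field and hence extendable. Projecting onto the normal bundle kills the first sum and leaves $(\Tilde D_t X)^\perp = \sum_i X^i (\Tilde\nabla_{\gamma'}E^i)^\perp = \sum_i X^i\,A(\gamma', E^i)$; here $A(\gamma'(t), E^i_{\gamma(t)})$ is well-defined because $\gamma'(t)\in T_{\gamma(t)}M$ and $E^i$ is an honest vector field. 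Since the second fundamental form $A$ is bilinear over $C^\infty$ in its second slot, $\sum_i X^i A(\gamma', E^i) = A\bigl(\gamma', \sum_i X^i E^i\bigr) = A(\gamma', X)$. Combining the two steps yields $\Tilde D_t X = D_t X + A(\gamma', X)$ on the chosen neighborhood, and hence globally.

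The only genuine subtlety — the point I would be most careful about — is giving meaning to $A(\gamma', X)$ when neither $\gamma'$ nor $X$ extends to a vector field on $M$; this is handled purely by the tensoriality of $A$ and by the uniqueness characterization of $D_t$, so no new analytic estimate enters. One may instead simply invoke \cite{Lee}, but the argument above shows the statement follows cleanly from the facts already recorded in this appendix.
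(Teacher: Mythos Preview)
The paper does not supply its own proof of this lemma; it is simply cited from Lee (page~229, Corollary~8.3). Your argument is correct and is essentially the standard proof one finds in Lee: split $\Tilde D_t X$ into tangential and normal parts, identify the tangential part with $D_t X$ via the uniqueness characterization of the covariant derivative along a curve, and compute the normal part in a local tangent frame to recover $A(\gamma',X)$ by bilinearity.

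One small technical remark: you invoke Lemma~\ref{lem:cov der is tang der} to identify the Levi--Civita connection of $M$ with the tangential projection of $\Tilde\nabla$, but as stated in this paper that lemma is only for (pseudo-)Euclidean ambient space, whereas the Gauss formula here is asserted for a general ambient $(\Tilde M,\Tilde g)$. The identity $\nabla = (\Tilde\nabla)^\top$ does hold in that generality (this is the Gauss formula for vector fields, Lee Theorem~8.2), so your argument is sound; you are just citing a slightly weaker version of the fact you actually use.
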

\newpage
\bibliographystyle{plain}
\bibliography{citation}
\end{document}